\theoremstyle{plain}
\newtheorem{thm}{Theorem}[section]
\newtheorem{lm}[thm]{Lemma}
\newtheorem{prop}[thm]{Proposition}
\newtheorem{cor}[thm]{Corollary}
\theoremstyle{definition}
\newtheorem{dfn}[thm]{Definition}
\newtheorem{rmk}[thm]{Remark}
\newtheorem{example}[thm]{Example}
\def\dpar#1#2{\frac{\partial #1}{\partial #2}}
\def\Z{\mathbb{Z}\xspace}
\def\R{\mathbb{R}\xspace}
\def\C{\mathbb{C}\xspace}
\def\P{\mathbb{P}\xspace}
\def\S{\mathbb{S}\xspace}
\def\T{\mathbb{T}\xspace}
\def\De{\Delta}
\newcommand{\om}{\omega}
\def\CP{\mathbb{CP}\xspace}
\def\blowup#1#2{\text{Bl}_{#1}^{#2}}
\def\family{fixed-$\S^1$ family\xspace}
\def\families{fixed-$\S^1$ families\xspace}
\def\Families{Fixed-$\S^1$ families\xspace}
\def\Hirzscaled#1{W_{#1}(\alpha,\beta)\xspace}
\def\omHirz#1{\omega_{\Hirzscaled{#1}}}
\def\polygonnzero{\De^{n,0}(\alpha,\beta)}
\def\polygonnone{\De^{n,1}(\alpha,\beta)}
\def\poly#1{\Delta_{#1}\xspace}
\newcommand{\myitem}[1]{%
\item[#1]\protected@edef\@currentlabel{#1}%
}
\newcommand\blfootnote[1]{%
  \begingroup
  \renewcommand\thefootnote{}\footnote{#1}%
  \addtocounter{footnote}{-1}%
  \endgroup
}
\pgfplotsset{compat=1.13} 
\begin{document}

\title{\vspace{-1cm}Semitoric families}

\author{Yohann Le Floch \qquad Joseph Palmer}

\maketitle

\vspace{-.5cm}

\begin{abstract}
Semitoric systems are a type of four-dimensional integrable system for which one of the integrals generates a global
$\mathbb{S}^1$-action; these systems were classified by Pelayo and V\~{u} Ng\d{o}c in terms of five symplectic invariants.
We introduce and study semitoric families, which are one-parameter families of integrable systems with a fixed $\mathbb{S}^1$-action that are semitoric for all but finitely many values of the parameter, with the goal of developing a strategy to find a semitoric system associated to a given partial list of semitoric invariants. We also enumerate the possible behaviors of such families at the parameter values for which they are not semitoric, providing examples illustrating nearly all possible behaviors, which describes the possible limits of semitoric systems with a fixed $\mathbb{S}^1$-action. Furthermore, we introduce natural notions of blowup and blowdown in this context, investigate how semitoric families behave under these operations, and use this to prove that each Hirzebruch surface admits a semitoric family with certain desirable invariants; these families are related to the semitoric minimal model program. Finally, we give several explicit semitoric families on the first and second Hirzebruch surfaces showcasing various possible behaviors of such families which include new semitoric systems.
\end{abstract}

\blfootnote{\,\,\emph{2010 Mathematics Subject Classification.} 37J35, 53D20, 37J15.\\
\indent\indent \emph{Key words and phrases.} Semitoric systems, integrable Hamiltonian systems, focus-focus singularities, Hamiltonian-Hopf bifurcation, semitoric minimal model program.}

\newpage

\noindent
{\bf Yohann Le Floch} \\
Institut de Recherche Math\'ematique avanc\'ee,\\
UMR 7501, Universit\'e de Strasbourg et CNRS,\\
7 rue Ren\'e Descartes,\\
67000 Strasbourg, France.\\
{\em E-mail:} \texttt{ylefloch@unistra.fr}\\

%\noindent
%{\bf Joseph Palmer} \\
%Rutgers University Mathematics Dept., Hill Center,\\
%Busch Campus, 110 Frelinghuysen Road,\\
%Piscataway, NJ 08854, United States.\\
%{\em E-mail:} \texttt{j.palmer@math.rutgers.edu}

\noindent
{\bf Joseph Palmer}\\
University of Antwerp\\
Department of Mathematics and Computer Science\\
Middelheimlaan 1\\
B-2020 Antwerpen, Belgium\\
\indent \emph{and}\\
University of Illinois at Urbana-Champaign\\
Department of Mathematics\\
1409 W Green St\\
Urbana, IL 61801 USA\\
{\em E\--mail}: \texttt{jpalmer5@illinois.edu}

\newpage

\setcounter{tocdepth}{2}
\tableofcontents

\newpage

\section{Introduction}

\subsection{Motivation}

Semitoric systems form a special class of integrable systems in dimension four for which one of the integrals generates a global $\S^1$-action. They were introduced by V\~u Ng\d{o}c~\cite{VNpoly} following the work of Symington \cite{Sym} and constitute a generalization of four-dimensional toric systems. More precisely, a semitoric system $(M,\omega,F=(J,H))$ is a Liouville integrable system $(J,H)$ on a four-dimensional symplectic manifold $(M,\omega)$ such that $J$ is proper and is the momentum map for an effective Hamiltonian $\S^1$-action on $(M,\omega)$, and the momentum map $F$ only possesses mild singularities (all non-degenerate and not of hyperbolic type). One of the primary differences between toric and semitoric systems is that the latter admit so-called focus-focus singular points that do not appear in toric systems and which introduce monodromy in the natural affine structure of the image of the momentum map, as introduced by Duistermaat~\cite{Dui} and studied by many authors since. The symplectic classification of semitoric systems with generic critical fibers was obtained by Pelayo and V\~u Ng\d{o}c \cite{PVNinventiones,PVNacta}, and these systems have generated a lot of interest during the last decade, see for instance \cite{HSS, dullin-pelayo, HohPal, HSS-vertical,  LFP,  AloDulHoh, AHS, MirPreSol,  HohMeu, AloHoh21}; for recent nice surveys of semitoric systems and additional references see~\cite{VNSepe,AloHoh19}.

For compact toric systems, the image of the momentum map is a convex polytope \cite{Ati, GuiSte}. A celebrated theorem of Delzant \cite{Del} states that this polytope determines the toric system up to equivariant symplectomorphism, and that any Delzant (i.e.~rational, smooth, and convex) polytope is the image of the momentum map of some toric system. A remarkable feature of \cite{Del} is that it also provides an explicit procedure to construct the toric system associated with a given Delzant polytope via symplectic reduction of $\C^d$ by the action of a torus, where $d$ and the action are straightforward to determine given the polytope. The construction of a semitoric system from its five symplectic invariants in \cite{PVNacta}, however, is much more complicated and involves the delicate operation of symplectically gluing different local (or semi-local) normal forms; 
of course, this extra difficulty reflects the richer nature of these invariants and indeed the additional complexity inherent in semitoric systems.

Nevertheless, a very natural question is the following: can we find a natural and simple procedure to construct \emph{some} semitoric system given a subset of its five invariants, and placing no constraints on the remaining invariants? Specifically, this paper is motivated by the task of constructing explicit semitoric systems given two of the five symplectic invariants, namely the semitoric polygon and the number of focus-focus points. We package these two invariants together with the so-called height invariant into a single object which we call a marked semitoric polygon, see Section \ref{sec:semitoric}. 
One reason that we choose the polygon invariant to play a special role is that it is the only invariant which is a direct analogue of the complete invariant of toric integrable systems.
Additionally, such a construction could help to find explicit systems associated to a given \emph{semitoric helix}, an invariant which can be recovered from the semitoric polygon and was introduced in~\cite{KPP_min} and used in the classification of minimal semitoric systems therein.
This problem, of searching for a technique to easily construct a semitoric system from its invariants, is
the inverse of Problem 2.35 from~\cite{PelVNsteps}.

There are only a few fully explicit examples of semitoric systems on closed symplectic four-dimensional
manifolds in the literature: two on $\S^2\times\S^2$~\cite{SZ,LFP,AHS,HohPal}, one on $\C\P^2$~\cite{CDEW},
and one on $\S^2\times\S^2$ blown up four times~\cite{HohMeu} (this last example was obtained using
the techniques developed in the present paper). The examples on $\S^2\times\S^2$ are particularly relevant for motivating this work.
The first one is obtained by coupling two angular momenta in a non-trivial way, and has been introduced in \cite{SZ} and studied in \cite{LFP} and in \cite{AHS}; it is of the form $F_t=(J,H_t)\colon\S^2 \times \S^2\to\R^2$ where $H_t$ is a one-parameter family (in fact, a convex combination) of Hamiltonians, and displays either zero or one focus-focus singularities depending on the value of $t$. This system is semitoric except for two values of $t$ (when a singular point transitions between being of focus-focus type and elliptic-elliptic type). The second one was introduced in \cite{HohPal} and is a generalization of the former; it is again an integrable system on $\S^2 \times \S^2$, but this time the momentum map is of the form $F_{s_1,s_2}=(J,H_{s_1,s_2})$ where $H_{s_1,s_2}$ is a two-parameter family and for almost every choice of parameters the system is semitoric with either zero, one, or two focus-focus singularities. 
Both of these examples are briefly reviewed in Section~\ref{sec:knownex}.

There are several interesting things to note about these two examples. First, in each of them the manifold, the symplectic form, and the component of the momentum map generating the $\S^1$-action are fixed; only the second component of the momentum map varies. Second, they undergo Hamiltonian-Hopf bifurcations where a singular point transitions between being of elliptic-elliptic and focus-focus type, which
are well studied (see for instance~\cite{Montaldi_notes, vdM, COR03, COR03_corr, EfsCusSad}). Third, the limiting systems (i.e.~when the parameters $(s_1, s_2)$ or $t$ are in the boundary of the parameter space) are related to one of the semitoric invariants from the Pelayo-V\~{u} Ng\d{o}c classification.
More precisely, semitoric systems have associated to them a family of polygons~\cite{VNpoly} constructed from the affine
structure on the image of the momentum map, and in these examples there is a relationship between the polygons associated
to the limiting systems and the polygons associated to the system for intermediate values of the parameters (for which it has more focus-focus points).
This relationship is explained in detail in Section~\ref{sec:semitorictransfam}.
Note that there exists a similar example of such transition in the non-compact setting, obtained by coupling a spin and an harmonic oscillator, see~\cite[Section 6.2]{VNpoly}. Finally, in Dullin-Pelayo~\cite{dullin-pelayo}, the authors start with a semitoric system $(M,\omega,(J,H))$ and perturb only $H$ to produce hyperbolic singularities. Thus, for all these reasons, it seems very natural to study such families.

The idea of the present paper is to introduce a class of systems describing this transition, and to try to understand to what extent such a construction can be generalized. We introduce \emph{\families}, which are one-parameter families of integrable systems with momentum map of the form $F_t=(J,H_t)$, $0\leq t\leq 1$, where $J$ generates an $\S^1$\--action and $(t,p)\mapsto H_t(p)$ is smooth, and \emph{semitoric families}, which are \families that are semitoric for all but finitely many values of $t$, and we study some basic properties of both kinds of families, such as the possible behaviors of the system at the times when it is not semitoric. Furthermore, we introduce toric type blowups at completely elliptic points and the associated blowdowns and study how \families interact with these operations. As an application of these properties, we prove, starting from the coupled angular momenta system and using blowups and blowdowns, that a semitoric family with a specified marked semitoric polygon exists on the $n$-th Hirzebruch surface for each $n\in\Z_{\geq 0}$.
Finally, we give several \emph{explicit} semitoric families on the first and second Hirzebruch surfaces, which display some of the various possible behaviors of semitoric families.

In fact, there is a reason to be particularly interested in the specific semitoric polygons of these systems on Hirzebruch surfaces
and the blowup and blowdown operations on semitoric families.
In~\cite{KPP_min}, it is shown that every semitoric system can be produced by performing a sequence of toric type blowups on a semitoric system whose associated semitoric helix is one of seven types, types (1)-(7) in Theorem $1.3$ of the aforementioned paper, where some of the types are a single helix and some are a family depending on parameters. Note that the blowups introduced in~\cite{KPP_min} are defined as corner chops of the semitoric polygons, so they are only defined on semitoric systems and thus are less general than the ones introduced here.
The present paper is part of a program whose goal is to understand, as explicitly as possible, the systems with these minimal helices and the operation of a toric type blowup, and to use this knowledge to better understand all semitoric systems.
More specifically, one could hope that nearly every semitoric system can be obtained explicitly in a semitoric transition family with a relatively simple form (as considered in this paper); this is optimistic, but even if this construction does not work in general it is of interest to be able to produce a wide variety of examples.

\begin{figure} 
\begin{center} 
\def\dashlength{0.08}
\begin{subfigure}[b]{.3\linewidth} 
\centering 
\begin{tikzpicture} 
\draw (0,-2)--(0,2); 
\draw (-2,0)--(2,0); 
\draw [thick,->] (0,0)--(0,1); 
\draw [thick,->] (0,0)--(-1,-2); 
\draw (0,1) node [above right] {$v_0$}; 
\draw (-1,-2) node [left] {$v_1$}; 
\draw (-1,-\dashlength) -- (-1,\dashlength);
\draw (-\dashlength,1) -- (\dashlength,1);
\draw (1,-\dashlength) -- (1,\dashlength);
\draw (-\dashlength,-1) -- (\dashlength,-1);
\begin{scope}[yshift = -4.5cm] 
\filldraw[draw=black, fill=gray!60] (-2,0) -- (0,1) -- (2,0) -- cycle; 
\draw (0,0.4) node {$\times$}; 
\draw [dashed] (0,0.4) -- (0,1); 
\draw [transparent] (0,-.3) circle (1/16);
\end{scope} 
\end{tikzpicture} 
\caption{Minimal helix of type $(1)$.} 
\end{subfigure} 
\hspace{5pt} 
\begin{subfigure}[b]{.3\linewidth} 
\centering 
\def\const{2.5} 
\begin{tikzpicture} 
\draw (0,-2)--(0,2); 
\draw (-2,0)--(2,0); 
\draw [thick,->] (0,0)--(0,1); 
\draw [thick,->] (0,0)--(-1,-1); 
\draw (0,1) node [above right] {$v_0$}; 
\draw (-1,-1) node [left] {$v_1$}; 
\draw (-1,-\dashlength) -- (-1,\dashlength);
\draw (-\dashlength,1) -- (\dashlength,1);
\draw (1,-\dashlength) -- (1,\dashlength);
\draw (-\dashlength,-1) -- (\dashlength,-1);
\begin{scope}[yshift = -4.5cm] 
\filldraw[draw=black, fill=gray!60] (-1.5,0) -- (-0.5,1) -- (0.5,1) -- (1.5,0) -- cycle; 
\draw (-0.5,0.3) node {$\times$}; 
\draw [dashed] (-0.5,0.3) -- (-0.5,1); 
\draw (0.5,0.6) node {$\times$}; 
\draw [dashed] (0.5,0.6) -- (0.5,1); 
\draw [transparent] (0,-.3) circle (1/16);
\end{scope} 
\end{tikzpicture} 
\caption{Minimal helix of type $(2)$.} 
\end{subfigure} 
\hspace{5pt} 
\begin{subfigure}[b]{.3\linewidth} 
\centering 
\def\const{2.5} 
\begin{tikzpicture} 
\draw (0,-2)--(0,2); 
\draw (-2,0)--(2,0); 
\draw [thick,->] (0,0)--(0,1); 
\draw [thick,->] (0,0)--(-1,-3); 
\draw [thick,->] (0,0)--(0,-1); 
\draw (0,1) node [above right] {$v_0$}; 
\draw (0,-1) node [below right] {$v_2$}; 
\draw (-1,-3) node [above right] {$v_1$}; 
\draw[decoration={brace,raise=5pt, amplitude = 5pt},decorate] (-1,-3) -- (-1,-0.05);
\draw (-1.3,-1.5) node [left] {$k$}; 
\draw (-1,-\dashlength) -- (-1,\dashlength);
\draw (-\dashlength,1) -- (\dashlength,1);
\draw (1,-\dashlength) -- (1,\dashlength);
\draw (-\dashlength,-1) -- (\dashlength,-1);
\begin{scope}[yshift = -4.5cm] 
\filldraw[draw=black, fill=gray!60] (-2.5,0) -- (-1.5,1) -- (-0.5,1) -- (2.5,0) -- cycle; 
\draw (-1.5,0.5) node {$\times$}; 
\draw [dashed] (-1.5,0.5) -- (-1.5,1); 
\draw [transparent] (0,-.3) circle (1/16);
\end{scope} 
\end{tikzpicture} 
\caption{Minimal helix of type $(3)$.} 
\end{subfigure} 
\end{center} 
\caption{A visual representation of the minimal semitoric helices of types (1), (2), and (3) from~\cite{KPP_min} and one example of a semitoric polygon having each helix. See~\cite[Theorem 2.4]{KPP_min} for notation and the exact values of the integral vectors $v_i$.}
\label{fig:minimal} 
\end{figure}
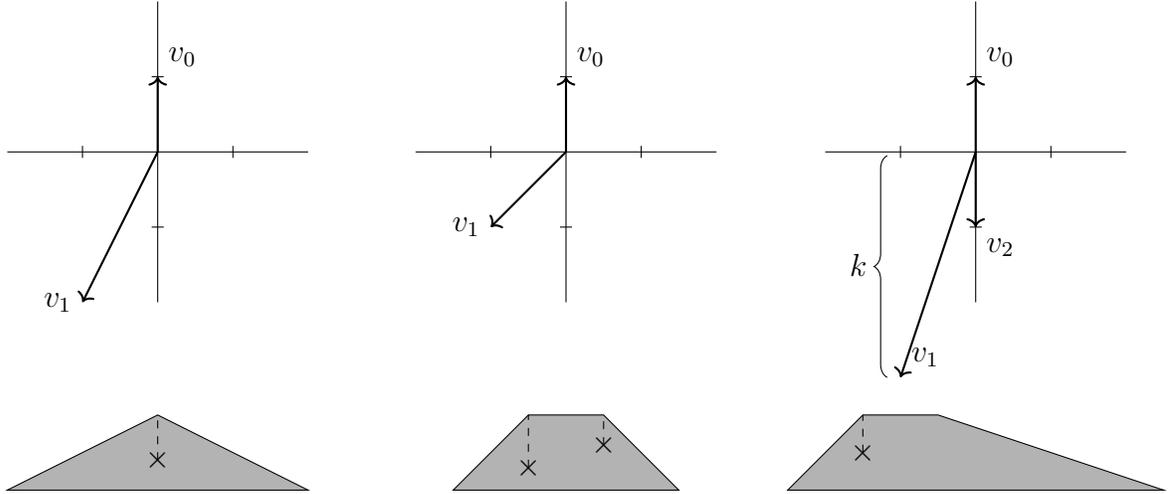

The minimal helices of types (1), (2), and (3) are shown in Figure~\ref{fig:minimal}, along with related marked semitoric polygons.
The minimal helices of type (3) depend on a single parameter $k\in\Z\setminus\{\pm 2\}$, and the coupled angular momenta system is minimal of type (3) with $k=1$. The new systems discussed in this paper on the $n$-th Hirzebruch surface have helix which is minimal of type (3) with parameter $k=-n+1$, hence we have every helix of type (3) with $k\leq 1$. In fact, one can construct the systems with $k>1$ from these, so this completes the search for all minimal helices of type (3).
There is only one minimal helix of type (2), and
the example from~\cite{HohPal} has this as its helix. 
Note that there is also a unique helix of type (1); an example with this helix on $\mathbb{CP}^2$ can be found in \cite{CDEW}. Thus, an example has been obtained for all minimal semitoric helices of type (1), (2), and (3).
Additionally, many semitoric polygons are associated to each helix, and we use scaled Hirzebruch surfaces to obtain infinitely many different semitoric 
polygons associated to the minimal helices of type (3).

\begin{rmk} Semitoric systems have a close relationship with several other interesting objects. First, semitoric systems are a particular class of the almost toric systems investigated in~\cite{Sym, LeuSym}. Moreover, since the first component $J$ of the momentum map of a semitoric system generates an effective Hamiltonian $\S^1$-action, by forgetting the second component $H$, we can see a semitoric system as a particular instance of a four-dimensional Hamiltonian $\S^1$-space, as studied in \cite{karshon}; the relation between these Hamiltonian $\S^1$-spaces and semitoric systems was studied in~\cite{HSS}. Natural higher dimensional analogues of Hamiltonian $\S^1$-spaces are complexity one spaces, which come with an effective Hamiltonian action of a torus of dimension $n-1$ on a symplectic manifold of dimension $2n$, and have been studied in~\cite{KarTol01, KarTol03, GodSou, GodSou_errata, KarTol14, KarTol20, SabSep} for instance. Note that several generalizations of semitoric systems have been studied: in higher dimension~\cite{Wac, wacheux-local-preprint}, when $J$ is not necessarily proper (but $F$ is)~\cite{PelRatVN}, when the restriction on singularities is relaxed to include some hyperbolic singularities and simple degenerate singularities~\cite{HohPal21} and other natural generalizations~\cite{PelRatVN_connectivity,HSS-vertical,RatWacZun}. \end{rmk}

\begin{rmk}
As already mentioned, in this paper we are only interested in integrable systems defined on closed symplectic manifolds. But the theory of semitoric systems includes the non-compact case, which is relevant from the physical point of view; for instance the celebrated Jaynes-Cummings model~\cite{jaynes-cummings,cummings1965stimulated,shore-knight,babelon-spin,BabDou} from quantum optics, in its simplest form, is a semitoric system defined on $\S^2 \times \R^2$. The spherical pendulum~\cite{CusBat, Dui}, which possesses one focus-focus singularity, fails to be a semitoric system only because the component of its momentum map generating an $\S^1$-action is not proper; however its momentum map itself is proper, so it belongs to the class of systems investigated in~\cite{PelRatVN}. The Champagne bottle~\cite{child}, with phase space $T^* \R^2$, is another instance of a system with one focus-focus singularity which is not strictly semitoric for the same reason
as the spherical pendulum. In fact, the introduction of semitoric systems in \cite{VNpoly} was motivated by such physical examples, in which quantum monodromy~\cite{cushman-duist-pendulum} was observed as a consequence of classical monodromy. The Arnold-Liouville-Mineur theorem~\cite{Min47, Dui, BatSni, HofZeh, GuiSte_book, Zung18} (see also \cite{Zung96} for a generalization of this theorem for singularities) states that if $(M^{2n},\omega,F)$ is an integrable system on a compact connected symplectic manifold, near any regular fiber of $F$, the system is symplectically equivalent to the standard system with linear dynamics on $T^* \mathbb{T}^n$. This induces local coordinates on $M$ which are called action-angle variables, and the action variables are the components of a local toric momentum map. Monodromy is one of the topological obstructions to the construction of global action variables~\cite{Dui}, and is induced for instance by the presence of focus-focus singularities, as shown in~\cite{Zou, Mat96, Zung97}. More precisely, the action variables induce a natural
 integral affine structure on the set of regular values of the momentum map, $B_{\mathrm{reg}}$, and the monodromy measures
 how the affine structure changes when transported around a non-trivial loop in the fundamental group of $B_{\mathrm{reg}}$.
 Besides the references already mentioned above, monodromy (and its quantum counterpart) in specific physical systems and in general contexts has been studied by many authors, such as~\cite{CusKno, joyeux-sado-tenny, Zung03, EJS, DulGiaCus, WaaDulRic, DDB07, DDSZ, Tar, BKK, EfsMar17, MBE20}, see also~\cite{MarBroEfs} for a recent survey on this topic and additional references.
\end{rmk}

\subsection{\texorpdfstring{\Families}{Fixed-S1 families} and semitoric families}

We will be interested in families of semitoric systems, hence we start by recalling the definition of these systems.
\begin{dfn}
 A four-dimensional integrable system $(M,\om,F=(J,H))$ is \emph{semitoric} if
\begin{enumerate}[noitemsep]
\item $J$ is proper,
\item $J$ is the momentum map for an effective Hamiltonian $\S^1$-action,
\item all singular points of $F$ are non-degenerate and have no components of hyperbolic type.
\end{enumerate}
\end{dfn}
This definition will become clearer once the singularities of four-dimensional integrable systems have been discussed, see Section \ref{subsect:sing_dim4} and Definition \ref{def:semitoric}.

Now we introduce the main objects of study in this paper:
\begin{dfn}\label{def:family}
A \emph{\family} is a family of integrable systems $(M,\om,F_t)$, $0\leq t\leq 1$,
on a four-dimensional manifold $M$ such that $F_t = (J,H_t)$ where $H_t = H(t,\cdot)$ and $H: [0,1] \times M \to \R$ is smooth.
If additionally there exist $k \in \Z_{\geq 0}$ and $t_1, \ldots, t_k \in [0,1]$ such that $(M,\om,F_t)$ is semitoric if and only if $t \notin \{t_1, \ldots, t_k\}$,
then we call $(M,\om,F_t)$, $0\leq t\leq 1$, a \emph{semitoric family} with \emph{degenerate times} $t_1, \ldots, t_k$. Here we adopt the convention that $k=0$ means that there are no degenerate times (hence the system is semitoric for every $t \in [0,1]$).
\end{dfn}

Here, by a slight abuse of language, we use \emph{time} to refer to the deformation parameter $t$,
even though it is not associated with any dynamics.

\begin{rmk}\label{rmk:HSS}
 Notice that in a \family the underlying Hamiltonian $\S^1$\--manifold coming
 from the $\S^1$\--action generated by $J$ does not depend on $t$, so a semitoric family
 may be thought of as a smooth family of smooth functions $H_t$ on a fixed Hamiltonian $\S^1$\--manifold (note that not every such family $H_t$, $0 \leq t \leq 1$, will work).
 Compact semitoric systems are viewed as Hamiltonian $\S^1$\--manifolds by forgetting
 the other Hamiltonian in~\cite{HSS}. 
 The moduli space of semitoric
 systems, $\mathcal{M}_{\mathrm{ST}}$, was
 studied in~\cite{Pal17} and in particular
 endowed with a natural topology. 
 A semitoric family can almost be thought of as a path
 in $\mathcal{M}_{\mathrm{ST}}$, except that the system is not semitoric at the degenerate times. Therefore, one can think of a semitoric family instead as a path in the closure
 of $\mathcal{M}_{\mathrm{ST}}$, which passes
 through the walls separating different connected components
 of $\mathcal{M}_{\mathrm{ST}}$.
\end{rmk}

One of the topics we study in this paper is listing the possible behaviors of such families at degenerate times.
Among these possibilities, we are particularly interested in the scenario in which a single elliptic-elliptic point undergoes a Hamiltonian-Hopf bifurcation and becomes focus-focus after a
degenerate time (case \ref{item:HHbif} in Section~\ref{sec:semitoricfam-degen}). This  motivates the following definition, where the notion of non-degenerate singular points and their classification is as reviewed in Section~\ref{sec:singularpoints}.

\begin{dfn}
\label{dfn:transition_fam}
 A \emph{semitoric transition family} with \emph{transition point} $p \in M$ and \emph{transition times} $t^-,t^+\in (0,1)$, $t^-<t^+$, is a semitoric family $(M,\om,(J,H_t))$, $0\leq t \leq 1$, 
 with degenerate times $t^-$ and $t^+$, such that
\begin{itemize}[itemsep=0pt]
 \item for $t<t^-$ and $t>t^+$ the point $p$ is singular of elliptic-elliptic type,
 \item for $t^-<t<t^+$, the point $p$ is singular of focus-focus type,
 \item for $t=t^-$ and $t=t^+$ there are no degenerate singular points in $M\setminus\{p\}$,
 \item if $p$ is a maximum (respectively minimum) of $(H_0)_{|J^{-1}(J(p))}$ then $p$ is a minimum (respectively maximum)
  of $(H_1)_{|J^{-1}(J(p))}$.
\end{itemize}
If the number of focus-focus points of $(J,H_t)$ for $t \in (t^-,t^+)$ equals $k \geq 1$, we call it a \emph{semitoric $k$-transition family}.
\end{dfn}

Note that in such a family $p$ must be degenerate at $t=t^-$ and $t=t^+$ (see Lemma~\ref{lem:HPdegen} and~\cite[Proposition 2.8]{HohPal}); moreover, by the results contained in Section \ref{sec:firstprop}, the number of rank zero points of the system is the same for every value of $t$, and a point in $M\setminus\{p\}$ which is of focus-focus type for some value of the parameter stays focus-focus for all $t$. Definition \ref{dfn:transition_fam} is illustrated in Figure~\ref{fig:1transition}.

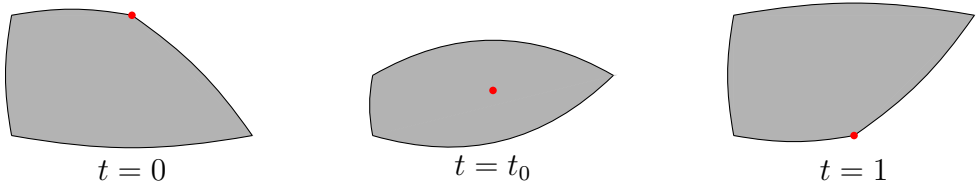
\begin{figure}
\begin{center}
\begin{tikzpicture}[scale=.80]
\filldraw[draw=gray!60,fill=gray!60] (0,0) node[anchor=north,color=black]{}
  -- (0,2) node[anchor=south,color=black]{}
  -- (2,2) node[anchor=south,color=black]{}
  -- (4,0) node[anchor=north,color=black]{}
  -- cycle;

\node[label={$t=0$}] at (2,-1.1){};

\draw[bend left=10, fill = gray!60] (0,0) to (0,2);
\draw[bend left=10, fill = gray!60] (0,2) to (2,2);
\draw[bend left=10, fill = gray!60] (2,2) to (4,0);
\draw[bend left=10, fill = gray!60] (4,0) to (0,0);

\fill[red] (2,2) circle (1/16);

\filldraw[draw=gray!60,fill=gray!60] (6,0) node[anchor=north,color=black]{}
  -- (6,1) node[anchor=south,color=black]{}
  -- (10,1) node[anchor=south,color=black]{}
  -- cycle;  

\draw[bend left=10, fill = gray!60] (6,0) to (6,1);
\draw[bend left, fill = gray!60] (6,1) to (10,1);
\draw[bend left, fill = gray!60] (10,1) to (6,0);

\fill[red] (8,0.75) circle (1/16); 

\node[label={$t=t_0$}] at (8,-1.1){};

\filldraw[draw=gray!60,fill=gray!60] (12,0) node[anchor=north,color=black]{}
  -- (12,2) node[anchor=south,color=black]{}
  -- (16,2) node[anchor=south,color=black]{}
  -- (14,0) node[anchor=north,color=black]{}
  -- cycle;  

\draw[bend left=10, fill = gray!60] (12,0) to (12,2);
\draw[bend left=10, fill = gray!60] (12,2) to (16,2);
\draw[bend left=10, fill = gray!60] (16,2) to (14,0);
\draw[bend left=10, fill = gray!60] (14,0) to (12,0);

\fill[red] (14,0) circle (1/16); 

\node[label={$t=1$}] at (14,-1.1){};

\end{tikzpicture}
\end{center}
\caption{The momentum map image of a semitoric 1-transition family with the image of the transition
 point indicated in red, where $t^-<t_0<t^+$.
 As $t$ increases from $0$ to $1$ the system starts with zero focus-focus points,
and then one elliptic-elliptic point transitions into being focus-focus (becoming degenerate for $t=t^-$),
and finally that point transitions back into being elliptic-elliptic (again becoming degenerate for $t=t^+$).}
\label{fig:1transition}
\end{figure}

\subsection{Main results}

The first goal of this paper is to prove various foundational facts about \families, semitoric families, and semitoric transition families related to the possible $\mathbb{S}^1$-actions, the possible systems at degenerate times, and the behavior of these systems as the
parameter varies. 
These results can be found in Section~\ref{sec:firstprop}.
For instance, we prove some restrictions on how the number, and position, of rank zero singular points can change with the parameter $t$
and we prove that, even at degenerate times, semitoric families do not admit singular points of hyperbolic type (which implies that at each degenerate time, the system has at least one degenerate singular point). Note, however, that a \family which is not a semitoric family may display hyperbolic singularities; we give an example of such a family in Section \ref{subsect:triangle}. 
In Section~\ref{sec:semitoricfam-degen}, we describe the possible scenarios that can occur for a semitoric family in a time interval containing a degenerate time, and give explicit examples of many systems which exhibit these behaviors, some of which are introduced in this paper.
Taking the special case in which the only degenerate time is $t_1=1$, Section~\ref{sec:semitoricfam-degen} describes the elements of the closure of the set of semitoric systems inside the set of all integrable systems $(M,\om,F)$ on a fixed symplectic manifold which have the momentum map of a fixed Hamiltonian $\S^1$-action as the first component of $F$; this is closely related to Problem 2.45 from~\cite{PelVNsteps}, which asks for the closure of the set of semitoric systems in the set of all smooth functions $F\colon M\to \R^2$.
We also show that some of the invariants of a semitoric system (the number of focus-focus points and unmarked semitoric polygon invariant) can only change as the system passes through a degenerate time.
Finally, in some cases we can see how these invariants change during the degenerate times;
indeed, by Lemma~\ref{lem:polygons_transition}, the family of semitoric polygons of 
a semitoric transition family with $t\in (t^-, t^+)$ is, roughly speaking, the union of the semitoric
polygons of the systems for $t=0$ and $t=1$.

After studying the general theory of such families, the remainder of the paper focuses on proving the existence of a semitoric transition family with prescribed semitoric polygons on each Hirzebruch surface on the one hand, and on the other hand on describing explicit examples of semitoric families on the first and second Hirzeburch surfaces. In the course of attaining the first goal, we define the blowup of a semitoric family at an elliptic-elliptic point and prove that it is still a semitoric family.

Near any of its completely elliptic points, an integrable system can be locally modeled as an $n$-torus acting on $\C^n$, and we use this local model to define a natural notion of blowup at such points for any integrable system. This is related to the $\S^1$-equivariant blowup on symplectic 4-manifolds from \cite[Section 6]{karshon}, but even in the case of a semitoric system our construction is more rigid since we require these blowups to be compatible with the local $\T^2$-action induced by the momentum map near the elliptic-elliptic point, and thus both components of the momentum map have to be taken into account. Furthermore, in the case that the system is semitoric we discuss the effect of such a blowup on the associated semitoric polygon\footnote{on the way showing that our definition agrees with the less general definition used in~\cite{KPP_min}, which only applies to semitoric systems.}, and how to perform a blowup on a \family to obtain another \family. These constructions present some difficulties, for instance because of the choices involved in this local normal form and because the blowups of elements in a \family are naturally defined in possibly different (albeit symplectomorphic) manifolds. We also define the notion of toric type blowdown, which is the inverse operation. The following is a collection of the results in Section~\ref{sec:blowups}; for more detailed statements see Proposition~\ref{prop:blowupdefined}, Lemma~\ref{lem:blowups_semitoric_chops}, Theorem~\ref{thm:blowups}, Proposition~\ref{prop:blowdowns}, and Corollary~\ref{cor:blowup_down_family}.

\begin{thm}\label{thm:blowupintro}
 The toric type blowup of an integrable system at a completely elliptic point and toric type blowdown as described in Section~\ref{sec:blowups} are well-defined.
 Moreover,
 \begin{itemize}[noitemsep]
  \item a toric type blowup (respectively blowdown) of a simple semitoric system corresponds to a corner chop (respectively unchop) of the semitoric polygon,
  \item if each member of a \family admits a toric type blowup (respectively blowdown) of the same size at the same point (respectively surface), then the new collection of systems formed by performing these operations can be naturally identified with a \family,
  \item under this identification, the toric type blowup or blowdown of a semitoric family is also a semitoric family with the same degenerate times.
 \end{itemize}
\end{thm}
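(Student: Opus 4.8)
The plan is to assemble the statement from the five more detailed results cited just above it (Proposition~\ref{prop:blowupdefined}, Lemma~\ref{lem:blowups_semitoric_chops}, Theorem~\ref{thm:blowups}, Proposition~\ref{prop:blowdowns}, Corollary~\ref{cor:blowup_down_family}), so the work really lives in those statements; here I only need to stitch them together and handle the compatibility between the ``pointwise'' blowup construction and the family structure. First I would recall the local model: near a completely elliptic (rank-zero, elliptic-elliptic) point $p$, the Eliasson normal form gives coordinates in which the momentum map is conjugate to $(|z_1|^2,|z_2|^2)$ up to an affine change, so there is a Hamiltonian $\T^2$-action on a neighborhood of $p$ modeled on the standard action on a ball in $\C^2$. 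The toric type blowup of size $\lambda$ is then defined by excising this ball and gluing in the corresponding piece of the blowup of $\C^2$ at the origin (equivalently, symplectic cutting), which produces a new integrable system on a new manifold $\widetilde M$; well-definedness, i.e. independence of the auxiliary choices (the normal form, the radius within an allowed range) up to the appropriate equivalence, is exactly the content of Proposition~\ref{prop:blowupdefined}. The blowdown is the inverse: given an appropriate embedded symplectic sphere of self-intersection $-1$ sitting over an edge of the momentum image in toric position, one cuts it out and glues back a ball; Proposition~\ref{prop:blowdowns} gives that this is well-defined and inverse to blowup.

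Next, for the first bullet, I would invoke Lemma~\ref{lem:blowups_semitoric_chops}: in the simple semitoric case, the semitoric polygon is built from the image of the momentum map via the affine structure with cuts, and because the blowup is performed in a neighborhood where the system is genuinely toric (all singularities there are elliptic), the effect on each representative polygon is the standard Delzant corner chop at the vertex corresponding to $p$, of size determined by $\lambda$; the blowdown correspondingly unchops. One should note here that the cut lines and the choice of representative in the polygon's $(\Z\ltimes\text{vertical})$-orbit do not interfere, since $p$ is completely elliptic and hence its vertex is not a cut vertex — this is the point the footnote refers to, that the construction agrees with the corner-chop definition of~\cite{KPP_min}.

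For the second and third bullets, suppose each $(M,\om,F_t)$, $t\in[0,1]$, in a \family admits a toric type blowup of the same size $\lambda$ at a point $p_t$ varying smoothly in $t$ (or a blowdown along a smoothly varying surface). The natural identification comes from trivializing: by the implicit function theorem, the completely elliptic point $p_t$ and an Eliasson chart around it can be chosen to depend smoothly on $t$ on a fixed small ball, so the excised balls and the glued-in blowup pieces are literally the same symplectic manifold with the same $J$ for all $t$, and only $H_t$ (which is smooth in $t$ by hypothesis) changes; hence the blown-up systems form a \family on a single manifold $\widetilde M$, which is the content of Corollary~\ref{cor:blowup_down_family}. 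Finally, to see that a semitoric family blows up (or down) to a semitoric family with the \emph{same} degenerate times: away from a neighborhood of $p_t$ the two families are symplectomorphic via a $J$-preserving map, so the singular points, their non-degeneracy type, and whether hyperbolic points appear are unchanged there; inside the neighborhood the system is toric (purely elliptic) for every $t$, including degenerate times, and a blowup of a toric piece is still toric, so no new degeneracies are created or removed there either. Therefore $(\widetilde M,\widetilde\om,\widetilde F_t)$ fails to be semitoric at exactly the same values $t_1,\dots,t_k$.

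The main obstacle is the smooth-dependence/gluing argument underlying the second bullet: making precise that the Eliasson normal form, and hence the ball to be excised and the size of the allowed blowup, can be chosen uniformly and smoothly in $t$ — including across a degenerate time, where $p_t$ might momentarily fail to be non-degenerate if one is not careful about which point is being tracked — and checking that the resulting family of gluing maps is smooth in $t$ so that the blown-up $H_t$ is genuinely smooth in $(t,\cdot)$. Once that uniform local model is in hand, everything else is bookkeeping with the cited propositions.
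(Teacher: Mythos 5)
Your overall architecture coincides with the paper's: Theorem~\ref{thm:blowupintro} is proved there precisely by assembling Proposition~\ref{prop:blowupdefined}, Lemma~\ref{lem:blowups_semitoric_chops}, Theorem~\ref{thm:blowups}, Proposition~\ref{prop:blowdowns} and Corollary~\ref{cor:blowup_down_family}, and your sketches of the well-definedness argument, of the corner chop correspondence (including choosing the cuts away from the chopped region), and of the third bullet (outside a neighborhood of $p_t$ nothing changes, inside it the system is toric for every $t$) all track the paper's proofs. One small bookkeeping slip: the identification of the blown-up systems with a single \family is the content of Theorem~\ref{thm:blowups} and Proposition~\ref{prop:blowdowns}, not of Corollary~\ref{cor:blowup_down_family}, which is only the statement about degenerate times; and the worry you raise about tracking $p_t$ across a degenerate time does not arise in the hypotheses of Theorem~\ref{thm:blowups} (there $p_t$ is assumed elliptic-elliptic and blowup-admissible for \emph{all} $t$; the degenerate-time issue is handled separately, in Lemmas~\ref{lem:degenerate-blowupdown} and~\ref{lem:degenerate-blowupdown-wall}, only when the construction is applied in Section~\ref{sec:hirz}).

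The genuine gap is at the step you yourself flag as ``the main obstacle'', which is exactly the technical heart of Theorem~\ref{thm:blowups}: smoothness in $(t,m)$ of the blown-up momentum maps after the identification. Your proposed mechanism is that ``an Eliasson chart around $p_t$ can be chosen to depend smoothly on $t$ by the implicit function theorem''. The implicit function theorem gives smooth dependence of the fixed point itself (Lemma~\ref{lm:morse_family}), but not of the normalizing symplectomorphism $\chi_t$: there is no off-the-shelf parametric Eliasson normal form to cite, and the paper deliberately avoids claiming one. Instead, the paper's proof never needs $\chi_t$ to vary smoothly: it uses Lemma~\ref{lem:samecorner} to arrange a $t$-independent toric model $\Phi$ (a fixed Delzant cone), uses uniqueness of toric blowups up to equivariant symplectomorphism (\cite[Proposition 6.5]{KarLer}) to build the identifications $\Psi_t^M$ of the blown-up manifolds, and then proves that only the developing maps $g_t$ need to be smooth in $t$; this last point is established via the action-integral formula of Remark~\ref{rmk:gcycles}, $g_t^{(2)}(x,y)=\frac{1}{2\pi}\int_{\gamma_2(t,x,y)}\alpha_t$, together with an argument that the cycles $\gamma_2(t,x,y)$ can be chosen with constant homotopy type because the relevant torus bundle is topologically trivial and the image corner does not depend on $t$. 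So while your decomposition of the theorem is the same as the paper's, your route through the second bullet rests on an unproved (and substantially harder) smooth-normal-form claim, whereas the paper substitutes a softer argument that only controls the action variables; as written, the proposal does not close this step.
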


In order to produce explicit semitoric systems, we use the fact that Lemma~\ref{lem:polygons_transition} describes the relationship between the semitoric polygons of systems in a semitoric transition family for different values of $t$, and thus gives
hints about what systems can occur for $t=0$ and $t=1$; once these two systems are found, we need to find a suitable one-parameter family interpolating between them. In our examples, the idea is to use the simplest example of such a family, namely a convex combination; of course, in general, this may not be sufficient.

For $n\in\Z_{\geq 0}$ let $(\Hirzscaled{n},\om_{\Hirzscaled{n}})$ denote the 
$n$-th Hirzebruch surface scaled by $\alpha, \beta \in \R_{> 0}$ with its standard symplectic form;
these symplectic manifolds are reviewed in Section~\ref{sec:hirzdef}.
Recall that the marked semitoric polygon is an invariant of semitoric systems which is essentially a combination of the number of focus-focus points, polygon, and height invariants from~\cite{PVNinventiones} and that we describe in Section~\ref{sec:semitoric}.
Making use of Theorem~\ref{thm:blowupintro} and other results we prove about semitoric families and toric type blowups and blowdowns, we obtain
the following theorem by performing alternating toric type blowups and blowdowns on the coupled angular momenta system on $W_0\cong \S^2\times \S^2$, which is a semitoric 1-transition family (see Section~\ref{sec:knownex}), to produce semitoric 1-transition families on $\Hirzscaled{n}$. 
\begin{thm}\label{thm:Wk}
 For each $n\in\Z_{\geq 0}$ and $\alpha,\beta \in \R_{>0}$ there exists a semitoric 1-transition family
 on $\Hirzscaled{n}$ with degenerate times $t^-, t^+$ satisfying $0<t^-<1/2<t^+<1$ such that a representative
 of the marked semitoric polygon of the system for $t^-<t<t^+$ is shown in Figure~\ref{fig:thmWk}.
\end{thm}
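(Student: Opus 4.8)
\textbf{Proof proposal for Theorem~\ref{thm:Wk}.}

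The plan is to construct the desired semitoric $1$-transition family on $\Hirzscaled{n}$ by induction on $n$, starting from the coupled angular momenta system on $\S^2\times\S^2 \cong W_0$ (which, as recalled in Section~\ref{sec:knownex}, is a semitoric $1$-transition family), and alternately applying toric type blowups and blowdowns as provided by Theorem~\ref{thm:blowupintro}. The base case $n=0$ is the coupled angular momenta family itself, possibly after rescaling the symplectic form and relabelling so that the polygon matches the $\alpha,\beta$ version in Figure~\ref{fig:thmWk}; since the $\S^1$-action and its Hamiltonian $\S^1$-manifold are unchanged under rescaling, this is immediate. For the inductive step, I would take the semitoric $1$-transition family on $\Hirzscaled{n}$ whose polygon for $t^- < t < t^+$ is the one in Figure~\ref{fig:thmWk}, and identify a completely elliptic (rank zero, elliptic-elliptic) fixed point $p$ which is present \emph{for every} $t\in[0,1]$ --- this exists because, by the results of Section~\ref{sec:firstprop}, the rank zero points and their types (away from the transition point) do not vary with $t$, and the corner of the polygon we wish to chop corresponds to such a point. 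Blowing up at $p$ with an appropriate size $\lambda$ (depending on $\alpha,\beta$, chosen small enough that the blowup is defined for all $t$ simultaneously, using that the local toric model near $p$ is uniform in $t$), and then blowing down along a different $\S^1$-invariant sphere in the resulting space, produces, by Corollary~\ref{cor:blowup_down_family}, a new semitoric family with the same degenerate times $t^-,t^+$; one checks it is still a $1$-transition family with the same transition point and transition times since these operations are performed away from the transition point and do not introduce or destroy degenerate points there.

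The combinatorial heart of the argument is to verify that this particular blowup--blowdown pair realizes, on the level of the (marked) semitoric polygon, the passage from the Figure~\ref{fig:thmWk} polygon for $\Hirzscaled{n}$ to the one for $\Hirzscaled{n+1}$. By the first bullet of Theorem~\ref{thm:blowupintro}, a toric type blowup is a corner chop and a toric type blowdown is a corner unchop of the semitoric polygon, so this reduces to a purely polygonal computation: chop the appropriate corner of the $\Hirzscaled{n}$ polygon, then unchop at another corner, and check that the result is (a representative of) the $\Hirzscaled{n+1}$ polygon in Figure~\ref{fig:thmWk}, with the marked points (the cut data encoding the focus-focus point) transported correctly. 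Here one uses that the underlying toric manifolds of the Hirzebruch surfaces are themselves related by a single blowup--blowdown (the standard description of $\Hirzscaled{n+1}$ from $\Hirzscaled{n}$), together with Lemma~\ref{lem:blowups_semitoric_chops} to be sure the chop we perform on the semitoric polygon agrees with the toric-level operation and does not collide with the marking. The height invariant is tracked by recording the vertical position of the cut within the polygon and confirming it is preserved by the chop and unchop, which are affine operations fixing the relevant edges.

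I expect the main obstacle to be the bookkeeping needed to perform the blowups and blowdowns \emph{uniformly in $t$}: one must choose a single blowup size that works for all $t\in[0,1]$ (so that every member of the family admits the blowup at a point, and the blown-up spaces are mutually symplectomorphic in a $t$-smooth way), and similarly a single invariant sphere of the right symplectic area along which to blow down for all $t$; the existence of such choices follows from compactness of $[0,1]$ and the $t$-smoothness in Definition~\ref{def:family} together with the rigidity of the local toric models, but making this precise --- and checking that the transition point $p$ is never disturbed and that no new degenerate times are created --- is the delicate part. A secondary point requiring care is that, a priori, the blowups of different members of the family live in different manifolds, so one must invoke the identification furnished by the second bullet of Theorem~\ref{thm:blowupintro} (equivalently Corollary~\ref{cor:blowup_down_family}) to legitimately regard the output as a single \family before concluding it is a semitoric $1$-transition family with the asserted polygon.
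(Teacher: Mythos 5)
Your proposal follows essentially the same route as the paper's proof (given as Theorem~\ref{thm:Wkdetailed}): induction on $n$ starting from the coupled angular momenta system on $W_0$ (Lemma~\ref{lem:coupledspinsW0}), with the inductive step realized by a toric type blowup followed by a toric type blowdown performed uniformly in $t$, Corollary~\ref{cor:blowup_down_family} preserving the degenerate times, and Lemma~\ref{lem:blowups_semitoric_chops} reducing the identification of the resulting marked polygon to a corner chop/unchop computation, exactly as in Figures~\ref{fig:obtainingWk} and~\ref{fig:Wkfamilies}.

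There are, however, two places where your justification has a real gap. First, your claim that the blowup is performed at a single point $p$ ``present for every $t$,'' justified by the assertion that rank zero points do not vary with $t$, fails precisely in the step producing $\Hirzscaled{2}$ from $W_1$: there the corner to be chopped lies over an extremal value of $J$ whose fiber is a fixed sphere, and Lemma~\ref{lem:fixedpointsdontmove} explicitly excludes that case --- the elliptic-elliptic point realizing that corner can move inside the sphere as $t$ varies (compare the explicit system of Theorem~\ref{thm:W1_movingAB}). The paper handles this by stating Theorem~\ref{thm:blowups} and Proposition~\ref{prop:blowdowns} for a continuous family $(p_t)$, so your argument must be amended to allow the blowup locus (and the blowdown sphere $\Sigma_t$) to depend on $t$. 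Second, your appeal to compactness of $[0,1]$ and rigidity of the local models to obtain a single size $\lambda$ valid for all $t$ glosses over the degenerate times: for $t\notin\{t^-,t^+\}$ admissibility of the blowup or blowdown is read off from $\mathrm{SL}_2(\Z)$-lengths of edges of the semitoric polygon, but at $t=t^\pm$ the system is not semitoric and no polygon is available; the paper needs Lemmas~\ref{lem:degenerate-blowupdown} and~\ref{lem:degenerate-blowupdown-wall} (shrinking $\lambda$ if necessary, and using that at $t^\pm$ the only degenerate point is the transition point, which lies in a different $J$-fiber) to extend admissibility across $t^-$ and $t^+$. Two smaller bookkeeping items: to land exactly on $\Hirzscaled{n+1}$ the inductive hypothesis must be applied to $W_n(\alpha+\lambda,\beta)$, since the blowup shrinks the horizontal scaling; and the asserted inequality $0<t^-<1/2<t^+<1$ should be traced through the induction --- it holds because the operations do not change the degenerate times, which are ultimately those of a coupled angular momenta system on $W_0(\alpha',\beta)$ with $\alpha'\geq\alpha$, as the paper records.
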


\begin{figure}[h]
\begin{center}
\begin{tikzpicture}[scale=.80]
\filldraw[draw=black,fill=gray!60] (0,0) node[anchor=north,color=black]{$(0,0)$}
  -- (2,2) node[anchor=south,color=black]{$(\beta,\beta)$}
  -- (4,2) node[anchor=south,color=black]{$(\alpha+\beta,\beta)$}
  -- (10,0) node[anchor=north,color=black]{$(\alpha + n \beta,0)$}
  -- cycle;

\draw (2,1) node[] {$\times$};

\draw [dashed] (2,2) -- (2,1);

\end{tikzpicture}
\end{center}
\caption{A representative of the marked semitoric polygon associated to the system on $\Hirzscaled{n}$ in Theorem~\ref{thm:Wk}.}
\label{fig:thmWk}
\end{figure}

A more precise version of Theorem~\ref{thm:Wk} is stated as Theorem~\ref{thm:Wkdetailed}. 
Note that more is known about the system than its existence and marked
semitoric polygon, since Theorem~\ref{thm:Wkdetailed} is proved by constructing the systems in question via a specified sequence of toric type blowups and blowdowns on the coupled angular momenta system away from the fiber of $J$ containing the transition point.
For more details, see Remark~\ref{rmk:taylortwist}.

Theorem~\ref{thm:Wk} states that there exists a semitoric family on each Hirzebruch surface,
and gives some properties of that family, but it does not give explicit formulas for the elements of this family.
However, we also provide several families of integrable systems (depending on one or two parameters), which
include new explicit examples of semitoric systems.

\begin{thm}\label{thm:examplesintro}
We construct the following examples of families of semitoric systems:
\begin{itemize}[itemsep=0pt,leftmargin=15pt]
 \item the system from Equation~\eqref{system:W1_movingAB} is a semitoric $1$-transition family on $\Hirzscaled{1}$,
 \item the system from Equation~\eqref{system:W1_switch} is a semitoric family on $\Hirzscaled{1}$ with three degenerate times,
 \item the systems from Equations~\eqref{system:W2_transB} and~\eqref{system:W2_trans_C} are semitoric $1$-transition families on $\Hirzscaled{2}$,
 \item the system from Equation~\eqref{eqn:W2system2param} is a two-parameter family of systems which are semitoric for almost all choices of the parameters on $\Hirzscaled{2}$.
\end{itemize}
\end{thm}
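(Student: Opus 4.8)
The plan is to verify, for each of the five explicit systems in Theorem~\ref{thm:examplesintro}, that it satisfies Definition~\ref{def:family} and then identify the degenerate times and check the semitoric conditions at all other times. The common strategy is the one outlined in the introduction: each system is written as $F_t = (J, H_t)$ where $J$ is the fixed momentum map of an $\S^1$-action on the relevant scaled Hirzebruch surface $\Hirzscaled{1}$ or $\Hirzscaled{2}$ (realized, e.g., as a symplectic reduction or as a toric blowup of $\S^2\times\S^2$ or $\CP^2$), and $H_t$ is a convex combination (or simple polynomial family) of Hamiltonians, so that smoothness of $(t,p)\mapsto H_t(p)$ is immediate. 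The real content is: (i) computing the critical points of $F_t$ and their linearizations to determine which are non-degenerate of elliptic-elliptic, elliptic-regular, or focus-focus type; (ii) showing there are no hyperbolic blocks for any $t$ (this is automatic for a semitoric family by the results of Section~\ref{sec:firstprop}, but for the explicit verification one checks it directly); (iii) verifying the simplicity/genericity conditions (no two focus-focus points on the same $J$-fiber, $J$ has connected fibers, properness, etc.) reviewed in Section~\ref{sec:semitoric}; and (iv) pinpointing the finitely many parameter values at which a Hamiltonian-Hopf bifurcation occurs, i.e., where the relevant discriminant vanishes and a point is degenerate.

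Concretely, I would proceed system by system. For the one-parameter families on $\Hirzscaled{1}$ and $\Hirzscaled{2}$ claimed to be semitoric $1$-transition families (Equations~\eqref{system:W1_movingAB}, \eqref{system:W2_transB}, \eqref{system:W2_trans_C}), I would first locate the distinguished rank-zero point $p$ that will serve as the transition point, restrict $H_t$ to the $\S^1$-invariant symplectic surface $J^{-1}(J(p))$ (a $2$-sphere after reduction), and analyze the Hamiltonian-Hopf bifurcation of $p$ as a function of $t$: the linearized flow at $p$ has eigenvalues whose type (purely imaginary vs.\ complex quadruple) is governed by the sign of a single smooth function $\delta(t)$; the transition times $t^-<t^+$ are the (simple) zeros of $\delta$, and between them $p$ is focus-focus. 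One then checks that away from $p$ all singular points remain non-degenerate elliptic-elliptic or elliptic-regular for every $t\in[0,1]$ — this reduces to showing a finite list of explicit polynomial/trigonometric inequalities hold uniformly in $t$ — and that the last bullet of Definition~\ref{dfn:transition_fam} (the max/min swap of $(H_t)|_{J^{-1}(J(p))}$ between $t=0$ and $t=1$) holds, which follows by inspecting the two endpoint toric systems. Finally one invokes the simple semitoric criteria to conclude. For the three-degenerate-time family on $\Hirzscaled{1}$ (Equation~\eqref{system:W1_switch}), the same analysis applies but now there are two distinct rank-zero points undergoing bifurcations, producing three values of $t$ at which \emph{some} point is degenerate (two Hamiltonian-Hopf transitions for one point and one for another, or an analogous configuration); I would carefully bookkeep the list of critical points of $F_t$ as functions of $t$ and the discriminants controlling each, then note that the system is semitoric precisely off this finite set. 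For the two-parameter family on $\Hirzscaled{2}$ (Equation~\eqref{eqn:W2system2param}), the claim is weaker — semitoric for almost all $(s_1,s_2)$ — so it suffices to show the set of parameters where degeneracy occurs is contained in the common zero locus of finitely many real-analytic functions, none of which vanishes identically (exhibit one point where the system is semitoric), hence has measure zero.

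A cleaner route for several of these examples, which I would use wherever possible, is to \emph{not} redo the bifurcation analysis from scratch but instead deduce the result from Theorem~\ref{thm:Wk}/Theorem~\ref{thm:Wkdetailed} and Theorem~\ref{thm:blowupintro}: since Theorem~\ref{thm:Wkdetailed} produces the systems on $\Hirzscaled{n}$ by an explicit sequence of toric type blowups and blowdowns on the coupled angular momenta system, and since by Theorem~\ref{thm:blowupintro} a toric type blowup or blowdown of a semitoric family is again a semitoric family with the \emph{same} degenerate times, one gets that the resulting explicit formulas define semitoric $1$-transition families automatically, inheriting the degenerate-time structure from the coupled angular momenta system (which has exactly two degenerate times by~\cite{LFP, AHS}). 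For the three-degenerate-time example and the two-parameter example, which do not arise from a single blowup/blowdown sequence, one must do the direct computation, but even there the linear algebra at the rank-zero points is the only genuinely new input. The main obstacle I anticipate is step (ii)–(iii) done \emph{uniformly in $t$}: ruling out the appearance of a hyperbolic block or of a spurious extra degenerate time requires controlling the sign of the relevant discriminants over the entire interval $[0,1]$ (or over an open dense subset of parameter space in the two-parameter case), and because $H_t$ is a convex combination this amounts to showing certain explicit functions have exactly the expected number of sign changes — a finite but delicate elementary estimate that has to be carried out honestly for each of the five systems, and which is where subtle constraints on the scaling parameters $\alpha,\beta$ (or on the allowed range of $s_1,s_2$) are likely to surface.
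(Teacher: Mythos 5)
Your direct plan---computing the fixed points explicitly, using the discriminant of the reduced characteristic polynomial of $\Omega^{-1}d^2H_t$ at each rank-zero point to distinguish elliptic-elliptic from focus-focus and to locate the Hamiltonian-Hopf times, and running a Morse-type analysis of the reduced Hamiltonians $H_t^{\text{red},j}$ on the reduced spheres for the rank-one points---is exactly how the paper proves Theorems~\ref{thm:W1_movingAB} and~\ref{thm:W2_trans_B} (with Theorems~\ref{thm:W1_switch} and~\ref{thm:W2_trans_C} left as similar). However, your proposed ``cleaner route'' is a non sequitur: Theorems~\ref{thm:Wkdetailed} and~\ref{thm:blowupintro} only give the \emph{existence} of some semitoric $1$-transition family on each $\Hirzscaled{n}$, built abstractly by blowups and blowdowns of the coupled angular momenta system; they provide no identification of that family with the explicit formulas in Equations~\eqref{system:W1_movingAB}, \eqref{system:W2_transB}, \eqref{system:W2_trans_C}, and the paper says explicitly that its explicit families on $\Hirzscaled{1}$ are \emph{not} the ones produced by Theorem~\ref{thm:Wkdetailed}. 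Proving such an identification would be at least as much work as the direct verification, so the discriminant computations cannot be bypassed for any of the five systems.

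Your account of Equation~\eqref{system:W1_switch} also misidentifies the mechanism: the three degenerate times do not come from a second fixed point undergoing a Hamiltonian-Hopf bifurcation. The fixed points $A,B,C,D$ are the same for all $t\neq 1/2$; only $C$ transitions, at the two Hamiltonian-Hopf times $t^{\pm}$, while the third degenerate time $t=1/2$ is the collapse of the fixed sphere $J^{-1}(0)$ (case \ref{item:fixedcollapse} of Section~\ref{sec:semitoricfam-degen}): since the perturbing term is $\gamma J X$, the Hamiltonian $H_{1/2}$ is constant on $J^{-1}(0)$, so the entire sphere consists of degenerate rank-zero points and the images of $A$ and $B$ pass through one another, with no point changing type there---this is precisely why this family is not a semitoric transition family. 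Your promised bookkeeping would likely uncover this, but as stated the configuration you describe is wrong. Finally, for the two-parameter family~\eqref{eqn:W2system2param} your measure-zero/real-analyticity argument is a genuinely different route from the paper, which proves rank-one non-degeneracy for all $(s_1,s_2)$ (Lemma~\ref{lm:rankone_W2}) but verifies the fixed-point types only at and near $(1/2,1/2)$ and along the boundary one-parameter families, supplementing with numerical evidence; if you carry your plan out, beware that vanishing of the discriminant of the reduced characteristic polynomial at a fixed point does not imply degeneracy (the paper repeatedly handles such cases with a different linear combination $\nu J+\mu H$), so the analytic functions whose zero locus is to contain the bad parameter set must be chosen so that their non-vanishing genuinely implies non-degeneracy (for instance the product of the discriminant and the constant term at each of the four fixed points) and shown not to vanish identically.
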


\begin{rmk}
Note that $W_\ell(\alpha,\beta)$ and $W_k(\alpha',\beta')$ are diffeomorphic if and only if
the parity of $\ell$ and $k$ are equal, and in this case they are moreover symplectomorphic for some choice of $\alpha, \beta, \alpha', \beta'\in\R_{\geq 0}$ (this is well-known and proved for instance in \cite[Lemma 3]{Kar02}). Thus, the existence of semitoric 1-transition families on $W_0(\alpha,\beta)$ (from~\cite{SZ}) and
$W_1(\alpha,\beta)$ (from Theorem~\ref{thm:examplesintro}) implies that there exists a semitoric 1-transition family on all Hirzebruch surfaces
since the pull-back of a semitoric system by a symplectomorphism is an isomorphic semitoric system. These are not the semitoric families discussed in Theorem~\ref{thm:Wk} since the systems induced by the symplectomorphisms to $W_0(\alpha,\beta)$ (respectively $W_1(\alpha,\beta)$) will
all have the same semitoric polygon as the system on $W_0(\alpha,\beta)$  (respectively $W_1(\alpha,\beta)$)
described above, instead of the semitoric polygons
described in Theorem~\ref{thm:Wk}.
\end{rmk} 

For more precise statements of the results in Theorem~\ref{thm:examplesintro} see Sections~\ref{sec:W1_example} and~\ref{sec:W2examples}.
In particular, see Theorem~\ref{thm:W1_movingAB} for the system from Equation~\eqref{system:W1_movingAB},
Theorem~\ref{thm:W1_switch} for the system from Equation~\eqref{system:W1_switch},
Theorem~\ref{thm:W2_trans_B} for the system from Equation~\eqref{system:W2_transB},
and Theorem~\ref{thm:W2_trans_C} for the system from Equation~\eqref{system:W2_trans_C}.
Each of these systems displays unique behavior, which can be seen from images of their momentum maps
for various $t$.
The system from Theorem~\ref{thm:W1_movingAB} has fixed points which change as $t$ changes, which can only occur
on a fixed surface of the $\mathbb{S}^1$-action (see Figure~\ref{fig:moment_map_W1_noswitch}).
In the system from Theorem~\ref{thm:W1_switch} the fixed sphere of $J$ collapses to a single fiber for $t=1/2$, at which
time the images of the fixed points pass through one another (see Figure~\ref{fig:moment_map_W1_switch}).
The two systems on $\Hirzscaled{2}$ (from Theorems~\ref{thm:W2_trans_B} and~\ref{thm:W2_trans_C}) are packaged together into a two-parameter
family of systems (similar to~\cite{HohPal}) in Equation~\eqref{eqn:W2system2param} which transitions between having zero, one, or two focus-focus points depending on the parameters
(see Figure~\ref{fig:moment_map_W2_array}).

Note that in the semitoric transition families that we construct in Sections~\ref{sec:W1_example} and~\ref{sec:W2examples}, the limiting systems are of toric type, so in particular the momentum map of the underlying $\S^1$-action is the first component of some toric momentum map. The question of whether an effective Hamiltonian $\S^1$-action on a compact manifold can be extended in this way was completely resolved by Karshon~\cite[Section 5]{karshon}; this can be read on a graph associated with the action. Moreover, as explained in \cite{HSS}, given a compact semitoric system, Karshon's graph for the underlying $\S^1$-action can be read from the semitoric polygon invariant. So if one wants to construct a semitoric system with given polygon invariant, one may or may not be able to use toric type integrable systems as limiting systems, depending on this polygon. Building on the present paper, another semitoric transition family with limiting systems of toric type was constructed in \cite{HohMeu}.

\subsection{Structure of the paper}

In Section~\ref{sec:background} we describe some background material about integrable and semitoric systems and their singular points, and in
Section~\ref{sec:firstprop} we prove some basic facts about
\families and semitoric families; in particular, in Section~\ref{sec:semitoricfam-degen} we describe all possibilities of the behavior
of a semitoric family at the degenerate times.
In Section~\ref{sec:blowups} we describe toric type blowups and blowdowns
and how they interact with \families, which is useful in Section~\ref{sec:hirz},
in which we prove that every Hirzebruch
surface admits a semitoric family which has a specific semitoric polygon (Theorem~\ref{thm:Wk}).
In Sections~\ref{sec:W1_example} and~\ref{sec:W2examples} we give explicit examples 
on $\Hirzscaled{1}$ and $\Hirzscaled{2}$ (Theorems~\ref{thm:W1_movingAB}, \ref{thm:W1_switch}, \ref{thm:W2_trans_B},
and~\ref{thm:W2_trans_C}). In Appendix \ref{sect:appendix}, we compare the two-parameter family of systems
that we introduce in Section~\ref{sec:W2_s1s2} to the family of systems studied in~\cite{HohPal}.

\paragraph{Acknowledgements.} We would like to thank Konstantinos Efstathiou, Annelies De Meulenaere, and Chris Woodward for several useful discussions and comments which were very helpful when preparing this manuscript. Part of this work was completed while the first author was visiting the second author at Rutgers University. The first author also benefited from a PEPS Jeunes Chercheur-e-s 2017 grant. The second author was partially supported by the AMS-Simons travel grant (which also funded the first author's visit), and he would also like to thank the Institut des Hautes \'{E}tudes Scientifiques (IH\'{E}S) for inviting him to visit during the summer of 2017, during which some of the work for this project was completed. We thank two anonymous referees for their useful comments that helped improve the exposition of the paper. On a more personal note, we would like to express our gratitude to CK, $\aleph$, and $\nabla$ for their constant support. 

%We also thank the editor Chiu-Chu Melissa Liu for being very responsive during the review process.

\section{Preliminaries on integrable and semitoric systems}
\label{sec:background}

\subsection{Integrable systems}

Let $(M,\om)$ be a symplectic manifold. Recall that given $f\in C^\infty(M,\R)$
there exists a unique vector field $X_f$ on $M$,
known as the \emph{Hamiltonian vector field} of $f$, 
such that $\omega(X_f, \cdot) + df = 0$.
We also define the \emph{Poisson bracket}
$\{\cdot,\cdot\}\colon C^\infty(M,\R)\times C^\infty(M,\R)\to C^\infty(M,\R)$
by $\{f,g\} = \om(X_f,X_g)$.
An \emph{integrable system} is a triple $(M,\om,F)$ where
$(M,\om)$ is a symplectic manifold of dimension $2n$ and
$F \in C^{\infty}(M,\R^n)$ has components $f_1,\ldots, f_n\colon M\to\R$ such that
\begin{enumerate}
 \item $\{f_i,f_j\}=0$ for all $i,j=1,\ldots, n$;
 \item $X_{f_1}(m),\ldots,X_{f_n}(m)$ are linearly independent for almost all $m\in M$.
\end{enumerate}

The function $F$ is called the \emph{momentum map} of the integrable system. A point $m \in M$ for which $X_{f_1}(m),\ldots,X_{f_n}(m)$ are linearly independent is called a \emph{regular point} of $F$.

\subsection{Singular points}
\label{sec:singularpoints}

Let $(M,\omega,F=(f_1,\ldots, f_n))$ be an integrable system. A \emph{singular point} of $F$ is a point $m \in M$ such that the family $(X_{f_1}(m), \ldots, X_{f_n}(m))$ is linearly dependent; the \emph{rank} of the singular point $m$ is the rank of this family. A singular point of rank zero is often called a \emph{fixed point}. As in Morse theory, there exists a notion of non-degenerate fixed point; given a fixed point $m \in M$, let $d^2 f_j(m)$ be the Hessian of $f_j$ at $m$, for $j=1, \ldots, n$ (we will often slightly abuse notation and use this for the matrix of this Hessian in any basis of $T_m M$).

\begin{dfn}[{\cite[Definition 1.23]{BolFom}}]
A fixed point $m$ is \emph{non-degenerate} if the Hessians $d^2 f_1(m)$, $\ldots$, $d^2 f_n(m)$ span a Cartan subalgebra of the Lie algebra of quadratic forms on $T_m M$.
\end{dfn}

The Lie algebra structure on the space of quadratic forms on $T_m M$ is defined so that the natural identification with $\mathfrak{sp}(2n,\R)$
induced by $\omega_m$ is a Lie algebra isomorphism.
There exists an analogous definition for singular points of all ranks, but we will not describe it here since it requires more background and notation (but we will give an equivalent definition in the case $n=2$ below); we refer the reader to \cite[Section 1.8.3]{BolFom}. The main idea is that this definition is conceived so that the following symplectic Morse lemma holds.

\begin{thm}[Eliasson normal form \cite{Eliasson-thesis,Eli90,miranda-zung}]
\label{thm:normalform)}
Let $m \in M$ be a non-degenerate singular point for $F$. Then there exist local symplectic coordinates $(x,y) = (x_1, \ldots, x_n, y_1, \ldots, y_n)$ on an open neighborhood $U \subset M$ of $m$ and $Q = (q_1, \ldots, q_n): U \to \R^n$ whose components $q_j$ are of the form
\begin{itemize}
\item $q_j(x,y) = \frac{1}{2}(x_j^2 + y_j^2)$ (elliptic),
\item $q_j(x,y) = x_j y_j$ (hyperbolic),
\item $q_j(x,y) = x_j y_{j+1} - x_{j+1} y_j$, $q_{j+1}(x,y) = x_j y_j + x_{j+1} y_{j+1}$ (focus-focus),
\item $q_j(x,y) = y_j$ (regular),
\end{itemize}
such that $m$ corresponds to $(x,y) = (0,0)$ and $\{f_j,q_k\} = 0$ for all $j,k \in \{1, \ldots, n\}$. Moreover, if there is no hyperbolic component, the following stronger result holds: there exists a local diffeomorphism $g: (\R^n,0) \to (\R^n,F(m))$ such that  
$ F(x,y) = (g \circ Q)(x,y)$ for every $(x,y) \in U$. 
\end{thm}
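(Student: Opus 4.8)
The plan is to reprove the classical symplectic Morse lemma for integrable systems; since this is Eliasson's theorem \cite{Eliasson-thesis}, with a complete modern treatment by Miranda and Zung \cite{miranda-zung}, I will only outline the argument and indicate where the difficulty lies. The first step is to treat the quadratic part. By hypothesis the Hessians $d^2 f_1(m), \dots, d^2 f_n(m)$ span a Cartan subalgebra $\mathfrak{c}$ of the Lie algebra of quadratic forms on $T_m M$, which $\om_m$ identifies with $\mathfrak{sp}(2n,\R)$. Williamson's classification of Cartan subalgebras of $\mathfrak{sp}(2n,\R)$ then says that, after a linear symplectomorphism of $T_m M$, $\mathfrak{c}$ is spanned by forms of exactly the elliptic, hyperbolic, and focus-focus types appearing in the statement --- no ``regular'' blocks $y_j$ occur because $m$ has rank zero. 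Composing with a Darboux chart, I obtain local symplectic coordinates $(x,y)$ centered at $m$ in which $d^2 f_j(m) = \sum_k A_{jk} q_k$ for some invertible matrix $A$, where $Q = (q_1, \dots, q_n)$ denotes the standard model; replacing $F$ by $A^{-1}\circ F$ (a linear change of coordinates on the target, reincorporated into $g$ at the very end) I may assume the quadratic part of $f_j$ is $q_j$.

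Next I would build a \emph{formal} normal form, order by order. Assuming inductively that after a formal symplectic transformation all terms of $f_j$ of degree $\le N$ already depend only on $Q$, I remove the degree-$(N+1)$ part of $f_j$ transverse to the functions of $Q$ by composing with the time-one flow of the Hamiltonian vector field $X_\chi$ of a homogeneous polynomial $\chi$ of degree $N+1$; the obstruction is a cohomological equation for the operators $\{q_k, \cdot\}$ on homogeneous polynomials. On the elliptic and focus-focus blocks these operators act semisimply (their Hamiltonian flows are quasiperiodic), so the equation is solved there by averaging; for a hyperbolic block $\operatorname{ad}_{x_j y_j}$ is diagonalizable on monomials with kernel spanned by the powers of $x_j y_j$; in every case the non-removable part is a function of $Q$. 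Since the $q_k$ pairwise Poisson-commute, these reductions are carried out simultaneously in all $n$ components, and composing the successive transformations yields a formal symplectomorphism after which each $f_j$ is a formal power series in $q_1, \dots, q_n$.

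Then --- and this is the main obstacle --- I would pass from the formal to the smooth statement. Borel's lemma realizes the formal transformation above by an honest smooth symplectomorphism, which reduces matters to the case where $F$ coincides with a smooth function of $Q$ up to an error that is flat along the singular fiber $Q^{-1}(0)$. This flat remainder is removed by a Moser-type path argument: one interpolates $F_t$ between the normal form and $F$ through the flat error, solves the homological equation for the generator $X_t$ of the desired isotopy, and verifies that flatness of the error permits the required division by components of $Q$ near $Q^{-1}(0)$, producing a smooth family $X_t$ supported in a neighborhood of $m$ whose time-one flow finishes the normalization. This division near the singular leaf is the delicate analytic step --- precisely the point where the early arguments of Vey, Eliasson and Colin de Verdi\`ere--Vey had to be supplemented --- and I would invoke the careful verification in \cite{miranda-zung} (see also the exposition in \cite[Section 1.8.3]{BolFom}); the outcome is coordinates and $Q$ with $\{f_j, q_k\} = 0$ for all $j, k$.

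Finally, for the ``moreover'' statement, when there is no hyperbolic block the model $Q$ generates near $m$ a Hamiltonian action whose elliptic factors are periodic, so the joint fibers of $Q$ near $m$ are connected and $Q$ maps a neighborhood of $m$ onto a neighborhood of $0$ with the local structure of a manifold with corners. Then the relation $\{f_j, q_k\} = 0$, which directly says only that $f_j$ is locally constant on the fibers of $Q$, upgrades --- by the Schwarz-type fact that a smooth function invariant under such an action is a smooth function of its moment map --- to $f_j = g_j(Q)$ with $g_j$ smooth, and $g = (g_1, \dots, g_n)$ (after reinserting $A$) is the desired local diffeomorphism of $(\R^n, 0)$. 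A hyperbolic block would obstruct this last step because $x_j y_j$ has disconnected fibers, so that invariant smooth functions need not factor through it.
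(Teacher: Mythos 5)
The paper does not prove this statement: it is quoted as the Eliasson normal form with references to Eliasson's thesis and to Miranda--Zung, and immediately after the statement the authors remark that, to their knowledge, no complete proof exists in the literature. So there is no in-paper argument to compare yours against line by line. Your outline reproduces the accepted strategy (Williamson's classification for the quadratic part, a formal Birkhoff-type normalization solving the cohomological equations blockwise, then Borel's lemma plus a Moser path argument to kill the flat remainder), and you correctly single out the division near the singular fiber as the crux --- but you then defer exactly that step to \cite{miranda-zung}, so your write-up has the same logical status as the paper's citation rather than being an independent proof. That is a reasonable thing to do here, and as an outline it is consistent with how the result is established in the references.

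One genuine soft spot is the ``moreover'' part. You deduce $f_j = g_j(Q)$ from a ``Schwarz-type fact that a smooth function invariant under such an action is a smooth function of its moment map,'' but only the elliptic blocks generate a compact (periodic) torus action, where the classical invariant-theory argument applies; for a focus-focus pair the flow of $q_{j+1}$ is non-periodic, the local symmetry group is $\R \times \S^1$, and the factorization of a smooth invariant function through $(q_j, q_{j+1})$ requires a dedicated argument (local connectedness of the joint fibers together with a smooth division/first-integral lemma, as in Eliasson's treatment of the focus-focus case and in later careful write-ups such as V\~u Ng\d{o}c--Wacheux), not just compact-group invariant theory. You should also verify that $dg(0)$ is invertible (this follows from the nondegeneracy, i.e.\ from the invertibility of your matrix $A$), so that $g$ is indeed a local diffeomorphism. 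With those points supplied --- again from the literature --- your sketch matches the standard route.
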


To our knowledge, in the literature there does not exist a unified complete proof of this theorem and many authors have contributed to it and its generalizations, see for instance \cite{Rus,Vey,DufMol,ColVey,ColVN,Mir,miranda-zung,MirVN,VNWac,Cha}; see also the discussion in \cite[Remark 4.16]{VNSepe}. The \emph{Williamson type} of the non-degenerate point $m$ is the quadruple of non-negative integers $(k,k_e,k_h,k_{ff})$ where $k$ (respectively $k_e$, $k_h$, $k_{ff}$) is the number of regular (respectively elliptic, hyperbolic, focus-focus) components of the above $Q$; note that $k \neq n$ and $k + k_e + k_h + 2 k_{ff} = n$. In the rest of the paper, a fixed point with $k_e = n$ will be called \emph{completely elliptic} (or elliptic-elliptic if $n=2$).

\begin{rmk}\label{rmk:gcycles}
At least for singularities with only elliptic or regular components, the
$i$-th component of the inverse $h = g^{-1}$ of $g$ from Theorem \ref{thm:normalform)} is given by the action integral
\[h^{(i)}(c) = \frac{1}{2\pi}\int_{\gamma_p^i}\alpha\] 
where $d\alpha=\om$ in a neighborhood of $F^{-1}(c)$ and $\{\gamma_c^1,\ldots, \gamma_c^n\}$ is any basis 
of $H^1(F^{-1}(c))$. This is well-known for regular points~\cite[Remark 3.38]{VNSepe}, and can be extended
to a neighborhood of a singular points with elliptic and regular components as well~\cite{miranda-zung}. 
\end{rmk}

\begin{rmk}
Note that the Williamson type of a singular point can be described independently of the normal form theorem, via the classification of Cartan subalgebras of $\mathfrak{sp}(2n,\R)$ \cite{Wil}. We only chose this order for exposition purposes. Note also that the description of singular points of a Hamiltonian invariant under the action of a compact Lie group has been studied, see for instance~\cite{MonRobSte}, leading to an equivariant version of Williamson's classification~\cite{MelDel93}. It is important to note that the Darboux theorem does not hold in this equivariant setting, since in this case there may not be a unique canonical symplectic form, as explained in~\cite{MelDel92}.
\end{rmk}

\subsection{Singularities of integrable systems in the case \texorpdfstring{$n=2$}{n=2}}
\label{subsect:sing_dim4}

In the rest of the paper, we will mostly be interested in the case $\dim M = 4$; in this case, we change our notation and consider an integrable system with momentum map $F = (J,H)$. We now describe in more detail the singular points.

\paragraph{Fixed points}

Let $m\in M$ be a fixed point. Choose any basis of $T_m M$ and consider the matrix $\Omega_m$ of the symplectic form $\omega_m$ and the Hessian matrices $d^2 J(m)$ and $d^2 H(m)$ in this basis. Let $\nu, \mu \in \R$. As a consequence of \cite[Proposition 1.2]{BolFom}, the characteristic polynomial of the matrix $A_{\nu,\mu} = \Omega_m^{-1} (\nu d^2 J(m) + \mu d^2 H(m))$ is of the form $X \mapsto \chi_{\nu,\mu}(X^2)$ for some quadratic polynomial $\chi_{\nu,\mu}$; we call $\chi_{\nu,\mu}$ the \emph{reduced characteristic polynomial} of $A_{\nu,\mu}$. Then $m$ is non-degenerate if and only if there exists $\nu, \mu \in \R$ such that $\chi_{\nu,\mu}$ has two distinct nonzero roots $\lambda_1, \lambda_2 \in \C$; furthermore, if this is case, then the type of $m$ is 
\begin{itemize}
\item elliptic-elliptic (Williamson type $(0,2,0,0)$) if $\lambda_1 < 0$ and $\lambda_2 < 0$, 
\item elliptic-hyperbolic (Williamson type $(0,1,1,0)$) if $\lambda_1 < 0$ and $\lambda_2 > 0$,
\item hyperbolic-hyperbolic (Williamson type $(0,0,2,0)$) if $\lambda_1 > 0$ and $\lambda_2 > 0$,
\item focus-focus (Williamson type $(0,0,0,1)$) if $\Im(\lambda_1) \neq 0$ and $\Im(\lambda_2) \neq 0$.
\end{itemize}

Note that if $m$ is non-degenerate, then the set of $(\nu, \mu) \in \R^2$ such that $\chi_{\nu,\mu}$ has two distinct roots is open and dense (and the signs of the roots do not depend on the choice of $(\nu, \mu)$ in this set, so in particular the type of a singular point is well-defined).

\paragraph{Rank one singular points}

Let $m \in M$ be a rank one singular point; then there exists $\nu, \mu \in \R$ such that $\nu dH(m) + \mu dJ(m) = 0$, and hence there exists a linear combination of $X_J$ and $X_H$ whose flow has a one-dimensional orbit through $m$. Let $L \subset T_m M$ be the tangent line of this orbit at $m$ and let $L^{\perp}$ be the symplectic orthogonal of $L$. Then $\Omega_m^{-1} \left( \nu d^2 H(m) + \mu d^2 J(m) \right)$ descends to an operator $A_{\nu,\mu}$ on the two-dimensional quotient $L^{\perp} \slash L$, and the eigenvalues of $A_{\nu,\mu}$ are of the form $\pm \lambda$ for $\lambda \in \C$ (again, as a consequence of \cite[Proposition 1.2]{BolFom}). 

\begin{dfn}[{\cite[Section 1.8.3]{BolFom}, see also \cite[Section 2.3]{HohPal}}]
\label{dfn:nondeg_rankone}
The rank one singular point $m$ is non-degenerate if and only if $A_{\nu,\mu}$ is an isomorphism of $L^{\perp} \slash L$. Moreover, the type of $m$ is 
\begin{itemize}
\item elliptic-transverse (Williamson type $(1,1,0,0)$) if the eigenvalues of $A_{\nu,\mu}$ are of the form $\pm i \alpha$, $\alpha \in \R$,
\item hyperbolic-transverse (Williamson type $(1,0,1,0)$) if the eigenvalues of $A_{\nu,\mu}$ are of the form $\pm \alpha$, $\alpha \in \R$,
\end{itemize}
and these are the only possibilities.
\end{dfn}

Now, assume that the Hamiltonian flow of $J$ is periodic; for $j \in \R$ a regular value of $J$, let $M_j^{\text{red}}$ be the symplectic reduction of $M$ at level $j$ by the $\S^1$-action generated by $J$. Since $\{J, H \} = 0$, the function $H$ descends to a function $H^{\text{red},j}$ on the surface $M_j^{\text{red}}$. If moreover $\S^1$ acts freely on a point $m \in M$ with $j = J(m)$, then $M_j^{\text{red}}$ is smooth in a neighbourhood of the image $[m]$ of $m$ in $M_j^{\text{red}}$, and so is $H^{\text{red},j}$. The next lemma follows from the proof of \cite[Lemma 2.4]{HohPal}~\footnote{the statement of~\cite[Lemma 2.4]{HohPal} contains a slight mistake: $\S^1$ must act freely on the point $m\in M$, but in~\cite[Lemma 2.4]{HohPal} it is only required that $m$ is a regular point of $J$.}, see also \cite[Definition 3]{ToZel}.

\begin{lm}
\label{lm:nondeg_rankone_red}
Let $m \in M$ be such that the $\S^1$-action generated by $J$ acts freely on $m$. Let $j=J(m)$.
Then $m$ is a rank one singular point of $F$ if and only if $[m]$ is a singular point of $H^{\text{red},j}$.
Moreover, $m$ is a non-degenerate singular point of $F$ if and only if $[m]$ is non-degenerate for $H^{\text{red},j}$ (in the Morse sense) and the type of $m$ is elliptic-transverse (respectively hyperbolic-transverse) if and only if $[m]$ is an elliptic (respectively hyperbolic) singular point of $H^{\text{red},j}$.
\end{lm}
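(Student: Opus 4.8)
The plan is to reduce everything to a local statement near $m$ via symplectic reduction, and then to relate the linear-algebraic data defining non-degeneracy upstairs (the operator $A_{\nu,\mu}$ on $L^\perp/L$ from Definition~\ref{dfn:nondeg_rankone}) to the Hessian of $H^{\mathrm{red},j}$ at $[m]$ downstairs. First I would fix $j = J(m)$, a regular value, so that near $m$ the level set $J^{-1}(j)$ is a smooth coisotropic submanifold on which $\S^1$ acts freely (shrinking to an invariant neighborhood if the action is only locally free; since $j$ is regular this is automatic after shrinking), and the reduced space $M_j^{\mathrm{red}}$ is a genuine symplectic surface near $[m]$ carrying the well-defined function $H^{\mathrm{red},j}$. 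Since $\{J,H\}=0$, the flow of $X_H$ preserves $J^{-1}(j)$ and descends to the reduced Hamiltonian flow of $H^{\mathrm{red},j}$; the Hamiltonian vector field $X_{H^{\mathrm{red},j}}([m])$ is the image of $X_H(m)$ under $d\pi_m$, where $\pi\colon J^{-1}(j)\to M_j^{\mathrm{red}}$ is the quotient projection.

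For the first equivalence I would argue as follows. Because $j$ is a regular value, $dJ(m)\neq 0$, so $X_J(m)\neq 0$; hence $m$ is a rank one singular point of $F=(J,H)$ precisely when $X_H(m)$ is proportional to $X_J(m)$, i.e.\ when $X_H(m)\in \mathbb{R}\,X_J(m)$. Now $X_J(m)$ spans $\ker d\pi_m$ (the tangent to the $\S^1$-orbit), and $X_H(m)\in T_m(J^{-1}(j))$ since $dJ(X_H)=\{J,H\}=0$. Therefore $X_H(m)\in\mathbb{R}\,X_J(m)$ if and only if $d\pi_m(X_H(m))=0$, i.e.\ $X_{H^{\mathrm{red},j}}([m])=0$, i.e.\ $[m]$ is a critical point of $H^{\mathrm{red},j}$. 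This proves the first assertion. (One subtlety: one should note $H^{\mathrm{red},j}$ is Morse-meaningful only at such reduced critical points, which is exactly the case of interest.)

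For the non-degeneracy and type statements, the idea is to compare $A_{\nu,\mu}$ with the linearization of the reduced Hamiltonian flow. At a rank one singular point we have $\nu\, dH(m)+\mu\, dJ(m)=0$ with $(\nu,\mu)\neq 0$; since $dJ(m)\neq 0$ we may normalize $\nu = 1$, so $dH(m) = -\mu\, dJ(m)$ and the combination $\tilde H := H + \mu J$ satisfies $d\tilde H(m)=0$, i.e.\ $m$ is a genuine fixed point of $X_{\tilde H}$. The operator $A_{\nu,\mu} = \Omega_m^{-1}(\nu\, d^2H(m)+\mu\, d^2 J(m))$ is, up to the identification provided by $\omega_m$, the linearization of $X_{\tilde H}$ acting on $L^\perp/L$, where $L=\mathbb{R}\,X_J(m)=\ker d\pi_m$ and $L^\perp = T_m(J^{-1}(j))$. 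Thus $L^\perp/L$ is canonically $T_{[m]}M_j^{\mathrm{red}}$ with its reduced symplectic form, and $A_{\nu,\mu}$ on this quotient is exactly the linearization $d(X_{H^{\mathrm{red},j}})$ at the critical point $[m]$; equivalently $A_{\nu,\mu}=\Omega^{\mathrm{red}}_{[m]}{}^{-1}\,d^2 H^{\mathrm{red},j}([m])$. From here the conclusions are immediate: $A_{\nu,\mu}$ is an isomorphism iff $d^2 H^{\mathrm{red},j}([m])$ is nondegenerate, i.e.\ iff $[m]$ is a Morse critical point; and a $2\times2$ Hamiltonian matrix $\Omega^{-1}\mathrm{Hess}$ has eigenvalues $\pm i\alpha$ (elliptic-transverse) exactly when $\mathrm{Hess}$ is definite, i.e.\ $[m]$ is an elliptic (local max or min) critical point of $H^{\mathrm{red},j}$, and eigenvalues $\pm\alpha$, $\alpha\in\mathbb{R}\setminus\{0\}$ (hyperbolic-transverse) exactly when $\mathrm{Hess}$ is indefinite, i.e.\ $[m]$ is a saddle.

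The main obstacle is the careful bookkeeping in identifying $A_{\nu,\mu}$ on $L^\perp/L$ with the reduced Hessian operator: one must check that the descent of $\Omega_m^{-1}(d^2H(m)+\mu\, d^2J(m))$ to $L^\perp/L$ really coincides with $(\Omega^{\mathrm{red}})^{-1}$ applied to the Hessian of the \emph{descended} function $H^{\mathrm{red},j}$, which amounts to verifying that symplectic reduction intertwines Hessians of $\S^1$-invariant functions at reduced critical points with Hessians on the reduced space --- this is where one uses that the term $\mu\, d^2 J(m)$ does not contribute after restricting to $L^\perp$ and quotienting by $L$ (indeed $d^2 J(m)$ restricted to $L^\perp$ is the pullback along $d\pi_m$ of $0$ once we are on the reduced critical set, since $J$ is constant on $J^{-1}(j)$). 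Everything else is then a short linear algebra computation with $2\times 2$ infinitesimally symplectic matrices. Alternatively, this linear-algebra core can be outsourced entirely by citing \cite[Corollary 2.5]{HohPal} and \cite[Definition 3]{ToZel}, in which case the proof reduces to the first paragraph plus a remark that the cited results provide exactly the claimed dictionary between Williamson types upstairs and Morse types downstairs.
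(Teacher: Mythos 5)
The paper gives no proof of this lemma: it is imported as a known result, with the bracketed attribution to \cite[Definition 3]{ToZel} and \cite[Corollary 2.5]{HohPal}, so there is no internal argument to compare against. Your proposal instead supplies a self-contained proof, and its content is correct and is essentially the reduction argument that those references formalize. The first equivalence is exactly right: since $j$ is regular, $X_J(m)\neq 0$, so rank one means $X_H(m)\in\R X_J(m)=L=\ker d\pi_m$, and since $\{J,H\}=0$ gives $X_H(m)\in T_mJ^{-1}(j)$ and $d\pi_m X_H(m)=X_{H^{\mathrm{red},j}}([m])$, this is equivalent to $[m]$ being critical for $H^{\mathrm{red},j}$. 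Identifying $L^{\perp}/L$ with $T_{[m]}M_j^{\mathrm{red}}$ and $A_{\nu,\mu}$ with $(\Omega^{\mathrm{red}}_{[m]})^{-1}d^2H^{\mathrm{red},j}([m])$, and then running the $2\times 2$ spectral dichotomy (definite Hessian gives $\pm i\alpha$, indefinite gives $\pm\alpha$), delivers the non-degeneracy and type statements simultaneously, which is precisely the dictionary the cited corollary provides. Two spots in your write-up deserve tightening. First, the parenthetical claim that freeness of the $\S^1$-action is ``automatic after shrinking'' is not accurate: a finite stabilizer at $m$ itself cannot be removed by shrinking; at a regular level the action is only locally free, and one should either invoke that in this setting the reduced space is still a smooth surface near $[m]$ (as the paper asserts, following the references) or pass to a $\Z_k$-equivariant local slice --- the linear algebra on $L^{\perp}/L$ is unaffected either way. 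Second, saying that ``$\mu\, d^2J(m)$ does not contribute'' is coordinate-dependent as stated, because neither $d^2H(m)$ nor $d^2J(m)$ is intrinsically defined at a point where the corresponding differential is nonzero; the clean formulation is to restrict the well-defined combination $d^2(\nu H+\mu J)(m)$ to $L^{\perp}=T_mJ^{-1}(j)$, note that since $m$ is a critical point of $\nu H+\mu J$ this restriction equals the intrinsic Hessian of $(\nu H+\mu J)|_{J^{-1}(j)}=\nu H|_{J^{-1}(j)}+\mathrm{const}$, and then descend modulo $L$ using $\S^1$-invariance of $H|_{J^{-1}(j)}$ and the fact that the whole orbit through $m$ is critical. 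With those repairs your argument is complete; it buys an explicit verification of what the paper simply outsources to the literature.
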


Note that Lemma~\ref{lm:nondeg_rankone_red} implies that if one point in a fiber of $F$ is a rank one singular point then all points in that fiber must be rank one singular points.

\subsection{Semitoric systems and semitoric polygons}
\label{sec:semitoric}

We will be interested in a particular class of four-dimensional integrable systems, called semitoric systems.
These systems, defined in \cite{VNpoly}, are an example of the \emph{almost toric Lagrangian fibrations} introduced by Symington~\cite{Sym}.

\begin{dfn}\label{def:semitoric}
 A four-dimensional integrable system $(M,\om,F=(J,H))$ is \emph{semitoric} if
\begin{enumerate}[noitemsep]
\item\label{item:Jproper} $J$ is proper,
\item $J$ is the momentum map for an effective Hamiltonian $\S^1$-action,
\item all singular points of $F$ are non-degenerate and have no components of hyperbolic type ($k_h = 0$ in their Williamson type).
\end{enumerate}
\end{dfn}

This means that a semitoric system can only have elliptic-transverse, elliptic-elliptic, or focus-focus singular points. 
For this paper, we will mostly be interested in the case that $M$ is compact,
so the first condition in the above definition is automatically satisfied.

The systems that we consider in the present paper will all have at most one focus-focus point in each level set of $J$.
Semitoric systems with this property are called \emph{simple}.
%A semitoric system is called \emph{simple} if there is at most one focus-focus point per fiber of $J$. 
Simple semitoric systems were classified in~\cite{PVNinventiones,PVNacta} in terms of five invariants: 
the number $m_f$ of focus-focus points (which is finite), the polygon invariant, the height invariant, the Taylor series invariant, and the twisting index invariant. We will only describe the ones that we will need in the rest of the paper. 
The semitoric classification was later extended to include all semitoric systems, simple or not, by
adapting the invariants~\cite{PPT}.

\paragraph{Marked semitoric polygons.}

In this section we describe an invariant of simple semitoric integrable systems which we call 
a marked semitoric polygon, and which encodes the data of the \emph{number of focus-focus points}, \emph{polygon}, and \emph{height}
invariants of the original classification in~\cite{PVNinventiones,PVNacta}.
This is similar to the notion of decorated polygon introduced in~\cite{VNSepe}, but the marked semitoric
polygon does not include data about the so-called \emph{twisting index} while the decorated polygon does.
The only remaining invariant involved in the complete symplectic classification of simple semitoric systems
is called the \emph{Taylor series invariant} and is not encoded in either the marked or decorated polygon invariants,
see Remark~\ref{rmk:taylor}.
A visual representation of a marked semitoric polygon is shown in Figure~\ref{fig:markedpoly}. 
Note that as in \cite{VNpoly}, we use the word \emph{polygon} to refer to a closed subset of $\R^2$ whose boundary is a continuous, piecewise linear curve with a finite number of vertices in any compact set, but in fact for a semitoric system on a compact manifold what we obtain is a polygon in the usual sense.

Let $\pi_j: \R^2 \to \R$ be the canonical projection to the $j$-th factor, $j=1,2$.
Let $\Delta \subset \R^2$ be a rational convex polygon (rational means that every edge is generated by an integral vector), let $s \in \Z_{\geq 0}$, let $\vec{c} = (c_1, \ldots, c_s) \in (\R^2)^s$, and let $\vec{\epsilon} = (\epsilon_1, \ldots, \epsilon_{s}) \in \{ -1,1 \}^{s}$.
We call the triple $\left(\Delta, \vec{c}, \vec{\epsilon} \right)$ a \emph{marked weighted polygon} if 
$c_1,\ldots, c_s\in\mathrm{int}(\Delta)$ and 
$ \pi_1(c_1) < \ldots < \pi_1(c_s)$.
Let
\begin{equation}\label{eqn:T} T = \begin{pmatrix} 1 & 0 \\ 1 & 1 \end{pmatrix} \in \text{SL}_2(\Z) \end{equation}
and let $\mathcal{T}$ be the subgroup of $GL_2(\Z) \ltimes \R^2$ consisting of the composition of $T^k$ for some $k \in \Z$ and a vertical translation (that is, the subgroup of integral affine transformations leaving the vertical direction invariant). The group $\mathcal{T}$ acts on the set of marked weighted polygons as follows; if $\left(\Delta, \vec{c} = (c_1, \ldots, c_s), \vec{\epsilon} = (\epsilon_1, \ldots, \epsilon_{s}) \right)$ is a marked weighted polygon and $\tau \in \mathcal{T}$, then
\begin{equation} \tau \cdot \left(\Delta, \vec{c}, \vec{\epsilon} \right) = \left(\tau(\Delta), \tau(\vec{c}), \vec{\epsilon} \right) \label{eq:T_action}\end{equation} 
where $\tau(\vec{c}) = \left( \tau(c_1), \ldots, \tau(c_s) \right)$. Note that this action preserves the convexity of the polygon. 

For $\lambda \in \R$, let $\ell_{\lambda} =\pi_1^{-1}(\lambda)$. Given $\lambda \in \R$ and $k \in \Z$, we fix an origin in $\ell_{\lambda}$ and define $t_{\ell_\lambda}^k: \R^2 \to \R^2$ as the identity on $\{x \leq \lambda\}$ and as $T^k$, relative to this origin, on $\{x \geq \lambda\}$ (note that $t_{\ell_\lambda}^k$ thus defined is independent of the choice of this origin). 
Moreover, for $\vec{u} = (u_1, \ldots, u_s) \in \{-1,1\}^s$ and $\vec{\lambda} = (\lambda_1, \ldots, \lambda_s)$, we define $t_{\vec{u},\vec{\lambda}} = t_{\ell_{\lambda_1}}^{u_1} \circ \ldots \circ t_{\ell_{\lambda_s}}^{u_s}$. 

The group $G_s = \{-1,1\}^s$ with componentwise multiplication $\vec{\epsilon'} * \vec{\epsilon} = ({\epsilon_1}' \epsilon_1, \ldots, {\epsilon_s}' \epsilon_s)$ acts on the set of all triples $(P,\vec{c},\vec{\epsilon})$ where $P \subset \R^2$ is a (possibly not convex) polygon,
$\vec{c}\in(\R^2)^s$, and $\vec{\epsilon}\in\{-1,1\}^s$ by 
\begin{equation} \vec{\epsilon'} \cdot (P,\vec{c},\vec{\epsilon}) = \left(t_{\vec{u},\vec{\lambda}}(P), t_{\vec{u},\vec{\lambda}}(\vec{c}), \vec{\epsilon'} * \vec{\epsilon} \right), \label{eq:Gs_action}\end{equation} 
where 
\begin{equation} \vec{u} = \left(\frac{\epsilon_1 - \epsilon_1 \epsilon_1'}{2}, \ldots, \frac{\epsilon_s - \epsilon_s \epsilon_s'}{2}\right), \qquad \vec{\lambda} = (\pi_1(c_1), \ldots, \pi_1(c_s)).\label{eq:u_lambda}\end{equation}
However, even if $P$ is convex
it is possible to have non-convex polygons in the $G_s$-orbit of $(P,\vec{c},\vec{\epsilon})$.
A marked weighted polygon $(\Delta,\vec{c},\vec{\epsilon})$ is called \emph{admissible} if its $G_s$-orbit contains
only convex polygons, and thus $G_s$ acts on admissible 
 marked weighted polygons by:
\[ \vec{\epsilon'} \cdot \left(\Delta, \vec{c}, \vec{\epsilon} \right) = \left(t_{\vec{u},\vec{\lambda}}(\Delta), t_{\vec{u},\vec{\lambda}}(\vec{c}), \vec{\epsilon'} * \vec{\epsilon} \right) \]
with $\vec{u}$ and $\vec{\lambda}$ as in Equation \eqref{eq:u_lambda}. Note that the actions of $\mathcal{T}$ and $G_s$ both preserve rationality of the polygon.

Let $q$ be a vertex of a rational polygon $\Delta$,
and let $u,v\in\Z^2$ be the primitive (minimal length) vectors directing the edges adjacent to $q$. Then 
we say that $q$ satisfies:
\begin{enumerate}[noitemsep]
\item the \emph{Delzant condition} if $\det(u,v) = 1$,
\item the \emph{hidden Delzant condition} if $\det(u,Tv) = 1$,
\item the \emph{fake condition} if $\det(u,Tv) = 0$.
\end{enumerate}
Given a marked weighted polygon $(\De,\vec{c},\vec{\epsilon})$ let
\begin{equation}\label{eqn:Lcuts}
 \mathcal{L}_{(\De,\vec{c},\vec{\epsilon})} = \cup_{j=1}^s\{(\pi_1(c_j),y)\mid\varepsilon_j y \geq \varepsilon_j \pi_2(c_j)\}
\end{equation}
so $\mathcal{L}_{(\De,\vec{c},\vec{\epsilon})}$ is the disjoint union of half lines emanating from each $c_j$ going up
if $\varepsilon_j =1$ and going down if $\varepsilon_j=-1$. The set $\mathcal{L}_{(\De,\vec{c},\vec{\epsilon})}$ is denoted
by dotted lines in the figures in this paper, see for instance Figure~\ref{fig:markedpoly}.
\begin{dfn}
A \emph{marked Delzant semitoric polygon} is the $G_s \times \mathcal{T}$-orbit of a marked weighted polygon $(\De,\vec{c},\vec{\epsilon})$
such that
\begin{enumerate}[noitemsep]
 \item each point in $\partial \De \cap \mathcal{L}_{(\De,\vec{c},\vec{\epsilon})}$ is a corner which satisfies either the 
  fake or hidden Delzant condition, in which case it is known as a \emph{fake} or \emph{hidden corner}, respectively,
 \item all other corners satisfy the Delzant condition, and are known as \emph{Delzant corners},
\end{enumerate}
in which case all marked weighted polygons in the orbit satisfy the same conditions and are admissible.
\end{dfn}

%\begin{figure}
%\begin{center}
%\begin{tikzpicture}[scale=.8]
%\filldraw[draw=black, fill=gray!60] (0,2) node[anchor=north,color=black]{}
  %-- (0,3) node[anchor=south,color=black]{}
  %-- (1,4) node[anchor=south,color=black]{}
  %-- (4,4) node[anchor=north,color=black]{}
  %-- (6,2) node[anchor=north,color=black]{}
  %-- (6,1) node[anchor=north,color=black]{}
  %-- (2,1) node[anchor=north,color=black]{}
  %-- cycle;
%\draw [dashed] (2,1) -- (2,3);
%\draw (2,3) node[] {$\times$};
%\draw [dashed] (4,4) -- (4,2.5);
%\draw (4,2.5) node[] {$\times$};
%\end{tikzpicture}
%\end{center}
%\caption{An example of a representative of a marked semitoric polygon associated to a system with 
%two focus-focus singular points.}
%\label{fig:markedpoly}
%\end{figure}

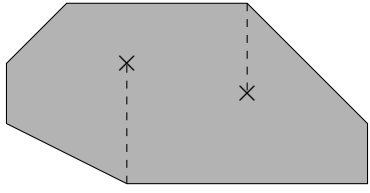
\begin{figure}
\begin{center}
\begin{tikzpicture}[scale=.8]
\filldraw[draw=black, fill=gray!60] (0,3) node[anchor=north,color=black]{}
  -- (0,3.5) node[anchor=south,color=black]{}
  -- (0.5,4) node[anchor=south,color=black]{}
  -- (4,4) node[anchor=north,color=black]{}
  -- (6,2) node[anchor=north,color=black]{}
  -- (6,1) node[anchor=north,color=black]{}
  -- (2,1) node[anchor=north,color=black]{}
  -- cycle;
\draw [dashed] (2,1) -- (2,3);
\draw (2,3) node[] {$\times$};
\draw [dashed] (4,4) -- (4,2.5);
\draw (4,2.5) node[] {$\times$};
\end{tikzpicture}
\end{center}
\caption{An example of a representative of a marked semitoric polygon associated to a system with 
two focus-focus singular points.}
\label{fig:markedpoly}
\end{figure}

We can also define an action of $G_s\times\mathcal{T}$ on pairs $((\De,\vec{c},\vec{\epsilon}),q)$, where $(\De,\vec{c},\vec{\epsilon})$
is a marked weighted polygon and $q\in\De$, by 
\begin{equation}\label{eqn:action_Deq}
(\vec{\epsilon'},\tau) \cdot ((\De,\vec{c},\vec{\epsilon}),q) = \left((\vec{\epsilon'},\tau) \cdot (\De,\vec{c},\vec{\epsilon}),t_{\vec{u},\vec{\lambda}}(q)\right)
\end{equation}
with $u, \lambda$ as in Equation \eqref{eq:u_lambda}. This will be useful later when we need to follow a given point through the orbit defining a marked semitoric polygon.

\paragraph{The marked semitoric polygon associated with a simple semitoric system.}

Now, coming back to the simple semitoric system $(M,\omega,F=(J,H))$, let $B = F(M)$ and let $B_{\text{reg}}$ be the set of regular values of $F$. Let $\vec{\epsilon} \in \{ -1,1 \}^{m_f}$; here we assume that $m_f > 0$ for simplicity, see Remark \ref{rmk:mf0} for the case $m_f = 0$. Let $(x_j,y_j)$, $j \in \{1, \ldots, m_f\}$, be the focus-focus values, with $x_1 < x_2 < \ldots < x_{m_f}$. Let $\ell_j^{\epsilon_j}$ be the vertical half-line starting from $(x_j,y_j)$ and going upwards if $\epsilon_j = 1$ and downwards if $\epsilon_j = -1$, and let 
\[ \ell^{\vec{\epsilon}} = \bigcup_{j=1}^{m_f} \ell_j^{\epsilon_j}. \] 

By \cite[Theorem 3.8]{VNpoly} there exists a homeomorphism $g_{\vec{\epsilon}}: B \to g_{\vec{\epsilon}}(B) \subset \R^2$ of the form 
\begin{equation} g_{\vec{\epsilon}}(x,y) = \left(x, g_{\vec{\epsilon}}^{(2)}(x,y)\right), \qquad \dpar{g_{\vec{\epsilon}}^{(2)}}{y} > 0, \label{eq:dev_map}\end{equation}
whose image $\poly{\vec{\epsilon}} = g_{\vec{\epsilon}}(B)$ is a rational convex polygon, and whose restriction ${g_{\vec{\epsilon}}}_{|B \setminus \ell^{\vec{\epsilon}}}$ is a diffeomorphism from $B \setminus \ell^{\vec{\epsilon}}$ to its image, sends the integral affine structure on $B_{\text{reg}} \setminus \ell^{\vec{\epsilon}}$ to the standard integral affine structure on $\R^2$, and extends to a smooth multivalued map from $B_{\text{reg}}$ to $\R^2$ such that for any $j \in \{1, \ldots, m_f\}$ and $c \in \ell_j^{\epsilon_j}$,
\[ \lim_{\substack{(x,y) \to c\\[.3mm]  x < x_j}} df(x,y) = T \left(\lim_{\substack{(x,y) \to c\\[.3mm]  x > x_j}} df(x,y)\right), \]  
where $T$ is as in Equation \eqref{eqn:T}.

The choice of $\vec{\epsilon}$ corresponds to a choice of cuts (downwards or upwards) at each focus-focus value. For example, in Figure~\ref{fig:markedpoly}
the first cut is downwards and the second is upwards, so $\vec{\epsilon}=(-1,1)$. The triple 
\[ \left( \Delta_{\vec{\epsilon}}, \vec{c} = (g_{\vec{\epsilon}}(x_1,y_1), \ldots, g_{\vec{\epsilon}}(x_{m_f},y_{m_f})), \vec{\epsilon} \right) \]
is an admissible marked weighted polygon. 

Given $\vec{\epsilon} \in \{ -1,1 \}^{m_f}$, the function $g_{\vec{\epsilon}}$ is unique up to left composition by an element of $\mathcal{T}$, and thus the polygon $\poly{\vec{\epsilon}}$ is unique up to the action of $\mathcal{T}$, and the same holds for the marked points. Indeed, the freedom in the construction of  $g_{\vec{\epsilon}}$ corresponds to the choice of a basis of action variables over $B_{\mathrm{reg}}$ as in Remark~\ref{rmk:gcycles}; in principle this is unique up to left composition by an element of $GL_2(\Z)$ and a translation, but because of the conditions in \eqref{eq:dev_map}, only elements in $\mathcal{T}$ are allowed. Moreover, choosing $\vec{\epsilon'} * \vec{\epsilon}$ instead of $\vec{\epsilon}$ (which corresponds to flipping the cut at $(x_j,y_j)$ if $\epsilon_j'=-1$ or taking the same cut otherwise) yields $\poly{\vec{\epsilon'}} = t_{\vec{u},\vec{\lambda}}(\poly{\vec{\epsilon}})$ where 
\[ \vec{u} = \left(\frac{\epsilon_1 - \epsilon_1 \epsilon_1'}{2}, \ldots, \frac{\epsilon_s - \epsilon_s \epsilon_s'}{2}\right), \quad \vec{\lambda} = (x_1, \ldots, x_{m_f}). \]
In other words, the freedom in this construction corresponds to the action of $G_{m_f} \times \mathcal{T}$ as described in Equations \eqref{eq:T_action} and \eqref{eq:Gs_action}. The \emph{marked semitoric polygon} $\poly{(M,\omega,F)}$ of $(M,\omega,F)$ is the orbit of any of the marked weighted polygons $\left( \Delta_{\vec{\epsilon}}, \vec{c} = (g_{\vec{\epsilon}}(x_1,y_1), \ldots, g_{\vec{\epsilon}}(x_{m_f},y_{m_f})), \vec{\epsilon} \right)$ constructed above under this group action.

The \emph{semitoric polygon} defined in~\cite[Section 4.3]{PVNinventiones} is similar to a marked semitoric polygon except that the height of the marks (the second coordinate of each of $\vec{c}_1, \ldots, \vec{c}_{m_f}$) is not included. We will call this an \emph{unmarked semitoric polygon} to emphasize the difference.

\begin{rmk}
\label{rmk:mf0}
The marked (and unmarked) semitoric polygon is still well-defined when $m_f = 0$ (the so-called \emph{toric type} systems), see for instance \cite[Definition 5.26]{VNSepe}. Of course, in that case, there is no marked point and no cut, and the only choice is the one of a global basis of action variables, giving the developing map $g$ (of the form $g(x,y) = (x,g^{(2)}(x,y))$ as above). This map is unique up to the action of $\mathcal{T}$, and the semitoric polygon is the orbit through $\mathcal{T}$ of $g(F(M))$. Furthermore, if $m_f=0$ then the marked and unmarked semitoric polygons are
the same. Of course, if the system is not only of toric type but toric (and $M$ is compact), this polygon is nothing but its Delzant polytope. As such, the construction of this invariant can be seen as a generalization of the convexity results for compact toric systems~\cite{Ati, GuiSte, Del}. Similar convexity results have been obtained in various contexts, such as for non compact toric manifolds~\cite{KarLer}, for toric orbifolds~\cite{LerTol}, for torus actions on some particular spaces called quasifolds to allow possibly non rational polytopes~\cite{Pra}, for presymplectic~\cite{RatZun} and log-symplectic~\cite{GLPR} toric manifolds. In~\cite{RatWacZun}, the authors prove several (positive and negative) convexity results for integrable systems possessing non-degenerate singularities with elliptic or focus-focus components only; in fact, the reader interested in convexity results in other contexts can find numerous references in~\cite{RatWacZun}, as well as in~\cite{GuiSja}, for instance.
\end{rmk}

\begin{rmk}
The idea behind the construction of the semitoric polygon in \cite{VNpoly} is as follows. Near any regular value of the momentum map $F$, one can apply the action-angle theorem to obtain a set of action variables of the form $(J,K)$ (or equivalently a local toric momentum map); then one wants to try to extend these actions to the whole set $B_{\text{reg}}$ of regular values of $F$. Of course, this cannot be done because of the non-trivial monodromy induced by the presence of focus-focus singularities~\cite{Zou, Mat96, Zung97}. The momentum map images of the focus-focus singularities are isolated singular values in $B$, which therefore obstruct $B_{\text{reg}}$ from being simply connected. The vertical cuts serve to create a simply connected set $B_{\text{reg}} \setminus \ell^{\vec{\epsilon}}$ from $B_{\text{reg}}$ in a way that is compatible with $J$; this set can then be endowed with global actions of the desired form, thus providing a toric momentum map on $F^{-1}(B_{\text{reg}} \setminus \ell^{\vec{\epsilon}})$ and its associated image, which is a polygon. The change of slope at a fake or hidden Delzant corner reflects the way that the affine structure changes when traveling along a loop in $B_{\mathrm{reg}}$ enclosing the corresponding focus-focus value, effectively taking into account the monodromy. 
\end{rmk}

\begin{rmk}
\label{rmk:taylor}
As already explained, in the present paper we are not concerned with the Taylor series invariant introduced in~\cite{San_Taylor}, which symplectically classifies the singular Lagrangian foliation in the neighbourhood of a focus-focus fiber. Computing explicitly this Taylor series invariant for concrete examples is a non-trivial task, and several authors calculated some or all terms of this invariant for various systems with focus-focus singularities (and other invariants when these systems were semitoric),
such as the spherical pendulum, the Jaynes-Cummings model, non-trivial couplings of two spins, and more~\cite{san-alvaro-spin,Dul13,LFP,AloDulHoh,AHS,jaume-thesis}. The Taylor series invariant
was generalized in~\cite{PT} to also apply to singular fibers that include multiple focus-focus singularities, and thus are topologically
multi-pinched tori. The semi-local \emph{topological}~\cite{Mat96, Zung97, Zung02} and \emph{smooth}~\cite{BolIzo} classifications of focus-focus fibers have also been investigated. These works are part of a long line of results on classifying integrable systems with two degrees of freedom, see for instance~\cite{Fom88, Fom91, BMF90, Bol94, BolFom94, BolFom94b, Bol95, BolMat96, BolMat98}; for a survey of various classification problems and more references, see~\cite{BolOsh}.
\end{rmk}

\subsection{Additional properties of semitoric systems}

We conclude by collecting some properties of semitoric systems that we will need in the rest of the paper. In such systems, the first component $J$ of the momentum map, which generates a $\S^1$-action, plays a special role. Following Karshon \cite{karshon}, we will call a value $j$ of $J$ (or a fixed point of $J$ in the fiber $J^{-1}(j)$) \emph{extremal} if $j$ is a global minimum or maximum of $J$, and \emph{interior} otherwise.

The following describes the fixed set of the $\S^1$-action generated by the first component, and is a consequence of~\cite[Proposition 3.4]{HSS} which is proved using~\cite[Lemma 2.1]{karshon}.

\begin{lm}\label{lem:dJzero}
 Let $(M,\om,(J,H))$ be a semitoric integrable system and let $M^{\S^1}$ denote the set of points
 fixed by the $\S^1$-action generated by the Hamiltonian flow of $J$, 
 equivalently $M^{\S^1}$ is the set of points $p$ such that $dJ(p)=0$.
 Then the connected components
 of $M^{\S^1}$ are all either points or diffeomorphic to $\S^2$; the latter case can only occur if the
 fixed surface is exactly the set $J^{-1}(j)$ where $j$ is an extremal value of $J$. 
\end{lm}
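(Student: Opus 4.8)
\textbf{Proof plan for Lemma~\ref{lem:dJzero}.}

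The plan is to reduce the statement to the structure theory of Hamiltonian $\S^1$-actions on four-manifolds as developed by Karshon~\cite{karshon}, using the semitoric hypothesis only to rule out the problematic local model. First I would recall that $M^{\S^1} = \{p \in M : dJ(p) = 0\}$ is a closed submanifold each of whose connected components is a symplectic submanifold of $(M,\om)$, hence of even dimension $0$, $2$, or $4$; the dimension-$4$ case is excluded because the $\S^1$-action is effective and $M$ is connected. So every component is either an isolated fixed point or a closed symplectic surface $\Sigma$. For such a surface, the normal bundle is a symplectic line bundle on which $\S^1$ acts with some nonzero weight, and near $\Sigma$ the manifold looks like the total space of this bundle with the linear rotation action on the fibers; this is the local normal form from~\cite[Lemma 2.1]{karshon}, which is exactly what~\cite[Proposition 3.4]{HSS} invokes.

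The key step is to show $\Sigma \cong \S^2$ and that $\Sigma$ sits over an extremal value of $J$. For the latter: if $j = J(\Sigma)$ were an interior value, then on one side of $\Sigma$ the function $J$ would increase and on the other it would decrease, but since $\Sigma$ is pointwise fixed the Hessian of $J$ in the normal directions governs this behavior, and one checks that an interior fixed surface forces $H$ (or rather, the reduced dynamics transverse to $J$) to have a degenerate behavior incompatible with all singular points of $F=(J,H)$ being non-degenerate of the allowed semitoric types. Concretely, pick $p \in \Sigma$; then $p$ is a rank-zero singular point of $F$ (since $dJ(p)=0$ and $p \in \Sigma$ forces the $H$-dynamics to be tangent), and the reduced characteristic polynomial analysis from Section~\ref{subsect:sing_dim4} applied to $A_{\nu,\mu} = \Omega_p^{-1}(\nu\, d^2 J(p) + \mu\, d^2 H(p))$ shows that a two-dimensional fixed space (the tangent space $T_p\Sigma$, on which both Hessians may be treated as zero after reduction) is incompatible with $A_{\nu,\mu}$ having two distinct nonzero eigenvalue-pairs unless the normal weight structure is of the extremal (elliptic) kind — i.e.\ the transverse slice contributes an elliptic block, which by the local normal form pins $J$ to a max or min normal to $\Sigma$. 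This is essentially the content of~\cite[Proposition 3.4]{HSS}, so I would cite it for this step rather than reproving it.

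For $\Sigma \cong \S^2$: once we know $\Sigma$ lies over an extremal value, say the maximum, the restriction $H|_\Sigma$ is a smooth function on the closed surface $\Sigma$ whose critical points are precisely the rank-zero singular points of $F$ lying on $\Sigma$, and by the semitoric non-degeneracy these are non-degenerate in the Morse sense (an elliptic-elliptic point of $F$ restricts to a non-degenerate extremum of $H|_\Sigma$; one must check no saddles arise, which again follows from the no-hyperbolic-components condition via Lemma~\ref{lm:nondeg_rankone_red} applied to nearby regular levels of $J$). A closed surface admitting a Morse function with only maxima and minima — no saddles — must be $\S^2$. Alternatively, and perhaps more cleanly, I would just invoke that in Karshon's classification the only fixed surfaces that occur for Hamiltonian $\S^1$-actions on closed four-manifolds at extremal levels, under the constraint coming from the Delzant-type combinatorics forced by semitoricity, are spheres; this is again packaged in~\cite[Proposition 3.4]{HSS}.

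\textbf{Main obstacle.} The delicate point is the exclusion of an interior fixed surface and of saddle-type critical points of $H|_\Sigma$: this is where one genuinely uses that $F$ has no hyperbolic singularities, and it requires carefully relating the normal-form block structure of a rank-zero point on $\Sigma$ to the admissible Williamson types. Since~\cite[Proposition 3.4]{HSS} already carries out exactly this analysis (built on~\cite[Lemma 2.1]{karshon}), the honest approach here is to state the reduction to that proposition precisely and cite it, rather than to reconstruct its proof.
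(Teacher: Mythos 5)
The paper offers no independent proof of Lemma~\ref{lem:dJzero}: it is stated as a direct consequence of \cite[Proposition 3.4]{HSS}, which in turn rests on the local normal form of \cite[Lemma 2.1]{karshon}. So your bottom-line strategy of reducing the statement to that proposition and citing it is exactly the route the paper takes, and as a proof-by-citation it is fine.

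The sketch you wrap around the citation, however, contains steps that fail. It is not true that every $p\in\Sigma$ is a rank-zero singular point of $F$: invariance of $H$ under the $\S^1$-action only gives $X_H(p)\in T_p\Sigma$, so a generic point of a fixed surface is a rank-one (elliptic-transverse) point, and $\Sigma$ carries only finitely many fixed points of $F$. For instance, the fixed sphere $J^{-1}(0)$ of the system \eqref{system:W1_movingAB} contains exactly the two fixed points $A_t,B_t$ of Lemma~\ref{lm:fixed_points_W1ns}. The correct statement is that the rank-zero points of $F$ on $\Sigma$ are precisely the critical points of $H_{|\Sigma}$: if $d(H_{|\Sigma})(p)=0$ then $X_H(p)\in T_p\Sigma$ is $\om$-orthogonal to $T_p\Sigma$, hence vanishes since $\om_{|\Sigma}$ is non-degenerate. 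Moreover, at such a point $d^2H(p)$ does not vanish on $T_p\Sigma$ ($H$ is not constant on $\Sigma$), so the computation you propose with $A_{\nu,\mu}$, in which both Hessians are treated as zero after restricting to $T_p\Sigma$, does not go through as written.

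You also locate the semitoric hypothesis at the wrong step. Excluding an interior fixed surface needs no non-degeneracy assumption at all: effectiveness forces the weight on the normal bundle of $\Sigma$ to be $\pm 1$, so by the local normal form $J$ equals $j\pm\frac{1}{2}\abs{z}^2$ near $\Sigma$; thus $\Sigma$ is a local extremum of the Morse--Bott function $J$, and connectivity of the level sets of the momentum map of a Hamiltonian $\S^1$-action on a compact connected manifold upgrades this to a global extremum with $J^{-1}(j)=\Sigma$ --- which also yields the claim that the fixed surface is \emph{exactly} $J^{-1}(j)$, a part of the statement your sketch does not address. Where semitoricity genuinely enters is the genus-zero conclusion: the fixed points of $F$ on $\Sigma$ cannot be focus-focus (those are isolated fixed points of the action) and have no hyperbolic blocks, so by the Eliasson normal form $H_{|\Sigma}$ is Morse with only maxima and minima, whence $\Sigma\cong\S^2$. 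Note that Lemma~\ref{lm:nondeg_rankone_red} is not available at this level of $J$ (every point of $J^{-1}(j)$ is critical for $J$), so the absence of saddles should be argued through the normal form at the elliptic-elliptic points rather than through nearby regular levels, as is done in \cite{HSS}.
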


\begin{rmk}
 The fixed surface at an extremal value $j$ of $J$ described in Lemma~\ref{lem:dJzero} corresponds to a 
 component of the boundary of the momentum map image equal to an interval of the form
 $\{(j,y)\in\R^2\mid a\leq y \leq b\}$, and thus it is sometimes called
 a \emph{vertical wall}. Notice that the singular points at an extremal value of $J$ do not 
 necessarily form a vertical wall; they can also be isolated points.
\end{rmk}

We also want to understand the geometry of the reduced space $M_j^{\text{red}}$ for different values of $j$.

\begin{lm}
\label{lm:sphere}
Let $(M,\omega,(J,H))$ be a semitoric system. The reduced space $M_j^{\text{red}}$ at an interior level $j$ of $J$ is homeomorphic to a $2$-sphere and if $j$ is a regular value of $J$, then $M_j^{\text{red}}$ is diffeomorphic to a $2$-sphere.
If $j$ is an extremal value of $J$ then $M_j^{\text{red}}$ is either diffeomorphic to a $2$-sphere or a point.
\end{lm}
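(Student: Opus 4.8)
<br>

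The plan is to analyze the reduced space $M_j^{\text{red}} = J^{-1}(j)/\S^1$ using the structure theory of Hamiltonian $\S^1$-manifolds in dimension four, combined with the special properties that semitoric systems impose (no hyperbolic singularities, and the description of $M^{\S^1}$ from Lemma~\ref{lem:dJzero}). First I would handle the case where $j$ is a regular value of $J$: then $J^{-1}(j)$ is a compact connected three-manifold carrying a free $\S^1$-action, so $M_j^{\text{red}}$ is a closed orientable surface, and I would argue it must be $\S^2$ by finding a Morse function with exactly two critical points. The natural candidate is $H^{\text{red},j}$: by Lemma~\ref{lm:nondeg_rankone_red} its critical points correspond to rank one singular points of $F$ in $J^{-1}(j)$, which are all non-degenerate (since the system is semitoric) and of elliptic type (since $k_h=0$ forces elliptic-transverse), hence are local extrema in the Morse sense. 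A Morse function on a closed surface all of whose critical points are extrema has exactly one maximum and one minimum, forcing the surface to be $\S^2$. Connectedness of $J^{-1}(j)$ itself follows from properness of $J$ together with the connectedness of fibers of momentum maps of Hamiltonian $\S^1$-actions (Atiyah–Guillemin–Sternberg), or can be cited from~\cite{karshon}.

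Next I would treat an interior singular value $j$. Here $J^{-1}(j)$ may contain rank zero fixed points, and near such a point the reduced space can acquire a cone-like (orbifold or topological) singularity, so one only gets a homeomorphism to $\S^2$ rather than a diffeomorphism. By Lemma~\ref{lem:dJzero}, an interior value of $J$ can only have isolated fixed points, never a fixed surface, so $J^{-1}(j)$ contains finitely many fixed points and the quotient is locally modeled near each of them on $\C^2/\S^1$ with some weights, which is topologically a cone on a lens space quotient — in real dimension two this is topologically a disk. Away from these points the action is locally free, so the quotient is a topological surface; gluing in the disk neighborhoods of the images of the fixed points shows $M_j^{\text{red}}$ is a closed topological surface. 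To pin down that it is $S^2$ one again uses $H^{\text{red},j}$, now as a continuous function whose only critical behavior occurs at images of elliptic-transverse rank one points (local extrema) and at images of rank zero elliptic-elliptic points (also local extrema, since $k_h=0$); a continuous function on a closed surface with only finitely many local extrema and no other critical structure forces the Euler characteristic to be $2$. Alternatively, and perhaps more cleanly, I would invoke the general structure of symplectic reductions of four-dimensional Hamiltonian $\S^1$-manifolds at interior levels, which are known to be homeomorphic to $S^2$ (this is essentially contained in the Atiyah connectedness argument or in Karshon's and Audin's work).

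For the extremal case, $j$ a minimum or maximum of $J$: by Lemma~\ref{lem:dJzero} the fixed set $J^{-1}(j)$ is either a single point or a copy of $\S^2$. If it is a single point, then $J^{-1}(j) = \{p\}$ collapses entirely, so $M_j^{\text{red}}$ is a point. If $J^{-1}(j)$ is a fixed $\S^2$, then the $\S^1$-action on $J^{-1}(j)$ is trivial, so $M_j^{\text{red}} = J^{-1}(j) \cong \S^2$; in this case it is genuinely a smooth sphere. This covers all stated alternatives.

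The main obstacle I anticipate is the interior singular value case: showing that the quotient is globally homeomorphic to $S^2$ (and not some other surface, nor something non-Hausdorff or non-manifold) requires care about the local models at rank zero points and a clean global argument. The cleanest route is probably to note that $J^{-1}(j)$ for an interior value is connected and, after collapsing $\S^1$-orbits, the only points where the quotient could fail to be a topological manifold are images of $\S^1$-fixed points, but near an isolated $\S^1$-fixed point in a $4$-manifold the quotient $\C^2/\S^1$ is a topological $\R^2$ (the cone on $S^3/\S^1 = S^2$ is $\R^3$, wait — one must be careful with dimensions: $S^3/\S^1$ with a weighted circle action is a $2$-sphere only when the weights are coprime, giving a weighted projective line, and the cone on it is topologically $\R^2$ since any cone on $S^2$... no). I would instead directly use that $M_j^{\text{red}}$ is the reduced space of a compact connected Hamiltonian $\S^1$-four-manifold and cite that such reduced spaces at connected interior levels are $S^2$ up to homeomorphism, then only need the Morse-theoretic refinement for the regular-value smoothness claim, where everything is genuinely smooth and the argument via $H^{\text{red},j}$ is rigorous.
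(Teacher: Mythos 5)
Your treatment of the regular-value case (reduced Hamiltonian is Morse with only elliptic, hence extremal, critical points, so the smooth closed reduced surface is $\S^2$) and of the extremal case (the level set is a fixed component, hence a point or a fixed sphere by Lemma~\ref{lem:dJzero}, on which the action is trivial) matches the paper's proof. The genuine gap is in the interior \emph{singular} value case, and it is not a cosmetic one. The general fact you propose to cite as a fallback --- that reduced spaces of compact four-dimensional Hamiltonian $\S^1$-manifolds at interior levels are homeomorphic to $S^2$ --- is false without further hypotheses: take $\Sigma_g \times \S^2$ with the circle rotating the second factor and $J$ the height on that factor; the interior reduced spaces are genus-$g$ surfaces. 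It is precisely the semitoric structure (isolated interior fixed points, fixed surfaces only at extrema and only spheres, no hyperbolic blocks) that rules this out, so this case cannot be outsourced to a general statement and must be argued. Your Morse-style fallback also misidentifies the critical structure: for an interior singular value $j$ the fixed points in $J^{-1}(j)$ are elliptic-elliptic \emph{or focus-focus}, and at a focus-focus point the image in $M_j^{\text{red}}$ is not a local extremum of $H^{\text{red},j}$ --- the corresponding fiber of $F$ is a pinched torus whose collapsing cycle is the $\S^1$-orbit, so the reduced level set is a circle passing through the cone point of the reduced space. Hence "only finitely many local extrema and no other critical structure" is not established (and, for a merely continuous function on a non-smooth quotient, an extrema count of this kind is not by itself a rigorous determination of the Euler characteristic). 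Finally, your local-model discussion conflates $\C^2/\S^1$, which is three-dimensional, with the two-dimensional reduced space; the relevant local statement is that $J^{-1}(j)$ near an isolated fixed point is a cone over a $2$-torus whose $\S^1$-quotient is a cone over a circle, i.e.\ a disk --- you noticed the dimensional trouble but left it unresolved.

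For comparison, the paper avoids all of this by fibering $M_j^{\text{red}}$ by the level sets of $H^{\text{red},j}$: for $c$ in the interior of the image, $(H^{\text{red},j})^{-1}(c) = F^{-1}(j,c)/\S^1$ is a circle whether $(j,c)$ is a regular value (torus fiber) or a focus-focus value (pinched torus, with the $\S^1$-direction the vanishing cycle), while the two extreme levels are single points (an elliptic-elliptic point, or an elliptic-regular circle that is a single $\S^1$-orbit). This exhibits the reduced space topologically as a sphere and handles exactly the focus-focus points on which your sketch breaks down. If you want to salvage your approach, you would need to replace the false general citation by this fiberwise analysis, or by an argument using Lemma~\ref{lem:dJzero} together with an explicit study of the local structure of $J^{-1}(j)/\S^1$ at the images of the interior fixed points.
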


\begin{proof}

Suppose that $j$ is an interior value of $J$. Then the image of $H^{\text{red},j}$ is an interval $[a,b]$, and it suffices to prove that $(H^{\text{red},j})^{-1}(a)$ and $(H^{\text{red},j})^{-1}(b)$ are each a point and that for every $c\in(a,b)$, $(H^{\text{red},j})^{-1}(c)$ is homeomorphic to a circle. Let $c\in(a,b)$ and note
that $(H^{\text{red},j})^{-1}(c) = F^{-1}(j,c)/\S^1$. If $(j,c)$ is a regular value then $F^{-1}(j,c)$ is a $2$-torus and 
$(H^{\text{red},j})^{-1}(c)$ is homeomorphic to a circle. Since $(j,c)\in\mathrm{int}(F(M))$ the only other possibility is that
$(j,c)$ is the image of one or more focus-focus points so $F^{-1}(j,c)$ is a (multiply) pinched torus; since the $\S^1$-action associated to $J$
is in the direction of the vanishing cycle we again have that $(H^{\text{red},j})^{-1}(c)$ is homeomorphic to a circle. Next, notice
that in any case $(H^{\text{red},j})^{-1}(a)$ and $(H^{\text{red},j})^{-1}(b)$ are each a point; indeed, if $(j,a)$ is an elliptic-elliptic point then $F^{-1}(j,c)$ is already
a point and if $(j,a)$ is the image of an elliptic-regular point then $F^{-1}(j,c)$ is a circle and the flow of $J$ gives the
non-trivial $\S^1$-action on this space.

If $j$ is a regular value of $J$, then $H^{\text{red},j}$ is a Morse function with no hyperbolic points on the smooth closed surface $M_j^{\text{red}}$ (recall that the fibers of $J$ are compact); this is a consequence of Lemma~\ref{lm:nondeg_rankone_red} since the rank one singular points of $(J,H)$ are all non-degenerate of elliptic-transverse type. Hence $M_j^{\text{red}}$ is diffeomorphic to a $2$-sphere. 

If $j$ is an extremal value of $J$, then by Lemma~\ref{lem:dJzero}, $J^{-1}(j)$ is diffeomorphic to a sphere or a point, and $M_j^{\text{red}} = J^{-1}(j)$ since the action of $J$ on $J^{-1}(j)$ is trivial.
\end{proof}

\begin{lm}
\label{lm:fixed_semitoric}
Let $(M,\omega,F=(J,H))$ be a semitoric system, and let $m$ be a fixed point of $J$ which does not belong to a fixed sphere. 
Then $m$ is a fixed point of $F$.
\end{lm}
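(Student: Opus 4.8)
The plan is to show that $m$ is a rank zero singular point of $F$, i.e.\ that $dJ(m)=0$ implies $dH(m)=0$ as well. Since $m$ is a fixed point of $J$, we already have $dJ(m)=0$, so $m$ is automatically a singular point of $F$; the only thing to rule out is that $m$ has rank one, which would mean $dH(m)\neq 0$. So suppose for contradiction that $dH(m)\neq 0$.

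The key idea is to use the $\S^1$-action generated by $J$ near $m$, together with the classification of singular points. Since $m$ is a fixed point of the $\S^1$-action but does \emph{not} lie on a fixed surface, by Lemma~\ref{lem:dJzero} the connected component of $M^{\S^1}$ containing $m$ is just the isolated point $\{m\}$. Linearizing the $\S^1$-action at $m$ (using the equivariant Darboux/Bochner linearization), we get weights $(a,b)\in\Z^2\setminus\{0\}$ for the action on $T_mM\cong\C^2$, and since $m$ is isolated in the fixed set both weights are nonzero. Now consider the other Hamiltonian: $\{J,H\}=0$ means $H$ is invariant under the $\S^1$-action, so $dH(m)$ must be a fixed covector for the (cotangent lift of the) action on $T_m^*M$; but the action on $T_m^*M$ has weights $(-a,-b)$, both nonzero, so the only fixed covector is $0$. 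Hence $dH(m)=0$, contradicting the assumption. Therefore $m$ is a fixed point of $F$.

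Alternatively, and perhaps cleaner to write, I would argue via reduction or via the local normal form: near $m$ one can choose coordinates in which $J$ has the elliptic-elliptic-type quadratic form $\tfrac{a}{2}(x_1^2+y_1^2)+\tfrac{b}{2}(x_2^2+y_2^2)$ up to higher order (this is just the statement that an isolated fixed point of a Hamiltonian $\S^1$-action in dimension four is non-degenerate with purely elliptic components for $J$ viewed alone, which follows from the weights being nonzero); then $H$ commuting with $J$ and $dH(m)\neq 0$ is impossible because any smooth function Poisson-commuting with such a $J$ must be a function of $(x_1^2+y_1^2, x_2^2+y_2^2)$ near $m$ (when $a,b$ are independent) or at least annihilated to first order, forcing $dH(m)=0$. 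I would phrase the final writeup using the weight argument on $T_m^*M$ since it is the most transparent and avoids invoking normal forms for $H$.

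The main obstacle I anticipate is handling the case where the weights $a,b$ of the $\S^1$-action at $m$ are not independent over $\Q$ (e.g.\ equal, or one a multiple of the other), since then the ring of $\S^1$-invariant functions near $m$ is larger and a general invariant $H$ need not be a function of the two quadratic Hamiltonians alone. However, this subtlety does not affect the argument about \emph{first derivatives}: regardless of the weights, as long as both are nonzero, the induced linear $\S^1$-representation on $T_m^*M$ has no nonzero fixed vectors, so the $\S^1$-invariance of $H$ still forces $dH(m)=0$. The only place one truly needs both weights nonzero is to exclude a fixed \emph{line} (weight zero in one factor), which is precisely the fixed-surface case excluded by hypothesis via Lemma~\ref{lem:dJzero}. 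So the proof reduces to: (i) invoke Lemma~\ref{lem:dJzero} to see $m$ is an isolated fixed point of the $\S^1$-action, hence has nonzero weights; (ii) note $H$ is $\S^1$-invariant since $\{J,H\}=0$; (iii) conclude $dH(m)$ is a fixed covector for an action with no nonzero fixed covectors, so $dH(m)=0$; (iv) conclude $m$ is a fixed point of $F$.
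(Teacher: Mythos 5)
Your proof is correct, but it follows a genuinely different route from the paper's. The paper stays inside the singularity-theory framework it has already set up: since $dJ(m)=0$, the point $m$ is singular for $F$; if it were of rank one it would be non-degenerate of elliptic-transverse type (by the semitoric hypothesis), and the rank-one normal form of Theorem~\ref{thm:normalform)} yields a local surface $\Sigma$ of rank one points through $m$ on which, because $dH$ is nonvanishing near $m$, the differential $dJ$ must vanish identically; thus $m$ would lie in a fixed surface of $J$, contradicting the hypothesis. You instead linearize the $\S^1$-action at $m$: Lemma~\ref{lem:dJzero} shows $m$ is an isolated fixed point, so both isotropy weights on $T_mM$ are nonzero; since $\{J,H\}=0$ makes $H$ invariant under the action, $dH(m)$ is a fixed covector of the dual representation, which has no nonzero fixed vectors, hence $dH(m)=0$. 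Your argument is more elementary and more robust in one respect: it never uses non-degeneracy of the singular points of $F$, only the $\S^1$-action and the structure of its fixed set, so it would apply verbatim even at degenerate times of a \family, whereas the paper's proof is tied to the semitoric assumption; conversely, the paper's proof needs no equivariant linearization input and simply recycles the normal form machinery already in place. One small remark: the ``alternative'' sketch in your second paragraph is the weaker part (as you yourself note, the claim that an invariant $H$ is a function of the two quadratic Hamiltonians requires independence of the weights), so the covector-weight argument you settle on in steps (i)--(iv) is indeed the version to write up.
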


\begin{proof}
Since $dJ(m)=0$ we see that $m$ is a singular point; if $m$ is rank zero we are done, and otherwise it is rank one. By the normal form for non-degenerate rank one singular points (see Theorem~\ref{thm:normalform)}), there is a surface $\Sigma\subset M$ through $m$ consisting of rank one singular points. If $dH(m) \neq 0$ then $dH$ does not vanish is a neighborhood $U$ of $m$ and thus $dJ_{|\Sigma\cap U} = 0$; hence $m$ belongs to a fixed surface of $J$.
\end{proof}

\subsection{Examples on $\S^2\times \S^2$}
\label{sec:knownex}

\noindent We conclude this section by describing two examples of semitoric systems on $\S^2 \times \S^2$. Let $\om_{\mathbb{S}^2}$ be the standard symplectic form on $\mathbb{S}^2$ (the one giving total area $4\pi$)
and consider the manifold $\mathbb{S}^2\times\mathbb{S}^2$ with coordinates $(x_1, y_1, z_1, x_2, y_2, z_2)$ obtained from the
usual embedding of each sphere into $\R^3$.
Given $0<R_1<R_2$ and $0\leq t \leq 1$, the \emph{coupled angular momenta system} (introduced in~\cite{SZ} and further studied in~\cite{AHS, LFP}) is given by
$F_t = (J,H_t)$ where
\begin{equation}\label{eqn:coupledmomenta}
 J = R_1z_1 + R_2 z_2, \qquad H_t = (1-t)z_1 + t(x_1 x_2 + y_1 y_2 + z_1 z_2)
\end{equation}
on $\mathbb{S}^2\times\mathbb{S}^2$ with symplectic form $\om_{R_1,R_2} = R_1\om_{\S^2}\oplus R_2\om_{\S^2}$.
This is an example of a semitoric $1$-transition family, and a comparison of its momentum map for varying $t$ with the semitoric
polygons for the system with $t=1/2$ (which is semitoric with one focus-focus point) is shown in Figure~\ref{fig:coupledangular}.

\begin{figure}
\centering
\begin{subfigure}{.95\linewidth}
\centering
\includegraphics[width=450pt]{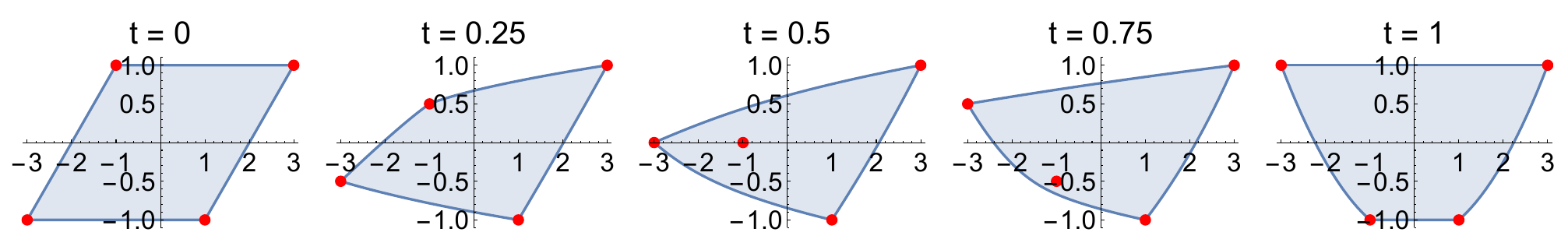}
\caption{The momentum map image for varying values of $t$ and $R_1 = 1$, $R_2 = 2$.}
\end{subfigure}
\begin{subfigure}{.95\linewidth}
\centering
\begin{tikzpicture}[scale=.75]
\filldraw[draw=black, fill=gray!60] (0,0) node[anchor=north,color=black]{}
  -- (2,2) node[anchor=south,color=black]{}
  -- (5,2) node[anchor=south,color=black]{}
  -- (3,0) node[anchor=north,color=black]{}
  -- cycle;
\draw [dashed] (2,1) -- (2,2);
\draw (2,1) node[] {$\times$};
\filldraw[draw=black, fill=gray!60] (8,2) node[anchor=north,color=black]{}
  -- (13,2) node[anchor=south,color=black]{}
  -- (11,0) node[anchor=north,color=black]{}
  -- (10,0) node[anchor=south,color=black]{}
  -- cycle;
\draw [dashed] (10,0) -- (10,1);
\draw (10,1) node[] {$\times$};
\end{tikzpicture}
\caption{Two representatives of the semitoric polygon of the system when $t=1/2$.}
\end{subfigure}
\caption{A comparison of the momentum map image and semitoric polygons for the coupled angular momenta system \eqref{eqn:coupledmomenta}.}
\label{fig:coupledangular}
\end{figure}
This system was generalized in~\cite{HohPal} to create a two-parameter family of systems $F_{s_1,s_2} = (J,H_{s_1,s_2})$ on the same symplectic manifold which has two focus-focus points for $s_1=s_2=\frac{1}{2}$ and transitions between systems with zero, one or two focus-focus points, depending on the parameters. Explicitly,
\begin{equation}\label{eqn:HPsystem}
 \begin{cases}
  J = R_1z_1 + R_2 z_2,\\
  H_{s_1,s_2} = (1-s_1)(1-s_2) z_1 + s_1 s_2 z_2      + s_1(1-s_2)(X + z_1 z_2) + s_2(1-s_1)(X - z_1 z_2),
 \end{cases}
\end{equation}
where $X = x_1 x_2 + y_1 y_2$ (this $X$ is the same as the one discussed in the beginning of Section~\ref{sec:semitorictransfam}).
The momentum map image is shown in Figure~\ref{fig:HPsystem}. The coupled angular momenta system is obtained by taking $(s_1,s_2) = (t,0)$. The parameter values with $R_1=R_2$ are usually excluded since for some values of $s_1, s_2$ this can cause degeneracies, but 	
in Section~\ref{sec:semitoricfam-degen} this constitutes an important example of the behavior of a semitoric family at degenerate times.

\begin{figure}
\centering
\begin{subfigure}{.95\linewidth}
\centering
\includegraphics[width=.94\linewidth]{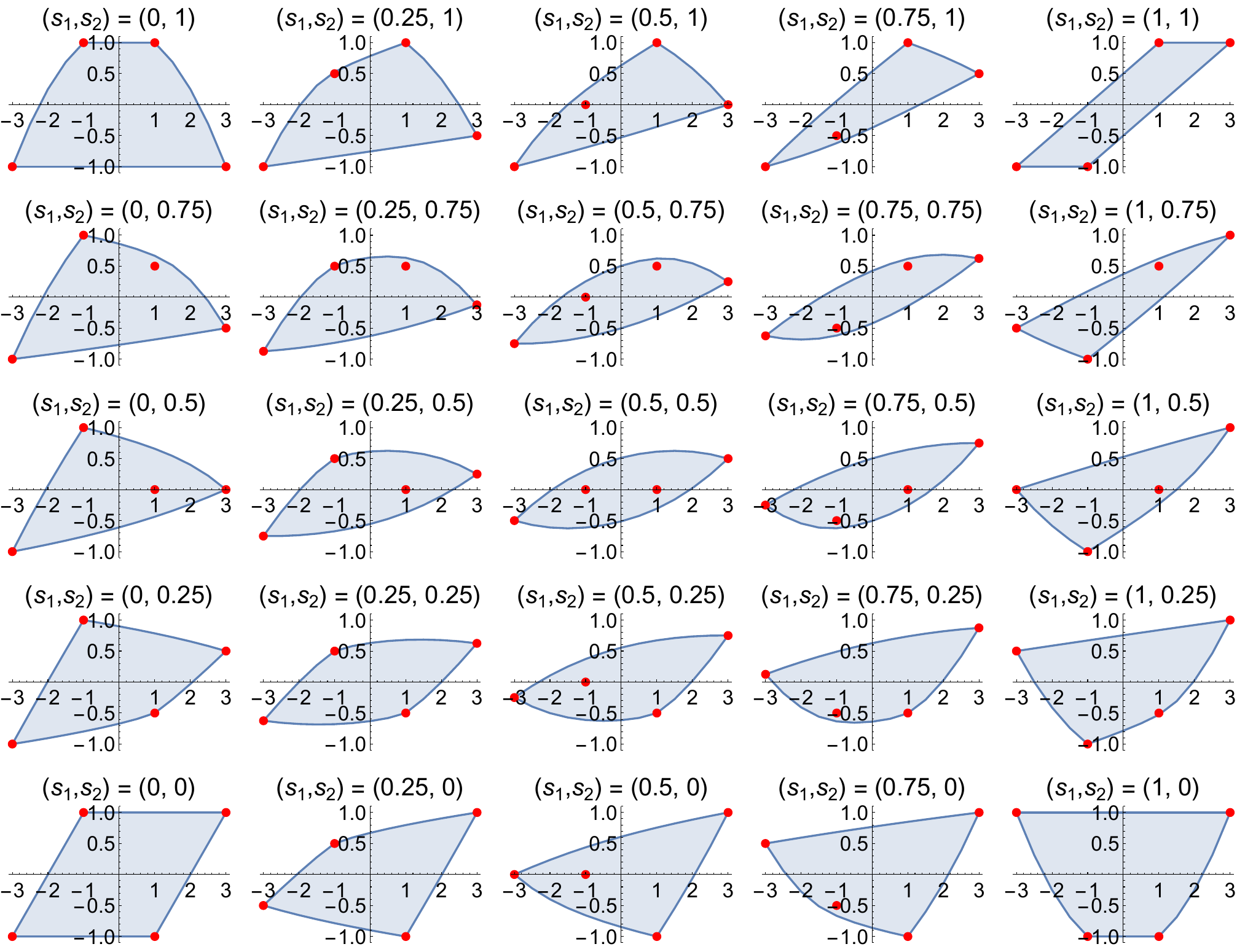}
\caption{The momentum map image of the system for various values of $s_1, s_2\in [0,1]$ with $R_1 = 1$, $R_2 = 2$.}
\end{subfigure}
\begin{subfigure}{.95\linewidth}
\centering
\begin{tikzpicture}[scale=.8]
\filldraw[draw=black, fill=gray!60] (0,0) node[anchor=north,color=black]{}
  -- (2,2) node[anchor=south,color=black]{}
  -- (5,2) node[anchor=south,color=black]{}
  -- (3,0) node[anchor=north,color=black]{}
  -- cycle;
\draw [dashed] (2,1) -- (2,2);
\draw (2,1) node[] {$\times$};
\draw [dashed] (3,1) -- (3,0);
\draw (3,1) node[] {$\times$};
\filldraw[draw=black, fill=gray!60] (6,2) node[anchor=north,color=black]{}
  -- (11,2) node[anchor=south,color=black]{}
  -- (9,0) node[anchor=north,color=black]{}
  -- (8,0) node[anchor=south,color=black]{}
  -- cycle;
\draw [dashed] (8,1) -- (8,0);
\draw (8,1) node[] {$\times$};
\draw [dashed] (9,0) -- (9,1);
\draw (9,1) node[] {$\times$};

\filldraw[draw=black, fill=gray!60] (6,2.5) node[anchor=north,color=black]{}
  -- (9,5.5) node[anchor=south,color=black]{}
  -- (11,5.5) node[anchor=north,color=black]{}
  -- (8,2.5) node[anchor=south,color=black]{}
  -- cycle;
\draw [dashed] (8,3.5) -- (8,2.5);
\draw (8,3.5) node[] {$\times$};
\draw [dashed] (9,5.5) -- (9,4.5);
\draw (9,4.5) node[] {$\times$};

\filldraw[draw=black, fill=gray!60] (0,3) node[anchor=north,color=black]{}
  -- (2,5) node[anchor=south,color=black]{}
  -- (3,5) node[anchor=north,color=black]{}
  -- (5,3) node[anchor=south,color=black]{}
  -- cycle;
\draw [dashed] (3,5) -- (3,4);
\draw (3,4) node[] {$\times$};
\draw [dashed] (2,4) -- (2,5);
\draw (2,4) node[] {$\times$};
\end{tikzpicture}
\caption{Four representatives of the semitoric polygon of the system when $s_1=s_2=\frac{1}{2}$.}
\label{fig:stpolygon-2ff}
\end{subfigure}
\caption{Comparison of the image of the momentum map for the system in Equation~\eqref{eqn:HPsystem} and representatives of the semitoric polygon for $s_1=s_2=\frac{1}{2}$.}
\label{fig:HPsystem}
\end{figure}

\section{\texorpdfstring{\Families}{Fixed-S1 families}}
\label{sec:firstprop}

In this section we prove some general facts about \families, semitoric families, and semitoric transition families, as defined in Definitions~\ref{def:family} and~\ref{dfn:transition_fam}.

\subsection{First properties}
%In this section we prove some basic facts about \families.

Throughout this section let $(M,\om,F_t=(J,H_t))$ be a \family
as in Definition~\ref{def:family}. The following is a special case of~\cite[Proposition 2.8]{HohPal},
 and holds in general for families $(M,\om,(J_t,H_t))$ even in cases when $J_t$ does not generate an $\S^1$\--action or does not depend on $t$,
 see~\cite{HohPal}.

\begin{lm}\label{lem:HPdegen}
 Suppose that $p\in M$ and $t_0 \in (0,1)$ are such that $p$ is a fixed point
 for all $t$ in a neighborhood of $t_0$,
 $p$ is of focus-focus type for $t>t_0$,
 and of elliptic-elliptic type
 for $t<t_0$. Then $p$ is a degenerate fixed point for $t=t_0$.
\end{lm}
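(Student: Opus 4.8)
The statement is essentially a continuity/openness argument combined with the characterization of non-degeneracy from Section~\ref{subsect:sing_dim4}. Recall that since $p$ is a fixed point of $F_t$ for all $t$ near $t_0$, non-degeneracy of $p$ at parameter $t$ is governed by the reduced characteristic polynomial $\chi_{\nu,\mu}^t$ of the matrix $A_{\nu,\mu}^t = \Omega_p^{-1}(\nu\, d^2 J(p) + \mu\, d^2 H_t(p))$: the point is non-degenerate iff there exist $\nu,\mu$ for which $\chi_{\nu,\mu}^t$ has two distinct nonzero roots $\lambda_1^t,\lambda_2^t$, and the Williamson type is read off from the signs of (or whether they are real) these roots. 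The whole argument will be phrased in terms of tracking these two roots as functions of $t$.

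The plan is to argue by contradiction: assume $p$ is non-degenerate at $t=t_0$. Then there is a fixed pair $(\nu,\mu)$ (and, since the non-degenerate locus in $(\nu,\mu)$-space is open and dense, an open set of such pairs) for which $\chi_{\nu,\mu}^{t_0}$ has two distinct nonzero roots; since the entries of $A_{\nu,\mu}^t$ depend smoothly (indeed the coefficients of $\chi$ depend smoothly) on $t$ through $d^2 H_t(p)$, for $t$ in a neighborhood of $t_0$ the polynomial $\chi_{\nu,\mu}^t$ still has two distinct nonzero roots, so $p$ is non-degenerate for all $t$ near $t_0$, and its Williamson type cannot jump. Here I would invoke the fact that the type of a non-degenerate fixed point is locally constant: the two roots $\lambda_1^t,\lambda_2^t$ move continuously, they never collide and never hit $0$ on this neighborhood, so the pair $\{\lambda_1^t,\lambda_2^t\}$ either stays a pair of (distinct, nonzero) negative reals, or stays elliptic-hyperbolic, etc. In particular it cannot be elliptic-elliptic for $t<t_0$ and focus-focus for $t>t_0$, because passing from two negative real roots to a complex-conjugate pair with nonzero imaginary part forces the two roots to collide at some intermediate parameter (a real double root), contradicting that they stay distinct. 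This contradiction shows $p$ must be degenerate at $t_0$.

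More carefully, to make the ``roots move continuously and a real pair cannot become a complex pair without colliding'' step rigorous, I would look directly at the discriminant of the quadratic $\chi_{\nu,\mu}^t$ as a function of $t$: it is a continuous (smooth) function, it is nonzero at $t=t_0$ (two distinct roots), negative for $t>t_0$ (complex roots, focus-focus), and positive for $t<t_0$ (two real roots, elliptic-elliptic), hence by the intermediate value theorem it must vanish at some $t_1$ arbitrarily close to $t_0$ — and at such $t_1$, $\chi_{\nu,\mu}^{t_1}$ has a double root, so for no choice of $(\nu,\mu)$ near our fixed one does it have two distinct roots; combined with density of the good $(\nu,\mu)$-locus this already says $p$ is degenerate at $t_1$. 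To conclude degeneracy precisely at $t_0$ (rather than just nearby), I would instead phrase the dichotomy at $t_0$ itself: if $p$ were non-degenerate at $t_0$ with two distinct nonzero roots, openness of this condition in $t$ gives an entire neighborhood of $t_0$ on which $p$ is non-degenerate with the same type, directly contradicting that the type differs on the two sides of $t_0$.

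\textbf{Main obstacle.} The only genuinely delicate point is justifying that the Williamson type (equivalently, the sign/reality pattern of the roots of the reduced characteristic polynomial) cannot change as long as non-degeneracy is maintained along a path of parameters — i.e. the local constancy of type on the connected non-degenerate locus. This follows from continuity of the roots of a polynomial in its coefficients together with the fact that the forbidden transitions (real$\,\leftrightarrow\,$complex, or a change of sign of a real root) all require a root to pass through a double root or through $0$, both of which are excluded by non-degeneracy; but one should state this cleanly, perhaps citing that this is exactly how the type is shown to be well-defined in Section~\ref{subsect:sing_dim4}. Everything else is a routine continuity argument. (Alternatively one can avoid the characteristic-polynomial bookkeeping entirely and cite~\cite[Proposition 2.8]{HohPal} directly, of which this is the stated special case; but the argument above gives a self-contained proof.)
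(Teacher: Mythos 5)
Your proposal is correct, but it is worth noting that the paper itself gives no argument at all for Lemma~\ref{lem:HPdegen}: it simply records the statement as a special case of \cite[Proposition 2.8]{HohPal} (which is proved there in the greater generality of families $(J_t,H_t)$ where the first component may also vary). Your blind proof is therefore a genuinely different, self-contained route, and it is essentially sound: fixing a pair $(\nu,\mu)$ witnessing non-degeneracy at $t_0$, using smoothness of $t\mapsto d^2H_t(p)$ to keep the roots of $\chi_{\nu,\mu}^t$ distinct and nonzero nearby, and then running the discriminant/intermediate-value argument to rule out a real pair turning into a complex-conjugate pair without a collision. The one step you should state explicitly is the justification for reading the Williamson type at nearby times from your \emph{single} fixed $(\nu,\mu)$: at those times non-degeneracy is part of the hypothesis, and the paper's remark in Section~\ref{subsect:sing_dim4} that the sign/reality pattern of the roots is independent of the choice of $(\nu,\mu)$ in the open dense set is exactly what licenses this; with that in hand the sign of the discriminant of $\chi_{\nu,\mu}^t$ must be positive for $t<t_0$ and negative for $t>t_0$, contradicting its nonvanishing continuity at $t_0$. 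It is also worth observing that your argument is not foreign to the paper: it is the same eigenvalue-tracking technique the authors deploy in case \ref{item:HHbif} of Section~\ref{sec:semitoricfam-degen}, where a common $(\nu_0,\mu_0)$ is extracted by density and the eigenvalues of $A_{\nu_0,\mu_0}^t$ are followed through the Hamiltonian--Hopf bifurcation. What the citation route buys is brevity and the more general statement (degeneracy also when $J$ varies with $t$); what your route buys is a self-contained proof within the framework already set up in Section~\ref{subsect:sing_dim4}.
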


 We also have the following.

\begin{lm}\label{lem:stayssingular}
Let $(M,\om,(J,H_t))$ be a \family and
suppose that $p\in M$ is a rank zero singular point for some $t_0\in [0,1]$.
Then $p$ is a singular point for all $t\in [0,1]$ (but not necessarily of rank zero).
\end{lm}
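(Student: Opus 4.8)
The plan is to use the fact that $p$ being a rank zero singular point at time $t_0$ means $dJ(p) = 0$ (since $F_{t_0} = (J, H_{t_0})$ has differential with both rows vanishing at $p$, in particular $dJ(p) = 0$). The crucial observation is that the first component $J$ does \emph{not} depend on $t$, so the condition $dJ(p) = 0$ continues to hold for all $t \in [0,1]$. Hence at every time $t$, the differentials $dJ(p)$ and $dH_t(p)$ are linearly dependent (indeed $dJ(p) = 0$), so $p$ is a singular point of $F_t = (J, H_t)$ for every $t$.

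First I would recall the definitions: a point $m$ is a singular point of rank $r$ if the span of $\{X_J(m), X_{H_t}(m)\}$ has dimension $r$; equivalently, since $\om$ is nondegenerate, if the span of $\{dJ(m), dH_t(m)\}$ has dimension $r$. So $p$ having rank zero at $t_0$ forces both $dJ(p) = 0$ and $dH_{t_0}(p) = 0$. Then I would point out that the first equality, $dJ(p) = 0$, is a $t$-independent condition (it only involves $J$, which is fixed throughout the \family by Definition~\ref{def:family}). Therefore for any $t \in [0,1]$ we have $\dim \mathrm{span}\{dJ(p), dH_t(p)\} \leq 1 < 2$, which by definition means $p$ is a singular point of $F_t$. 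The rank could be $0$ or $1$ depending on whether $dH_t(p)$ vanishes, which is why the parenthetical caveat in the statement is there.

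There is essentially no obstacle here; this is a one-line observation exploiting the rigidity built into the definition of a \family (the $\S^1$-generating component is fixed). The only thing worth being slightly careful about is the identification between linear dependence of Hamiltonian vector fields and linear dependence of differentials, which is immediate from $X_f = -\om^{-1}(df)$ and the nondegeneracy of $\om$. I would state this identification explicitly and then conclude. No compactness or smoothness-in-$t$ hypotheses are needed for this particular lemma, though they are of course part of the standing assumptions.
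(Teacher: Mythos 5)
Your proposal is correct and is essentially identical to the paper's own proof: rank zero at $t_0$ forces $dJ(p)=0$, and since $J$ is independent of $t$ this alone makes $\{dJ(p),dH_t(p)\}$ linearly dependent for every $t$, so $p$ stays singular. The extra remarks about identifying dependence of Hamiltonian vector fields with dependence of differentials are fine but not needed beyond what the paper already records.
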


\begin{proof}
Since $p$ is singular at $t_0$ we have that $dH_{t_0}(p) = dJ(p) = 0$, and since $J$ does not depend on $t$
we see that $dJ(p)=0$ implies that $p$ is singular for all $t$.
\end{proof}

In Theorem~\ref{thm:W1_movingAB} we give an example of a system for which some points change rank between zero and one.

There are many possible types of \families which would be interesting to study, for instance those in which hyperbolic singularities appear as the parameter changes. This would complement \cite{dullin-pelayo}, in which the authors show how to modify the Hamiltonian $H$ locally around a focus-focus point of $(J,H)$ to obtain hyperbolic singularities and get what they call a hyperbolic semitoric system. However, this is a problem of its own and in this paper we will focus on the case of semitoric families; those \families which are semitoric for all but finitely many values of the parameter. Note that hyperbolic semitoric systems naturally appear as well; for instance, such a non-trivial system on $\S^2 \times \R^2$ is explicitly described in \cite[Section 9]{dullin-pelayo} and Example 4.3 in the same article is a simple example of hyperbolic semitoric system with a different behavior. In Section~\ref{subsect:triangle} we give a new example of such a \family on a compact manifold, namely the first Hirzebruch surface.

\subsection{Semitoric families}

In this section let $(M,\om,F_t=(J,H_t))$, $0 \leq t \leq 1$, be a semitoric family with degenerate times $t_1,\ldots, t_k$,
as in Definition~\ref{def:family}.
In Section~\ref{sec:semitoricfam-general} we prove some general facts and in Section~\ref{sec:semitoricfam-degen} we discuss the possible behavior of the system at degenerate times. We will need the following standard lemma from Morse theory, which roughly says that a non-degenerate singular point cannot spontaneously appear, though it can move around in the manifold.
We give a short proof for the sake of completeness.

\begin{lm}
\label{lm:morse_family}
Let $X$ be a smooth manifold, $I$ be an open interval of $\R$, and $H: I \times X \to \R$ be smooth and such that there exists $t_0 \in I$ so that for $t \neq t_0$, $H_t = H(t,\cdot)$ is a Morse function on $M$. Assume that $x_0$ is a non-degenerate singular point of $H_{t_0}$; then there exists $\epsilon > 0$ and an open neighborhood $U$ of $x_0$ in $X$ such that for every $t \in (t_0 - \epsilon, t_0 + \epsilon) \subset I$, there exists a unique singular point $x(t)$ of $H_t$ in $U$. Moreover, $t \mapsto x(t)$ is smooth, and  for every $t \in (t_0 - \epsilon, t_0 + \epsilon) \subset I$, $x(t)$ has the same Morse index as $x_0$.
\end{lm}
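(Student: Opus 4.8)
The natural approach is the implicit function theorem applied to the map $(t,x) \mapsto d H_t(x)$, viewed in local coordinates near $x_0$. The plan is as follows. First I would choose local coordinates $(y_1,\ldots,y_n)$ on an open neighborhood $V$ of $x_0$ in $X$ with $x_0$ corresponding to $y=0$, and define $\Phi \colon I \times V \to \R^n$ by $\Phi(t,y) = \bigl(\partial_{y_1} H(t,y), \ldots, \partial_{y_n} H(t,y)\bigr)$, so that singular points of $H_t$ in $V$ are exactly the zeros of $\Phi(t,\cdot)$. Since $x_0$ is a \emph{non-degenerate} critical point of $H_{t_0}$, the partial Jacobian $D_y \Phi(t_0,0)$ is precisely the Hessian matrix $d^2 H_{t_0}(x_0)$, which is invertible. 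By the implicit function theorem there exist $\epsilon > 0$ and a neighborhood $U \subset V$ of $x_0$, together with a smooth map $t \mapsto x(t)$ defined for $t \in (t_0-\epsilon, t_0+\epsilon)$, with $x(t_0) = x_0$, such that $x(t)$ is the unique zero of $\Phi(t,\cdot)$ in $U$; shrinking $\epsilon$ and $U$ if necessary this gives the existence and uniqueness of the singular point $x(t)$ in $U$, as well as its smooth dependence on $t$.

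For the statement about the Morse index, I would argue by continuity of eigenvalues. The Hessian $d^2 H_t(x(t))$, expressed in the fixed coordinates, is a symmetric matrix depending continuously (indeed smoothly) on $t$. At $t = t_0$ it is invertible by hypothesis, hence has no zero eigenvalue; by continuity of the eigenvalues of a continuously varying symmetric matrix, after possibly shrinking $\epsilon$ once more, the eigenvalues of $d^2 H_t(x(t))$ stay away from $0$ for all $t \in (t_0 - \epsilon, t_0 + \epsilon)$. Consequently $x(t)$ is a non-degenerate critical point of $H_t$ for every such $t$ (which is automatic for $t \neq t_0$ since $H_t$ is assumed Morse, but now also holds at $t_0$), and the number of negative eigenvalues — that is, the Morse index — is locally constant in $t$, hence equal to that of $d^2 H_{t_0}(x_0)$ throughout the interval. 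Note that here it is important that the index is computed in the same coordinate chart for all $t$; by Sylvester's law of inertia the index is independent of the chosen coordinates, so this causes no issue.

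There is no serious obstacle: the only mild subtlety is making sure that uniqueness of $x(t)$ holds in a \emph{fixed} neighborhood $U$ rather than a $t$-dependent one, which is exactly what the implicit function theorem provides, and that the eigenvalue-continuity argument is applied on a possibly smaller interval than the one coming from the implicit function theorem. Both are routine. I expect the proof to be short.
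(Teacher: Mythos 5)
Your proposal is correct and follows essentially the same route as the paper: the implicit function theorem applied to $(t,x)\mapsto d_xH(t,x)$, using invertibility of the Hessian at $(t_0,x_0)$, followed by a continuity-of-eigenvalues argument to keep the Morse index constant. The only cosmetic difference is that the paper invokes the Morse hypothesis for $t\neq t_0$ to rule out an eigenvalue vanishing at some $t_1$ (so no further shrinking of $\epsilon$ is needed), whereas you shrink $\epsilon$ so the eigenvalues stay away from zero; both are valid.
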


\begin{proof}
We may work in a trivialization open set and assume that $X = \R^n$ for some $n$. The first part is then a simple application of the implicit function theorem, since singular points are solutions of $d_x H(t,x) = 0$, and since $d_x (d_x H)(t_0,x_0) = d^2_x H(t_0,x_0)$ is bijective by assumption. Since $H_t$ is Morse for $t \neq t_0$, the singular point $x(t)$ is automatically non-degenerate and its Morse index could only change if at least one eigenvalue of $d^2_x H(t,x(t))$ changed sign, which would imply that it vanishes for some $t_1$ and would contradict the fact that $x(t_1)$ is non-degenerate.
\end{proof}

\subsubsection{General results about semitoric families}
\label{sec:semitoricfam-general}

We start by proving some properties of the rank zero points of $F_t$.

\begin{lm}\label{lem:fixedpointsdontmove}
Let $t_0 \in [0,1]$ and let $p$ be a fixed point of $F_{t_0}$ which does not belong to a fixed surface of $J$. Then $p$ is a fixed point of $F_t$ for all $t\in [0,1]$.
\end{lm}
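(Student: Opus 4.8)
The plan is to reduce the statement to Lemma~\ref{lm:fixed_semitoric} at the non-degenerate times and then pass to the limit at the degenerate times. First, since $p$ is a rank zero singular point of $F_{t_0}$ we have $dJ(p) = dH_{t_0}(p) = 0$; in particular $dJ(p) = 0$, and since $J$ does not depend on $t$ (as already observed in the proof of Lemma~\ref{lem:stayssingular}) the point $p$ lies in $M^{\S^1}$ and is a singular point of $F_t$ for every $t \in [0,1]$. It is also worth noting that the fixed surfaces of the $\S^1$-action generated by $J$ are precisely the connected components of $M^{\S^1}$ that are not isolated points (Lemma~\ref{lem:dJzero}), hence are determined by $J$ alone and do not depend on $t$; thus the hypothesis that $p$ does not belong to a fixed surface of $J$ is a condition on $p$ and $J$ only, and remains in force for all $t$.

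Next I would fix any $t \notin \{t_1, \ldots, t_k\}$, so that $(M,\om,F_t)$ is semitoric. Then $p$ is a fixed point of $J$ which does not belong to a fixed sphere of $J$, so Lemma~\ref{lm:fixed_semitoric} applies to the semitoric system $(M,\om,F_t)$ and shows that $p$ is a fixed point of $F_t$; equivalently, $dH_t(p) = 0$.

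It remains to handle a degenerate time $t_j$ (and, if it happens to be degenerate, $t_0$ itself), where $(M,\om,F_t)$ need not be semitoric so Lemma~\ref{lm:fixed_semitoric} does not apply directly. Here I would choose a sequence $t_n \to t_j$ with $t_n \notin \{t_1, \ldots, t_k\}$, which exists because this set is finite. By the previous paragraph $dH_{t_n}(p) = 0$ for all $n$, and since $H$ is smooth on $[0,1] \times M$ (Definition~\ref{def:family}) the map $t \mapsto dH_t(p)$ is continuous, whence $dH_{t_j}(p) = \lim_n dH_{t_n}(p) = 0$. Combined with $dJ(p) = 0$ this shows that $p$ is a fixed point of $F_{t_j}$, and together with the non-degenerate case this proves that $p$ is a fixed point of $F_t$ for all $t \in [0,1]$.

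The only point requiring any care is this last step at the degenerate times, since the semitoric hypothesis fails there; but the continuity of $t \mapsto dH_t(p)$ combined with the finiteness of the set of degenerate times disposes of it, and the rest is a direct application of Lemmas~\ref{lem:stayssingular} and~\ref{lm:fixed_semitoric}.
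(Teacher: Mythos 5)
Your proof is correct and follows essentially the same route as the paper's: observe that $dJ(p)=0$ is independent of $t$, apply Lemma~\ref{lm:fixed_semitoric} at every non-degenerate time to get $dH_t(p)=0$ there, and use the continuity of $t\mapsto dH_t(p)$ to extend this to the finitely many degenerate times. The additional remark that fixed surfaces are determined by $J$ alone is fine but not needed, since the hypothesis is stated only in terms of $J$.
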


\begin{proof}
Since $p$ is a fixed point of $F_{t_0}$, in particular $dJ(p) = 0$. By Lemma \ref{lm:fixed_semitoric}, $p$ is a fixed point of $F_t$ for any $t \notin \{t_1, \ldots, t_k\}$, since for such $t$ the system is semitoric. This proves that $dH_t(p) = 0$ whenever $t \notin \{t_1, \ldots, t_k\}$; since $t \mapsto dH_t(p)$ is continuous, this in turn implies that $dH_{t_j}(p) = 0$ for $j = 1, \ldots, k$. Hence $p$ is a fixed point of $F_t$ for every $t$. 
\end{proof}

For $t \in [0,1]$, let $N_0(t)$ be the number of rank zero points of $F_t$. 
If $M$ is compact let $\chi(M)$ be the Euler characteristic of $M$.

\begin{lm}
\label{lm:euler}
Suppose $M$ is compact. For every $t \in [0,1] \setminus \{t_1, \ldots, t_k\}$, $N_0(t) = \chi(M)$. Moreover, if there is no fixed sphere for the $\S^1$-action generated by $J$, then $N_0(t) =  \chi(M)$ for every $t \in [0,1]$.
\end{lm}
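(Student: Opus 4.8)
The plan is to use Morse theory for the function $H_t^{\mathrm{red},j}$ on the reduced spaces together with the structure of the Hamiltonian $\S^1$-action, but actually the cleanest route is to exploit Morse theory directly for a suitable ``global'' Morse function built from $H_t$, or — more robustly — to count rank zero points via the fixed point set of the full torus-type local action. Let me sketch the most direct approach. First I would treat $t \in [0,1] \setminus \{t_1,\dots,t_k\}$, so that $(M,\om,F_t)$ is semitoric; the claim is $N_0(t) = \chi(M)$. The key observation is that for a semitoric system the rank zero points are exactly the elliptic-elliptic and focus-focus points, and at each such point there is (by the Eliasson normal form, Theorem~\ref{thm:normalform)}) a local model of $\T^2$ acting on $\C^2$ (elliptic-elliptic) or of an $\S^1 \times \S^1$-type action with one focus-focus block. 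The slick way: pick a generic $\xi = \nu J + \mu H_t$; since all singular points of $F_t$ are non-degenerate with no hyperbolic components, for generic $(\nu,\mu)$ the function $\xi$ is a Morse--Bott function whose critical submanifolds are: the rank zero points of $F_t$ (as isolated critical points, all of even index since the local models are elliptic or focus-focus, both giving even-index critical points), together with possibly critical circles/spheres coming from rank one points lying in a fixed surface of $J$. One then shows that all these higher-dimensional critical submanifolds also contribute $\chi = $ (number of their own fixed/critical points) in a way that, summed up, gives $\chi(M) = \sum (\text{contributions})$; when there is no fixed surface, the higher critical submanifolds are circles contributing $0$ to $\chi$, so $\chi(M) = N_0(t)$ directly.

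More carefully, here is the step order I would follow. Step 1: For generic $(\nu,\mu)\in\R^2$, argue that $\xi = \nu J + \mu H_t$ is Morse--Bott on $M$: its critical set is $\{dJ = dH_t = 0\} \cup \{\text{rank one points where } \nu dH_t + \mu dJ = 0\}$, and genericity in $(\nu,\mu)$ plus non-degeneracy of all singular points of $F_t$ ensures the Hessian is non-degenerate transverse to each critical submanifold. Step 2: Identify the critical submanifolds. The rank zero points of $F_t$ are isolated critical points of $\xi$, and using the Eliasson normal form one checks each has even Morse index (for elliptic-elliptic, $\xi$ looks locally like a non-degenerate quadratic form with signature determined by the signs appearing in the elliptic blocks, giving index $0,2,$ or $4$; for focus-focus, the $x_1y_2 - x_2y_1$, $x_1y_1 + x_2y_2$ combination gives a quadratic form of signature $(2,2)$, index $2$). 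The other critical submanifolds come only from rank one points sitting inside fixed surfaces of $J$ (by Lemma~\ref{lm:fixed_semitoric} and Lemma~\ref{lem:dJzero} such surfaces are $\S^2$'s at extremal $J$-values), and on such a sphere $\xi$ restricts to a Morse function with two critical points. Step 3: Apply the Morse--Bott equality $\chi(M) = \sum_{C} \chi(C)$ over critical submanifolds $C$. Isolated rank zero points each contribute $1$. A critical sphere contributes $\chi(\S^2) = 2$; but such a sphere is itself a fixed surface of $J$ whose two poles are rank zero points of $F_t$ — so ``$2$'' equals the number of rank zero points on it, and this is consistent with counting. A critical circle (if any) contributes $0$, and contains no rank zero points. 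Conclusion: $\chi(M) = N_0(t)$ when there are no fixed spheres, and in general $\chi(M) = N_0(t)$ still holds because each fixed sphere carries exactly two rank zero points matching its Euler characteristic. Step 4: Handle $t \in \{t_1,\dots,t_k\}$ assuming no fixed sphere. Here the system may be degenerate, but by Lemma~\ref{lem:fixedpointsdontmove} every rank zero point of $F_{t_0}$ persists for all $t$, and conversely — using Lemma~\ref{lm:morse_family} applied to $H_t^{\mathrm{red},j}$ on reduced spaces, or to $\xi$ directly on $M$ near a degenerate time — no rank zero point can appear or disappear as $t$ crosses $t_0$ without it being forced to lie on a fixed surface (which we excluded). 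Hence $N_0(t_0) = N_0(t)$ for $t$ near $t_0$ but $\neq t_0$, which equals $\chi(M)$ by Steps 1--3.

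The main obstacle I anticipate is Step 4 and, more subtly, the bookkeeping in Step 3 when fixed spheres are present: one must be careful that ``rank zero points'' means rank zero points of $F_t = (J,H_t)$, and that the poles of a fixed $\S^2$ of $J$ are indeed rank zero for $F_t$ (they are, since $dJ = 0$ there and $H_t$ restricted to the sphere has critical points at the poles — but checking this requires knowing the poles are critical for $H_t|_{\S^2}$, which follows because the sphere is a symplectic submanifold on which $J$ is constant and the reduced dynamics forces it). For Step 4, the delicate point is ruling out a rank zero point of $F_{t_0}$ that is \emph{not} a rank zero point of nearby $F_t$ (it could drop to rank one, as the paper explicitly warns happens in Theorem~\ref{thm:W1_movingAB}) — but that phenomenon only occurs on fixed surfaces of $J$, which is exactly the excluded case, so under the no-fixed-sphere hypothesis $dJ = 0$ cuts out a finite set and every point of it is rank zero for every $t$, giving $N_0(t) = |\{dJ = 0\}| = \chi(M)$ for all $t$ at once. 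I would double-check that $\{dJ=0\}$ being finite together with the $\S^1$-action forces $|\{dJ=0\}| = \chi(M)$ via the standard fixed-point formula for Hamiltonian circle actions (the localization $\chi(M) = \chi(M^{\S^1})$), which is cleaner than the Morse--Bott argument and I would probably use it as the primary argument for the ``no fixed sphere'' statement, reserving Morse--Bott for the general semitoric case.
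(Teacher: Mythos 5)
Your conclusion is right, and the argument you only flag at the very end --- the localization identity $\chi(M)=\chi(M^{\S^1})$ --- is in fact the whole of the paper's proof, not merely a shortcut for the no-fixed-sphere case. The paper applies it uniformly: by Lemma~\ref{lem:dJzero} the fixed set $M^{\S^1}$ is a disjoint union of isolated points and spheres; each isolated point is a rank zero point of $F_t$ by Lemma~\ref{lm:fixed_semitoric} and contributes $1$ to $\chi(M^{\S^1})$, and each fixed sphere contributes $\chi(\S^2)=2$ and, because the system is semitoric at time $t$, carries exactly two rank zero points (the elliptic-elliptic points at the top and bottom of the corresponding vertical wall). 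Summing gives $N_0(t)=\chi(M^{\S^1})=\chi(M)$ for all non-degenerate $t$, with no Morse--Bott input at all. The second statement is then exactly your Step 4 and the paper's: with no fixed sphere, Lemma~\ref{lem:fixedpointsdontmove} shows the fixed point set of $F_t$ is independent of $t$, so $N_0$ is constant and equal to $\chi(M)$.

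The Morse--Bott scaffolding you propose as the primary route for the general case is both unnecessary and, as written, not quite correct. For generic $(\nu,\mu)$ with $\mu\neq 0$, a fixed sphere $S$ of $J$ is \emph{not} a critical submanifold of $\xi=\nu J+\mu H_t$: away from its two poles the points of $S$ are rank one, so $dH_t\neq 0$ there and hence $d\xi=\mu\, dH_t\neq 0$; only the two poles, which are precisely the rank zero points on $S$, are critical. Your count survives this only by coincidence ($\chi(\S^2)=2$ happens to equal the number of poles), but the description of the critical set is wrong. More seriously, the claim that genericity of $(\nu,\mu)$ makes $\xi$ Morse--Bott along the rank one strata is not justified: the critical locus there consists of the circle fibers over the points of the boundary curve of $F_t(M)$ whose tangent direction is annihilated by $(\nu,\mu)$, and non-degeneracy transverse to those circles is a condition on the second-order behavior of that curve, which does not follow from non-degeneracy of the singular points of $F_t$; a fixed generic direction can still be tangent to the curve at infinitely many points or at an inflection, so one would need an additional argument (or a different formulation of the Euler characteristic count) to make this step rigorous. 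All of these difficulties disappear if you use the localization formula throughout, as the paper does.
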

\begin{proof}
It is well-known that if $\S^1$ acts smoothly on a closed smooth manifold $M$, then the fixed set $M^{\S^1}$ is a disjoint union of smooth manifolds and $\chi(M) = \chi(M^{\S^1})$. In our case, as discussed in Lemma~\ref{lem:dJzero}, $M^{\S^1}$ is a disjoint union of single points $p$, and $\chi(\{p\})=1$, and fixed surfaces of $J$, which are spheres; let $S$ be any of these spheres. For $t \notin \{ t_1, \ldots, t_k \}$, the system is semitoric so $S$ contains two rank zero points, and $\chi(S)=2$. So we indeed have $N_0(t) = \chi(M)$. Finally, if there is no fixed sphere for the $\S^1$-action generated by $J$, then Lemma \ref{lem:fixedpointsdontmove} ensures that the fixed points are the same for every $t \in [0,1]$, hence $N_0$ does not depend on $t$.
\end{proof}

When the $\S^1$-action generated by $J$ has a fixed sphere $S$ more complicated things can occur because the location of rank zero points in $S$ can change with $t$, and at a degenerate time additional rank zero points can appear. For instance, the system given in Equation \eqref{system:W1_switch} is a semitoric family with four rank zero points for $t \neq \frac{1}{2}$, but at $t = \frac{1}{2}$ there is an entire sphere of (degenerate) rank zero points.

\begin{lm}
For every $t \in [0,1]$, the system $(M,\om,F_t)$ does not have any singular
 points of hyperbolic-hyperbolic, hyperbolic-elliptic, or hyperbolic-transverse type.
\end{lm}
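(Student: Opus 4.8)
The plan is to argue by contradiction using the continuity of the family together with the fact that hyperbolic behavior is an open phenomenon, so it cannot occur only at the (finitely many) degenerate times. First I would dispose of the rank one case. Suppose some $t_0\in[0,1]$ and some rank one point $m$ of $F_{t_0}$ is hyperbolic-transverse. By Lemma~\ref{lm:nondeg_rankone_red} (applied in a reduced space near $[m]$, valid since $dJ(m)\neq 0$ so $m$ is not a fixed point of $J$), this is equivalent to $[m]$ being a non-degenerate hyperbolic critical point of the reduced Hamiltonian $H^{\mathrm{red},j_0}$ on the smooth surface $M^{\mathrm{red}}_{j_0}$, where $j_0 = J(m)$. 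Now apply the Morse-theoretic stability lemma, Lemma~\ref{lm:morse_family}, to the smooth family $t\mapsto H^{\mathrm{red},j_0}_t$ on this fixed surface: there is $\epsilon>0$ and a neighborhood of $[m]$ in which $H^{\mathrm{red},j_0}_t$ has, for all $t$ with $|t-t_0|<\epsilon$, a unique critical point $[m](t)$, and it has the same Morse index $1$ as $[m]$; hence it is hyperbolic. Pulling back via Lemma~\ref{lm:nondeg_rankone_red}, $F_t$ has a hyperbolic-transverse rank one point for all $t$ in an interval around $t_0$. But the family is semitoric away from the finitely many $t_i$, and semitoric systems have no hyperbolic components (Definition~\ref{def:semitoric}), a contradiction. (A small wrinkle: if $j_0$ happens to be a singular value of $J$, one uses the remark after Lemma~\ref{lm:nondeg_rankone_red} that the reduction is still smooth near $[m]$ when $m$ is not a critical point of $J$; and one should also note $J(m)$ is interior, since at extremal values the reduced space is a point or a sphere on which the relevant rank one points are elliptic-transverse.)

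Next I would handle the rank zero case, i.e.\ that $F_{t_0}$ has a fixed point $m$ of hyperbolic-hyperbolic or elliptic-hyperbolic (hyperbolic-elliptic) type. Since $dJ(m)=0$, the point $m$ lies in $M^{\S^1}$, which by Lemma~\ref{lem:dJzero} is a union of isolated points and of fixed spheres occurring only at extremal values of $J$. If $m$ is not on a fixed sphere, then by Lemma~\ref{lem:fixedpointsdontmove} $m$ is a fixed point of $F_t$ for every $t$; I would then study the reduced characteristic polynomial $\chi^{(t)}_{\nu,\mu}$ of $A^{(t)}_{\nu,\mu} = \Omega_m^{-1}(\nu\, d^2 J(m) + \mu\, d^2 H_t(m))$ from Section~\ref{subsect:sing_dim4}. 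Its coefficients depend smoothly on $t$ (and polynomially on $\nu,\mu$). A hyperbolic component at $t_0$ means: for $(\nu,\mu)$ in the open dense non-degeneracy set, $\chi^{(t_0)}_{\nu,\mu}$ has a positive root. The condition ``has a positive root'' together with non-degeneracy (two distinct nonzero roots) is open in $(\nu,\mu,t)$ jointly, so it persists for $t$ in a neighborhood of $t_0$; hence $F_t$ has a hyperbolic fixed point for $t$ near $t_0$, again contradicting that the system is semitoric for all but finitely many $t$. If instead $m$ lies on a fixed sphere $S = J^{-1}(j)$ with $j$ extremal, then $m$ is a fixed point of $F_t$ with $H_t$ restricting to a function on $S\cong\S^2$; here I would argue that the rank-zero point on $S$ of $F_t$, viewed as a critical point of $H_t|_S$ compared with the transverse directions, again has its hyperbolic/elliptic nature governed by a $t$-smooth family of quadratic forms and so the hyperbolic condition is open in $t$, leading to the same contradiction.

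The honest alternative, and perhaps cleaner, is a uniform formulation: for any point $m\in M$ with $dJ(m)=0$ that stays a fixed point across the family (which, off fixed spheres, is automatic by Lemma~\ref{lem:fixedpointsdontmove}), the set of $t$ for which $m$ is a non-degenerate fixed point of $F_t$ with no hyperbolic component is open; combined with Lemma~\ref{lem:stayssingular} and the finiteness of degenerate times, a hyperbolic component at one time forces a hyperbolic component at infinitely many times. Likewise for rank one points via the reduced picture. I expect the main obstacle to be the bookkeeping around fixed spheres and around points that change rank: on a fixed sphere the rank-zero point of $F_t$ can move within $S$ as $t$ varies and extra degenerate rank-zero points can appear exactly at degenerate times (as in system~\eqref{system:W1_switch}), so Lemma~\ref{lem:fixedpointsdontmove} does not directly pin the point down; there one must instead track the relevant critical point of the restricted/reduced Hamiltonian and invoke Lemma~\ref{lm:morse_family} along $S$ (for the transverse-to-$S$ behavior) or analyze the $t$-family of Hessians carefully. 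Handling these cases uniformly — so that ``hyperbolic type is an open condition along the family'' genuinely covers rank zero on and off fixed spheres as well as rank one — is the crux; once that is set up, the contradiction with the definition of a semitoric family and the finiteness of $\{t_1,\dots,t_k\}$ is immediate.
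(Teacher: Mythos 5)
Your rank one argument is correct and is essentially the paper's: at a hyperbolic-transverse point you pass to the reduced Hamiltonian via Lemma~\ref{lm:nondeg_rankone_red}, apply Lemma~\ref{lm:morse_family} on an interval avoiding the other degenerate times, and contradict semitoricity at nearby times (the paper simplifies the bookkeeping by using the normal form to assume $J(p)$ is a regular value, whereas you invoke the remark following Lemma~\ref{lm:nondeg_rankone_red}; both are fine). Your treatment of rank zero points \emph{off} fixed spheres is also sound, and is a genuinely different route from the paper's: you fix the point via Lemma~\ref{lem:fixedpointsdontmove} and use that ``two distinct nonzero roots of $\chi_{\nu,\mu}$, one of them positive'' is an open condition in $t$ for a fixed $(\nu,\mu)$, which is a perfectly valid openness argument on the reduced characteristic polynomial.

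The genuine gap is the rank zero case on a fixed sphere $S$ of $J$, which you explicitly leave unresolved (``the crux''). There the strategy ``hyperbolic type is open in $t$'' does not apply as stated: a fixed point of $F_{t_0}$ lying in $S$ need not be a fixed point of $F_t$ for nearby $t$ (it can move within $S$, or drop to rank one, since only $dJ$ is pinned there), so there is no fixed Hessian family $d^2H_t(m)$ to which openness applies; one would instead have to track critical points of $H_t|_S$ and rule out saddles, which requires an additional argument (e.g.\ that rank zero points of a semitoric system on $S$ are elliptic-elliptic and restrict to extrema of $H_t|_S$, or that rank one points on $S$ are forced to be elliptic-transverse because $d^2J$ is definite transverse to $S$). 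The paper avoids this entirely with one observation you are missing: by the normal form of Theorem~\ref{thm:normalform)}, \emph{any} fixed point with a hyperbolic component is accompanied by nearby rank one points of hyperbolic-transverse type, so the whole rank zero discussion (on or off fixed spheres, and including hyperbolic-hyperbolic as well as elliptic-hyperbolic) reduces to the rank one case you already handled. Without either that reduction or a worked-out argument on the fixed sphere, your proof does not cover all cases of the statement.
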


\begin{proof}
Notice that by the normal form given in Theorem~\ref{thm:normalform)}, any fixed point with a hyperbolic component comes with some nearby hyperbolic-transverse points (see also Figure 1 in \cite{dullin-pelayo}); thus, it is sufficient to discard the latter. Of course, they are excluded when $t \notin \{t_1, \ldots, t_k\}$ since for such $t$ the system is semitoric. So suppose that $p\in M$ is a singular point of hyperbolic-transverse type of $F_{t_{\ell}}$ for some ${\ell} \in \{1, \ldots, k\}$. Without loss of generality we may assume that $j = J(p)$ is a regular value of $J$ (again, see Theorem~\ref{thm:normalform)}). Hence, by Lemma \ref{lm:nondeg_rankone_red} the image of $p$ in the reduced manifold $M_j^{\text{red}}$ is a hyperbolic singular point of $H^{\text{red},j}_{t_{\ell}}$. Since $H^{\text{red},j}_t$ is a Morse function on $M_j^{\text{red}}$ for $t \neq t_{\ell}$ in an open interval $I$ containing $t_{\ell}$ (and chosen so small that it does not contain any of the $t_m$, $m \neq \ell$), Lemma \ref{lm:morse_family} implies that $H^{\text{red},j}_t$ has a singular point of hyperbolic type for $t \in I \setminus \{t_\ell\}$. This means that for such $t$, $F_t$ has some rank one points of hyperbolic-transverse type, which is absurd since it is semitoric.
\end{proof}

Let $(M,\om,F_t)$ be a semitoric family which is simple for all non-degenerate $t$.
Recall that the unmarked semitoric polygon is defined in Section~\ref{sec:semitoric}; now we show
that the number of focus-focus points and unmarked semitoric polygon can only change
at degenerate times.

\begin{lm}\label{lem:samepoly}
 Let $I\subset [0,1]\setminus\{t_1, \ldots, t_k\}$ be an interval. Then
 for any $t, t'\in I$ the semitoric systems $(M,\om,F_t)$ and $(M,\om,F_{t'})$
 have the same number of focus-focus points and the same unmarked semitoric polygon.
\end{lm}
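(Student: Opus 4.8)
The plan is to fix the interval $I\subset[0,1]\setminus\{t_1,\ldots,t_k\}$ from the statement, use that $I$ is connected, and prove that each of the two quantities in question — the number of focus--focus points and the unmarked semitoric polygon — is locally constant on $I$. Throughout one uses that for $t\in I$ the system $(M,\om,F_t)$ is semitoric and simple, so all its singular points are non-degenerate and, by the absence of hyperbolic singular points established in the lemma above, every rank zero point of $F_t$ is either elliptic--elliptic or focus--focus.

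For the number of focus--focus points, I would first note that a focus--focus value is an interior value of $J$ (well known), so by Lemma~\ref{lem:dJzero} a focus--focus point of $F_{t_0}$, $t_0\in I$, does not lie on a fixed surface of the $\S^1$-action, and hence by Lemma~\ref{lem:fixedpointsdontmove} it is a fixed point of $F_t$ for every $t\in[0,1]$. Fixing such a point $p$, I would show that $\{t\in I : p\text{ is of focus--focus type for }F_t\}$ is both open and closed in $I$. Openness follows from the description of the type of a rank zero point in Section~\ref{subsect:sing_dim4}: the coefficients of the reduced characteristic polynomial of $\Omega_p^{-1}(\nu\,d^2J(p)+\mu\,d^2H_t(p))$ depend continuously on $t$, and both the focus--focus condition (a pair of distinct non-real roots) and the elliptic--elliptic condition (two distinct negative roots) are open. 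Closedness follows because for $t\in I$ the point $p$ is a non-degenerate rank zero point of the semitoric system $F_t$, hence of elliptic--elliptic or focus--focus type, so a focus--focus $p$ cannot be an $I$-limit of non-focus--focus points without itself being elliptic--elliptic, contradicting openness of the latter condition. As $I$ is connected, this set is $\emptyset$ or all of $I$; hence the set of focus--focus points of $F_t$, and in particular their number $m_f$ and their ($t$-independent) first coordinates $x_j=J(p_j)$, does not depend on $t\in I$.

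For the unmarked semitoric polygon, working locally in $t\in I$, I would choose a developing map $g^t_{\vec{\epsilon}}$ as in Section~\ref{sec:semitoric} depending continuously on $t$ (possible, since it is unique up to $\mathcal{T}$) and show that the rational convex polygon $\Delta_{\vec{\epsilon}}(t)=g^t_{\vec{\epsilon}}(F_t(M))$ is independent of $t$ up to a vertical translation; as vertical translations lie in $\mathcal{T}$ and the $G_{m_f}\times\mathcal{T}$-orbit is independent of these choices, this yields local constancy of the unmarked semitoric polygon, hence constancy on $I$. The vertices of $\Delta_{\vec{\epsilon}}(t)$ are the images of the elliptic--elliptic points of $F_t$ (pairwise distinct, since an elliptic--elliptic fibre of a semitoric system is a single point) together with the fake or hidden corners where the cuts $\{x=x_j\}$ meet the boundary; since $g^t_{\vec{\epsilon}}$ preserves the first coordinate, every such vertex has a $t$-independent first coordinate (a value of $J$ at a fixed point, or the constant $x_j$), and the number of elliptic--elliptic points equals $\chi(M)-m_f$, constant on $I$ by Lemma~\ref{lm:euler} and the previous step; so the combinatorial type of $\Delta_{\vec{\epsilon}}(t)$ does not change on $I$. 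The primitive integral direction vector of each edge varies continuously with $t$, hence is locally constant. This already pins down all non-vertical edges from the (constant) edge directions and vertex first-coordinates; the only possible vertical edges of $\Delta_{\vec{\epsilon}}(t)$ occur at $x=\min_M J$ or $x=\max_M J$ and correspond to fixed spheres $S$ of the $\S^1$-action, which, being connected components of the $t$-independent set $\{dJ=0\}$, are themselves $t$-independent and have affine length $\tfrac{1}{2\pi}\int_S\om$, also $t$-independent. Hence $\Delta_{\vec{\epsilon}}(t)$ is constant up to a single vertical translation on $I$.

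I expect the polygon to be the main obstacle: constancy of $m_f$ is a soft open/closed argument resting on Lemma~\ref{lem:fixedpointsdontmove}, but a rational convex polygon has genuine continuous moduli (its edge lengths), so the argument must use that for these families every ``continuous'' quantity feeding into the polygon — the first coordinates of the vertices and the lengths of the vertical walls — is already controlled by the $t$-independent data ($J$, the fixed set $\{dJ=0\}$, and $\om$). The most technical ingredient is the identification of the affine length of a vertical wall with $\tfrac{1}{2\pi}$ times the symplectic area of the corresponding fixed sphere — the semitoric counterpart of the familiar toric fact — which relies on the action-integral description of the developing map recalled in Remark~\ref{rmk:gcycles}; one must also verify carefully that $g^t_{\vec{\epsilon}}$, and hence $\Delta_{\vec{\epsilon}}(t)$, can be taken to depend continuously on $t$.
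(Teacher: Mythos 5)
Your first half (constancy of the number of focus--focus points) is correct and is in substance the paper's own argument: focus--focus points cannot lie on a fixed surface, hence by Lemma~\ref{lem:fixedpointsdontmove} they are fixed points for all $t$, and their type is rigid on $I$ (the paper quotes Lemma~\ref{lem:HPdegen}; your open/closed argument with the reduced characteristic polynomial proves the same rigidity and is fine).

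The polygon half takes a genuinely different route, but as written it has a genuine gap exactly at its pivotal step: everything rests on choosing the developing map $g^t_{\vec{\epsilon}}$ (hence the polygon $\Delta_{\vec{\epsilon}}(t)$) continuously in $t$, and ``possible, since it is unique up to $\mathcal{T}$'' is not an argument --- uniqueness up to a discrete group does not yield a continuous selection. To get it one must return to the action-integral description of Remark~\ref{rmk:gcycles} and choose a basis of cycles continuously in $(t,x,y)$ over the $t$-dependent regular set minus the cuts; this is precisely the kind of verification the paper carries out only later, and only locally near an elliptic--elliptic corner, in the proof of Theorem~\ref{thm:blowups}. Without it, your conclusion that ``the primitive integral direction vector of each edge varies continuously with $t$, hence is locally constant'' has no support. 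A second omission: knowing that the number of elliptic--elliptic points is $\chi(M)-m_f$ and that their $J$-values are $t$-independent does not yet fix the combinatorial type of $\Delta_{\vec{\epsilon}}(t)$; you must also rule out an elliptic--elliptic point switching between the top and bottom boundary (i.e.\ between maximum and minimum of $H_t$ on its $J$-fiber) within $I$. The paper excludes this because such a switch would force a degenerate time; under your continuity assumption it would follow from positivity of the height $(2\pi)^{-1}\mathrm{vol}(J^{-1}(j))$, but you do not say so. Note that the paper's proof is designed to avoid the continuity question altogether: it normalizes a representative at the leftmost corner (Lemma~\ref{lem:samecorner}), uses that the height of the polygon over each $J$-value is the $t$-independent quantity $(2\pi)^{-1}\mathrm{vol}(J^{-1}(j))$, and then matches the top boundary vertex by vertex via Lemma~\ref{lem:changeslope}, which expresses the change of slope at each vertex purely in terms of $t$-independent data (isotropy weights of the $\S^1$-action and the number of focus--focus points in the fiber). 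If you wish to keep your route, the continuity of $g^t_{\vec{\epsilon}}$ in $t$ is the statement you actually have to prove, not a detail to be checked at the end.
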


In order to prove this lemma, we state a result which is well-known and follows, for instance, from~\cite[Theorem 5.3]{VNpoly}. As before, let $\pi_1, \pi_2$ be the natural projections on each factor of $\R^2$. Given a polygon $\Delta$ for which $\pi_1^{-1}(x) \cap \Delta$ is compact for every $x \in \R$ (which is in particular the case for semitoric polygons), we define the \emph{top boundary of $\De$} as the set 
\begin{equation} \partial^{\mathrm{top}}\De = \left\{(x, y) \in \Delta \ | \ y = \max_{\pi_1^{-1}(x) \cap \Delta} \pi_2   \right\}. \label{eq:top_bound}\end{equation}

\begin{lm}\label{lem:changeslope}
 Let $(\De,\vec{\ell},\vec{\epsilon})$ be a representative of the unmarked semitoric polygon of a simple semitoric system $(M,\om,F)$ associated
 to a developing map $g$, so that $\De = (g\circ F)(M)$. Given a corner $c = (c_1,c_2)$ of $\De$ on its top boundary, let $s_{\ell}$ (respectively $s_r$) be the slope of the edge of $\Delta$ adjacent to $c$ and consisting of points $(x,y) \in \R^2$ with $x \leq c_1$ (respectively $x \geq c_1$) and let $k$ be the number of focus-focus points in $J^{-1}(c_1)$. Then $s_r - s_{\ell} = w_e - k$ where $w_e = -\frac{1}{|ab|}$ if $(g\circ F)^{-1}(c)$ is an elliptic-elliptic point $p$ and $a,b\in\Z$ are the weights at $p$ of the $\S^1$-action induced by J, and $w_e = 0$ otherwise.
\end{lm}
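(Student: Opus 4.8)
The plan is to reduce everything to a local computation near the point of $\partial^{\mathrm{top}} B$ lying over $c$, using the two defining properties of the developing map $g$ from Equation~\eqref{eq:dev_map}: it preserves the first coordinate and is strictly increasing in the second. First I would observe that these properties force $g$ to send $\partial^{\mathrm{top}} B$ bijectively onto $\partial^{\mathrm{top}} \De$ (one checks $\phi_\De(x) = g^{(2)}(x, \phi_B(x))$, where $\phi_B, \phi_\De$ denote the top-boundary functions of $B$ and $\De$), so that $\tilde c := g^{-1}(c)$ is the unique point of $\partial^{\mathrm{top}} B$ with first coordinate $c_1$ and $(g \circ F)^{-1}(c) = F^{-1}(\tilde c)$. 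Since $c \in \partial\De$, every point $p$ of this fiber is a singular point of $F$; as the system is semitoric and focus-focus values lie in $\mathrm{int}(B)$, $p$ is either of elliptic-transverse or of elliptic-elliptic type. The line $\pi_1^{-1}(c_1)$ also meets the focus-focus values of the system with first coordinate $c_1$; the cuts of $\vec\ell$ issuing upward from these are exactly the ones crossing $\tilde c$, and for $c$ to be a corner at all one needs either $p$ elliptic-elliptic or at least one such upward cut. The facts about $g$ I would draw on are those in~\cite[Theorems 3.8 and 5.3]{VNpoly}: off the cuts $g$ is a local integral-affine isomorphism sending the affine structure of $B_{\mathrm{reg}}$ to the standard one on $\R^2$; its differential jumps by $T$ (Equation~\eqref{eqn:T}) across each cut; and near an elliptic-elliptic value it coincides, up to an element of $GL_2(\Z) \ltimes \R^2$, with the standard toric chart. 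I would also note that one may assume $c$ is not an endpoint of a vertical edge of $\De$ — equivalently, in the elliptic-elliptic case, that $p$ does not lie on a fixed surface of $J$ — so that then both weights $a, b$ are nonzero and $s_\ell, s_r$ are finite.

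The case $p$ elliptic-transverse is the easier one: here $w_e = 0$, and I would show $s_r - s_\ell = -k$ (necessarily $k \geq 1$, since otherwise $g$ is smooth near $\tilde c$ and $c$ is not a corner). By Theorem~\ref{thm:normalform)}, $\partial B$ is a smooth arc near $\tilde c$; write $\sigma$ for its slope at $c_1$. For $x$ just left of $c_1$ the affine part of $g$ fixes the first coordinate, lies in $GL_2(\Z)$, and has positive lower-right entry, hence equals $T^e$ for some $e \in \Z$, so differentiating $\phi_\De(x) = g^{(2)}(x, \phi_B(x))$ gives $s_\ell = e + \sigma$; crossing the $k$ upward cuts multiplies this affine part by $T^{-k}$, so for $x$ just right of $c_1$ it is $T^{e-k}$ and $s_r = e - k + \sigma$. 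Hence $s_r - s_\ell = -k$, as claimed.

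The case $p$ elliptic-elliptic is the one I expect to be the main obstacle. By Theorem~\ref{thm:normalform)} there is a local Hamiltonian $\T^2$-momentum map $\mu = (\mu_1, \mu_2)$ near $p$ with $\mu(p) = 0$ and image a neighborhood of the corner of the positive quadrant; since $J$ generates the subcircle of weights $a, b$ at $p$, we have $J = a\mu_1 + b\mu_2 + c_1$, so $\mu$ induces an integral-affine chart $\Psi$ on $B$ near $\tilde c$ whose first component is $(a,b)\cdot(\cdot) + c_1$. Then on each side $\bullet \in \{\ell, r\}$ of the line $\{x = c_1\}$ the map $g$ agrees, up to translation, with $L_\bullet \circ \Psi$ for some $L_\bullet \in GL_2(\Z)$ whose first row, by $\pi_1 \circ g = J - c_1$, is $(a, b)$, and with $L_r = T^{-k} L_\ell$. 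The two edges of $\De$ at $c$ are the $g$-images of the two coordinate half-axes of the quadrant; writing $L_\ell = \left(\begin{smallmatrix} a & b \\ \rho & \sigma \end{smallmatrix}\right)$, these are the rays from $c$ in the primitive directions $(b, \sigma)$ and $(a, \rho - ka)$, whose first coordinates $b$ and $a$ have opposite sign because $c$ lies on the top boundary — which identifies which is the left edge and which the right (the case $b > 0 > a$ being symmetric to $a > 0 > b$). A short computation then gives $s_r - s_\ell = -\det(L_\ell)/(ab) - k$, and the last step is to pin down $\det(L_\ell) = \pm 1$ from the convexity of $\De$ at $c$ (equivalently from the Delzant or hidden-Delzant condition it satisfies), which forces $-\det(L_\ell)/(ab) = -1/|ab| = w_e$, so that $s_r - s_\ell = w_e - k$.

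The delicate points all lie in this last case: transporting the local toric normal form of $F$ through the developing map $g$ (which is genuinely non-smooth along an upward cut meeting $\tilde c$), identifying the linear part $L_\bullet$ correctly with the weight data of $J$, and reading off the sign of $\det L_\bullet$ from the shape of the polygon — all while simultaneously accounting for the contribution of the $k$ cuts. Once the properties of $g$ recalled from~\cite{VNpoly} are in hand, the rest is routine linear algebra with $2 \times 2$ integral matrices together with some care about signs.
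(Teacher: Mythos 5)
First, the relevant point of comparison: the paper does not prove Lemma~\ref{lem:changeslope} at all --- it is presented as well-known, with a pointer to \cite[Theorem 5.3]{VNpoly} --- so your direct argument is necessarily a different route from the paper's. Its skeleton is the right one: compare $g$ on the two sides of the line $\{x=c_1\}$, use the jump by the matrix $T$ of Equation~\eqref{eqn:T} across each upward cut, and, at an elliptic-elliptic point, identify the weights $(a,b)$ with the first row of the integral matrix relating $g\circ F$ to a local toric momentum map; the $2\times 2$ determinant computation then produces $w_e-k$. However, two steps have genuine gaps as written.

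In the elliptic-transverse case you assert that near $\tilde c$, in the ambient coordinates of $B=F(M)$, the ``affine part'' of $g$ lies in $GL_2(\Z)$ and equals $T^e$, and you differentiate $\phi_\De(x)=g^{(2)}(x,\phi_B(x))$ against the ambient slope $\sigma$ of $\partial B$. This is not correct: $g$ is integral-affine only with respect to the action-variable charts on $B$, not with respect to the $(J,H)$-coordinates, and its Jacobian is $\begin{pmatrix}1 & 0\\ \partial_x g^{(2)} & \partial_y g^{(2)}\end{pmatrix}$ with $\partial_y g^{(2)}$ neither constant nor equal to $1$ in general, while $\partial^{\mathrm{top}}B$ is generally curved. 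The conclusion $s_r-s_\ell=-k$ does survive, but only after inserting the chart $(J,A)$ given by the second action (smooth across $\{x=c_1\}$ near the elliptic-regular boundary point, in which $\partial^{\mathrm{top}}B$ is straight) and running the same computation with the same chart on both sides. In the elliptic-elliptic case the real gap is the sign of $\det L_\ell$: convexity of $\De$ at $c$ pins it down only when $k=0$; for a hidden corner ($k\ge 1$) the wrong sign yields $s_r-s_\ell=1/|ab|-k$, which is already negative whenever $|ab|\ge 2$, so convexity cannot exclude it. You must either invoke the hidden-Delzant condition at the corner --- but that condition is precisely part of the structure of the semitoric polygon established in \cite{VNpoly}, i.e.\ the input behind the citation the paper relies on, at which point the lemma reduces to your determinant computation --- or give an honest orientation argument (normalize the local model $\mu$ so the induced chart on $B$ is orientation-compatible, use $\det dg=\partial_y g^{(2)}>0$ to force $\det L_\ell=+1$, and check that the top-corner position then forces $a<0<b$). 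Finally, note that the $k$ in your argument is really the number of upward cuts at $x=c_1$, not the number of focus-focus points in $J^{-1}(c_1)$; the two agree in every situation where the paper applies the lemma, but the discrepancy with the literal statement should be made explicit.
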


\begin{proof}[Proof of Lemma \ref{lem:samepoly}]
Let $t \in I$; by Lemma \ref{lem:fixedpointsdontmove}, the focus-focus points of $F_t$ are also rank zero points (and thus either of elliptic-elliptic or focus-focus type) of $F_{s}$ for any $s \in I$, since focus-focus points cannot occur in fixed spheres of $J$. Because of Lemma \ref{lem:HPdegen}, they can only change type at a degenerate time.

Now, let $t' \in I$ and let $(\Delta',\vec{\ell},\vec{\epsilon})$ be a representative of the unmarked semitoric polygon of the system $(J,H_{t'})$. We will construct a representative of the unmarked semitoric polygon of $(J,H_t)$ which coincides with $(\Delta',\vec{\ell},\vec{\epsilon})$, which will imply that the unmarked semitoric polygon is the same at $t$ and $t'$. 
First, notice that the vertical lines $\vec{\ell} = (\ell_1,\ldots,\ell_{m_f})$ can be used to construct a representative of the unmarked semitoric polygon at $t$, since they are given by the $J$-values of the focus-focus points. Note also that the height of the unmarked semitoric polygon at each $J$-value $j$ is given by $(2\pi)^{-1}\mathrm{vol}(J^{-1}(j))$, which is independent of $t$, so it is sufficient to check that we can construct a polygon for $t$ with cut directions $(\epsilon_1, \ldots, \epsilon_{m_f})$ which has the same top boundary as $\De$. In what follows, let $g': \R^2 \to \R^2$ be as in Equation \eqref{eq:dev_map} such that $\Delta' = (g' \circ F_{t'})(M)$.
 
\paragraph{Step 1: the top left corner.} Even if $M$ is not compact, Corollary $5.5$ in \cite{VNpoly} ensures that, unless the system has no rank zero point at all (in which case we do not have to prove anything) $J$ has either a global minimum or a global maximum, and we may assume the former. Let $q'$ be the elliptic-elliptic point of $(J,H_{t'})$ with minimal $J$-value $j_{\min}$ and such that 
\[ H_{t'}(q') = \max_{J^{-1}(j_{\min})} H_{t'} \]
(recall that $J^{-1}(j_{\min})$ can be a fixed surface, in which case the left side of $\De'$ is a vertical wall), so that $q'$ is sent to the point $Q'$ on $\partial^{\mathrm{top}}\De'$ which has minimal $J$-value.
If $J^{-1}(j_{\min})$ is a fixed surface, then the edges of $\De'$ emanating from $Q'$ are directed along the vectors 
\[ \begin{pmatrix} 0 \\ -1 \end{pmatrix}, \qquad \begin{pmatrix} 1 \\ n \end{pmatrix} \]
for some $n \in\Z$, since it is a Delzant corner.
Similarly, let $q\in J^{-1}(j_{\text{min}})$ be the elliptic-elliptic point of $(J,H_t)$ with maximal $H_t$-value among the points with minimal $J$-value, and let $Q$ be its image in one representative $(g \circ F_t)(M)$ of the unmarked semitoric polygon of $F_t$ constructed using the cuts $(\epsilon_1, \ldots, \epsilon_{m_f})$. Then the edges of this polygon leaving $Q$ are also directed along
\[ \begin{pmatrix} 0 \\ -1 \end{pmatrix}, \qquad \begin{pmatrix} 1 \\ p \end{pmatrix} \]
for some $p \in \Z$. Therefore, by setting $\tilde{g} = u \circ T^{n-p} \circ g$ where $T$ is as in Equation \eqref{eqn:T} and $u$ is a vertical translation, we obtain another representative $\Delta = (\tilde{g} \circ F_t)(M)$ of the unmarked semitoric polygon of $F_t$ such that $\Delta$ agrees with $\Delta'$ in a neighborhood of the point $Q$. 
If $J$ has a unique minimum, it occurs at $q$, and thus $q$ is of elliptic-elliptic type for all $t$, so as a consequence of Lemma~\ref{lem:samecorner} below we can choose
such a $\tilde{g}$ (hence $\Delta$) in a similar way. Note that in both cases $\De'$ and $\De$ have the same leftmost point in their top boundary, and the first segment of the (piecewise linear) top
boundaries agree in slope.
\\
\paragraph{Step 2: the top boundary.} 
Now we only need to study the position of each vertex on the top boundary and the change in slope at each vertex for $\De$ and $\De'$. The vertices in the top boundary of $\De$ occur at points $c=(c_1,c_2)$ such that either:
\begin{itemize}[nosep]
 \item $(g\circ F_{t})^{-1}(c)$ is an elliptic-elliptic point whose $H_t$-value is maximum in $J^{-1}(c_1)$, or
 \item the $i$-th focus-focus point is in the set $J^{-1}(c_1)$ and $\epsilon_i=+1$.
\end{itemize}
We already saw that the focus-focus points of $F_t$ and $F_{t'}$ have the same $J$-value. Moreover, a similar argument shows that an elliptic-elliptic point of $F_t$ is also an elliptic-point of $F_{t'}$, with the same $J$-value $j$, and is a maximum of $H_t$ in $J^{-1}(j)$ if and only if it is a maximum of $H_{t'}$ in $J^{-1}(j)$ (otherwise, there would be a value of the parameter in $I$ for which this point is both a global maximum and a global minimum of the Hamiltonian on $J^{-1}(j)$, so it would be degenerate, see Section \ref{sec:semitoricfam-degen} below). Thus, the vertices in the top boundaries of $\De$ and $\De'$ occur at the same values of $J$.
Moreover, by Lemma~\ref{lem:changeslope}, the change of slope at each vertex only depends on the weights of the action of $J$ at the elliptic-elliptic points and the number of focus-focus points in the $J$-fiber; hence the change of slope at corresponding vertices of $\De$ and $\De'$ is the same. 
\end{proof}

A \emph{Delzant cone} is a set of the form $\{a_1 v_1 + a_2 v_2\mid a_i\geq 0\}\subset\R^2$ where
$v_1,v_2\in\Z^2$ satisfy $\mathrm{det}(v_1,v_2)=1$.
\begin{lm}
\label{lem:samecorner}
Let $(M,\om,F_t=(J,H_t))$ be a semitoric family and let $(p_t)_{0 \leq t \leq 1}$ be a continuous family of points of $M$ such that for every $t \in [0,1]$, $p_t$ is a fixed point of elliptic-elliptic type of $F_t$. Then there exists a toric momentum map $\Phi$ on $\C^2$ and for each $t$ there exists an open set $U_t$ containing $p_t$, a local diffeomorphism $g_t:(\R^2,F_t(p_t)) \to (\R^2,0)$, and a local symplectomorphism $\chi_t\colon (U_t,p_t)\to (\C^2,0)$ such that:
\begin{enumerate}[noitemsep]
 \item $g_t$ is of the form $g_t (x,y) = (x, g_t^{(2)}(x,y))$,
 \item $(g_t \circ F_t)_{| U_t}= \Phi\circ \chi_t$,
 \item $(g_t \circ F_t)(U_t)=K \cap \Omega_t$ where $K$ is a fixed Delzant cone and $\Omega_t$ is an open neighborhood of the origin.
\end{enumerate}
\end{lm}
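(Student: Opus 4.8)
Here the plan is to read everything off the Eliasson normal form at $p_t$, exploiting crucially that $J$ is a \emph{fixed} function generating an effective $\S^1$-action, and then to rewrite the resulting local model in terms of a single fixed linear toric model on $\C^2$. Fix $t\in[0,1]$. Since $p_t$ is elliptic-elliptic it is non-degenerate with no hyperbolic component, so Theorem~\ref{thm:normalform)} provides Eliasson coordinates $(x,y)$ near $p_t$, a map $Q^t=(q_1^t,q_2^t)$ with $q_j^t=\tfrac12(x_j^2+y_j^2)$, and a local diffeomorphism $\tilde g_t\colon(\R^2,0)\to(\R^2,F_t(p_t))$ with $F_t=\tilde g_t\circ Q^t$ near $p_t$ and $\{J,q_j^t\}=0$. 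Writing $z_j=x_j+iy_j$ identifies a neighborhood of $p_t$ symplectically with a neighborhood of $0$ in $\C^2$ via a symplectomorphism $\chi_t$ so that $Q^t=\Phi_0\circ\chi_t$, where $\Phi_0(z)=\tfrac12(|z_1|^2,|z_2|^2)$ is the standard toric momentum map on $\C^2$.

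Next I would pin down the first component of $\tilde g_t$. From $\{J,q_j^t\}=0$ the flow of $X_J$ preserves each regular Liouville torus $\{q_1^t=c_1>0,\ q_2^t=c_2>0\}$ and acts there as the linear flow with frequency vector $\nabla\tilde g_t^{(1)}(c_1,c_2)$; since $X_J$ generates a $2\pi$-periodic flow, this vector lies in $\Z^2$, and by continuity it is a constant integer vector, so $\tilde g_t^{(1)}(q_1,q_2)=a_t q_1+b_t q_2+J(p_t)$ with $(a_t,b_t)\in\Z^2$. These integers are the weights of the isotropy representation of the $\S^1$-action at $p_t$, i.e.\ the eigenvalues of $DX_J(p_t)$ up to the factor $i$; since $X_J$ is a fixed vector field and $t\mapsto p_t$ is continuous, these depend continuously on $t$ and are hence constant, so after possibly relabelling the two Eliasson coordinates we may assume $(a_t,b_t)=(a,b)$ for a fixed $(a,b)\in\Z^2$. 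Moreover $\gcd(a,b)=1$: if $g:=\gcd(a,b)>1$ then $\Z/g\Z\subset\S^1$ acts trivially on a neighborhood of $p_t$ (on $\C^2$ with both weights divisible by $g$), so its fixed-point set is open, closed and nonempty in the connected manifold $M$, hence all of $M$, contradicting effectiveness. Finally, $J(p_t)$ is independent of $t$ since $\{p_t\mid t\in[0,1]\}$ is a connected subset of $M^{\S^1}$ (see Lemma~\ref{lem:dJzero}), on each component of which $J$ is constant because its differential vanishes there; subtracting this constant from $J$ (which affects neither the $\S^1$-action nor anything relevant) we may assume $J(p_t)=0$, so $\tilde g_t^{(1)}(q_1,q_2)=aq_1+bq_2$.

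Then I would assemble the model. Fix once and for all a matrix $M\in\mathrm{SL}_2(\Z)$ with first row $(a,b)$ (possible as $\gcd(a,b)=1$) and set $\Phi:=M\circ\Phi_0\colon\C^2\to\R^2$; this is again a toric momentum map (the momentum map of the standard $\T^2$-action on $\C^2$ reparametrized by $M$), and its image is the Delzant cone $K:=M\bigl(\{q_1\ge 0,\ q_2\ge 0\}\bigr)$, spanned by the primitive vectors $Me_1,Me_2$ with $\det(Me_1,Me_2)=\det M=1$. For each $t$ put $g_t:=M\circ\tilde g_t^{-1}$, a local diffeomorphism $(\R^2,F_t(p_t))\to(\R^2,0)$. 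Its first component is $(a,b)\cdot\tilde g_t^{-1}(x,y)=\tilde g_t^{(1)}\bigl(\tilde g_t^{-1}(x,y)\bigr)=x$, which gives item~(1), and on the Eliasson neighborhood $g_t\circ F_t=M\circ\tilde g_t^{-1}\circ\tilde g_t\circ Q^t=M\circ Q^t=\Phi\circ\chi_t$, which is item~(2). For item~(3), shrink $U_t$ to $\chi_t^{-1}(P_t)$ for a small polydisk $P_t=\{|z_1|<r_t,\ |z_2|<r_t\}$ inside the image of $\chi_t$; then $\Phi_0(P_t)=\{q_1\ge 0,\ q_2\ge 0\}\cap S_t$ with $S_t=\{|q_1|<r_t^2/2,\ |q_2|<r_t^2/2\}$, so $(g_t\circ F_t)(U_t)=\Phi(P_t)=M\bigl(\{q_1,q_2\ge 0\}\bigr)\cap M(S_t)=K\cap\Omega_t$ with $\Omega_t:=M(S_t)$ an open neighborhood of the origin.

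I expect the only real content to sit in the second paragraph: that the first component of the normal-form diffeomorphism is genuinely the \emph{linear} form $aq_1+bq_2$ with integer, $t$-independent coefficients, and that $\gcd(a,b)=1$. The integrality and coprimality use that $J$ generates an effective circle action (periodicity of $X_J$ on regular tori, and the fixed-point-set argument for connectedness of $M$), while the $t$-independence relies on $X_J$ being a single fixed vector field together with continuity of $t\mapsto p_t$. Once these facts are established, the remaining steps---choosing $M\in\mathrm{SL}_2(\Z)$, defining $\Phi$ and $g_t$, and shrinking $U_t$ to a polydisk---are routine linear algebra over $\Z$ and elementary bookkeeping with neighborhoods.
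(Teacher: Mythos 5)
Your proposal is correct, but it takes a more self-contained route than the paper's. The paper does not rebuild the local model from the raw Eliasson normal form: it quotes the semitoric refinement (step 2 of the proof of Theorem 3.8 in \cite{VNpoly}), which already produces for each $t$ a $g_t$ of the form $(x,g_t^{(2)}(x,y))$ and a toric momentum map $\Phi_t$ with $g_t\circ F_t=\Phi_t\circ\chi_t$, so the only remaining work is to make $\Phi_t$ independent of $t$. For that, the paper uses Lemma~\ref{lem:dJzero}: the continuous family $(p_t)$ is either a single point or lies in a fixed sphere at an extremal value of $J$, so the weights of the $J$-component of the local $\T^2$-action are $t$-independent; hence the cones $K_t=\Phi_t(\C^2)$ differ only by powers of the shear $T$ of Equation~\eqref{eqn:T}, and replacing $g_t$ by $(x,g_t^{(2)}(x,y)+k_t x)$ and $\Phi_t$ by $T^{k_t}\circ\Phi_t$ finishes. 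You instead derive the structure by hand from Theorem~\ref{thm:normalform)}: integrality and linearity of $\tilde g_t^{(1)}$ via $2\pi$-periodicity of the $J$-flow on the Liouville tori, coprimality of the weights via effectiveness (this uses connectedness of the manifold, which is standard), and then you fix $\Phi$ once and for all as an $\mathrm{SL}_2(\Z)$-reparametrization of the standard model with first row the weight vector, rather than adjusting by $T^{k_t}$. Both arguments hinge on the same key point, the $t$-independence of the isotropy weights at $p_t$. One small imprecision in your treatment of that point: the eigenvalues of $DX_J(p_t)$ determine the weights only up to individual signs, so continuity of eigenvalues alone gives constancy of $\{|a_t|,|b_t|\}$; to get the signed pair constant you should argue that the isotropy representation (hence its signed weights) is locally constant along the connected component of the fixed-point set containing all the $p_t$, or invoke the dichotomy of Lemma~\ref{lem:dJzero} as the paper does --- a one-line patch. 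Finally, note what each approach buys: the citation of \cite{VNpoly} also hands the paper the developing-map convention $\partial g_t^{(2)}/\partial y>0$, which it quotes when applying this lemma in the proof of Theorem~\ref{thm:blowups}, whereas your $g_t$ need not satisfy it (the lemma as stated does not require it); on the other hand your construction makes the weight structure, its coprimality, and the choice of the fixed cone $K$ completely explicit.
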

\begin{proof}
By the local normal form for elliptic-elliptic points (Theorem~\ref{thm:normalform)}) for each $t$ there exist $U_t$, $g_t$, $\chi_t$, and $\Phi_t$ such that $g_t\circ F_t = \Phi_t\circ\chi_t$ on $U_t$, with $g_t$ satisfying the first property (see for instance step 2 in the proof of Theorem 3.8 in~\cite{VNpoly}). 
For $t\in [0,1]$, let $K_t = \Phi_t(\C^2)$; then $K_t$ is a Delzant cone determined by the weights of the $\T^2$-action generated by $g_t \circ F_t$. 
Since $(p_t)_{t \in [0,1]}$ is a continuous family of fixed points of $J$, all the points in this family belong to the same component of the fixed point set of $J$, hence Lemma \ref{lem:dJzero} implies that either $p_t = p$ does not depend on $t$ or $J(p_t)$ is the maximum (respectively minimum) of $J$ for every $t \in [0,1]$. In both cases, the weights of the first component of the $\T^2$-action generated by $g_t \circ F_t$ do not depend on $t$, so there exists $k_t\in\Z$ such that $K_0 = T^{k_t} K_t$ with $T$ as in Equation \eqref{eqn:T}.
Thus, by replacing $g_t$ with $(x,g_t^{(2)}(x,y) + k_t x)$ and $\Phi_t$ with $T^{k_t}\circ\Phi_t$, we obtain $\Phi_t=\Phi_0$ (hence $K_t=K_0$) while preserving the first two properties. 
\end{proof}

\subsubsection{Semitoric families at degenerate times}
\label{sec:semitoricfam-degen}

Now, we investigate the events that can occur in a semitoric family, and describe the mechanisms leading to these events. We also provide examples for many of the different behaviors. As before, let $(M,\omega,F_t = (J,H_t))$ be a semitoric family with degenerate times $\{t_1, \ldots, t_k \}$.

We already have a good understanding of the times that are not degenerate; indeed, Lemma \ref{lm:euler} tells us that on $[0,1] \setminus \{t_1, \ldots, t_k\}$ the number of fixed points does not change (if $M$ is compact), and Lemma \ref{lem:fixedpointsdontmove} ensures that fixed points themselves do not change, except maybe those that lie in a fixed sphere of $J$. In fact, the system described in Equation \eqref{system:W1_movingAB} displays such a behavior: the minimum of $J$ corresponds to a fixed sphere and rank zero points in this sphere change with $t$. Moreover, rank one singular points can change with time, as it is the case in every example in this paper. Hence, we now focus on what happens at a degenerate time.

Let $t_\ell \in \{t_1, \ldots, t_k\}$, let $I \subset [0,1]$ be an open interval containing $t_{\ell}$ and none of the $t_j$, $j \neq \ell$, and let 
\[ I_- = \{ t \in I \ | \ t < t_{\ell} \}, \qquad I_+ = \{ t \in I \ | \ t > t_{\ell} \}. \]

Here we have implicitly assumed that $t_\ell \notin \{0,1\}$. Note that if $t_\ell\in\{0,1\}$ then the possible behaviors are the same,
up to working with only $I_-$ or $I_+$.
To organize the possible cases, we use the following definition:
\begin{dfn}
We say that a singular point $p$ of $F_{t_{\ell}}$ \emph{appears} at $t_{\ell}$ if there do not exist two families $(p_t^\pm)_{t \in I_\pm}$ such that $p_t^\pm$ is a singular point of $F_t$ of the same rank as $p$ and converges to $p$ when $t$ goes to $t_{\ell}$.
\end{dfn}

Thus, there are three possible cases for degenerate times: appearance of a singular point (of rank one or zero), rank zero points becoming degenerate, or rank one points becoming degenerate.

\paragraph{Appearance of rank zero and rank one degenerate points.} A rank zero point can only appear in a fixed sphere of $J$ by Lemma \ref{lem:fixedpointsdontmove}. Thus, the only singular points that appear at $t_{\ell}$ are either rank zero points belonging to a fixed sphere of $J$ or rank one points. By Lemma \ref{lm:morse_family}, these new singular points are necessarily degenerate.

\begin{example}[Appearance]\label{ex:appearance}
Consider, on the manifold $\S^2 \times \S^2$ with symplectic form $\omega_{\S^2} \oplus \omega_{\S^2}$ and coordinates $(x_1,y_1,z_1,x_2,y_2,z_2)$, the \family obtained from
\begin{equation} J = z_1, \qquad H_t = z_2^3 + \left( (1 + z_1)^2 + (1-2t)^2 \right) z_2. \label{eq:system_appearance}\end{equation}
We claim that this is a semitoric family and at $t = \frac{1}{2}$ the circle given by $z_1 = -1$ and $z_2 = 0$ consists in degenerate rank zero points of the system (but these points are rank one singular points for $t \neq \frac{1}{2}$). In fact, if we replace this $H_t$ by $H_t = z_2^3 + \left( (z_1 - j_0)^2 + (1-2t)^2 \right) z_2$ for any $j_0 \in (-1,1)$, rank one points appear at $t=\frac{1}{2}$ on the regular level $J^{-1}(j_0)$.
\end{example}

\begin{rmk}
More generally, the following procedure can be used to construct examples on $\S^2 \times \S^2$ exhibiting various behaviors: let $J = z_1$ be the momentum map for the standard $\S^1$-action, let $h: [0,1] \times [-1,1] \times \S^2 \to \R$ be a smooth function, and let
\[ H: [0,1] \times \S^2 \times \S^2 \to \R, \qquad (t,x_1,y_1,z_1,x_2,y_2,z_2) \mapsto h(t,z_1,x_2,y_2,z_2). \]
For a given $t$, if $h_{t, z} = h(t,z,\cdot,\cdot,\cdot)$ is Morse for all $z\in[-1,1]$ then $(J,H_t= H(t,\cdot))$ is a semitoric system, so we can choose $h$ in such a way that $(J,H_t )$ is a semitoric family. 
This process can be used to produce examples of semitoric families displaying many different behaviors coming from what can happen in a two-parameter family $h_{t,z}$ of smooth functions on the sphere which are Morse for all but finitely many values of $t$ (note not all such functions are sufficient, since we must also assure that integrability holds at the degenerate times).
\end{rmk}

\paragraph{Rank zero points that become degenerate.} If an already existing rank zero point $p$ becomes degenerate at $t_{\ell}$ (on a fixed sphere $p$ may be the limit of a family $p_t$, where $p_t$ is a fixed point of $F_t$) one of the following happens:
\begin{enumerate} 
\item if $p$ belongs to a fixed sphere $S = J^{-1}(j_0)$ of $J$, then there exists a family $p_t$ such that $p_{t_\ell}=p$ and $p_t$ is the global maximum or minimum of $H_t^{\text{red},j_0} = {H_t}_{|S}$ (see Lemma \ref{lm:sphere}) for $t \in I \setminus \{t_{\ell}\}$; hence we are faced with two cases:
\begin{enumerate}
\myitem{(1a)}\label{item:fixedcollapse} $p_t$ is the global maximum (respectively minimum) of ${H_t}_{|S}$ for $t \in I_-$ and the global minimum (respectively maximum) of ${H_t}_{|S}$ for $t \in I_+$. Assume for instance that we are in the first case and let $m \in S$; from 
\[ \begin{cases} \forall t \in I_-, \quad H_t(m) \leq H_t(p_t),  \\ \forall t \in I_+, \quad H_t(m) \geq H_t(p_t), \end{cases} \]
and the continuity of $t \mapsto H_t$, we obtain that $H_{t_{\ell}}(m) = H_{t_{\ell}}(p)$. Consequently, we have that $F_{t_{\ell}}^{-1}(F_{t_{\ell}}(p)) = S$; in this case, we say that the $J$-fiber $S$ \emph{collapses} at $t = t_{\ell}$. Such a situation occurs at $t = \frac{1}{2}$ for the system given in Equation \eqref{system:W1_switch}, see Figure~\ref{fig:moment_map_W1_switch},
\item[(1b)] $p_t$ is the global maximum (respectively minimum) of ${H_t}_{|S}$ for $t \in I_-$ and $t \in I_+$. Then either $H_{t_{\ell}}$ is constant on $S$ (which means that $S$ collapses at $t=t_{\ell}$) or the same argument yields that $p=p_{t_\ell}$ is still a global maximum (respectively minimum) of ${H_{t_{\ell}}}_{|S}$, but $p$ may not be isolated anymore among rank zero points in $S$. For instance, if
\begin{equation} J = z_1, \qquad H_t = (z_2-1)^2 + ( (1-2t)^2 + (z_1 - j_0)^2) z_2 \label{eq:become_deg}\end{equation}
on $\S^2 \times \S^2$ with the standard symplectic form (see Example \ref{ex:appearance} for notation) and $j_0 = -1$, then $(0,0,-1,0,0,1)$ is elliptic-elliptic for $t \neq 1/2$ but degenerate for $t=1/2$. This follows from the fact that $(0,0,1)$ is a critical point of $(x,y,z) \mapsto (z-1)^2 + c z^2$ on $\S^2$ with Hessian matrix
\[ \begin{pmatrix} -c & 0 \\ 0 & -c \end{pmatrix}, \]
\end{enumerate}
\item otherwise, for every $t \in I \setminus \{ t_{\ell} \}$, $p$ is either an elliptic-elliptic or focus-focus point of $F_t$. Let $j = J(p)$; then either
\begin{enumerate}
\myitem{(2a)} \label{item:scenario_2a} $p$ is elliptic-elliptic for $t \in I_-$ and for $t \in I_+$. In this case, it is a global maximum or minimum of $H_t^{\text{red},j}$, and we can argue as in the case of a fixed sphere, using the continuity of $t \mapsto H_t^{\text{red},j}$. Hence the same scenarios can occur. For instance, let $R_2 > R_1 > 0$ and let $M = \S^2 \times \S^2$ with symplectic form $R_1 \omega_{\S^2} \oplus R_2 \omega_{\S^2}$ and coordinates $(x_1,y_1,z_1,x_2,y_2,z_2)$. Then for any $j_0 \in (-(R_1 + R_2), R_1 + R_2)$ (which in particular includes two of the $J$-fibers containing an elliptic-elliptic point), the \family given by 
\begin{equation} J = R_1 z_1 + R_2 z_2, \qquad H_t = (1-2t)z_1 + (J-j_0) (x_1 x_2 + y_1 y_2) \label{eq:system_collapse}\end{equation} 
exhibits a collapse of the level set $J^{-1}(j_0)$ at $t= \frac{1}{2}$. We claim that this is in fact a semitoric family (we do not give any details but display the image of the momentum map in Figure \ref{fig:moment_map_collapse_EE}),
\begin{figure}%[h]
\begin{center}
\includegraphics[scale=0.45]{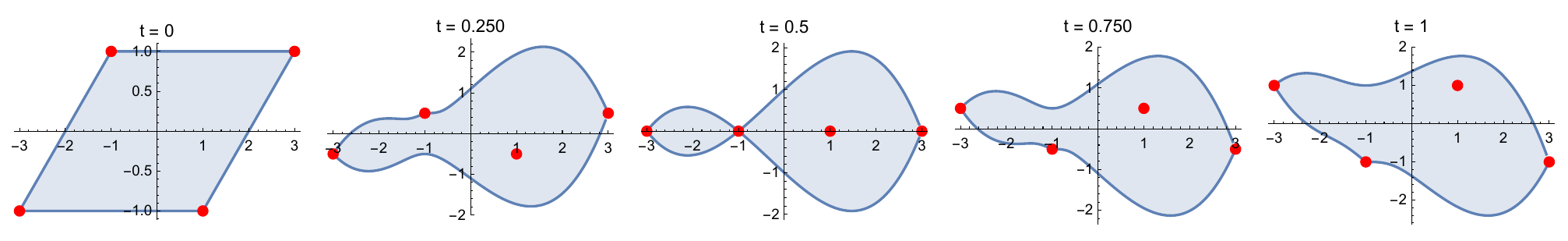}
\caption{Image of the momentum map for the system in Equation \eqref{eq:system_collapse} in the case $R_1=1$, $R_2=2$ and $j_0 = -1$.
Note that the point with $z_1=1$ and $z_2=-1$ is never of focus-focus type.}
\label{fig:moment_map_collapse_EE}
\end{center}
\end{figure}
\myitem{(2b)}\label{item:HHbif} or $p$ is elliptic-elliptic for $t \in I_-$ and focus-focus for $t \in I_+$ (or the converse which is similar). This means that for every $t \in I_-$, there exists $(\nu,\mu) \in \R^2$ such that the eigenvalues of $A_{\nu,\mu}^t = \Omega_p^{-1} (\nu d^2 J(p) + \mu d^2 H_t(p))$ (see Section \ref{subsect:sing_dim4} for notation) are of the form $\pm i \alpha, \pm i \beta$ for real and nonzero $\alpha \neq \beta$, and that for every $t \in I_+$, there exists $(\nu,\mu) \in \R^2$ such that the eigenvalues of $A_{\nu,\mu}^t$ are of the form $\pm \gamma \pm i \delta$ for real and nonzero $\gamma, \delta$ (note that $\nu,\mu,\alpha,\beta,\gamma$ and $\delta$ depend on $t$). Let $(\tau_n^-)_{n \geq 1}$ and $(\tau_n^+)_{n \geq 1}$ be two sequences converging to $t_{\ell}$ and such that for every $n \geq 1$, $\tau_n^- \in I_-$ and $\tau_n^+ \in I_+$. Since the set $E_t$ of $(\nu, \mu) \in \R^2$ satisfying the above properties for a given $t$ is open and dense, the intersection $\bigcap_{n \geq 1} (E_{\tau_n^-} \cap E_{\tau_n^+})$ is dense. Hence we can find a fixed $(\nu_0, \mu_0) \in \R^2$ such that for every $n \geq 1$, the eigenvalues of $A_{\nu_0, \mu_0}^{\tau_n^-}$ are of the form $\pm i \alpha_n, \pm i \beta_n$ and the eigenvalues of $A_{\nu_0, \mu_0}^{\tau_n^+}$ are of the form $\pm \gamma_n \pm i \delta_n$. Since $H_t$ depends continuously on $t$ and these eigenvalues are simple, they have limits
\[ \pm i \alpha_n \underset{n \to +\infty}{\longrightarrow} \pm i \alpha_{\infty}, \quad \pm i \beta_n \underset{n \to +\infty}{\longrightarrow} \pm i \beta_{\infty}, \quad \pm \gamma_n \pm i \delta_n \underset{n \to +\infty}{\longrightarrow} \pm \gamma_{\infty} \pm i \delta_{\infty}. \]
Since $\{ \pm i \alpha_{\infty}, \pm i \beta_{\infty} \} = \{ \pm \gamma_{\infty} \pm i \delta_{\infty} \}$, necessarily $\gamma_{\infty} = 0$ and $\alpha_{\infty} = \beta_{\infty} = \delta_{\infty}$. Hence $p$ undergoes a so-called Hamiltonian-Hopf bifurcation (see for instance \cite{Montaldi_notes, vdM, COR03, COR03_corr}) as shown in Figure \ref{fig:HH_bif_ev},
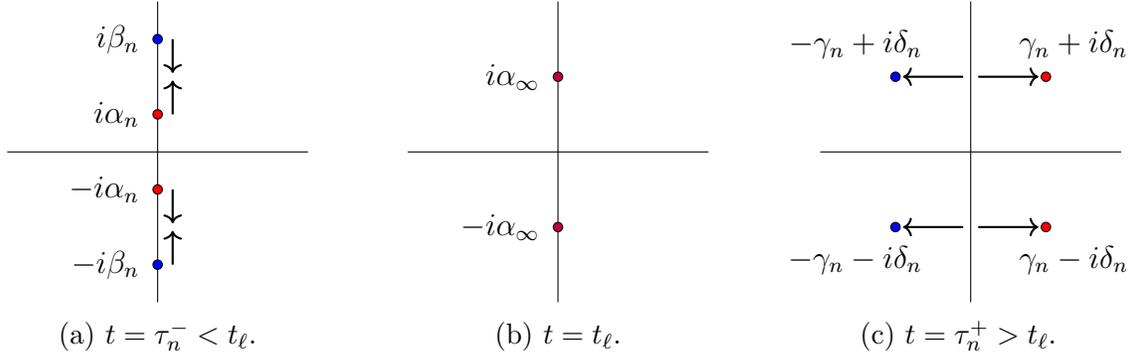
\begin{figure}%[H]
\begin{center}
\begin{subfigure}[b]{.3\textwidth}
\centering
\begin{tikzpicture}
\draw (-2,0) -- (2,0);
\draw (0,-2) -- (0,2);

\draw [fill=red] (0,1/2) circle (1/16);
\draw [fill=red] (0,-1/2) circle (1/16);
\draw [fill=blue] (0,3/2) circle (1/16);
\draw [fill=blue] (0,-3/2) circle (1/16);

\draw (-0.1,1/2) node [left] {$i \alpha_n$};
\draw (-0.1,-1/2) node [left] {$-i \alpha_n$};
\draw (-0.1,3/2) node [left] {$i \beta_n$};
\draw (-0.1,-3/2) node [left] {$-i \beta_n$};

\draw [->,thick] (0.2,1/2) -- (0.2,0.95);
\draw [->,thick] (0.2,3/2) -- (0.2,1.05);

\draw [->,thick] (0.2,-1/2) -- (0.2,-0.95);
\draw [->,thick] (0.2,-3/2) -- (0.2,-1.05);
\end{tikzpicture}
\caption{$t = \tau_n^- < t_{\ell}$.}
\end{subfigure} 
\hspace{2pt}
\begin{subfigure}[b]{.3\textwidth}
\centering
\begin{tikzpicture}
\draw (-2,0) -- (2,0);
\draw (0,-2) -- (0,2);

\draw [fill=purple] (0,1) circle (1/16);
\draw [fill=purple] (0,-1) circle (1/16);

\draw (-0.1,1) node [left] {$i \alpha_{\infty}$};
\draw (-0.1,-1) node [left] {$-i \alpha_{\infty}$};

\end{tikzpicture}
\caption{$t = t_{\ell}$.}
\end{subfigure}
\hspace{2pt}
\begin{subfigure}[b]{.3\textwidth}
\centering
\begin{tikzpicture}
\draw (-2,0) -- (2,0);
\draw (0,-2) -- (0,2);

\draw [fill=red] (1,1) circle (1/16);
\draw [fill=red] (1,-1) circle (1/16);
\draw [fill=blue] (-1,1) circle (1/16);
\draw [fill=blue] (-1,-1) circle (1/16);

\draw (0.5,1.1) node [above right] {$\gamma_n + i \delta_n$};
\draw (0.5,-1.1) node [below right] {$\gamma_n - i \delta_n$};
\draw (-0.5,1.1) node [above left] {$-\gamma_n + i \delta_n$};
\draw (-0.5,-1.1) node [below left] {$-\gamma_n - i \delta_n$};

\draw [->,thick] (0.1,1) -- (0.9,1);
\draw [->,thick] (-0.1,1) -- (-0.9,1);
\draw [->,thick] (0.1,-1) -- (0.9,-1);
\draw [->,thick] (-0.1,-1) -- (-0.9,-1);
\end{tikzpicture}
\caption{$t = \tau_n^+ > t_{\ell}$.}
\end{subfigure}
\end{center}
\caption{Eigenvalues during the Hamiltonian-Hopf bifurcation at $t_{\ell}$ described in case \ref{item:HHbif}.}
\label{fig:HH_bif_ev}
\end{figure}
\item[(2c)] or $p$ is focus-focus for $t \in I_-$ and for $t \in I_+$. 
In general, in this case the images of any number of focus-focus points and zero, one, or two elliptic-elliptic points can collide.
There are two subcases:
\begin{enumerate}
 \item $p$ is not a global extremum of $H_{t_\ell}$ on $J^{-1}(J(p))$.
  In this case, the fiber $F_{t_\ell}^{-1}(p)$ can include regular points, degenerate rank one points, or degenerate/focus-focus rank zero points.
 \item $p$ is a global extremum of $H_{t_\ell}$ on $J^{-1}(J(p))$. This means that the image $F_{t}(p)$ is going to the boundary of $F_t(M)$ as $t\to t_\ell$, but as in the previous case this can occur in many different ways, the only difference being that now the fiber may include one or two elliptic-elliptic points. For instance,
 \begin{itemize}
  \item $F_{t_\ell}(p) = F_{t_\ell}(q)$ where $q\neq p$ is a global extremum of $(H_t)_{|J^{-1}(J(p))}$ for $t\in I$, as occurs in the coupled angular momenta system~\cite{LFP} with $R_1=R_2$, see Figure \ref{fig:spins_equal_R}.
	\begin{figure}
\begin{center}
\includegraphics[scale=0.45]{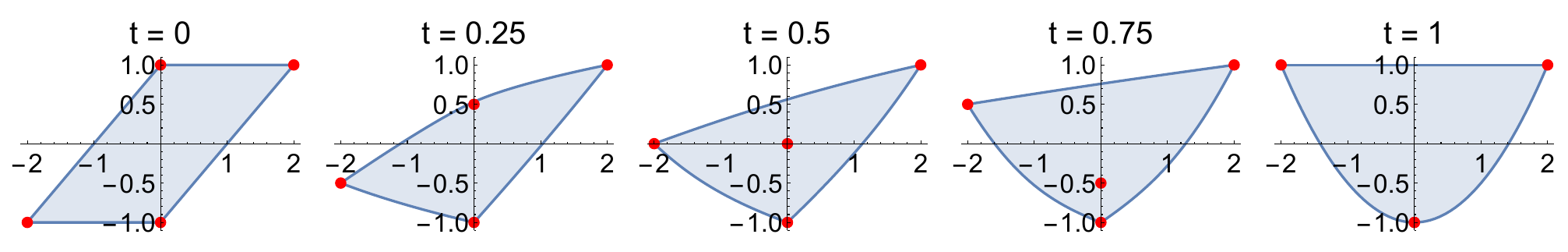}
\caption{Image of the momentum map for the coupled angular momenta with $R_1 = R_2 = 1$.}
\label{fig:spins_equal_R}
\end{center}
\end{figure}
  \item $F_{t_\ell}(p) = F_{t_\ell}(q)$ where $q\neq p$ is a focus-focus point for $t\in I_-$ or $t\in I_+$. For instance,
   this happens for the system from~\cite{HohPal} described in Equation \eqref{eqn:HPsystem} with $R_1=R_2$, $s_1 = t$ and $s_2=1-t^2$, see Figure \ref{fig:HP_FF_collide};
		\begin{figure}
\begin{center}
\includegraphics[scale=0.45]{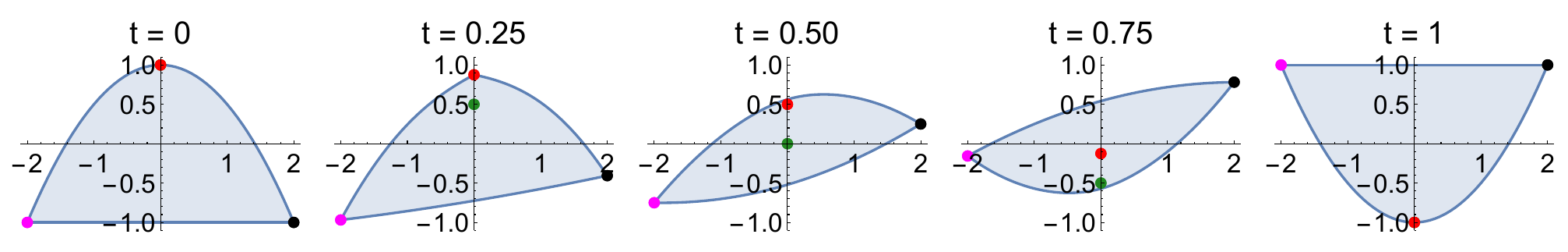}
\caption{Image of the momentum map for the system in Equation \eqref{eqn:HPsystem} with $R_1 = R_2 = 1$, $s_1 = t$ and $s_2 = 1-t^2$; the images of the fixed points in the fiber $J^{-1}(0)$ coincide only at $t=0$ and $t=1$.}
\label{fig:HP_FF_collide}
\end{center}
\end{figure}
 \end{itemize}
\end{enumerate}
\end{enumerate}
\end{enumerate}

\paragraph{Rank one points that become degenerate.} Rank one points can also become degenerate. Such points belonging to an interior fiber of $J$ are global minima or maxima of $H$ restricted to this fiber; so in this case the possible scenarios are the same as in item \ref{item:scenario_2a} above. For instance, Figure \ref{fig:moment_map_collapse} displays the image of the momentum map for the system in Equation \eqref{eq:system_collapse} with $j_0$ an interior value of $J$, and taking $j_0 \notin \{-1,1\}$ in Equation \eqref{eq:become_deg} yields a system where some of the rank one singular points on $J^{-1}(j_0)$ become degenerate. In an extremal fiber of $J$, however, rank one points can become degenerate rank zero points.

\begin{figure}
\begin{center}
\includegraphics[scale=0.45]{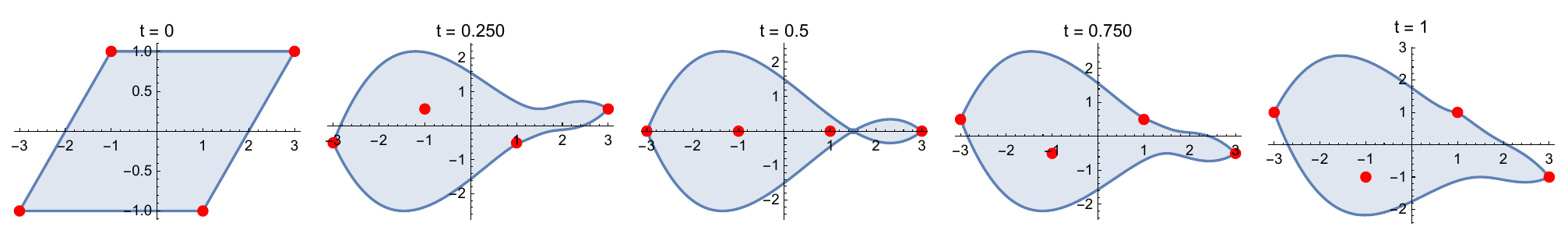}
\caption{Image of the momentum map for the system in Equation \eqref{eq:system_collapse} in the case $R_1=1$, $R_2=2$ and $j_0 = \frac{3}{2}$.}
\label{fig:moment_map_collapse}
\end{center}
\end{figure}

In the rest of this paper, we choose to focus on the case where one single rank zero point undergoes two Hamiltonian-Hopf bifurcations (see case~\ref{item:HHbif}), going from elliptic-elliptic as a global maximum on its $J$-fiber to focus-focus and then elliptic-elliptic as a global minimum on its $J$-fiber (or the converse); this leads us to the definition of a semitoric transition family, see Definition \ref{dfn:transition_fam}. We study the first properties of these families in the next subsection.

\subsection{Semitoric transition families}
\label{sec:semitorictransfam}

We start by giving the geometric idea of what happens in a semitoric transition family, for instance in the coupled angular momenta system (see Section~\ref{sec:knownex}). In order to do so, let $j$ be a singular level of $J$ containing one fixed point $p$ of $J$ and embed the singular manifold $M_j^{\text{red}}$ in $\R^3$ in such a way that the singularity (which is the projection of $p$) corresponds to the global maximum of the height function $Z$. Assume moreover that $H_0$ is such that Z is a function of the reduced Hamiltonian $R := H_0^{\text{red},j}$ (in which case the level sets of $Z$ and $R$ coincide), as in Figure \ref{fig:teardrops}, and that $H_{1/2}^{\text{red},j}$ corresponds to one of the horizontal coordinates $X$ or $Y$ on $M_j^{\text{red}}$. Then $p$ is an elliptic-elliptic singular point of $(J,H_0)$, and a focus-focus singular point of $(J,H_{1/2})$. Moreover, by transitioning from $H_0$ to $H_{1/2}$ in the most natural way, there exists $t^- \in (0,\frac{1}{2})$ such that the point $p$ stays elliptic-elliptic for $0 \leq t < t^-$, becomes degenerate at $t = t^-$ and is focus-focus for $t^- < t \leq \frac{1}{2}$. 
This process continues as $t$ increases from $t=\frac{1}{2}$ to $t=1$, until the singular point is the minimum of $H_t^{\text{red},j}$, which we do not include in the figure since it is similar.
Having this picture in mind helps to find explicit systems once we know enough about the reduced spaces; indeed, this is exactly the strategy we employ in Sections \ref{sec:W1_example} and \ref{sec:W2examples}, using the same notation $X,Y,Z,R$.
Thus, the first step is finding the coordinates $(X,Y,Z)$;
note that in principle the coordinates $X$ and $Y$ could depend on $j$, but in our examples they do not. For instance, the function $X = x_1 x_2 + y_1 y_2$ in Equation \eqref{eqn:HPsystem} is obtained exactly in this way.

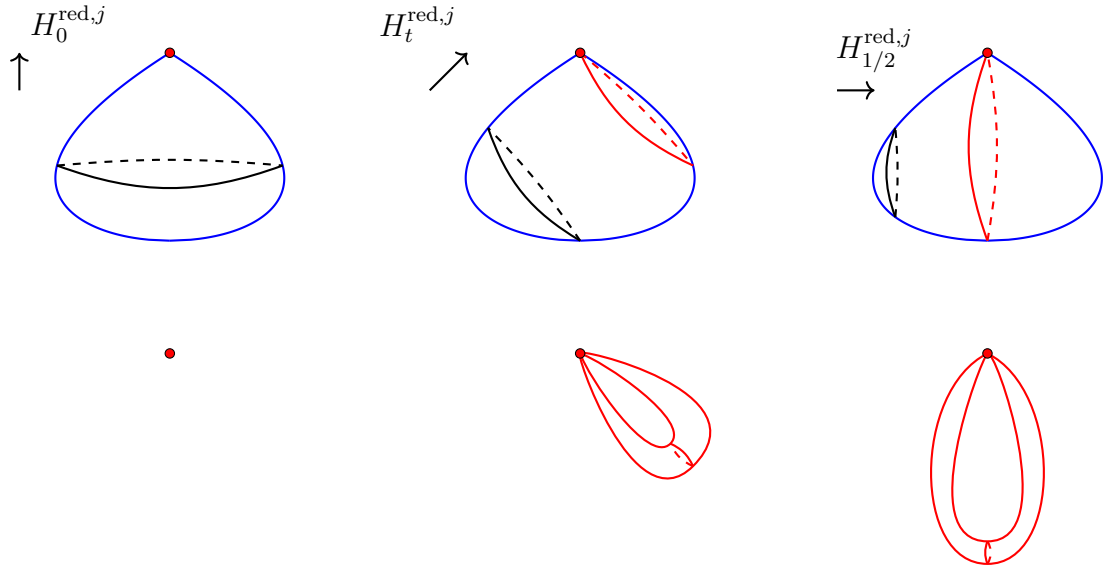
\begin{figure}[t]
\begin{center}
\begin{subfigure}[b]{.3\textwidth}
\centering
\def\const{2.5}
\begin{tikzpicture}
\draw [thick, ->] (-2,2)--(-2,2.5);
\draw (-2,2.5) node [above right] {$H_0^{\text{red},j}$};
\draw [domain=0:\const,samples=200,color=blue,thick] plot ({sqrt(\x*(\const-\x)^2)}, {\x});
\draw [domain=0:\const,samples=200,color=blue,thick] plot ({-sqrt(\x*(\const-\x)^2)}, {\x});
\draw [fill = red] (0,\const) circle (1/16);
\draw [thick] ({-(\const-1)},1) to[bend right=20] ({\const-1},1);
\draw [thick, dashed] ({-(\const-1)},1) to[bend left=5] ({\const-1},1);

\begin{scope}[yshift = -3.5cm]
\draw [fill = red] (0,\const) circle (1/16);
\draw [transparent] (0,-0.3) circle (1/16);
\end{scope}
\end{tikzpicture}
\caption{The singularity corresponds to an elliptic-elliptic point.}
\end{subfigure} 
\hspace{2pt}
\begin{subfigure}[b]{.3\textwidth}
\centering
\def\const{2.5}
\begin{tikzpicture}
\draw [thick, ->] (-2,2)--(-1.5,2.5);
\draw (-1.5,2.5) node [above left] {$H_t^{\text{red},j}$};
\draw [domain=0:\const,samples=200,color=blue,thick] plot ({sqrt(\x*(\const-\x)^2)}, {\x});
\draw [domain=0:\const,samples=200,color=blue,thick] plot ({-sqrt(\x*(\const-\x)^2)}, {\x});
\draw [thick, red] (0,\const) to[bend right=20] ({\const-1},{1});
\draw [thick, red, dashed] (0,\const) to[bend left=5] ({\const-1},{1});
\draw [fill = red] (0,\const) circle (1/16);
\draw [thick] (0,0) to[bend left=20] ({-sqrt(\const-1)},{\const-1});
\draw [thick, dashed] (0,0) to[bend right=5] ({-sqrt(\const-1)},{\const-1});

\begin{scope}[yshift = -3.5cm]

\draw [thick, red] (0,\const) ..controls +(.3,-0.1) and +(.3,.3).. ({\const-1-.3},{1+.3}) .. controls +(-.3,-.3) and +(0.1,-.3) .. (0,\const);
\draw [thick, red] (0,\const) ..controls +(-0.1,0.1) and +(1,1).. ({\const-1},1) .. controls +(-.8,-.8) and +(-0.1,0.1) .. (0,\const);
\draw [fill = red] (0,\const) circle (1/16);
\draw [thick, red] ({\const-1},1) to[bend right=20] ({\const-1-0.3},{1+0.3});
\draw [thick, red, dashed] ({\const-1},1) to[bend left=20] ({\const-1-0.3},{1+0.3});
\draw [transparent] (0,-0.3) circle (1/16);

\end{scope}
\end{tikzpicture}
\caption{The singularity corresponds to a focus-focus point.\\}
\end{subfigure}
\hspace{2pt}
\begin{subfigure}[b]{.3\textwidth}
\centering
\def\const{2.5}
\begin{tikzpicture}
\draw [thick, ->] (-2,2)--(-1.5,2);
\draw (-1.5,2.1) node [above] {$H_{1/2}^{\text{red},j}$};
\draw [domain=0:\const,samples=200,color=blue,thick] plot ({sqrt(\x*(\const-\x)^2)}, {\x});
\draw [domain=0:\const,samples=200,color=blue,thick] plot ({-sqrt(\x*(\const-\x)^2)}, {\x});
\draw [thick, red] (0,0) to[bend left=20] (0,\const);
\draw [thick, red, dashed] (0,0) to[bend right=10] (0,\const);
\draw [fill = red] (0,\const) circle (1/16);
\draw [thick] ({-sqrt(\const-1)},{0.5*(-sqrt((\const)^2+2*\const-3)+\const+1)}) to[bend left=20] ({-sqrt(\const-1)},{\const-1});
\draw [thick, dashed] ({-sqrt(\const-1)},{0.5*(-sqrt((\const)^2+2*\const-3)+\const+1)}) to[bend right=5] ({-sqrt(\const-1)},{\const-1});

\begin{scope}[yshift = -3.5cm]
\draw [thick, red] (0,\const) ..controls +(1,-0.5) and +(1,0).. (0,-0.3) .. controls +(-1,0) and +(-1,-0.5) .. (0,\const);
\draw [thick, red] (0,\const) ..controls +(0.1,0.1) and +(1,0).. (0,0) .. controls +(-1,0) and +(-0.1,-0.1) .. (0,\const);
\draw [fill = red] (0,\const) circle (1/16);
\draw [thick, red] (0,0) to[bend right=20] (0,-0.3);
\draw [thick, red, dashed] (0,0) to[bend left=30] (0,-0.3);
\draw [transparent] (0,-0.3) circle (1/16);
\end{scope}

\end{tikzpicture}
\caption{The singularity corresponds to a focus-focus point.\\}
\end{subfigure}
\end{center}
\caption{Top row: the singular level (in red, with the red circle indicating the singular point) and a regular level (in black) for the function $H_j^{\text{red}}$ (corresponding to the projection along a given line), for different choices of $H$, on the reduced space $M_j^{\text{red}}$ (in blue) where $j$ is a singular value of $J$. Bottom row: the corresponding singular fibers.}
\label{fig:teardrops}
\end{figure}

In what follows, by a \emph{simple semitoric transition family} we mean a semitoric transition family (as in Definition~\ref{dfn:transition_fam}) which is comprised of simple
semitoric systems for all $t\in[0,1]\setminus\{t^-, t^+\}$.

\begin{lm}\label{lem:onlyrankzeropt}
 The transition point in a simple semitoric transition family is the only rank zero point in its $J$-fiber.
\end{lm}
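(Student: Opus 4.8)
The plan is to argue by contradiction. Set $j_0 = J(p)$ and suppose that, for some $t_0 \in [0,1]$, the fiber $J^{-1}(j_0)$ contains a rank zero point $q \neq p$ of $F_{t_0}$. Since $p$ is of focus-focus type for $t \in (t^-,t^+)$, its image is interior to $F_t(M)$, so $j_0$ is an interior value of $J$ (which does not depend on $t$, as $J$ does not). Since $dJ(q)=0$, Lemma~\ref{lem:dJzero} shows that the connected component of $M^{\S^1}$ through $q$ is the single point $q$ (there is no fixed sphere over an interior value), so Lemma~\ref{lem:fixedpointsdontmove} applies and $q$ is a rank zero point of $F_t$ for \emph{every} $t \in [0,1]$; the same is true of $p$.

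Next I would show that $q$ is of elliptic-elliptic type for all $t$. For $t \notin \{t^-,t^+\}$ the system is simple semitoric, so $q$ is non-degenerate; and for $t = t^-, t^+$ the definition of a semitoric transition family (Definition~\ref{dfn:transition_fam}) forbids degenerate singular points in $M \setminus \{p\}$, so $q$ is non-degenerate there too. By Lemma~\ref{lem:HPdegen} a fixed point transitioning between elliptic-elliptic and focus-focus type must be degenerate at the transition, so since $q$ is non-degenerate for all $t$, its Williamson type is constant; and it cannot be focus-focus, since for $t \in (t^-,t^+)$ this would place two focus-focus points, $p$ and $q$, on the fiber $J^{-1}(j_0)$, contradicting simplicity. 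Hence $q$ is elliptic-elliptic for every $t$, and so is $p$ for $t<t^-$ and for $t>t^+$.

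Now I would invoke the fact that an elliptic-elliptic fixed point lying over an interior value $j$ of $J$ is necessarily either the global maximum or the global minimum of $H_t$ restricted to $J^{-1}(j)$. Indeed, by Eliasson's normal form (Theorem~\ref{thm:normalform)}) the image under $F_t$ of a small neighborhood of such a point is a neighborhood of its image inside a rational convex cone with that apex; since $j$ is not an extremal value of $J$, the moment map $J$ of the $\S^1$-action cannot have a local extremum there (this is the global input, underlying Lemma~\ref{lem:dJzero}), so this cone opens strictly upward or strictly downward. Comparing with the fact that $H_t(J^{-1}(j))$ is a compact interval $I$ with $\{j\}\times I \subseteq F_t(M)$, the image of the point must be the top or bottom endpoint of $I$, i.e.\ the point realizes $\max_{J^{-1}(j)}H_t$ or $\min_{J^{-1}(j)}H_t$. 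Applying this to both $p$ and $q$ at $t=0$ and at $t=1$, the two distinct elliptic-elliptic points $p, q$ of $J^{-1}(j_0)$ realize, in one order or the other, the maximum and the minimum of $H_0$ (resp.\ $H_1$) on $J^{-1}(j_0)$.

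The last clause of Definition~\ref{dfn:transition_fam} then forces $p$ to switch between being the maximum and the minimum of $H|_{J^{-1}(j_0)}$ as $t$ runs from $0$ to $1$, hence $q$ must switch as well. But the role of $q$ (maximum versus minimum) is locally constant on $[0,1]\setminus\{t^-,t^+\}$ by continuity of $t \mapsto H_t$, and it cannot change across $t^-$ or across $t^+$: if, say, $q$ realized the minimum of $H_t|_{J^{-1}(j_0)}$ just below $t^-$ and the maximum just above, then letting $t \to t^-$ in the inequalities $H_t(m) \ge H_t(q)$ (valid for $m \in J^{-1}(j_0)$ and $t<t^-$) and $H_t(m) \le H_t(q)$ (valid for $t>t^-$) would give $H_{t^-} \equiv H_{t^-}(q)$ on $J^{-1}(j_0)$, so $J^{-1}(j_0)$ would be a single fiber of $F_{t^-}$; this fiber is $3$-dimensional, while near a non-degenerate rank one point the fibers of $F_{t^-}$ are $2$-dimensional, so $J^{-1}(j_0)\setminus\{p\}$ would contain a degenerate rank one point, contradicting the definition of a semitoric transition family at $t^-$ (and likewise at $t^+$). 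Thus $q$ does not switch roles, contradicting the previous paragraph, so no such $q$ exists. I expect the main obstacle to be the middle step, namely making fully rigorous that an elliptic-elliptic point over an interior $J$-value is a global extremum of $H_t$ on its $J$-fiber — this combines the local normal form with the global structure of $\S^1$-moment maps behind Lemma~\ref{lem:dJzero} — while the remainder is a continuity-and-bookkeeping argument.
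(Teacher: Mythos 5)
Your overall strategy matches the paper's: show the putative extra rank zero point $q$ stays fixed for all $t$, that its type cannot change and hence (by simplicity) it is elliptic-elliptic throughout, that an elliptic-elliptic point over an interior $J$-value is a global extremum of $H_t$ on its $J$-fiber, and then play this against the requirement that $p$ swaps between maximum and minimum. Your first two steps are fine, and indeed more carefully justified than in the paper (via Lemmas \ref{lem:dJzero} and \ref{lem:fixedpointsdontmove}). The genuine gap is in the middle step. The Eliasson normal form only tells you that the image of a small \emph{neighborhood} of the elliptic-elliptic point is a corner-like cone, hence that the point is a \emph{local} extremum of $H_t$ restricted to $J^{-1}(j_0)$; it does not tell you that $F_t(q)$ is an endpoint of the vertical segment $\{j_0\}\times I\subseteq F_t(M)$, because other parts of $M$ may map near $F_t(q)$, so the vertical line being two-sided inside $F_t(M)$ is perfectly compatible with the one-sided local cone. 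Passing from local to global extremality needs genuinely global input on semitoric systems (connectedness of the fibers of $F_t$, or equivalently the structure results of V\~u Ng\d{o}c underlying Lemma \ref{lm:sphere}, which show that over an interior value the non-extremal levels contain only regular or focus-focus points, never elliptic-elliptic ones). The paper simply invokes this standard fact without proof; your attempted derivation of it is where the argument as written breaks.

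Your endgame also departs from the paper's and is more roundabout: the paper concludes at once that for $t<t^-$ or $t>t^+$ the points $p$ and $q$ would both sit at the maximum (or both at the minimum) of $(H_t)_{|J^{-1}(j_0)}$, which is impossible because an elliptic-elliptic point of a semitoric system is alone in its fiber; you instead argue that $q$ would have to swap roles and exclude this via a collapse argument at $t^{\pm}$. That route can be made to work (your dimension count producing degenerate rank one points at a transition time is fine, up to the local fiber through an elliptic-transverse point being $1$-, not $2$-, dimensional, which only strengthens the contradiction), but note that your claim that at $t=0$ and $t=1$ the points $p,q$ realize the maximum and minimum ``in one order or the other'' silently uses the very same fact that two distinct elliptic-elliptic points cannot share the extremal value, i.e.\ exactly the fiber statement the paper quotes. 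Either cite that fact explicitly, in which case you can shortcut to the paper's one-line conclusion, or keep your longer route but still justify that assertion.
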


\begin{proof}
Let $(M,\om,(J,H_t))$ be a simple semitoric family with transition point $p$ and transition times $t^-$, $t^+$, and suppose
that $q\in J^{-1}(J(p))\setminus\{p\}$ is also a rank zero point.
By Lemma~\ref{lm:morse_family} the type of $q$ cannot change without it becoming degenerate, which cannot occur in a semitoric transition
family, so it is either focus-focus or elliptic-elliptic for all $t\in [0,1]$.
If $q$ is focus-focus then the system is not simple for $t\in(t^-,t^+)$.
If $q$ is elliptic-elliptic for some value of $t$, then it is either the maximum or minimum of $(H_t)_{|J^{-1}(J(p))}$ for all $t$. Then either for $t<t^-$
or $t>t^+$ we have that $p$ and $q$ are both at the maximum or minimum of $(H_t)_{|J^{-1}(J(p))}$, contradicting the fact
that an elliptic-elliptic point is the only point in its fiber in a semitoric system.
\end{proof}

We conclude by explaining the way in which the unmarked semitoric polygon of the systems in a simple semitoric transition family are related before and after the transition times in a special case. Define a function $\phi_j$ which removes the $j$-th line and the $j$-th cut from a representative of an unmarked semitoric polygon, that is
\[
\phi_j (\De, \vec{\ell}, \vec{\epsilon}) = \big(\De, (\ell_1, \ldots, \ell_{j-1}, \ell_{j+1}, \ldots, \ell_s), (\epsilon_1,\ldots, \epsilon_{j-1}, \epsilon_{j+1}, \ldots, \epsilon_s)\big).
\]
Then, given an unmarked semitoric polygon $\Delta_u$ we define
\[\De_u^{\epsilon_j=\pm 1} = \{\phi_j\big( \De, \vec{\ell}, \vec{\epsilon}\big) \mid (\De, \vec{\ell}, \vec{\epsilon})\in\De_u, \epsilon_j = \pm 1\},\]
see Figure~\ref{fig:phij}. 
Recall that by Lemma~\ref{lem:samepoly} the unmarked semitoric polygon of a semitoric family is fixed in any interval between two consecutive degenerate times.

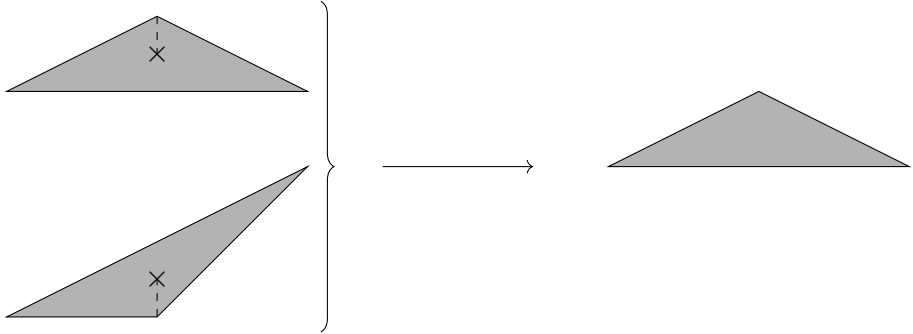
\begin{figure}
\begin{center}
\begin{tikzpicture}
\draw [->] (3,-1) -- (5,-1);

\draw[decoration={brace,raise=5pt, amplitude = 5pt},decorate] (2,1.2) -- (2,-3.2);

\filldraw[draw=black, fill=gray!60] (-2,0) node[anchor=north,color=black]{}
  -- (0,1) node[anchor=south,color=black]{}
  -- (2,0) node[anchor=south,color=black]{}
  -- cycle;
\draw [dashed] (0,1) -- (0,1/2);
\draw (0,1/2) node[] {$\times$};

\filldraw[draw=black, fill=gray!60] (-2,-3) node[anchor=north,color=black]{}
  -- (0,-3) node[anchor=south,color=black]{}
  -- (2,-1) node[anchor=south,color=black]{}
  -- cycle;
\draw [dashed] (0,-3) -- (0,-2.5);
\draw (0,-2.5) node[] {$\times$};

\filldraw[draw=black, fill=gray!60] (6,-1) node[anchor=north,color=black]{}
  -- (8,0) node[anchor=south,color=black]{}
  -- (10,-1) node[anchor=south,color=black]{}
  -- cycle;

\end{tikzpicture}
\end{center}
\caption{Two representatives of the same unmarked semitoric polygon $\Delta_u$ with opposite cuts (left) and one representative of $\Delta_u^{\epsilon_1 = 1}$ (right).}
\label{fig:phij}
\end{figure}
	
\begin{lm}
\label{lem:polygons_transition}
Let $(M,\omega,F_t = (J,H_t))$ be a simple semitoric $k$-transition family, $k\geq 1$, with transition point $p$ and transition times $t^-, t^+$. Let $\De_u$ be the unmarked semitoric polygon of the system for $t\in (t^-,t^+)$. Moreover, let $j \in \{1, \ldots, k\}$ be such that for $t \in (t^-, t^+)$, $p$ is the $j$-th focus-focus point of $F_t$ when we order these by $J$-value. Then the unmarked semitoric polygon of the system is $\De_u^{\epsilon_j=1}$ for $t \in [0,t^-)$ and $\De_u^{\epsilon_j=-1}$ for $t \in (t^+,1]$.
\end{lm}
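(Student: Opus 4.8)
The plan is to track the transition point $p$ through the degenerate times $t^-$ and $t^+$, and to use the results of Section~\ref{sec:semitoricfam-general} to compare the unmarked semitoric polygons before, during, and after the transition. First I would fix attention on a time interval $(t^-,t^+)$ and a representative $(\Delta, \vec\ell, \vec\epsilon)$ of $\Delta_u$; by Lemma~\ref{lem:samepoly} this is the unmarked semitoric polygon for all $t$ in this interval, and by Lemma~\ref{lem:samepoly} again the unmarked polygons for $t \in [0,t^-)$ and $t \in (t^+,1]$ are each well-defined (independent of $t$ in those intervals). By Lemma~\ref{lem:fixedpointsdontmove}, the focus-focus points other than $p$ remain rank-zero singular points of $F_t$ for all $t$ and, by Lemma~\ref{lem:HPdegen} together with Lemma~\ref{lm:morse_family}, they cannot change type in the interior of $[0,t^-)$, $(t^-,t^+)$, or $(t^+,1]$; since the system is semitoric (hence non-degenerate) at every non-degenerate time, each such point is focus-focus for all $t \in [0,1]$. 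Meanwhile $p$ is elliptic-elliptic for $t \in [0,t^-)$ and $t \in (t^+, 1]$ and focus-focus for $t \in (t^-,t^+)$, and by the defining properties of a transition family it is a maximum of $(H_0)_{|J^{-1}(J(p))}$ and a minimum of $(H_1)_{|J^{-1}(J(p))}$ (or vice versa). Consequently, for $t \in [0,t^-)$ and $t \in (t^+,1]$ the system has exactly $k-1$ focus-focus points, namely the ones other than $p$, and they occur at the same $J$-values $\ell_1,\dots,\widehat{\ell_j},\dots,\ell_k$ as for $t \in (t^-,t^+)$, with the same cut directions $\epsilon_1,\dots,\widehat{\epsilon_j},\dots,\epsilon_k$.

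Next I would compare the top boundaries. Mimicking the proof of Lemma~\ref{lem:samepoly} step by step: the height of the polygon at each $J$-value $j'$ is $(2\pi)^{-1}\mathrm{vol}(J^{-1}(j'))$, independent of $t$; the leftmost top vertex and the initial slope can be aligned by an element of $\mathcal T$ using Lemma~\ref{lem:samecorner} (applied to the continuous family of elliptic-elliptic points at the global minimum of $J$, or to a fixed sphere); and the positions and slope-changes at the remaining top vertices are governed by Lemma~\ref{lem:changeslope}, which depends only on the $\S^1$-weights at the elliptic-elliptic points in the $J$-fiber and the number of focus-focus points in that fiber. Away from the fiber $J^{-1}(J(p))$ nothing changes, so the polygons for $t<t^-$, $t\in(t^-,t^+)$, and $t>t^+$ all agree there. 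The only difference is at $J^{-1}(J(p))$: for $t\in(t^-,t^+)$ this fiber contributes the $j$-th focus-focus cut (no top vertex if $\epsilon_j=-1$, or a top vertex with slope change $-1$ if $\epsilon_j=+1$), whereas for $t<t^-$ (resp.\ $t>t^+$) it contributes an elliptic-elliptic corner on the top (resp.\ bottom) boundary — here I use the hypothesis that the isotropy weights of $J$ at $p$ are $\pm1$, so that $w_e = -1$ in Lemma~\ref{lem:changeslope}, meaning the slope change at that corner is again $-1$. The assignment of $p$ to the maximum of $(H_0)_{|J^{-1}(J(p))}$ for $t<t^-$ and the minimum for $t>t^+$ is exactly what determines that the elliptic-elliptic corner sits on the top boundary for $t<t^-$ and on the bottom for $t>t^+$.

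Finally I would assemble this into the statement. For $t \in [0,t^-)$: the system is simple semitoric with $k-1$ focus-focus points at $\ell_1,\dots,\widehat{\ell_j},\dots,\ell_k$ with cut directions $\epsilon_1,\dots,\widehat{\epsilon_j},\dots,\epsilon_k$, and its polygon, away from $J^{-1}(J(p))$, coincides with a representative of $\Delta_u$; at $J^{-1}(J(p))$ it has a Delzant corner on the top boundary with slope change $-1$, which is precisely what one gets by taking a representative of $\Delta_u$ with $\epsilon_j = +1$ (so the $j$-th cut goes up, producing a hidden or fake top corner with slope change $-1$) and then applying $\phi_j$ to delete the $j$-th line and cut. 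By the characterization of the unmarked semitoric polygon via its $G_{s}\times\mathcal T$-orbit, this identifies the unmarked polygon for $t<t^-$ with $\Delta_u^{\epsilon_j=1}$. The case $t \in (t^+,1]$ is identical except that $p$ is now a minimum on its $J$-fiber, so the corresponding corner is on the bottom boundary; choosing the representative of $\Delta_u$ with $\epsilon_j=-1$ (cut downward) and applying $\phi_j$ yields $\Delta_u^{\epsilon_j=-1}$.

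\medskip

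\textbf{Main obstacle.} The delicate point is showing that the elliptic-elliptic corner at $J^{-1}(J(p))$ appearing for $t<t^-$ (resp.\ $t>t^+$) is genuinely the \emph{same} corner, in the sense of the $G_s\times\mathcal T$-action, as the one obtained from a representative of $\Delta_u$ with the $j$-th cut pointed up (resp.\ down) followed by $\phi_j$ — i.e.\ that removing a focus-focus point whose weights are $\pm1$ "closes up" the polygon into a Delzant corner in exactly the prescribed way, and that the cut direction $\epsilon_j$ of the representative one must start from is forced to be $+1$ before the transition and $-1$ after. This requires carefully matching the local affine geometry near that vertex (using the explicit action of $T$ on slopes and the $w_e=-1$ computation of Lemma~\ref{lem:changeslope}) and invoking the uniqueness of the developing map up to $\mathcal T$; the weight-$\pm1$ hypothesis is exactly what makes the slope bookkeeping work out, and dropping it would break the argument.
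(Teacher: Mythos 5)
Your proposal is correct and follows essentially the same route as the paper: reduce to matching the top boundaries exactly as in the proof of Lemma~\ref{lem:samepoly}, and use Lemma~\ref{lem:changeslope} together with the weight-$\pm 1$ hypothesis to check that the slope change at the vertex over $J(p)$ is $-1$ both when $p$ is focus-focus with the cut going up and when it is the elliptic-elliptic extremum on its fiber. The only step you leave implicit is that $p$ is the \emph{only} rank-zero point in $J^{-1}(J(p))$ (Lemma~\ref{lem:onlyrankzeropt}), which is what guarantees $k=1$ and $w_e=0$ in the focus-focus case and $k=0$ in the elliptic-elliptic case in the slope-change formula; the paper's proof invokes this explicitly.
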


Note that this result implies that as a set (forgetting about the vertical lines and cuts) the unmarked semitoric polygon for $t\in (t^-,t^+)$ is the union of the unmarked semitoric polygons for $t \in [0,t^-)$ and $t \in (t^+,1]$.

\begin{proof}
We may assume without loss of generality that $p$ is a maximum of $(H_t)_{|J^{-1}(J(p))}$ for $t \in [0,t^-)$ and a minimum of $(H_t)_{|J^{-1}(J(p))}$ for $t \in (t^+,1]$. We only prove the statement about the interval $[0,t^-)$, since the other one is similar. Let $t_1\in [0,t^-)$, $t_2\in (t^-, t^+)$, and let
\[(\De,(\ell_1,\ldots, \ell_{j-1}, \ell_{j+1}, \ldots,\ell_k),(\epsilon_1, \ldots, \epsilon_{j-1}, \epsilon_{j+1}, \ldots, \epsilon_k))\in\Delta_u^{\epsilon_j=1},\]
with $\De = (g\circ F_{t_2})(M)$ for some $g\colon \R^2\to \R^2$ as in Equation \eqref{eq:dev_map}. We will construct a representative of the unmarked semitoric polygon for $t=t_1$ that is equal to this. The idea is the same as in the proof of Lemma \ref{lem:samepoly}; since the vertical lines $(\ell_1,\ldots, \ell_{j-1}, \ell_{j+1}, \ldots,\ell_k)$ are the correct ones for any unmarked semitoric polygon for $t=t_1$, it suffices to prove that we can construct a polygon $\widetilde{\De}$ such that
\[(\widetilde{\De},(\ell_1,\ldots, \ell_{j-1}, \ell_{j+1}, \ldots,\ell_k),(\epsilon_1, \ldots, \epsilon_{j-1}, \epsilon_{j+1}, \ldots, \epsilon_k))\]
is an unmarked semitoric polygon associated to $(M,\om,F_{t_1})$ and $\partial^{\mathrm{top}}\widetilde{\De} = \partial^{\mathrm{top}}\De$ (see Equation \eqref{eq:top_bound} for the definition of $\partial^{\mathrm{top}}$). The same arguments as in the proof of Lemma \ref{lem:samepoly} imply that we can construct such a $\widetilde{\Delta}$ whose top left Delzant corner is the same as the one of $\Delta$. Moreover, the vertices of $\partial^{\mathrm{top}}\widetilde{\De}$ and $\partial^{\mathrm{top}}\De$ occur at the same $J$-value, and the change of slope at each of these vertices is the same for $\Delta$ and $\widetilde{\Delta}$. 
Indeed, the same arguments as in the proof of Lemma \ref{lem:samepoly} apply to every vertex on the top boundaries except the ones in the same $J$-fiber as the transition point $p$. For $t=t_2$, $p$ is a focus-focus point whose image lies on the vertical line $\ell_j$, and since $\Delta$ is constructed using the cut $\epsilon_j = 1$, it gives rise to a vertex $P$ on $\partial^{\mathrm{top}}\De$; furthermore, Lemma \ref{lem:changeslope} yields that the change of slope in $\Delta$ at $P$ equals $-1$ since $p$ is the only rank zero point in $J^{-1}(J(p))$ by Lemma~\ref{lem:onlyrankzeropt}.
But for $t=t_1$, $p$ is an elliptic-elliptic point of the system and its image in $\widetilde{\Delta}$ belongs to $\partial^{\mathrm{top}}\widetilde{\De}$, and Lemma \ref{lem:changeslope} again yields that the change of slope at this vertex equals $-1$, because the weights of the $\S^1$-action generated by $J$ at $p$ are equal to $(-1,1)$ because $p$ is a focus-focus point for $t \in (t^-,t^+)$, see for instance \cite{Zung02} and \cite[Proposition 3.12]{HSS}.
\end{proof}

\begin{rmk}\label{rmk:weightsnot1}
Note that a semitoric system $(M,\omega,(J,H))$ with an elliptic-elliptic point $p$ whose weights for $J$ are not $(-1,1)$ cannot be one of the limiting systems in a semitoric transition family with transition point $p$. For instance, the standard toric system on the second Hirzebruch surface $\Hirzscaled{2}$, whose Delzant polygon is displayed in Figure \ref{fig:poly_Wn}, cannot be part of a semitoric transition family, as the only possible transition point would be the preimage of the corner at $(\alpha,\beta)$; but one of the weights of this point equals $\pm 2$, see Lemma \ref{lem:changeslope}. This explains why in Section \ref{sec:W2examples}, to construct a semitoric transition family on $\Hirzscaled{2}$, we start with a non standard polygon (see Figure \ref{fig:W2_JR}), so as to change these weights.
\end{rmk}

\section{Toric type blowups and blowdowns}
\label{sec:blowups}

In this section, we explain how to perform a toric type blowup of size $\lambda$ at a completely elliptic point of an integrable system $(M,\omega,F)$ to obtain a new integrable system $(\mathrm{Bl}_p(M),\widetilde{\omega}_{\lambda},\widetilde{F}_{\lambda})$ on the blowup manifold,
where we denote by $\blowup{p}{}(M)$ the usual blowup of the manifold $M$ at the point $p$,
see for instance~\cite[Section 7.1]{McDuffSal}. Roughly speaking, this corresponds to performing a toric blowup on any toric manifold given by the Eliasson normal form (as in Theorem~\ref{thm:normalform)}) around a completely elliptic point. Since the normal form depends on several choices, in Proposition \ref{prop:blowupdefined} we prove that this operation is well-defined. 
In Section~\ref{sec:toricchops}, we recall the usual definition of the equivariant blowup of a toric manifold and in Section~\ref{sect:def_blowups} we introduce and explain the toric type blowups in complete detail.
Moreover, in Section~\ref{sec:blowups_semitoric} we study toric type blowups of semitoric systems and explain the relationship with operations on the semitoric polygons. Finally, in Section~\ref{sec:blowups_families} we prove that the toric type blowup of a \family is still a \family. We also define the inverse operation, toric type blowdowns.

\subsection{Toric systems and corner chops}
\label{sec:toricchops}

In $\C^n$ the blowup of the origin can be achieved, roughly speaking, by replacing the origin with $\mathbb{CP}^n$.
Given a symplectic manifold $(M,\om)$ a blowup can be defined via a symplectomorphism from an open set in $M$ into $\C^n$ via the standard blowup of $\C^n$ and the resulting space can also be equipped with a symplectic form depending on a parameter $\lambda>0$, called the \emph{size} of the blowup, see~\cite[Section 7.1]{McDuffSal} for a detailed description of this operation and a proof that it is well-defined.
Near a fixed point in a toric manifold of dimension $2n$ the torus action can be modeled on the standard action of $\mathbb{T}^n$ on $\C^n$, in the sense that there is locally a symplectomorphism into $\C^n$ intertwining the momentum maps,
and performing the standard blowup in $\C^n$ relative to one such map is called a toric blowup.

If $(M,\om,F)$ is a toric system it is well-known (see for instance \cite[Section 3.4]{Can_toric}) that performing
a toric blowup of size $\lambda>0$ on $(M,\om,F)$ corresponds to a \emph{corner chop} of the associated Delzant
polytope $\De = F(M)$, which we define now.
Given vectors $u_1,\ldots,u_n\in\R^n$ and a point $q\in\R^n$ define
\begin{equation}\label{eqn:simp}
 \mathrm{Simp}_q(u_1,\ldots,  u_n) = \left\{q+\sum_{j=1}^n t_j u_j \,\left|\, t_1 \geq 0, \ldots, t_n \geq 0, \sum_{j=1}^n t_j\leq 1\right\}\right..
\end{equation}
If $p$ is a completely elliptic fixed point then $q=F(p)$ is a corner of the Delzant polytope
and primitive integral vectors directing the edges of $\De$ which are adjacent to $q$ generate $\Z^n$,
call them $v_1,\ldots,v_n\in\Z^n$.
Let $V$ be the set of all corners of $\De$, then a corner chop of size $\lambda>0$ at $q$
is possible if
\[
 \mathrm{Simp}_q(\lambda v_1,\ldots, \lambda v_n)\cap V = \{q\}
\]
in which case the corner chop of size $\lambda$ of $\De$ at $q$ is given by the new Delzant polytope
\[
 \widetilde{\De} = \overline{\De\setminus\mathrm{Simp}_q(\lambda v_1,\ldots, \lambda v_n)}.
\]
If $(\mathrm{Bl}_p(M),\widetilde{\om}_\lambda,\widetilde{F}_\lambda)$ is the toric blowup of $(M,\om,F)$ at $p$ of size $\lambda$ then
$\widetilde{F}_\lambda(\mathrm{Bl}_p(M)) = \widetilde{\De}$, see Figure~\ref{fig:CP2blowup}.
The \emph{exceptional divisor} of the blowup is 
\[\Sigma = \widetilde{F}^{-1}_\lambda(\widetilde{\De}\cap\mathrm{Simp}_q(\lambda v_1,\ldots, \lambda v_n)),\]
the preimage of the new face of the Delzant polytope.
If $(M,\om,F)$ can be obtained from $(\check{M},\check{\omega}_\lambda, \check{F}_\lambda)$ by a 
blowup of size $\lambda$ which has exceptional divisor $\Sigma$, then we say that $(\check{M},\check{\omega}_\lambda, \check{F}_\lambda)$ is the
\emph{blowdown of size $\lambda$ of $(M,\om,F)$ at $\Sigma$}.
The blowdown corresponds to performing a \emph{corner unchop} (the inverse of a corner chop, that is gluing the $\mathrm{SL}_n(\Z)$-image of a simplex onto a face) on the face of the Delzant polytope which is the image
of $\Sigma$.
Note that performing a blowup on an $n$-dimensional toric integrable system removes one completely elliptic point and introduces
$n$ new completely elliptic points (corresponding to the vertices incident to the new face of the Delzant polytope), so when performing a
blowdown of a toric system at a submanifold $\Sigma$ we must have that $\Sigma$ is the image of an embedding of $\mathbb{CP}^n$ which
includes exactly $n$ completely elliptic points of the integrable system and has self-intersection $-1$.

\begin{rmk}\label{rmk:SL2Z-length}
 Recall that given a line segment $\ell\subset\R^n$ with rational slope the \emph{$\mathrm{SL}_n(\Z)$\--length of $\ell$} 
 is the unique $a\in\R_{\geq 0}$ such that $A \ell$ is the line segment from the origin to $(a,0,\ldots,0)$
 for some $A\in\mathrm{SL}_n(\Z)$. If $q$ is the corner of a Delzant polytope then it is possible to perform a corner chop
 of size $\lambda$ at $q$ if and only if each edge incident to $q$ has $\mathrm{SL}_n(\Z)$\--length strictly greater than $\lambda$.
\end{rmk}

\begin{figure}
\begin{center}
\begin{tikzpicture}

\draw [->] (2.25,1) -- (3.5,1);
\draw [->] (6.25,1) -- (7.5,1);

\filldraw[draw=black, fill=gray!60] (0,0) node[anchor=north,color=black]{}
  -- (0,2) node[anchor=south,color=black]{}
  -- (2,0) node[anchor=north,color=black]{}
  -- cycle;

\filldraw[draw=black, fill=gray!60] (4,0) node[anchor=north,color=black]{}
  -- (4,2) node[anchor=south,color=black]{}
  -- (6,0) node[anchor=north,color=black]{}
  -- cycle;
  
\filldraw[draw=black, fill=gray!60, pattern=north east lines] (4,1) node[anchor=north,color=black]{}
  -- (4,2) node[anchor=south,color=black]{}
  -- (5,1) node[anchor=north,color=black]{}
  -- cycle;
  
\filldraw[draw=black, fill=gray!60] (8,0) node[anchor=north,color=black]{}
  -- (8,1) node[anchor=south,color=black]{}
  -- (9,1) node[anchor=south,color=black]{}
  -- (10,0) node[anchor=north,color=black]{}
  -- cycle;  
\end{tikzpicture}
\end{center}
\caption{The momentum map image of $\CP^2$ (left) and a blowup of $\CP^2$ of size $\lambda \in (0,1)$ (right), obtained
by ``chopping off'' the top corner. In the middle image $\mathrm{Simp}_q(\lambda v_1,\lambda v_2)$ is shaded.}
\label{fig:CP2blowup}
\end{figure}
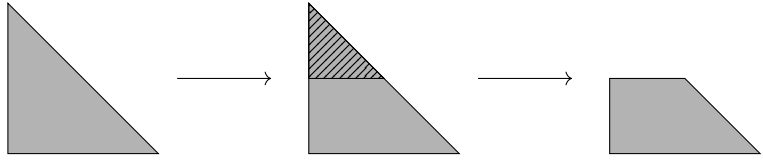

\subsection{Toric type blowups of integrable systems}
\label{sect:def_blowups}

Let $(M,\om,F)$ be an integrable system of dimension $2n$ and let $p\in M$ be a completely elliptic fixed point of $F$.
Then there exists an open neighborhood $U$ of $p$, a symplectomorphism $\chi\colon U\to\C^n$, and a local diffeomorphism $g\colon (\R^n,F(p))\to(\R^n,0)$ such that $g\circ F = \Phi \circ \chi$, where $\Phi$ is the momentum map for a symplectic $\T^n$-action on $\C^n$ which fixes only the origin (such a momentum map is equal to the standard one composed on the left with an element of $\mathrm{GL}_n(\Z)$), so that the diagram in Figure~\ref{fig:normalform} commutes.
\begin{figure}
\centering
\begin{subfigure}[b]{.45\linewidth}
\centering
\begin{tikzcd}
(U,p) \arrow[r,"\chi"] \arrow[d,"F"]  &  (\C^n,0) \arrow[d,"\Phi"]\\
(\R^n,F(p)) \arrow[r,"g"]             &  (\R^n,0)
\end{tikzcd}
\caption{Locally, a completely elliptic point can be modeled after $\C^n$ with the usual action of $\T^n$.}
\label{fig:normalform}
\end{subfigure} 
\hspace{20pt}
\begin{subfigure}[b]{.45\linewidth}
\centering
\begin{tikzcd}
\widetilde{U} \arrow[hookrightarrow]{r}{\rho} \arrow[d,"\pi^U"]  &  \mathrm{Bl}_p(M) \arrow[d,"\pi"]\\
U \arrow[hookrightarrow]{r}{\iota}                               &  M
\end{tikzcd}
\caption{Relationships between the blowup in $U$ and the blowup in $M$.}
\label{fig:UMblowup}
\end{subfigure}
 \caption{Diagrams for defining the toric type blowup.}
\end{figure}
In particular, this implies that $(g\circ F)_{|U}$ is a momentum map for a $\T^n$\--action.
Let $\blowup{p}{}(M)$ be the blowup of $M$ at $p$, so there is a projection map $\pi\colon\blowup{p}{}(M)\to M$ which is a 
diffeomorphism on $\blowup{p}{}(M)\setminus\pi^{-1}(p)$. 
Let $\lambda>0$ such that $B_\lambda=\{z\in\C^n\mid ||z||^2<2\lambda\}\subset\chi(U)$; we will endow
$\blowup{p}{}(M)$ with a symplectic form depending on $\lambda$, which we
call the \emph{size} of the blowup.
Let $V \subsetneq U$ be an open set such that $B_\lambda \subset \chi(V)$.
Since $(U,\om_{| U},(g\circ F)_{| U})$ is an (open) toric manifold we may perform a blowup of size $\lambda$
at $p$, which amounts to performing a corner chop of size $\lambda$ to the corner of the image $(g\circ F)(U)$ at the origin in $\R^n$, resulting in a new open toric manifold $(\widetilde{U}, \om_{\widetilde{U}, \lambda},\mu)$ with
exceptional divisor $\Sigma_U$.
There exists an embedding $\rho\colon \widetilde{U}\to\blowup{p}{}(M)$ determined uniquely by $\pi \circ \rho = \iota\circ\pi_U$ where 
$\pi_U\colon \widetilde{U}\to U$ is the projection associated to the blowup and $\iota\colon U\to M$ is inclusion, as
in Figure~\ref{fig:UMblowup}.
Then (see for instance~\cite[Section 7.1]{McDuffSal}) there exists a symplectic form 
$\widetilde{\om}_\lambda$ on $\blowup{p}{}(M)$ such that
$\widetilde{w}_{\lambda|_{\blowup{p}{}(M)\setminus \pi^{-1}(V)}} = \pi^* \om$
and $\rho^* \widetilde{\om}_\lambda = \om_{\widetilde{U},\lambda}$.
We define a momentum map 
\begin{equation}\label{eqn:Flambda}
\widetilde{F}_\lambda (m)= 
\begin{cases}
 (F\circ\pi)(m) &\textrm{ if } \pi(m)\notin V,\\
 (g^{-1}\circ\mu\circ\rho^{-1})(m)  & \textrm{ if }\pi(m)\in U.
\end{cases}
\end{equation}
Again, we refer the reader to~\cite[Section 7.1]{McDuffSal} to see that $\widetilde{F}_\lambda$ is smooth and 
that $F\circ\pi = g^{-1}\circ\mu\circ\rho^{-1}$ in $U\setminus V$, and that
$(\blowup{p}{}(M), \widetilde{\omega}_{\lambda}, \widetilde{F}_\lambda)$ is an integrable system. Since this construction depends on several choices, we need to prove that the blowup of size $\lambda$ is well-defined.
\begin{prop}\label{prop:blowupdefined}
 Given $(M,\om,F)$ with $p\in M$ an elliptic fixed point, and $\lambda>0$, the triple
 $(\blowup{p}{}(M),\widetilde{\om}_\lambda, \widetilde{F}_\lambda)$ is a well-defined
 integrable system.
\end{prop}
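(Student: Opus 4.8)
The statement that each individual triple $(\blowup{p}{}(M),\widetilde{\om}_\lambda,\widetilde{F}_\lambda)$ is an integrable system is already recorded in the construction preceding the proposition via~\cite[Section~7.1]{McDuffSal}, so the real content is that the construction is independent, up to the natural notion of isomorphism (a symplectomorphism between the two resulting symplectic manifolds that intertwines the two momentum maps), of the auxiliary data involved: the neighbourhood $U$, the open set $V$, the symplectomorphism $\chi$, and the toric momentum map $\Phi$ — recall that $g$ is then forced by $g\circ F=\Phi\circ\chi$. The plan is to vary these pieces of data one at a time.

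First I would dispose of the soft choices. The corner chop of size $\lambda$ modifies $(g\circ F)(U)$ only inside $\mathrm{Simp}_{0}(\lambda v_1,\ldots,\lambda v_n)$, so $\widetilde{\om}_\lambda$ and $\widetilde{F}_\lambda$, restricted to $\pi^{-1}(U)$, coincide with $\pi^*\om$ and $F\circ\pi$ outside a set whose closure is contained in $\pi^{-1}(\chi^{-1}(B_\lambda))$, hence inside $\pi^{-1}(V')$ for every admissible $V'$; thus the glued form and map do not depend on the particular $V$, and for two admissible neighbourhoods $U$ one passes to a common smaller one $U_0$ containing $\chi^{-1}(\overline{B_\lambda})$ and observes that the construction performed in $U_0$ is literally the restriction of the one in the larger neighbourhood. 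So it suffices to compare two choices of $(\chi,\Phi)$ with $U$ fixed. Next, the $\mathrm{GL}_n(\Z)$-ambiguity in $\Phi$ cancels: any two admissible toric momentum maps on $\C^n$ differ by $\Phi\mapsto A\Phi$ with $A\in\mathrm{GL}_n(\Z)$, which forces $g\mapsto Ag$; since $A$ is integral-linear it preserves Delzant corners and $\mathrm{SL}_n(\Z)$-lengths and carries $\mathrm{Simp}$ to $\mathrm{Simp}$ (see Section~\ref{sec:toricchops}), so the toric blowup of $(\C^n,A\Phi)$ of size $\lambda$ is the same symplectic manifold as that of $(\C^n,\Phi)$ with momentum map $A\mu$, and therefore $\widetilde{F}_\lambda=(Ag)^{-1}\circ(A\mu)\circ\rho^{-1}=g^{-1}\circ\mu\circ\rho^{-1}$ is unchanged, as is $\widetilde{\om}_\lambda$.

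The remaining and main case is a change of $\chi$ with $\Phi$ (hence $g$) unchanged. Then $\psi:=\chi_2\circ\chi_1^{-1}$ is a symplectomorphism of a neighbourhood of $0$ in $\C^n$ with $\psi(0)=0$ and $\Phi\circ\psi=\Phi$. Its linearization $d\psi_0$ is a linear symplectomorphism preserving the nondegenerate quadratic form $\Phi$ (which is homogeneous quadratic, so equal to its own Hessian at $0$), and a short computation identifies the group of such maps with the maximal torus $\T^n\subset\mathrm{Sp}(2n,\R)$ whose moment map is $\Phi$; composing $\psi$ with the corresponding torus element — which extends to the toric blowup and preserves $\mu$ — we may assume $d\psi_0=\mathrm{id}$. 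Then $\psi_t(z):=t^{-1}\psi(tz)$, $t\in(0,1]$, is an isotopy through symplectomorphisms fixing $0$ and preserving $\Phi$ (here one uses that $\Phi$ is exactly homogeneous quadratic), running from $\psi_1=\psi$ to $\psi_0=\mathrm{id}$; by the functoriality of the symplectic blowup under such an isotopy of the embedded ball~\cite[Section~7.1]{McDuffSal} this yields a symplectomorphism between the two blown-up local models intertwining the momentum maps $\mu$, and extending it by the natural identification over $\blowup{p}{}(M)\setminus\pi^{-1}(U)$ (where both outputs agree with $(\pi^*\om,F\circ\pi)$) gives the desired isomorphism. The technical heart is precisely this last step — upgrading the classical uniqueness of the symplectic blowup to a statement compatible with the torus momentum map near the exceptional divisor; the reductions to it are essentially bookkeeping.
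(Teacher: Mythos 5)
Your reduction steps (independence of $V$, shrinking $U$, and the $\mathrm{GL}_n(\Z)$-ambiguity in $\Phi$ cancelling against $g$) are fine, and the Alexander-trick isotopy $\psi_t(z)=t^{-1}\psi(tz)$ through $\Phi$-preserving symplectomorphisms is correct as far as it goes. But the proof has a genuine gap exactly where you flag the ``technical heart'': the classical uniqueness of the symplectic blowup under an isotopy of embedded balls (McDuff--Salamon) produces a symplectomorphism of the blown-up local models, with no statement whatsoever about intertwining the toric momentum maps $\mu$, and you do not supply the equivariant upgrade -- you only assert that it is needed. The paper's proof gets this for free from a different tool: the two blown-up open toric manifolds $(\widetilde{U},\om_{\widetilde{U},\lambda},\mu)$ and $(\widetilde{U}',\om_{\widetilde{U}',\lambda},\mu')$ have the same momentum map image, so the uniqueness theorem for (open) toric manifolds \cite[Proposition 6.5]{KarLer} immediately gives an \emph{equivariant} symplectomorphism $\Psi$ with $\mu'\circ\Psi=A\circ\mu$; no isotopy or Moser argument is needed, and the equivariance is built in rather than an unproven add-on.

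A second, related gap is the gluing. You propose to extend the local-model symplectomorphism by ``the natural identification'' over $\blowup{p}{}(M)\setminus\pi^{-1}(U)$, but the map produced by the isotopy argument comes with no control near the boundary of the chart: there is no reason it restricts to the canonical identification on $\pi^{-1}(U\setminus V)$, so the proposed extension is not even well-defined as a map without an additional argument making the local symplectomorphism the identity near the gluing locus. The paper resolves this by normalizing $\Psi$ to be compatible with the fixed projections, $\pi_U'\circ\Psi=\pi_U$, which forces $\rho=\rho'\circ\Psi$ on $\widetilde{U}$; this both settles the gluing and yields a conclusion stronger than yours, namely that the form $\widetilde{\om}_\lambda$ and the map $\widetilde{F}_\lambda$ on the fixed manifold $\blowup{p}{}(M)$ are literally equal for the two sets of choices, not merely isomorphic. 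That literal equality is what lets Definition~\ref{def:blowup} (and the later family constructions in Theorem~\ref{thm:blowups}) refer to \emph{the} toric type blowup, so even if your route were completed it would deliver a weaker statement than the one the paper uses.
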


\begin{proof}
 The only choices in the construction are $U$, $V$, $\Phi$, and $g$; so suppose
 we have two such sets of choices $U$, $V$, $\Phi$, $g$ and $U'$, $V'$, $\Phi'$, $g'$.
 Since $g$ is determined by a choice of action variables (see Remark~\ref{rmk:gcycles}),
 $g' = A\circ g$ for some $A\in\mathrm{GL}_n(\Z)$.
 Given $\lambda$ and the image of $g\circ F$ (respectively $g'\circ F$) the image
 of the blowup of the open toric manifold $(U,\om_{|U}, (g\circ F)_{| U})$ 
 (respectively $(U,\om_{|U}, (g'\circ F)_{|U})$) is determined, since it is 
 the original set minus a simplex centered at the origin, as described in Section~\ref{sec:toricchops}, call
 it $T$ (respectively $T'$). Then, since $g' = A\circ g$ we see that $T' = A(T)$. To perform a blowup of size $\lambda$
 we only need $U$ and $U'$ to include $(\Phi\circ\chi)^{-1}(T)$
 and $ (\Phi'\circ\chi')^{-1}(A(T))$ respectively, but
 \[ 
  (\Phi\circ\chi)^{-1}(T) = (g\circ F)^{-1}(T) = (g'\circ F)^{-1}(A(T)) = (\Phi'\circ\chi')^{-1}(A(T))
 \]
 so we may assume that $U=U'$ and $V = V'$. 
 Let $(\widetilde{U}, \om_{\widetilde{U}, \lambda},\mu)$ and $(\widetilde{U}', \om_{\widetilde{U}', \lambda},\mu')$ be the open toric manifolds obtained after performing toric blowups of size $\lambda$ on $(U,\om, g\circ F)$ and $(U,\om,g'\circ F)$ respectively.
 Note that the blowup of $(U,\om,A\circ g\circ F)$ is $(\widetilde{U},\om_{\widetilde{U}, \lambda},A\circ\mu)$, unique up to equivariant symplectomorphism (for instance see~\cite[Proposition 6.5]{KarLer}),
 and thus since $g'\circ F = A \circ g \circ F$ there exists a symplectomorphism
 $\Psi\colon (\widetilde{U}, \om_{\widetilde{U}, \lambda}) \to (\widetilde{U}', \om_{\widetilde{U}', \lambda})$ such that $\mu'\circ\Psi = A\circ\mu$.
 Let $\rho\colon \widetilde{U}\to\blowup{p}{}(M)$ and  $\rho'\colon \widetilde{U}'\to\blowup{p}{}(M)$ be the embeddings associated to the two blowups.
 Notice that $\pi_U'\circ \Psi\colon\widetilde{U}\to U$ sends $\Sigma_U$ to $p$ and is a diffeomorphism on $\widetilde{U}\setminus\Sigma_U$, so it is equal to $\pi_U$ up to a diffeomorphism, therefore we may choose $\Psi$ so that $\pi_U'\circ\Psi=\pi_U$, except that $\Psi$ is only a diffeomorphism \emph{a priori}. This equality implies that $\pi \circ \rho = \iota\circ\pi_U$ and $\pi \circ \rho'\circ\Psi = \iota\circ\pi_U$, hence $\rho=\rho'\circ\Psi$ on the dense set $\widetilde{U}\setminus\Sigma_U$, and thus on all of $\widetilde{U}$. But by definition of the symplectic structure on the blowups, we have that
\[ \om_{\widetilde{U},\lambda} = \rho^* \widetilde{\omega}_{\lambda} = \left( \rho' \circ \Psi \right)^* \widetilde{\omega}_{\lambda} = \Psi^* \left( (\rho')^* \widetilde{\omega}_{\lambda} \right) = \Psi^* \om_{\widetilde{U}',\lambda}, \]
so $\Psi$ is in fact a symplectomorphism. 
Thus, the two symplectic forms defined on $\blowup{p}{}(M)$ are equal.
Furthermore, in Equation~\eqref{eqn:Flambda} we see that only the second case depends on the choices, and if $\pi(m)\in U$ we have 
 \[ (g'^{-1} \circ \mu' \circ \rho^{-1}) (m)= (g^{-1} \circ A^{-1} \circ A \circ \mu \circ \Psi^{-1} \circ \Psi \circ \rho^{-1}) (m)= (g^{-1}\circ\mu \circ \rho^{-1}) (m)\]
 so the new momentum maps agree as well.
\end{proof} 

Because of Proposition~\ref{prop:blowupdefined} we can make the following definition:

\begin{dfn}\label{def:blowup}
 The integrable system $(\blowup{p}{}(M),\widetilde{\om}_\lambda, \widetilde{F}_\lambda)$ described above is the \emph{toric type
 blowup of $(M,\om,F)$ at $p$ of size $\lambda>0$}, and the \emph{exceptional divisor}
 of this blowup is the symplectically embedded sphere $\Sigma = \rho(\Sigma_U)\subset \blowup{p}{}(M)$.
\end{dfn}

In Figure \ref{fig:torictypeblowup}, we show what happens for the images of the momentum maps under a toric type blowup.

\begin{dfn}\label{def:blowdown}
 If $(M,\om,F)$ can be obtained from $(\check{M},\check{\om}_\lambda, \check{F}_\lambda)$
 by performing a blowup of size $\lambda$ which has exceptional divisor $\Sigma$, then we say
 that $(\check{M},\check{\om}_\lambda, \check{F}_\lambda)$ is the \emph{toric type
 blowdown of $(M,\om,F)$ at $\Sigma$ of size $\lambda$}.
\end{dfn}

As in the toric case, this implies that a blowdown is only possible at a submanifold $\Sigma$ with self-intersection $-1$
which is the image of an embedding of $\mathbb{CP}^n$ which contains exactly $n$ singular
points of the integrable system which are all of completely elliptic type.

\begin{rmk}
In the case that $(M,\om,F)$ is a toric integrable system then $g$ can be taken to be a translation on $\R^n$ in the above construction.
\end{rmk}

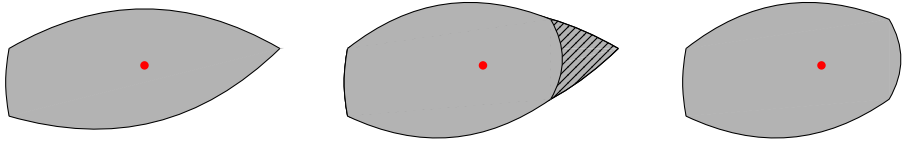
\begin{figure}
\begin{center}
\begin{tikzpicture}[scale=.9]

\filldraw[draw=gray!60,fill=gray!60] (0,0) node[anchor=north,color=black]{}
  -- (0,1) node[anchor=south,color=black]{}
  -- (4,1) node[anchor=south,color=black]{}
  -- cycle;  

\draw[bend left=10, fill = gray!60] (0,0) to (0,1);
\draw[bend left, fill = gray!60] (0,1) to (4,1);
\draw[bend left, fill = gray!60] (4,1) to (0,0);

\fill[red] (2,0.75) circle (1/16);	

\fill[fill=gray!60] (5,0) node[anchor=north,color=black]{}
  -- (5,1) node[anchor=south,color=black]{}
  -- (9,1) node[anchor=south,color=black]{}
  -- cycle;  
\draw[bend left=10, fill = gray!60] (5,0) to (5,1);
\draw[bend left, fill = gray!60] (5,1) to (9,1);
\draw[bend left, fill = gray!60] (9,1) to (5,0);  
  
\fill[fill=gray!60,pattern=north east lines] (5,0) node[anchor=north,color=black]{}
  -- (5,1) node[anchor=south,color=black]{}
  -- (9,1) node[anchor=south,color=black]{}
  -- cycle;  
\draw[bend left=10, fill = gray!60,pattern=north east lines] (5,0) to (5,1);
\draw[bend left, fill = gray!60,pattern=north east lines] (5,1) to (9,1);
\draw[bend left, fill = gray!60,pattern=north east lines] (9,1) to (5,0);

\filldraw[draw=gray!60,fill=gray!60] (5,0) node[anchor=north,color=black]{}
  -- (5,1) node[anchor=south,color=black]{}
  -- (8,1.43) node[anchor=south,color=black]{}
  -- (8,0.25) node[anchor=south,color=black]{}
  -- cycle;  
\draw[bend left=10, fill = gray!60] (5,0) to (5,1);
\draw[bend left, fill = gray!60] (5,1) to (8,1.43);
\draw[bend left, fill = gray!60] (8, 1.43	) to (8,0.25);
\draw[bend left, fill = gray!60] (8,0.25) to (5,0);
\fill[red] (7,0.75) circle (1/16);

\filldraw[draw=gray!60,fill=gray!60] (10,0) node[anchor=north,color=black]{}
  -- (10,1) node[anchor=south,color=black]{}
  -- (13,1.43) node[anchor=south,color=black]{}
  -- (13,0.25) node[anchor=south,color=black]{}
  -- cycle;  
\draw[bend left=10, fill = gray!60] (10,0) to (10,1);
\draw[bend left, fill = gray!60] (10,1) to (13,1.43);
\draw[bend left, fill = gray!60] (13, 1.43) to (13,0.25);
\draw[bend left, fill = gray!60] (13,0.25) to (10,0);
\fill[red] (12,0.75) circle (1/16);	

\end{tikzpicture}
\end{center}
\caption{The momentum map image after performing a toric type blowup at the far right elliptic-elliptic point where the dot represents a focus-focus value. Compare with Figure~\ref{fig:CP2blowup}.}
\label{fig:torictypeblowup}
\end{figure}

\subsection{Toric type blowups of simple semitoric systems}
\label{sec:blowups_semitoric}

Analogous to the case of toric systems and corner chops described in Section~\ref{sec:toricchops}, performing a toric type blowup
on a semitoric system changes the associated marked Delzant semitoric polygon in a relatively simple way.
Recall that associated to a simple semitoric system is a marked Delzant semitoric polygon $\poly{(M,\omega,F)}$ (described
in Section~\ref{sec:semitoric}), which is the $G_s\times\mathcal{T}$-orbit of a marked
weighted polygon $(\De, \vec{c}, \vec{\epsilon})$; in this section we define an operation on such an object, which we again
call a corner chop, which consists in performing a corner chop coherently on every representative of the semitoric polygon invariant, see Figure~\ref{fig:semitoricchop}.

Recall the action of $G_s\times \mathcal{T}$ on pairs of marked Delzant semitoric polygons and points given
in Equation~\eqref{eqn:action_Deq}. Let $Q$ be an equivalence class of this action, which amounts to
a marked Delzant semitoric polygon with a point coherently chosen for each representative. 
If there exists a representative
$((\De, \vec{c}, \vec{\epsilon}),q)$ of $Q$ such that $q$ is a Delzant corner of $\De$ and 
\[
 \mathrm{Simp}_q(\lambda v_1, \lambda v_2) \cap (\mathcal{L}_{(\De,\vec{c},\vec{\epsilon})}\cup V) = \{q\}
\]
where $\mathrm{Simp}_q$ is as in Equation~\eqref{eqn:simp}, $\mathcal{L}_{(\De,\vec{c},\vec{\epsilon})}$ is as in Equation~\eqref{eqn:Lcuts}, $V$ is the set of all corners
of $\De$, and $v_1,v_2\in\Z^2$ are the primitive vectors directing the edges of $\De$ adjacent to $q$, then we define the \emph{semitoric corner chop of size $\lambda$ of $Q$} to be the $G_s\times \mathcal{T}$-orbit of $(\overline{\De\setminus(\mathrm{Simp}_q(\lambda v_1, \lambda v_2))}, \vec{c}, \vec{\epsilon})$.

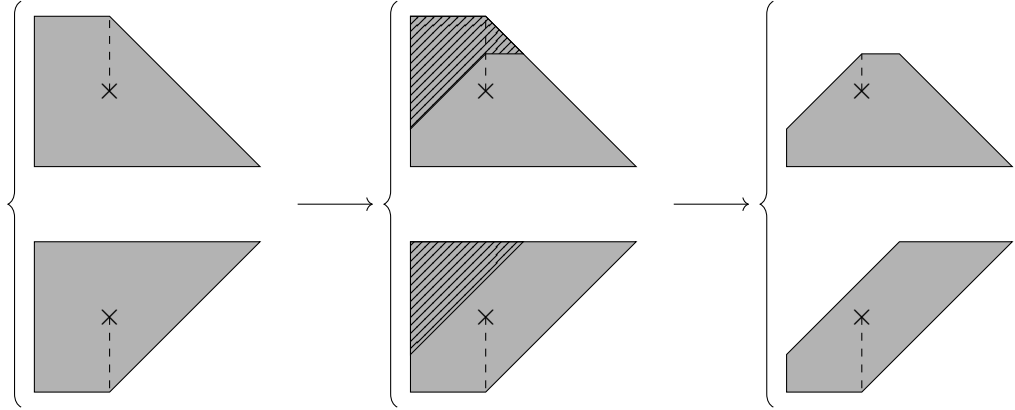
\begin{figure}
\begin{center}
\begin{tikzpicture}
\draw [->] (3.5,-0.5) -- (4.5,-0.5);
\draw [->] (8.5,-0.5) -- (9.5,-0.5);

\draw[decoration={brace,raise=5pt, amplitude = 5pt},decorate] (0,-3.2) -- (0,2.2);
\draw[decoration={brace,raise=5pt, amplitude = 5pt},decorate] ( 5,-3.2) -- (5,2.2);
\draw[decoration={brace,raise=5pt, amplitude = 5pt},decorate] ( 10,-3.2) -- (10,2.2);

\filldraw[draw=black, fill=gray!60] (0,0) node[anchor=north,color=black]{}
  -- (0,2) node[anchor=south,color=black]{}
  -- (1,2) node[anchor=south,color=black]{}
  -- (3,0) node[anchor=north,color=black]{}
  -- cycle;
\draw [dashed] (1,1) -- (1,2);
\draw (1,1) node[] {$\times$};

\filldraw[draw=black, fill=gray!60] (5,0) node[anchor=north,color=black]{}
  -- (5,2) node[anchor=south,color=black]{}
  -- (6,2) node[anchor=south,color=black]{}
  -- (8,0) node[anchor=north,color=black]{}
  -- cycle;
\draw [dashed] (6,1) -- (6,2);
\draw (6,1) node[] {$\times$};
  
\filldraw[draw=black, fill=gray!60, pattern=north east lines] (5,0.5) node[anchor=north,color=black]{}
  -- (5,2) node[anchor=south,color=black]{}
  -- (6,2) node[anchor=south,color=black]{}
  -- (6.5,1.5) node[anchor=north,color=black]{}
  -- (6,1.5) node[anchor=north,color=black]{}
  -- cycle;
  
\filldraw[draw=black, fill=gray!60] (10,0) node[anchor=north,color=black]{}
  -- (10,0.5) node[anchor=south,color=black]{}
  -- (11,1.5) node[anchor=south,color=black]{}
  -- (11.5,1.5) node[anchor=south,color=black]{}
  -- (13,0) node[anchor=north,color=black]{}
  -- cycle;
\draw [dashed] (11,1) -- (11,1.5);
\draw (11,1) node[] {$\times$};
  
\filldraw[draw=black, fill=gray!60] (0,-3) node[anchor=north,color=black]{}
  -- (0,-1) node[anchor=south,color=black]{}
  -- (3,-1) node[anchor=north,color=black]{}
  -- (1,-3) node[anchor=south,color=black]{}
  -- cycle;
\draw [dashed] (1,-2) -- (1,-3);
\draw (1,-2) node[] {$\times$};

\filldraw[draw=black, fill=gray!60] (5,-3) node[anchor=north,color=black]{}
  -- (5,-1) node[anchor=south,color=black]{}
  -- (8,-1) node[anchor=north,color=black]{}
  -- (6,-3) node[anchor=south,color=black]{}
  -- cycle;
\draw [dashed] (6,-2) -- (6,-3);
\draw (6,-2) node[] {$\times$};
  
\filldraw[draw=black, fill=gray!60, pattern=north east lines] (5,-2.5) node[anchor=north,color=black]{}
  -- (5,-1) node[anchor=south,color=black]{}
  -- (6.5,-1) node[anchor=north,color=black]{}
  -- (6,-1.5) node[anchor=north,color=black]{}
  -- cycle;
  
\filldraw[draw=black, fill=gray!60] (10,-3) node[anchor=north,color=black]{}
  -- (10,-2.5) node[anchor=south,color=black]{}
  -- (11.5,-1) node[anchor=south,color=black]{}
  -- (13,-1) node[anchor=north,color=black]{}
  -- (11,-3) node[anchor=south,color=black]{}
  -- cycle;  
\draw [dashed] (11,-2) -- (11,-3);
\draw (11,-2) node[] {$\times$};

\end{tikzpicture}
\end{center}
\caption{An example of performing a semitoric corner chop. The set $\mathrm{Simp}_q(\lambda v_1, \lambda v_2)$
is shaded in the lower middle figure. Note that the removed region only has to be a simplex for one representative of
the polygon.}
\label{fig:semitoricchop}
\end{figure}

\begin{lm}\label{lem:blowups_semitoric_chops}
The marked Delzant semitoric polygon associated to the toric type blowup of size $\lambda$ of a simple semitoric system $(M,\omega,F)$ at the completely elliptic fixed point $p$ is the marked Delzant semitoric polygon obtained by performing a semitoric corner chop of size $\lambda$ on $[((\De,\vec{c},\vec{\epsilon}),(g_{\vec{\epsilon}}\circ F)(p))]$
where $g_{\vec{\epsilon}}$ is the developing map associated to $(\De,\vec{c},\vec{\epsilon})$ so $\De = (g_{\vec{\epsilon}}\circ F)(M)$.
Moreover, if the marked semitoric polygon admits a semitoric corner chop of size $\lambda$ at the corner $F(p)$ then a blowup of size $\lambda$ may be performed at $p$.
\end{lm}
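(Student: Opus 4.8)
The plan is to exploit the fact that, by the very construction in Section~\ref{sect:def_blowups}, a toric type blowup is a genuine toric blowup performed in an Eliasson chart $U$ around $p$ in which $g_{\vec\epsilon}\circ F$ is a $\T^2$\--momentum map, together with the fact that a marked semitoric polygon is assembled from the integral affine structure on the regular part, the focus-focus values, and the cut directions, none of which is disturbed away from that chart. So the strategy is: transport the local toric statement (a toric blowup is a corner chop) to the semitoric setting, and check that everything outside the blowup region is left untouched.

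First I would fix a representative $(\De,\vec c,\vec\epsilon)$ with associated developing map $g_{\vec\epsilon}$, so $\De=(g_{\vec\epsilon}\circ F)(M)$, and choose the neighborhood $U$ of $p$ from the blowup construction small enough to be disjoint from the (compact) fibers of $J$ containing the focus-focus points and from the other rank-zero fibers, which is possible since $p$ is elliptic-elliptic. Then $(g_{\vec\epsilon}\circ F)|_U=\Phi\circ\chi$ for a toric momentum map $\Phi$, and $q:=(g_{\vec\epsilon}\circ F)(p)$ is a Delzant corner of $\De$ with primitive edge vectors $v_1,v_2\in\Z^2$ generating $\Z^2$. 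Performing the toric type blowup of size $\lambda$ amounts, inside $U$, to a corner chop of $(g_{\vec\epsilon}\circ F)(U)$ at $q$ as in Section~\ref{sec:toricchops}; outside $V$ the momentum map is unchanged since $\widetilde F_\lambda=F\circ\pi$ and $\pi$ is a symplectomorphism there, so the integral affine structure on the regular set, the focus-focus values, and the cut lines $\ell^{\vec\epsilon}$ are all preserved. Hence $g_{\vec\epsilon}$, built from action integrals as in Remark~\ref{rmk:gcycles}, induces a developing map for $(\mathrm{Bl}_p(M),\widetilde\om_\lambda,\widetilde F_\lambda)$ agreeing with $g_{\vec\epsilon}$ away from the blowup region, whose image is $\widetilde\De:=\overline{\De\setminus\mathrm{Simp}_q(\lambda v_1,\lambda v_2)}$, with the same marked points $\vec c$ and cut directions $\vec\epsilon$. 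Since $V$ is a small neighborhood of $p$ avoiding the other corners and the cuts, the simplex condition $\mathrm{Simp}_q(\lambda v_1,\lambda v_2)\cap(\mathcal{L}_{(\De,\vec{c},\vec{\epsilon})}\cup V)=\{q\}$ holds, so this is exactly a semitoric corner chop of size $\lambda$; one then checks that the two new corners are Delzant while all other corners, and the fake/hidden/Delzant conditions at points of $\partial\De\cap\mathcal{L}_{(\De,\vec{c},\vec{\epsilon})}$, persist because they are local, so $(\widetilde\De,\vec c,\vec\epsilon)$ is again a marked Delzant semitoric polygon.

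To promote this to an equality of $G_s\times\mathcal T$\--orbits I would use that the developing map of the blown-up system is unique up to $G_s\times\mathcal T$ (Section~\ref{sec:semitoric}), and verify that the point coherently tracked by the semitoric corner chop via the action \eqref{eqn:action_Deq} is precisely the image of $p$ under each developing map; this reduces to noting that the maps $t_{\vec u,\vec\lambda}$, the powers $T^k$, and vertical translations are integral affine near $q$ (which lies off the cuts), hence carry $\mathrm{Simp}_q(\lambda v_1,\lambda v_2)$ to the corresponding simplex, so ``chop'' and ``transform'' commute. For the converse, given a semitoric corner chop of size $\lambda$ at the corner corresponding to $p$, Remark~\ref{rmk:SL2Z-length} says each edge of $\De$ adjacent to $q$ has $\mathrm{SL}_2(\Z)$\--length strictly greater than $\lambda$; pulling this back through $g_{\vec\epsilon}$ and the Eliasson chart $\chi$ shows that a neighborhood of $q$ in $\De$ containing $\mathrm{Simp}_q(\lambda v_1,\lambda v_2)$ is realized inside $\chi(U)$, equivalently $B_\lambda\subset\chi(U')$ for a suitable $U'\subset U$, which is exactly the hypothesis needed to perform a toric type blowup of size $\lambda$ at $p$.

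I expect the main obstacle to be the bookkeeping in the middle step: showing rigorously that the developing map, focus-focus values, marked points, and cut lines of the original system survive unchanged on $\mathrm{Bl}_p(M)$ away from the exceptional divisor — in particular that no new focus-focus points are created and that the action-integral description of $g_{\vec\epsilon}$ transfers cleanly across the blowup — which is geometrically transparent but must be phrased carefully. By contrast, the compatibility with the $G_s\times\mathcal T$\--action and the converse direction are essentially formal once that step is established.
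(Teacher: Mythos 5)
Your overall route (a toric type blowup is, in a normal-form chart, a genuine toric blowup, hence a corner chop of the image, while nothing changes away from that chart) is the same as the paper's, but there is a genuine gap in how you handle the cuts in the forward direction. You fix an arbitrary representative $(\De,\vec{c},\vec{\epsilon})$ and then propose to take the neighborhoods $U$ and $V$ from the blowup construction ``small enough'' to avoid the $J$-fibers of the focus-focus points, concluding that the condition $\mathrm{Simp}_q(\lambda v_1,\lambda v_2)\cap(\mathcal{L}_{(\De,\vec{c},\vec{\epsilon})}\cup V)=\{q\}$ holds because $V$ is small. But the blowup has a \emph{fixed} size $\lambda$: by construction $B_\lambda\subset\chi(V)$, so $V$ must contain the preimage of essentially the whole simplex of size $\lambda$, and the simplex itself cannot be shrunk. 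For a given choice of $\vec{\epsilon}$ this simplex can perfectly well meet a cut half-line $\ell_j^{\epsilon_j}$ (blow up a corner on the top boundary with $\lambda$ large enough that the simplex passes the $J$-value of a focus-focus point whose cut goes upward); this is exactly the phenomenon illustrated in Figure~\ref{fig:semitoricchop}, where the removed region is a straight simplex only for one choice of cut. In that situation $g_{\vec{\epsilon}}\circ F$ is not a smooth toric momentum map on all of $U$ (the developing map jumps by $T^{\pm 1}$ across the cut), so your identification of the blown-up system's polygon with $\overline{\De\setminus\mathrm{Simp}_q(\lambda v_1,\lambda v_2)}$ fails for that representative. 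The missing idea, which is the actual content of the paper's argument, is that one must first \emph{choose} $\vec{\epsilon}$ so that the cuts avoid the image of the blowup region; this is possible because that set cannot contain points both directly above and directly below a focus-focus value, and with this choice the local $g$ extends to a developing map $g_{\vec{\epsilon}}$ for which the blowup is literally a toric corner chop. The definition of the semitoric corner chop (existence of one good representative) is there precisely to absorb this choice.

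A secondary gap of the same nature occurs in your converse. You invoke the $\mathrm{SL}_2(\Z)$-length condition to claim that a neighborhood of $q$ containing $\mathrm{Simp}_q(\lambda v_1,\lambda v_2)$ ``is realized inside $\chi(U)$, equivalently $B_\lambda\subset\chi(U')$ for a suitable $U'\subset U$.'' But the Eliasson chart of Theorem~\ref{thm:normalform)} is a priori only defined on some small neighborhood of $p$; the problem is not to pass to a smaller set but to show the chart can be taken \emph{large} enough to contain the whole size-$\lambda$ blowup region. The paper resolves this by taking $U$ to be the preimage under $g_{\vec{\epsilon}}\circ F$ of a neighborhood of the simplex: the chop condition guarantees this set contains no singularities other than $p$ and the two adjacent families of elliptic-regular points, so the elliptic-elliptic normal form applies on all of $U$ and $B_\lambda$ lies in the image of the chart. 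Without such a semiglobal extension step your argument only yields a blowup of some unspecified, possibly much smaller, size.
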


\begin{proof}
Suppose that $(M,\om,F)$ admits a toric type blowup of size $\lambda$ at $p$; by definition this means there exists $g$, $\chi$, $U$ and $\Phi$ as in the beginning of Section~\ref{sect:def_blowups} such that $g\circ F = \Phi\circ\chi$ and $B_\lambda\subset\chi(U)$. Choosing $\vec{\epsilon}$ so that the cuts do not intersect $(F\circ\chi^{-1})(B_\lambda)$ (this is possible since elements of this set cannot be both directly above and below a focus-focus value) and extending this $g$ we obtain a developing map $g_{\vec{\epsilon}}$ as in the definition of the semitoric corner chop above.
The result follows since a toric type blowup is locally the same as a toric blowup
relative to the toric momentum map $g_{\vec{\epsilon}}\circ F$ on $U$, which corresponds to a corner chop of the polytope near $(g_{\vec{\epsilon}}\circ F)(p)$.

To prove the converse, suppose that the marked semitoric polygon admits a semitoric corner chop of size $\lambda$ at the corner corresponding to $p$ so we can find a $\vec{\epsilon}$ and $g_{\vec{\epsilon}}$ as in the definition. Let $R=\mathrm{Simp}_{g_{\vec{\epsilon}}\circ F(p)}(\lambda v_1, \lambda v_2)$,
the simplex which will be removed when the corner chop is performed. Taking the preimage under $g_{\vec{\epsilon}}\circ F$ of a small open neighborhood of $R$ we obtain an open set $U\subset M$ whose only singularities are a completely elliptic fixed point and the two families of elliptic-regular points adjacent to it. 
Thus the elliptic-elliptic normal form can be used in the entire set $U$, and hence, since the size of $R$ guarantees that $B_\lambda$ is a subset of the open set in $\C^2$ used in the local normal form, a blowup of size $\lambda$ can be performed with respect to $U$.
\end{proof}

\begin{rmk}
 In~\cite{KPP_min} the notion of a blowup of a semitoric system is defined from the operation of performing a semitoric corner chop on semitoric polygons, and agrees with the notion in this section.
 In the present paper, we instead develop the more general notion of a blowup at a completely elliptic point so
 that we can perform blowups and blowdowns on integrable systems that are not semitoric (for instance, the systems in semitoric families
 with degenerate points).
\end{rmk}

\subsection{Toric type blowups of \texorpdfstring{\families}{fixed-S1 families}}
\label{sec:blowups_families}

If a \family admits a blowup of size $\lambda$ at some point $p$ for all $t$ then
we may take the blowup of each system in the family to produce a new set of integrable systems.
In this section we show that a collection of integrable systems produced in such a way can 
be associated with a \family, which essentially amounts to checking that the new momentum maps are still smooth
and still have a first component which is independent of $t$.

\begin{thm}\label{thm:blowups}
Let $(M,\om,F_t)$, $0\leq t\leq 1$, be a \family and let $\lambda>0$. Let $(p_t)_{0 \leq t \leq 1}$ be a continuous family of points in $M$ such that for every $t \in [0,1]$, $p_t$ is an elliptic-elliptic point of $F_t$ and $(M,\omega,F_t)$ admits a toric type blowup of size $\lambda$ at $p_t$. Then there exists a \family
 $(\mathrm{Bl}_{p_t}(M),\widetilde{\om}_{\lambda,t}, \widetilde{F}_{\lambda,t})$, $0\leq t\leq 1$, such that for each $t\in [0,1]$,  
 $ (\mathrm{Bl}_{p_t}(M),\widetilde{\om}_{\lambda,t}, \widetilde{F}_{\lambda,t}) $ is symplectomorphic to 
 the blowup of size $\lambda$ of $(M,\om,F_t)$ at $p_t$ by a symplectomorphism which intertwines the momentum maps.
\end{thm}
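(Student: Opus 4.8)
The plan is to produce the family by carefully reexamining the construction in Section~\ref{sect:def_blowups}, showing that the choices involved can be made to depend smoothly on $t$, and then to identify the resulting symplectic manifolds with a single fixed manifold. First I would invoke Lemma~\ref{lem:samecorner}: since $(p_t)$ is a continuous family of elliptic-elliptic points, there is a \emph{fixed} toric momentum map $\Phi$ on $\C^2$, and for each $t$ an open set $U_t\ni p_t$, a local diffeomorphism $g_t(x,y) = (x, g_t^{(2)}(x,y))$, and a local symplectomorphism $\chi_t\colon (U_t,p_t)\to(\C^2,0)$ such that $g_t\circ F_t = \Phi\circ\chi_t$ on $U_t$ and $(g_t\circ F_t)(U_t) = K\cap\Omega_t$ for a fixed Delzant cone $K$. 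A short argument (using compactness of $[0,1]$ and openness of $\Omega_t$) lets us shrink so that $B_\lambda\subset\chi_t(V_t)\subset\chi_t(U_t)$ for all $t$, using the hypothesis that a toric type blowup of size $\lambda$ is possible at every $p_t$. One must check that $\chi_t$ (hence $g_t$) can be chosen to depend smoothly on $t$; this follows from the smoothness of $H_t$ in $t$ together with the smooth dependence in the Eliasson/Morse normal form parameters — essentially the same implicit-function-theorem reasoning as in Lemma~\ref{lm:morse_family}, applied to the construction of the normal form.

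Next I would construct the underlying blown-up manifold. Since the topological blowup $\mathrm{Bl}_{p_t}(M)$ depends on $p_t$ but all the $p_t$ lie in one connected component of $M^{\S^1}$, I would fix a reference time $t_0$ and a smooth isotopy $\Theta_t$ of $M$ (generated by a time-dependent vector field tangent to that component, which exists since the component is either a single point or a sphere) with $\Theta_{t_0} = \mathrm{id}$ and $\Theta_t(p_{t_0}) = p_t$. Pulling back by $\Theta_t$ identifies $\mathrm{Bl}_{p_t}(M)$ with a fixed smooth manifold $\widetilde M := \mathrm{Bl}_{p_{t_0}}(M)$, carrying the projection $\pi\colon\widetilde M\to M$. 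Performing the toric blowup of size $\lambda$ on the open toric manifold $(U_t,\omega|_{U_t}, g_t\circ F_t)$ yields $(\widetilde U_t,\omega_{\widetilde U_t,\lambda},\mu_t)$ with an embedding $\rho_t\colon\widetilde U_t\hookrightarrow\widetilde M$, and as in Section~\ref{sect:def_blowups} (i.e.\ \cite[Section 7.1]{McDuffSal}) there is a symplectic form $\widetilde\omega_{\lambda,t}$ on $\widetilde M$ with $\widetilde\omega_{\lambda,t}|_{\widetilde M\setminus\pi^{-1}(V_t)} = \pi^*\omega$ and $\rho_t^*\widetilde\omega_{\lambda,t} = \omega_{\widetilde U_t,\lambda}$, and a momentum map $\widetilde F_{\lambda,t}$ defined piecewise as in Equation~\eqref{eqn:Flambda}, using $g_t$. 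By Proposition~\ref{prop:blowupdefined} each $(\widetilde M,\widetilde\omega_{\lambda,t},\widetilde F_{\lambda,t})$ is (symplectomorphic, intertwining momentum maps, to) the blowup of $(M,\omega,F_t)$ at $p_t$.

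It then remains to verify the two defining properties of a \family: that $\widetilde F_{\lambda,t} = (\widetilde J_\lambda, \widetilde H_{\lambda,t})$ with $\widetilde J_\lambda$ independent of $t$, and that $(t,m)\mapsto\widetilde H_{\lambda,t}(m)$ is smooth. For the first, note that the first component of $g_t\circ F_t$ is just $J$ (because $g_t$ preserves the vertical direction, property~(1) of Lemma~\ref{lem:samecorner}), and the first component of $\mu_t$ is likewise the restriction of $J$; hence in both branches of Equation~\eqref{eqn:Flambda} the first component of $\widetilde F_{\lambda,t}$ equals $J\circ\pi$, which is independent of $t$. For smoothness, away from $\pi^{-1}(V_t)$ we have $\widetilde F_{\lambda,t} = F_t\circ\pi$, smooth in $(t,m)$ since $H$ is; near the exceptional divisor we have $\widetilde F_{\lambda,t} = g_t^{-1}\circ\mu_t\circ\rho_t^{-1}$, and smoothness in $t$ follows from the smooth $t$-dependence of $g_t$, $\mu_t$, and $\rho_t$ established above, together with the fact (already used in Section~\ref{sect:def_blowups}) that the two expressions agree on the overlap $\pi^{-1}(U_t\setminus V_t)$ so they patch to a globally smooth map. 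The main obstacle I anticipate is precisely this smooth-in-$t$ bookkeeping: making rigorous that $\chi_t$, $g_t$, $U_t$, $V_t$, $\rho_t$, and the symplectic form $\widetilde\omega_{\lambda,t}$ can all be chosen to vary smoothly — in particular that the open sets can be taken $t$-independent (or nested) on small subintervals and then patched via a partition of unity in $t$ — and that the resulting family of symplectic forms can be further identified, if desired, with a single form via a Moser-type argument; but none of this requires new ideas beyond the normal-form smoothness and the construction already carried out for a single system.
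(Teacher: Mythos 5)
Your overall skeleton matches the paper's proof (use Lemma~\ref{lem:samecorner} to fix the cone and the form of $g_t$, blow up each local toric piece, transport everything to one fixed blown-up manifold, then check that the first component is $t$-independent and that the momentum maps vary smoothly), but the step you yourself flag as the main obstacle is resolved by an argument that does not work as stated. You propose to get smooth $t$-dependence of $\chi_t$ (hence of $g_t$, $\mu_t$, $\rho_t$, $\widetilde{\om}_{\lambda,t}$) from ``smooth dependence of the Eliasson normal form on parameters, by the same implicit-function-theorem reasoning as in Lemma~\ref{lm:morse_family}.'' Lemma~\ref{lm:morse_family} is a statement about nondegenerate critical points of a family of functions; a parametric Eliasson normal form --- a family of local \emph{symplectomorphisms} $\chi_t$ linearizing the $\T^2$-action, smooth in $t$ --- is a much stronger and genuinely delicate statement (recall the paper's own caveat that even the non-parametric theorem has an incomplete literature), and it does not follow from that lemma. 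The paper's proof deliberately avoids ever needing $\chi_t$ to depend smoothly on $t$: it uses uniqueness of blowups to choose the identifications $\Psi_t^M$ so that on the exceptional region they restrict to the Karshon--Lerman equivariant symplectomorphism of the blown-up local toric pieces (possible because, by Lemma~\ref{lem:samecorner}, the images $(g_t\circ F_t)(U_t)=\Omega\cap\Phi(\C^2)$ are literally equal for all $t$), which collapses the formula for the new momentum map there to $g_t^{-1}\circ g_0\circ\widetilde{G}_{\lambda,0}$. The only smoothness-in-$t$ that remains to be proved is that of $(t,x,y)\mapsto g_t(x,y)$ (and then $g_t^{-1}$ via the implicit function theorem, using $\partial g_t^{(2)}/\partial y>0$), and this is established not by normal-form theory but by the action-integral description of $g_t$ from Remark~\ref{rmk:gcycles}, together with a topological argument that the cycle $\gamma_2(t,x,y)$ can be chosen without monodromy jumps as $t$ varies (any jump would change the Delzant corner by a power of $T$, contradicting Lemma~\ref{lem:samecorner}). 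Without this, or an equivalent substitute, your proof has a genuine gap exactly at its load-bearing step.

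A secondary issue: your identification of $\mathrm{Bl}_{p_t}(M)$ with a fixed manifold via an ambient isotopy $\Theta_t$ moving $p_{t_0}$ to $p_t$ introduces compatibility problems you do not address --- $\Theta_t$ need not preserve $\omega$ or $J$, so the resulting first component need not be $t$-independent, and you are left with a $t$-dependent symplectic form that the definition of a \family does not allow; you defer this to a Moser argument ``if desired,'' but it is required, and the Moser diffeomorphisms would in turn have to respect $J$ and smoothness in $t$. The paper sidesteps all of this by taking the symplectomorphisms $\Psi_t^M\colon\widetilde{M}_0\to\widetilde{M}_t$ furnished by uniqueness of symplectic blowups and pulling every $\widetilde{G}_{\lambda,t}$ back to the single symplectic manifold $(\widetilde{M}_0,\widetilde{\om}_{\lambda,0})$, so that the fixed symplectic form comes for free and $\widetilde{J}_{\lambda,t}=\widetilde{J}_{\lambda,0}$ follows from $\widetilde{\chi}_0=\widetilde{\chi}_t\circ\Psi_t^M$ on the exceptional region and from the commuting projection diagram away from it.
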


Note that $p_t$ can only depend on $t$ in the case that it belongs to a fixed surface of $J$ for all $t$. This is the case, for instance, in one step of the proof of Theorem \ref{thm:Wkdetailed}.

\begin{proof}

Let $(M,\om,F_t)$, $\lambda$, and $p_t$ be as in the statement of the proposition. 
By changing $F_t$ to $F_t - F_t(p_t)$ if necessary, we may assume that $F_t(p_t) = (0,0)$ for all $t$.
 Then, by Lemma~\ref{lem:samecorner}, there exist a neighborhood $U_t$,
 local diffeomorphisms $g_t\colon (\R^2,0)\to(\R^2,0)$ such that $g_t(x,y) = (x,g^{(2)}_t(x,y))$ where $\dpar{g_t^{(2)}}{y}(x,y) > 0$, and
 symplectomorphisms $\chi_t\colon U_t\to \C^2$ (onto their images)
 such that
 \[  (g_t \circ F_t)_{| U_t} = \Phi \circ \chi_t \]
 where $\Phi \colon \C^2\to \R^2$ is a momentum map for a $\T^2$-action on $\C^2$ which fixes the origin only.
 This implies that $J = \Phi^{(1)}\circ\chi_t$.
 Also, since we have assumed that a blowup of size $\lambda$ is possible for each $t$, we see that
 $\Phi(B_{\lambda})\subset (g_t\circ F_t)(U_t)$, so we may choose $U_t$ such that $(g_t\circ F_t)(U_t) = \Omega\cap \Phi(\C^2)$ for some fixed
 open set $\Omega$.

{\bf Performing blowup and defining $\widetilde{F}_{\lambda,t}$:}
Performing a blowup of $(M,\om,F_t)$ at $p_t$ of a fixed size $\lambda$ at each $t$ yields
 a new integrable system $(\widetilde{M}_t,\widetilde{\om}_{\lambda,t},\widetilde{G}_{\lambda,t})$ with projection
 maps $\pi_t\colon \widetilde{M}_t\to M$.
 Let $\widetilde{\Phi}_{\lambda}=(\widetilde{\Phi}^{(1)}_{\lambda},\widetilde{\Phi}^{(2)}_{\lambda})$ be the toric momentum
 map on $\widetilde{\C^2}$, the blowup of size $\lambda$ at the origin relative to $\Phi$, 
 so there is a map
 $\widetilde{\chi}_t\colon \widetilde{U}_t\to \widetilde{\C^2}$
 which is a symplectomorphism onto its image
 and such that $\widetilde{G}^{(1)}_{\lambda,t} = \widetilde{\Phi}^{(1)}_\lambda\circ\widetilde{\chi_t}$
  where $\widetilde{G}^{(1)}_{\lambda,t}$ is the first component of $\widetilde{G}_{\lambda,t}$.
 For each $t$ let $\rho_t\colon \widetilde{U}_t\hookrightarrow \widetilde{M}_t$ be the associated embedding,
 as in Definition~\ref{def:blowup}.
 By uniqueness of blowups for each $t$ there exist symplectomorphisms $\Psi_t^M\colon \widetilde{M}_0\to \widetilde{M}_t$
 and $\Psi_t^U\colon \widetilde{U}_0\to \widetilde{U}_t$ which satisfy
 $\Psi_t^M\circ\rho_0 = \rho_t\circ\Psi_t^U$,
 and we can choose these so that $\widetilde{\chi}_t^{-1} \circ \widetilde{\chi_0} = \Psi_t^M$
 on the set $\rho_0(\widetilde{U}_0)$ and
\begin{equation}\label{diag:nonlocal-blowup}
\begin{tikzcd}
\widetilde{M}_0 \setminus \rho_0(\widetilde{U}_0) \arrow[rr,"\Psi_t^M"] \arrow[dr,"\pi_0"']
& & \widetilde{M}_t \setminus \rho_t(\widetilde{U}_t) \arrow[dl,"\pi_t"]\\
& M \setminus U_t
\end{tikzcd}
\end{equation}
commutes;
see Figure~\ref{fig:Jdiagram}.
Define $\widetilde{F}_{\lambda,t}\colon \widetilde{M}_0\to\R^2$ by $\widetilde{F}_{\lambda,t} = \widetilde{G}_{\lambda,t}\circ \Psi_t^M$,
 and write $\widetilde{F}_{\lambda,t} = (\widetilde{J}_{\lambda,t},\widetilde{H}_{\lambda,t})$. 
 Also notice that since $(g_t\circ F_t)(U_t) = \Omega\cap\Phi(\C^2)$ for all $t$ we have that
\begin{equation}\label{eqn:sameimage}
 (g_t \circ \widetilde{G}_{\lambda,t} \circ \rho_t)(\widetilde{U}_t) = \Omega\cap \widetilde{\Phi}(\C^2)
\end{equation}
for all $t$.
The relevant maps are shown in Figure~\ref{fig:bendingarrows_diagram}.

\begin{figure}
\centering
\begin{subfigure}[b]{.45\linewidth}
\centering
\[ \begin{tikzcd}
      & \widetilde{U}_0 \arrow[hookrightarrow]{r}{\rho_0} \arrow[d,"\Psi_t^U"]  &  \widetilde{M}_0 \arrow[d,"\Psi_t^M"]\\
      & \widetilde{U}_t \arrow[hookrightarrow]{r}{\rho_t} \arrow[d,"\pi^U_t"]  &  \widetilde{M}_t \arrow[d,"\pi_t"]\\
 \C^2 \arrow{d}{\Phi} & U_t \arrow[hookrightarrow]{r}{\iota_t} \arrow[swap]{l}{\chi_t} \arrow[swap]{d}{{F_t}_{|U_t}}         &  M \arrow{dl}{F_t}\\
 \R^2 & \R^2 \arrow{l}{g_t} &
\end{tikzcd}
\] 
\caption{Performing a blowup in $U_t$ for each $t$ and then identifying all of the resulting manifolds
with $\widetilde{M}_0$.}
\label{fig:Jdiagram}
\end{subfigure} 
\hspace{20pt}
\begin{subfigure}[b]{.45\linewidth}
\centering
\[ \begin{tikzcd}
\widetilde{U}_0 \arrow[bend right = 90, looseness = 2,swap]{ddr}{g_t^{-t}\circ\mu_t\circ\Psi_t^U} \arrow[hookrightarrow]{r}{\rho_0} \arrow[d,"\Psi_t^U"']  &  \widetilde{M}_0 \arrow[swap]{d}{\Psi_t^M} \arrow[bend left = 40]{dd}{\widetilde{F}_{\lambda,t}}\\
\widetilde{U}_t \arrow[hookrightarrow]{r}{\rho_t} \arrow[d,"\mu_t"']   &  \widetilde{M}_t \arrow[swap]{d}{\widetilde{G}_{\lambda,t}}\\
\R^2                                                                  &  \R^2 \arrow[l,"g_t"']
\end{tikzcd}
\]
\caption{The new momentum map is equal to $g_t^{-t}\circ\mu_t\circ\Psi_t^U$ in $\widetilde{U}_0$.\newline}
\label{fig:bendingarrows_diagram}
\end{subfigure}
 \caption{Diagrams related to the proof of Theorem~\ref{thm:blowups}.}
\end{figure}

{\bf Proof that $\widetilde{J}_{\lambda,t}$ is fixed:} Now we must show that $\widetilde{J}_{\lambda,t}$ does
not depend on $t$. Diagram~\eqref{diag:nonlocal-blowup} implies that $\widetilde{J}_{\lambda,t} = \widetilde{J}_{\lambda,0}$ on
$\widetilde{M}\setminus\rho_0(\widetilde{U})$
and $\widetilde{J}_{\lambda,t} = \widetilde{J}_{\lambda,0}$ on $\rho_0(\widetilde{U})$ because
$\widetilde{\chi}_0 = \widetilde{\chi}_t\circ \Psi_t^M$ implies
\[
 \widetilde{J}_{\lambda,0} =\widetilde{G}^{(1)}_{\lambda,0} = \widetilde{\Phi}^{(1)}\circ\widetilde{\chi_0} = \widetilde{\Phi}^{(1)}\circ \widetilde{\chi}_t\circ \Psi_t^M = \widetilde{G}^{(1)}_{\lambda,t}\circ\Psi_t^M = \widetilde{J}_{\lambda,t}.
\]

{\bf Proof that $\widetilde{F}_{\lambda,t}$ is smooth:} Now we only need to show that $\widetilde{F}_{\lambda,t}$ is smooth in $(t,m)$. On $\widetilde{M}\setminus\rho_t(\widetilde{U_t})$
we have that 
\[
(\widetilde{F}_{\lambda,t})_{|\widetilde{M}\setminus\rho_t(\widetilde{U_t})} = (\Psi^M_t \circ \pi_t \circ F_t)_{| \widetilde{M}\setminus\rho_t(\widetilde{U_t})} = (\pi_0 \circ F_t)_{| \widetilde{M}\setminus\rho_t(\widetilde{U_t})}
\]
which is clearly smooth in $(t,m)$. Now we show $\widetilde{F}_{\lambda,t}$ is smooth on $\rho_0(\widetilde{U}_0)$.
Letting $\mu_t=g_t\circ\widetilde{G}_{\lambda,t}\circ \rho_t$, Equation~\eqref{eqn:sameimage} implies
that $(\widetilde{U}_t, \mu_t)$ is an open toric manifold which has
the same image for all $t$ and thus by~\cite[Proposition 6.5]{KarLer}
$(\widetilde{U}_t, \mu_t)$ and $(\widetilde{U}_s, \mu_s)$ are equivariantly symplectomorphic for all $t, s \in [0,1]$. We may assume that we have chosen $\Psi_t^M$ to restrict to this equivariant 
symplectomorphism so
\begin{equation}\label{eqn:mu_t}
 \mu_t\circ\Psi_t^U = g_0\circ \widetilde{G}_{\lambda, 0}\circ \rho_0
\end{equation}
for all $t$.
From the diagram in Figure~\ref{fig:bendingarrows_diagram} and Equation~\eqref{eqn:mu_t} we see that
\[
 (\widetilde{F}_{\lambda,t})_{|\rho_0(\widetilde{U}_0)} = g_t^{-1}\circ \mu_t\circ\Psi_t^U\circ \rho_0^{-1}
                                                      = g_t^{-1}\circ g_0 \circ \widetilde{G}_{\lambda,0}, 
\] 
so we only must show that $(t,u,v) \mapsto g_t^{-1}(u,v)$ is smooth.

In fact, by the implicit function theorem it is sufficient to show that $(t,x,y) \mapsto g_t(x,y)$ is smooth, 
since $\dpar{g_t^{(2)}}{y}(x,y) > 0$.
Recall (as in Remark~\ref{rmk:gcycles}) that $2\pi g_t(x,y)$ is the integral of $\alpha_t = \chi^*_t(\alpha)$ over a cycle
in the fiber $F^{-1}(x,y)\cong \mathbb{T}^k$ (where $k\in\{0,1,2\}$ depends on the type of the fiber: regular, elliptic-transverse, or elliptic-elliptic), where
$\alpha$ is the standard primitive of the symplectic form
in $\C^2$. So we have that
\[
 g_t^{(1)}(x,y) = \frac{1}{2\pi}\int_{\gamma_1(x,y)}\alpha_t = x
\]
where $\gamma_1(x,y)$ is any orbit of $X_J$ contained in $F^{-1}_t(x,y)$, and similarly
\[
 g_t^{(2)}(x,y) = \frac{1}{2\pi}\int_{\gamma_2(t,x,y)}\alpha_t
\]
for some cycle $\gamma_2(t,x,y)\subset F^{-1}_t(x,y)$ such that $\{[\gamma_1],[\gamma_2]\}$ is a basis of $H_1(F^{-1}_t(x,y); \Z)$.
To complete the proof we must argue that $\gamma_2(t,x,y)$ can be chosen so that 
$g_t^{(2)}(x,y)$ is smooth in $t,x,y$. 
Choose a trivialization of
the torus bundle given by the fibers of $F\colon C \to \R^3, (t,p_t)\mapsto (t, F_t(p_t))$ over its image, where $C$ is the fiber bundle over $[0,1]$ with fiber $U_t$ over $t$.
Note that the integral along a given cycle will vary smoothly in $(t,x,y)$ with
the fiber $F^{-1}_t(x,y)$, so we must only argue that the cycle
$\gamma_2(t,x,y)$ can be chosen so that it does not change homotopy type on the torus 
as $t$ varies, but this is possible because
the image $F(C)$ is the fiber over $[0,1]$ with fiber $F_t(U_t)$ which 
is topologically trivial. In fact, we must already have that
$g_t^{(2)}$ is smooth because changing the homotopy type of $\gamma_2(t,x,y)$
would change the image of the Delzant corner in $\Phi(\C^2)\cap\Omega$ by composing
with some power of the matrix $T$, as in Equation~\eqref{eqn:T}, but we have already seen that the corner
in the image does not depend on $t$, see Lemma~\ref{lem:samecorner}.
\end{proof}

We state the following without proof, since it is nearly identical to the proof of Theorem~\ref{thm:blowups}. 

\begin{prop}\label{prop:blowdowns}
Let $(M,\om,F_t)$, $0\leq t\leq 1$ be a \family and let $\lambda>0$. Suppose
 there exist continuous families $(p_t)_{0 \leq t \leq 1}, (q_t)_{0 \leq t \leq 1}$ of points of $M$ such that for every $t \in [0,1]$, $p_t$ and $q_t$ are distinct elliptic-elliptic points of $F_t$ and there exists a surface $\Sigma_t\subset M$ with $p_t, q_t \in \Sigma_t$ such that $(M,\om,F_t)$ admits a blowdown  of size $\lambda$ at $\Sigma_t$. Then there exists a \family
 $(\check{M},\check{\om}_{\lambda,t}, \check{F}_{\lambda,t})$, $0\leq t\leq 1$, such that for each $t$
 $ (\check{M},\check{\om}_{\lambda,t}, \check{F}_{\lambda,t}) $ is symplectomorphic to 
 the blowdown of size $\lambda$ at $\Sigma_t$ of $(M,\om,F_t)$ by a symplectomorphism which intertwines the momentum maps.
\end{prop}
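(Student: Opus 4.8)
The plan is to mirror the proof of Theorem~\ref{thm:blowups}, reversing the roles of blowup and blowdown. First I would normalize so that $F_t(p_t)=(0,0)$ for all $t$ (replacing $F_t$ by $F_t-F_t(p_t)$). The key new feature compared to the blowup case is that we are given an exceptional sphere $\Sigma_t$ containing the two elliptic-elliptic points $p_t,q_t$; since a toric type blowdown is only possible at a surface $\Sigma_t$ which is the image of an embedding of $\mathbb{CP}^1$ with self-intersection $-1$ containing exactly two completely elliptic points of the system, the local picture near $\Sigma_t$ is a neighborhood of the exceptional divisor in a toric blowup of $\C^2$. So I would choose an open neighborhood $U_t$ of $\Sigma_t$ consisting (besides $\Sigma_t$) only of the two elliptic-elliptic points $p_t,q_t$, the elliptic-transverse points adjacent to them, and regular points; then the Eliasson normal form at $p_t$ (or the toric normal form near the whole divisor) gives local diffeomorphisms $g_t(x,y)=(x,g_t^{(2)}(x,y))$ with $\partial g_t^{(2)}/\partial y>0$ and symplectomorphisms $\chi_t$ identifying $(U_t,\omega,g_t\circ F_t)$ with an open toric piece of the blowup $\widetilde{\C^2}$ of $\C^2$ of size $\lambda$, and we may arrange $(g_t\circ F_t)(U_t)=\Omega\cap\widetilde\Phi(\C^2)$ for a fixed open set $\Omega$, exactly as in the blowup proof. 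Here I would invoke Lemma~\ref{lem:samecorner} to see that the weights of the first component of the $\T^2$-action, hence the relevant Delzant cone, do not depend on $t$ (using that $(p_t)$ is a continuous family of elliptic-elliptic fixed points, so by Lemma~\ref{lem:dJzero} it is either constant or sits on an extremal fixed sphere).

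Next I would perform the toric type blowdown of $(M,\omega,F_t)$ at $\Sigma_t$ of size $\lambda$ for each $t$, obtaining $(\check M_t,\check\omega_{\lambda,t},\check G_{\lambda,t})$ with projection/blowdown maps; locally this replaces the open toric manifold $\widetilde U_t$ by the corresponding un-chopped open toric piece $\check U_t$ of $\C^2$ (a corner unchop of the polytope image). By the uniqueness of blowdowns (equivalently, uniqueness of blowups, cf.~\cite[Proposition 6.5]{KarLer}) together with the fact that the image $\Omega\cap\Phi(\C^2)$ is the same for all $t$, I can pick symplectomorphisms $\Psi_t^M\colon\check M_0\to\check M_t$ and $\Psi_t^U\colon\check U_0\to\check U_t$ compatible with the inclusions $\rho_t\colon\check U_t\hookrightarrow\check M_t$, with $\Psi_t^M$ restricting to the equivariant symplectomorphism on the toric pieces, and making the analogue of diagram~\eqref{diag:nonlocal-blowup} commute. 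Then I define $\check F_{\lambda,t}=\check G_{\lambda,t}\circ\Psi_t^M$ on $\check M_0$ and write $\check F_{\lambda,t}=(\check J_{\lambda,t},\check H_{\lambda,t})$.

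It remains to check the two substantive points, exactly as in Theorem~\ref{thm:blowups}: that $\check J_{\lambda,t}$ is independent of $t$, and that $(t,m)\mapsto\check F_{\lambda,t}(m)$ is smooth. The first follows because $\check J_{\lambda,t}$ equals $\widetilde\Phi^{(1)}$ composed with the identification $\check\chi_t$, and $\check\chi_0=\check\chi_t\circ\Psi_t^M$ forces $\check J_{\lambda,0}=\check J_{\lambda,t}$ on the local piece, while away from the local piece the commuting diagram gives $\check J_{\lambda,t}=\check J_{\lambda,0}=J\circ\pi_0$. For smoothness: away from $\rho_0(\check U_0)$ we have $\check F_{\lambda,t}=\pi_0\circ F_t$, smooth in $(t,m)$; on $\rho_0(\check U_0)$ we get $\check F_{\lambda,t}=g_t^{-1}\circ g_0\circ\check G_{\lambda,0}$ after arranging $\mu_t\circ\Psi_t^U=g_0\circ\check G_{\lambda,0}\circ\rho_0$ (using equivariance), so it suffices to show $(t,x,y)\mapsto g_t(x,y)$ is smooth, which follows from the action-integral description of $g_t$ (Remark~\ref{rmk:gcycles}) and a trivialization of the torus bundle over the topologically trivial total image, since the relevant cycles can be chosen not to change homotopy type as $t$ varies — and in fact must vary smoothly because the Delzant corner in the image is $t$-independent by Lemma~\ref{lem:samecorner}. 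I expect the main obstacle, as in Theorem~\ref{thm:blowups}, to be this last smoothness argument, specifically justifying that the blowdown surfaces $\Sigma_t$ and the un-chopped toric pieces can be threaded together into a bundle over $[0,1]$ compatibly with a single choice of $\Psi_t^U$; but since the proof is "nearly identical" to that of Theorem~\ref{thm:blowups}, one can safely state the proposition and refer to that argument.
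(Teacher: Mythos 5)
Your proposal is correct and takes essentially the same approach as the paper: the paper states this proposition without proof, noting that the argument is nearly identical to the proof of Theorem~\ref{thm:blowups}, and your plan is precisely that adaptation (local toric model near $\Sigma_t$ with a $t$-independent image, uniqueness of the blowdown, then the same $J$-invariance and action-integral smoothness checks).
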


Note that since $M$ admits a blowdown at $\Sigma_t$ in the above proposition, it is implicit
that $\Sigma_t$ is a symplectically embedded $2$-sphere with self intersection $-1$.

\begin{dfn}
 The \emph{blowup} (respectively \emph{blowdown}) of a \family is the \family described
 in Theorem~\ref{thm:blowups} (respectively Proposition~\ref{prop:blowdowns}).
\end{dfn}

From the proof of Theorem~\ref{thm:blowups}, we see that performing a blowup at a given $t$ does not create a degenerate point, and thus if a \family is semitoric at time $t$ then so is its blowup (respectively blowdown); this implies the following.

\begin{cor}
\label{cor:blowup_down_family}
The blowup (respectively blowdown) of a semitoric family is still a semitoric family with the same 
degenerate times.
\end{cor}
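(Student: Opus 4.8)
The statement to prove is Corollary~\ref{cor:blowup_down_family}: the blowup (respectively blowdown) of a semitoric family is again a semitoric family with the \emph{same} degenerate times. The plan is to combine the structural result of Theorem~\ref{thm:blowups} (and Proposition~\ref{prop:blowdowns}) --- which says the blown-up collection of systems really is a \family\ and that each member is symplectomorphic, via a momentum-map-intertwining symplectomorphism, to the honest blowup of the corresponding $(M,\om,F_t)$ --- with the local nature of (non)degeneracy of singular points. The key observation, which I would state explicitly at the start, is that a toric type blowup at an elliptic-elliptic point $p_t$ only modifies the system inside a neighborhood $U_t$ of $p_t$ on which the system is already in Eliasson normal form with only regular, elliptic-transverse, and elliptic-elliptic singularities; away from the exceptional divisor the blown-up system is literally $(F_t\circ\pi_t)$, i.e.\ isomorphic to the original.

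First I would fix $t_0\in[0,1]$ and show: $(M,\om,F_{t_0})$ is semitoric $\iff$ its blowup is semitoric. For the forward direction, suppose $F_{t_0}$ is semitoric, so in particular $p_{t_0}$ is elliptic-elliptic and a toric type blowup of size $\lambda$ is possible there. By Definition~\ref{def:semitoric} I must check three things for the blowup $(\mathrm{Bl}_{p_{t_0}}(M),\widetilde{\om}_{\lambda,t_0},\widetilde{F}_{\lambda,t_0})$: that $\widetilde{J}_{\lambda,t_0}$ is proper and generates an effective Hamiltonian $\S^1$-action, and that all singular points are non-degenerate with no hyperbolic component. Properness and the $\S^1$-action are inherited because (by Theorem~\ref{thm:blowups}) $\widetilde{J}_{\lambda,t_0} = \widetilde{\Phi}^{(1)}_\lambda\circ\widetilde{\chi}_{t_0}$ near the exceptional divisor and equals $J\circ\pi_{t_0}$ elsewhere, and the blowup of $\C^2$ along a ball carries an induced $\T^2$-action (hence an $\S^1$-subaction) compatible with the old one; $\mathrm{Bl}_{p_{t_0}}(M)$ is compact when $M$ is, and more generally $\widetilde{J}_{\lambda,t_0}$ is proper since it agrees with the proper map $J\circ\pi_{t_0}$ outside a compact set. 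For the singular points: outside $\rho_{t_0}(\widetilde U_{t_0})$ the blown-up system is isomorphic to $(M,\om,F_{t_0})$ via $\pi_{t_0}$, so those singular points have exactly the Williamson types they had before, hence are non-degenerate with $k_h=0$; inside, the system is a genuine toric system (a corner chop of a Delzant cone), whose singular points are all completely elliptic or elliptic-regular by the standard toric normal form, hence again non-degenerate with no hyperbolic component. The converse direction (if the blowup is semitoric then so is the original) is similar: the blowdown restores the original system away from the contracted sphere, and no new non-toric singularities are introduced near where the sphere sat. The blowdown case is handled identically using Proposition~\ref{prop:blowdowns}.

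Next, having established the equivalence ``$F_{t_0}$ semitoric $\iff$ $\widetilde F_{\lambda,t_0}$ semitoric'' for every $t_0$, the conclusion about degenerate times is immediate: if $(M,\om,F_t)$ is a semitoric family with degenerate times $t_1,\dots,t_k$, then $F_t$ is semitoric $\iff t\notin\{t_1,\dots,t_k\}$, hence $\widetilde F_{\lambda,t}$ is semitoric $\iff t\notin\{t_1,\dots,t_k\}$, which is exactly the statement that the blowup (which is a \family\ by Theorem~\ref{thm:blowups}) is a semitoric family with the same degenerate times. The one point deserving care, and the place I expect the only real friction, is ensuring that the hypotheses of Theorem~\ref{thm:blowups}/Proposition~\ref{prop:blowdowns} are met, namely that a toric type blowup of the fixed size $\lambda$ at the continuously-varying point $p_t$ is possible for \emph{every} $t\in[0,1]$, including the degenerate times --- note that at a degenerate time $p_t$ could in principle cease to be elliptic-elliptic. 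In the applications (Theorem~\ref{thm:Wkdetailed} and the explicit examples) this is verified directly because the points being blown up lie in regions that stay completely elliptic for all $t$; in the statement of Corollary~\ref{cor:blowup_down_family} this is part of what it means to ``take the blowup of a semitoric family,'' i.e.\ it is built into the hypothesis that one is in the situation of Theorem~\ref{thm:blowups} with $p_t$ elliptic-elliptic for all $t$, so there is nothing further to check. I would conclude by remarking, as the paper does just before the corollary, that this all rests on the fact --- visible in the proof of Theorem~\ref{thm:blowups} --- that performing a blowup never creates a degenerate singular point, so the set of non-semitoric times is literally unchanged.
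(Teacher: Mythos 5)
Your proposal is correct and follows essentially the same route as the paper, which compresses the whole argument into one sentence: by the proof of Theorem~\ref{thm:blowups} the blowup only modifies the system near a completely elliptic point, where all singularities remain non-degenerate of elliptic type, and is unchanged elsewhere, so no degenerate (or hyperbolic) points are created and semitoricity at each fixed $t$ is preserved, giving the same degenerate times; the blowdown case is handled symmetrically via Proposition~\ref{prop:blowdowns}. Your additional checks (properness of $\widetilde{J}_{\lambda,t}$, effectiveness of the $\S^1$-action, and the converse direction via blowdown) are sensible elaborations of what the paper leaves implicit, not a different method.
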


We also have a result for degenerate times:
\begin{lm}\label{lem:degenerate-blowupdown}
Suppose that $(M,\om,F_t)$ is a semitoric transition family with transition times $t^-, t^+$ and $q\in M$ is a fixed point of elliptic-elliptic type for all $t\in[0,1]$.
If $(M,\om,F_t)$ admits a toric type blowup of size $\lambda>0$ at $q$ for all $t\in[0,1]\setminus\{t^-,t^+\}$, then for any positive $\tilde{\lambda}<\lambda$ the system $(M,\om,F_t)$ admits a blowup of size $\tilde{\lambda}$ at $q$ for all $t\in [0,1]$.
\end{lm}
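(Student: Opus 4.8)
The plan is to reduce the statement to a statement about the existence of a blowup of size $\tilde\lambda$ at $q$ for the degenerate times $t^-$ and $t^+$, since for all other $t$ the hypothesis already gives us a blowup of size $\lambda > \tilde\lambda$, and we may then shrink the size. By Lemma~\ref{lem:blowups_semitoric_chops} (or rather the local description in Section~\ref{sect:def_blowups}), performing a toric type blowup of size $\mu$ at a completely elliptic point $q$ is possible precisely when the Eliasson normal form around $q$ can be taken on a neighborhood whose image under the local toric momentum map $g_t \circ F_t$ contains the simplex $\mathrm{Simp}_{g_t\circ F_t(q)}(\mu v_1, \mu v_2)$, equivalently when the ball $B_\mu = \{\|z\|^2 < 2\mu\}$ is contained in the image of the Eliasson chart $\chi_t$. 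So the task is to produce, for $t \in \{t^-, t^+\}$, an Eliasson chart at $q$ whose domain maps to a set containing $B_{\tilde\lambda}$.

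First I would use Lemma~\ref{lem:samecorner}: since $q$ is of elliptic-elliptic type of $F_t$ for \emph{all} $t\in[0,1]$, there is a \emph{fixed} toric momentum map $\Phi$ on $\C^2$ and a family of local charts $\chi_t : U_t \to \C^2$, diffeomorphisms $g_t$, with $(g_t\circ F_t)|_{U_t} = \Phi\circ\chi_t$ and $(g_t\circ F_t)(U_t) = K\cap\Omega_t$ for a fixed Delzant cone $K$; moreover by the local normal form theorem the chart $\chi_t$ can be taken to depend continuously (indeed smoothly) on $t$, and so can the open set $\chi_t(U_t)$. The hypothesis that a blowup of size $\lambda$ exists for each $t\in[0,1]\setminus\{t^-,t^+\}$ says precisely that $B_\lambda \subset \chi_t(U_t)$ for those $t$. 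Now fix a sequence $t_n \to t^-$ with $t_n \neq t^-, t^+$; by continuity of $t\mapsto \chi_t(U_t)$ (in the sense that for any compact set $K'$ contained in $\chi_{t^-}(U_{t^-})$ one has $K' \subset \chi_t(U_t)$ for $t$ near $t^-$, and conversely), and since $\overline{B_{\tilde\lambda}}$ is compact and contained in $B_\lambda$, I claim that $\overline{B_{\tilde\lambda}} \subset \chi_{t^-}(U_{t^-})$: otherwise there would be a point of $\partial B_{\tilde\lambda}$ not in the open set $\chi_{t^-}(U_{t^-})$, but $\partial B_{\tilde\lambda}$ is a compact subset of the open set $B_\lambda$, and using that $\chi_{t_n}(U_{t_n}) \supset B_\lambda \supset \overline{B_{\tilde\lambda}}$ together with the fact that shrinking $U_{t^-}$ slightly and taking $U_{t_n}$ close to $U_{t^-}$ are compatible, we get the containment for $t^-$ as well. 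The same argument works at $t^+$. Hence $B_{\tilde\lambda} \subset \chi_t(U_t)$ for \emph{all} $t\in[0,1]$, which is exactly the condition permitting a toric type blowup of size $\tilde\lambda$ at $q$ for every $t$.

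I expect the main obstacle to be making the continuity-of-charts argument rigorous: the Eliasson chart $\chi_t$ at a varying-in-$t$ (but here actually fixed, since $q$ does not move) elliptic-elliptic point is only guaranteed to exist locally with some a priori uncontrolled domain, so I must invoke a parametrized version of the normal form — which is legitimate here because the hypotheses of Lemma~\ref{lem:samecorner} already package the needed uniformity (a fixed cone $K$, a fixed $\Phi$, charts $\chi_t$ with $(g_t\circ F_t)|_{U_t}=\Phi\circ\chi_t$) — and because $B_\lambda$ is, for each non-degenerate $t$, strictly interior to the image, so passing to the closure $\overline{B_{\tilde\lambda}}$ with $\tilde\lambda < \lambda$ buys the compactness needed to push the containment to the two limiting times. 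Once $B_{\tilde\lambda}\subset\chi_t(U_t)$ for all $t$, the construction in Section~\ref{sect:def_blowups} applies verbatim (for the non-degenerate $t$ one simply shrinks the already-existing blowup from size $\lambda$ to size $\tilde\lambda$), completing the proof.
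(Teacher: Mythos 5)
There is a genuine gap, and it sits exactly where you flag it: the ``continuity of $t\mapsto\chi_t(U_t)$'' on which your whole argument rests is not available and is in fact the entire content of the lemma. Lemma~\ref{lem:samecorner} gives, for each $t$, \emph{some} chart $\chi_t$ with a fixed cone $K$, but it gives no control on the size of $U_t$ nor any semicontinuity of the images $\chi_t(U_t)$ in $t$; no parametrized normal form with such control is proved in the paper (the smooth dependence established in the proof of Theorem~\ref{thm:blowups} concerns $g_t$, and the uniform choice $(g_t\circ F_t)(U_t)=\Omega\cap\Phi(\C^2)$ made there is only possible \emph{because} blowups of size $\lambda$ are assumed for all $t$, which is what you need to prove at $t^{\pm}$). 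The direction of ``continuity'' you actually need --- that a compact set contained in $\chi_{t_n}(U_{t_n})$ for $t_n\to t^{\pm}$ lies in the image of some chart at $t^{\pm}$ --- is a lower semicontinuity statement for the maximal toric chart at $q$, and it can genuinely fail at a degenerate time: a degenerate singular point appearing at $t^{\pm}$ (as in Example~\ref{ex:appearance} and case (1) of Section~\ref{sec:semitoricfam-degen}) with image inside the corner region destroys the local toric fibration there, so no chart of size $\tilde{\lambda}$ exists at that time even though charts of size $\lambda$ exist at all other times. Tellingly, your argument never uses the hypothesis that $(M,\om,F_t)$ is a semitoric \emph{transition} family --- in particular that at $t^{\pm}$ the only degenerate singular point is the transition point $p$ (Definition~\ref{dfn:transition_fam}), a point you never mention --- and an argument that ignores these hypotheses cannot be correct, since the statement is false for general \families.

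The paper's proof supplies precisely the missing ingredient. It works with the saturated set $V_t(\mu)=(g_t\circ F_t)^{-1}(\mathrm{Simp}_0(\mu v_1,\mu v_2))$ and shows that for $s\in\{t^-,t^+\}$ and $\tilde{\lambda}<\lambda$ the region $V_s(\tilde{\lambda})$ contains no singular points other than $q$ and the two elliptic-regular families emanating from it: any singular point at $s$ is either a limit of singular points at nearby times (hence already controlled by the size-$\lambda$ hypothesis) or an appearing, necessarily degenerate, point (Lemma~\ref{lm:morse_family}), which the transition-family definition excludes away from $p$; and $p$ itself stays out because $p\notin V_t(\lambda)$ for nearby non-degenerate $t$, so by continuity $p$ is not in the interior of $V_s(\lambda)$, hence not in $V_s(\tilde{\lambda})$. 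Once the region is clean, the elliptic-elliptic normal form extends over all of it exactly as in the converse direction of Lemma~\ref{lem:blowups_semitoric_chops}, giving a chart whose image contains $B_{\tilde{\lambda}}$ and hence the blowup of size $\tilde{\lambda}$ at every $t$. To repair your proposal you would have to replace the chart-continuity claim by this analysis of singular points in $V_s(\tilde{\lambda})$ at the degenerate times, at which point you have essentially reproduced the paper's argument.
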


\begin{proof}
Let $(M,\om,F_t)$, $\lambda$, and $q$ be as in the statement and let $p$ be the transition point of the semitoric transition family; as above, we may assume that $F_t(q) = (0,0)$ for all $t$. Let $t\in[0,1]\setminus\{t^-,t^+\}$ and let $g_t: (\R^2,0) \to (\R^2,0)$, chosen smooth in $t$ (see proof of Theorem~\ref{thm:blowups}), such that $g_t \circ F_t$ is the momentum map for a $\T^2$-action with image equal to the intersection of an open neighborhood of the origin and a Delzant cone spanned by some $v_1, v_2 \in \Z^2$. Since $(M,\om,F_t)$ admits a toric type blowup of size $\lambda$ at $q$, the point $p$ does not belong to $V_t(\lambda) = (g_t \circ F_t)^{-1} (\text{Simp}_0(\lambda v_1, \lambda v_2))$. By continuity, this implies that $p$ does not belong to the interior of $V_s(\lambda)$ if $s\in\{t^-,t^+\}$, and
 similarly, there are no fixed points other than $q$ in the interior of $V_s(\tilde{\lambda})$. Indeed, by the discussion in Section~\ref{sec:semitoricfam-degen} (following from Lemma~\ref{lm:morse_family}), the only points which are singular at $t^\pm$ are limits of singular points as $t\to t^\pm$ and degenerate singular points, which do not occur
 by assumption in a semitoric transition family except for the transition point $p$. Hence if $0 < \tilde{\lambda} < \lambda$, the only fixed point which belongs to $V_t(\tilde{\lambda})$ for any $t$ is $q$, and the only rank one points are the two families of elliptic-regular points emanating from it. Thus, $(M,\om,F_t)$ admits a blowup of size $\tilde{\lambda}$ at $q$ for all $t\in [0,1]$.
\end{proof}

We also need the following lemma whose proof is similar to the proof of the previous lemma.
The only difference being that, as discussed in Lemma~\ref{lem:fixedpointsdontmove}, fixed points can change with $t$ only in a fixed sphere of $J$, which occur at its maximum and minimum values.

\begin{lm}\label{lem:degenerate-blowupdown-wall}
Suppose that $(M,\om,F_t)$ is a semitoric transition family with transition times $t^-, t^+$ and $j\in\R$ is a maximum or minimum value of $J$.
Suppose $p_t\in J^{-1}(j)$ is the point realizing the maximum or minimum of $(H_t)_{|J^{-1}(j)}$ for all $t\in [0,1]\setminus\{t^-,t^+\}$.
If $(M,\om,F_t)$ admits a toric type blowup of size $\lambda>0$ at $p_t$ for all $t\in[0,1]\setminus\{t^-,t^+\}$, then for any positive $\tilde{\lambda}<\lambda$ the system $(M,\om,F_t)$ admits a blowup of size $\tilde{\lambda}$ at $p_t$ for all $t\in [0,1]$.
\end{lm}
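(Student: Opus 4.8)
The plan is to follow the proof of Lemma~\ref{lem:degenerate-blowupdown} almost verbatim, the only new ingredient being the treatment of the moving point $p_t$ inside the fixed sphere $J^{-1}(j)$. As before, after replacing $F_t$ by $F_t - F_t(p_t)$ we may assume $F_t(p_t) = (0,0)$ for all $t$. For $t \in [0,1]\setminus\{t^-,t^+\}$ the system is semitoric and $p_t$, being an extremum of $(H_t)_{|J^{-1}(j)}$ on the vertical wall $J^{-1}(j)$, is an elliptic-elliptic fixed point; since $(p_t)$ is a continuous family of such points, Lemma~\ref{lem:samecorner} provides (with $g_t$ chosen smooth in $t$, as in the proof of Theorem~\ref{thm:blowups}) local diffeomorphisms $g_t$ so that $g_t \circ F_t$ is the momentum map of a $\T^2$-action whose image near $0$ is the intersection of a neighborhood of the origin with a fixed Delzant cone spanned by some $v_1, v_2 \in \Z^2$. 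Set $V_t(\nu) = (g_t \circ F_t)^{-1}(\mathrm{Simp}_0(\nu v_1, \nu v_2))$; admissibility of the toric type blowup of size $\lambda$ at $p_t$ means that for $t \notin \{t^-,t^+\}$ the only singular points in $V_t(\lambda)$ are $p_t$ together with the two families of elliptic-regular points emanating from it.

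Next I would pass to the limit $t \to t^\pm$. The limit point $p_{t^\pm} := \lim_{t \to t^\pm} p_t$ lies in the fixed sphere $J^{-1}(j)$, so $dJ(p_{t^\pm}) = 0$, and by continuity of $t \mapsto dH_t(p_t) = 0$ we also get $dH_{t^\pm}(p_{t^\pm}) = 0$, so $p_{t^\pm}$ is a fixed point of $F_{t^\pm}$. By the discussion in Section~\ref{sec:semitoricfam-degen} (which rests on Lemma~\ref{lm:morse_family}), the only singular points of $F_{t^\pm}$ are limits of singular points of $F_t$ as $t \to t^\pm$ together with degenerate singular points; in a semitoric transition family the only degenerate singular point at $t^\pm$ is the transition point $p$, and $p$ cannot lie in a fixed sphere of $J$ since it is of focus-focus type for $t^- < t < t^+$. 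Hence $p_{t^\pm} \ne p$, so $p_{t^\pm}$ is a non-degenerate fixed point with no hyperbolic component, and being an extremum of $(H_{t^\pm})_{|J^{-1}(j)}$ it must be elliptic-elliptic; in particular the whole family $(p_t)_{0 \leq t \leq 1}$ consists of elliptic-elliptic points and $g_t$, $V_t(\nu)$ extend continuously to $t = t^\pm$.

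Finally, by the same continuity argument, for each $s \in \{t^-, t^+\}$ the set $V_s(\lambda)$ contains no fixed point other than $p_s$ in its interior, and invoking Section~\ref{sec:semitoricfam-degen} again the only other singular points it can contain are the two elliptic-regular families adjacent to $p_s$. Therefore, choosing any positive $\tilde{\lambda} < \lambda$, the preimage of a small neighborhood of $\mathrm{Simp}_0(\tilde{\lambda} v_1, \tilde{\lambda} v_2)$ under $g_t \circ F_t$ is, for every $t \in [0,1]$, an open set whose only singularities are the elliptic-elliptic point $p_t$ and the two adjacent elliptic-regular families; hence the Eliasson normal form of Theorem~\ref{thm:normalform)} applies on it, the ball $B_{\tilde{\lambda}}$ embeds into the corresponding chart, and $(M,\om,F_t)$ admits a toric type blowup of size $\tilde{\lambda}$ at $p_t$ for all $t \in [0,1]$. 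The step requiring the most care is the middle one: verifying that the moving point $p_t$ remains a non-degenerate elliptic-elliptic fixed point through the degenerate times $t^\pm$, which amounts to excluding a collision of $p_t$ with the transition point $p$ using that $p$ is forced off the fixed spheres of $J$.
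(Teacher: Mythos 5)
Your proposal is correct and follows essentially the same route as the paper, which itself only remarks that the proof is the same as that of Lemma~\ref{lem:degenerate-blowupdown} with the sole difference that the fixed point may now move inside the fixed sphere $J^{-1}(j)$. Your middle step---showing that $p_t$ stays non-degenerate elliptic-elliptic through $t^{\pm}$ because the transition point, being focus-focus for $t^-<t<t^+$, cannot lie in a fixed sphere of $J$---is precisely the ``only difference'' the paper alludes to, and the rest (the sets $V_t(\cdot)$, the continuity argument, and the Eliasson normal form on the preimage of the smaller simplex) matches the paper's argument.
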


\section{Semitoric transition families on Hirzebruch surfaces}
\label{sec:hirz}

In this section, we recall the definition of Hirzebruch surfaces and some of their properties, and prove the existence of certain semitoric 1-transition families with prescribed semitoric polygon invariants on each Hirzebruch surface, which is the content of Theorem \ref{thm:Wkdetailed} (a more precise version of Theorem \ref{thm:Wk}).

\subsection{Definition of Hirzebruch surfaces through symplectic reduction}
\label{sec:hirzdef}

There are several equivalent definitions of Hirzebruch surfaces; we use the Delzant construction (see \cite{Del} or \cite[Section 2.5]{Can_toric} for more details) to define them starting from their standard Delzant polygons. More precisely, for $\alpha, \beta > 0$ and $n \in \Z_{\geq 0}$, we define the $n$-th Hirzebruch surface $\Hirzscaled{n}$ with scalings $\alpha, \beta$ as the toric manifold whose Delzant polygon is displayed in Figure \ref{fig:poly_Wn}, and we denote by $\omHirz{n}$ the symplectic form on $\Hirzscaled{n}$ obtained in this way.

\begin{figure}
\begin{center}

\begin{tikzpicture}[scale=.8]
\filldraw[draw=black,fill=gray!60] (0,0) node[anchor=north,color=black]{$(0,0)$}
  -- (0,2) node[anchor=south,color=black]{$(0,\beta)$}
  -- (2,2) node[anchor=south,color=black]{$(\alpha,\beta)$}
  -- (10,0) node[anchor=north,color=black]{$(\alpha + n \beta,0)$}
  -- cycle;

\end{tikzpicture}
\caption{The standard Delzant polygon for $\Hirzscaled{n}$.}
\label{fig:poly_Wn}
\end{center}
\end{figure}

This construction yields that $\Hirzscaled{n}$ identifies with the symplectic reduction at level zero of $\C^4$ with symplectic form $\omega_{std} = \frac{i}{2} \sum_{j=1}^4 du_j \wedge d\bar{u}_j$ with respect to the Hamiltonian $\T^2$-action generated by
\[ N(u_1,u_2,u_3,u_4) = \frac{1}{2} \left( |u_1|^2 + |u_2|^2 + n |u_3|^2, |u_3|^2 + |u_4|^2 \right) - \left( \alpha + \beta n, \beta \right) \]
and that the standard toric system on $\Hirzscaled{n}$ is $(J_{std},H_{std})$ with $J_{std} = \frac{1}{2} |u_2|^2$, $H_{std} = \frac{1}{2} |u_3|^2$. We will denote by $[u_1, u_2, u_3, u_4] \in \Hirzscaled{n}$ the equivalence class of $(u_1, u_2, u_3, u_4) \in \C^4$ in this symplectic quotient.

\subsection{Local coordinates}
\label{subsect:coords_W}

We now construct an atlas for $\Hirzscaled{n}$. Define, for $\ell, m \in \llbracket 1,4 \rrbracket$, $\ell \neq m$, the open set
\[ U_{\ell,m} = \left\{ [u_1,u_2,u_3,u_4] \ | \ u_{\ell} \neq 0, u_m \neq 0 \right\} \subset \Hirzscaled{n}. \] 
Then we get an open cover $\Hirzscaled{n} \subset  U_{1,3} \cup U_{1,4} \cup U_{2,3} \cup U_{2,4}$.
Indeed, the complementary set of the right hand side is 
\[ S = \left\{u_1 = 0 \ \text{or} \ u_3 = 0 \right\} \cap \left\{ u_1 = 0 \ \text{or} \ u_4 = 0 \right\}  \cap \left\{ u_2 = 0 \ \text{or} \ u_3 = 0 \right\}  \cap \left\{ u_2 = 0 \ \text{or} \ u_4 = 0 \right\}. \]
But $u_3$ and $u_4$ cannot vanish at the same time (since $|u_3|^2 + |u_4|^2 = 2 \beta$ on $N^{-1}(0)$), so if $[u]$ belongs to $S$, then necessarily $u_1 = 0 = u_2$. This implies that $|u_3|^2 = 2\left(\beta + \frac{\alpha}{n} \right)$, which is impossible since $|u_3|^2 \leq 2 \beta$. 

Now, we can get local coordinates on $U_{\ell,m}$, for $(\ell,m) = (1,3), (1,4), (2,3)$ or $(2,4)$, as follows. Using the action of $N$, we may find a representative of $[u]$ such that both $u_{\ell}$ and $u_m$ are real and positive; we write $u_{\ell} = x_{\ell} > 0$ and $u_m = x_m > 0$. Then, we write $\{1,2,3,4 \} \setminus \{ \ell,m \} = \{p, q\}$ and $u_p = x_p + i y_p$, $u_q = x_q + i y_q$ with $x_p, y_p, x_q, y_q \in \R$. Then the latter give local coordinates on $U_{\ell,m}$, and we use the equations given by $N=0$ to write $x_{\ell}$ and $x_m$ in these coordinates. More precisely,

\begin{itemize}
\item for $(\ell,m) = (1,3)$: $x_1 = \sqrt{2\alpha + n(x_4^2 + y_4^2) - (x_2^2 + y_2^2)}$, $x_3 = \sqrt{2\beta - (x_4^2 + y_4^2)}$,
\item for $(\ell,m) = (1,4)$: $x_1 = \sqrt{2(\alpha + n \beta) - n(x_3^2 + y_3^2) - (x_2^2 + y_2^2)}$, $x_4 = \sqrt{2\beta - (x_3^2 + y_3^2)}$,
\item for $(\ell,m) = (2,3)$: $x_2 = \sqrt{2\alpha + n(x_4^2 + y_4^2) - (x_1^2 + y_1^2)}$, $x_3 = \sqrt{2\beta - (x_4^2 + y_4^2)}$,
\item for $(\ell,m) = (2,4)$: $x_2 = \sqrt{2(\alpha + n \beta) - n(x_3^2 + y_3^2) - (x_1^2 + y_1^2)}$, $x_4 = \sqrt{2\beta - (x_3^2 + y_3^2)}$.
\end{itemize}
In any of these coordinates, the symplectic form is $\omHirz{n} = dx_p \wedge dy_p + dx_q \wedge dy_q$.

\subsection{A semitoric system on \texorpdfstring{$\Hirzscaled{0}$}{W0}: the coupled angular momenta}
\label{sec:coupledspins}

We already know an example of semitoric system on the Hirzebruch surface $\Hirzscaled{0}$. Indeed, recall the coupled angular momenta system on $M = \S^2\times \S^2$ with symplectic form $\omega_{R_1,R_2} = R_1\om_{S^2}\oplus R_2\om_{\S^2}$ described in Section \ref{sec:knownex}. In the language of the present paper, Corollary 2.11 in \cite{LFP} states that this system is a semitoric 1-transition family with transition point $(0,0,1,0,0,-1)$ and transition times 
\[ t^- = \frac{R_2}{2 R_2 + R_1 + 2 \sqrt{R_1 R_2}}, \qquad t^+ = \frac{R_2}{2 R_2 + R_1 - 2 \sqrt{R_1 R_2}}. \]

Since $\Hirzscaled{0}$ is symplectomorphic to $\S^2\times \S^2$ with symplectic form $\omega_{R_1,R_2} $ where $R_1 = \frac{\alpha}{2}$ and $R_2 = \frac{\beta}{2}$, the following result holds.

\begin{lm}\label{lem:coupledspinsW0}
There exists a semitoric 1-transition family on $\Hirzscaled{0}$ with transition point given by $\left[ \sqrt{2 \alpha}, 0, \sqrt{2\beta}, 0 \right]$ and transition times
\[ t^- = \frac{\beta}{2 \beta + \alpha + 2 \sqrt{\alpha \beta}}, \qquad t^+ = \frac{\beta}{2 \beta + \alpha - 2 \sqrt{\alpha \beta}}. \]
\end{lm}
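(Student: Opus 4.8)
The plan is to reduce the statement to the already-known result for the coupled angular momenta system on $\S^2 \times \S^2$, i.e.\ Corollary 2.11 of \cite{LFP} as quoted just above, by producing an explicit symplectomorphism between $(\Hirzscaled{0}, \omHirz{0})$ and $(\S^2 \times \S^2, \omega_{R_1,R_2})$ with $R_1 = \frac{\alpha}{2}$, $R_2 = \frac{\beta}{2}$, tracking where the $\S^1$-action and the transition point go under this identification. First I would establish that $\Hirzscaled{0}$, which by Section~\ref{sec:hirzdef} is defined as the toric manifold with Delzant polygon the rectangle $[0,\alpha]\times[0,\beta]$ (the $n=0$ case of Figure~\ref{fig:poly_Wn}), is symplectomorphic to $\S^2_{R_1}\times \S^2_{R_2}$ where $\S^2_R$ denotes the $2$-sphere of total area $4\pi R$, i.e.\ with symplectic form $R\,\omega_{\S^2}$: the standard toric system on $\S^2_R$ has momentum map image an interval of length $2R$, so a product of two such spheres has momentum image a rectangle of side lengths $2R_1, 2R_2$, which matches $[0,\alpha]\times[0,\beta]$ precisely when $R_1 = \alpha/2$ and $R_2 = \beta/2$. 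By Delzant's theorem two toric manifolds with the same Delzant polygon are equivariantly symplectomorphic, so this identification exists and moreover intertwines the toric momentum maps.

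Next I would track the $\S^1$-action and the transition point through the symplectic reduction description of Section~\ref{sec:hirzdef}. Recall $J_{std} = \frac12|u_2|^2$ generates the relevant $\S^1$-action on $\Hirzscaled{0}$, and in the coupled angular momenta system the $\S^1$-action is generated by $J = R_1 z_1 + R_2 z_2$. Under the affine-linear normalization relating the height function $\frac12|u_2|^2$ to $z_2$ (and, through the polygon geometry, also to $z_1$), the generator of the $\S^1$-action on $\Hirzscaled{0}$ corresponds, up to an affine change, to $J = R_1 z_1 + R_2 z_2$ with $R_1 = \alpha/2$, $R_2 = \beta/2$; one then picks the convex combination of Hamiltonians $H_t = (1-t)z_1 + t(x_1x_2+y_1y_2+z_1z_2)$ of Equation~\eqref{eqn:coupledmomenta} and transports it to $\Hirzscaled{0}$ via the symplectomorphism. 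Since a symplectomorphism intertwining momentum maps preserves the property of being a semitoric ($1$-transition) family — it preserves non-degeneracy, Williamson types, the number of focus-focus points, and the $\S^1$-action — the resulting family on $\Hirzscaled{0}$ is again a semitoric $1$-transition family with the same transition times. Finally I would substitute $R_1 = \alpha/2$, $R_2 = \beta/2$ into the formulas $t^- = \frac{R_2}{2R_2+R_1+2\sqrt{R_1R_2}}$ and $t^+ = \frac{R_2}{2R_2+R_1-2\sqrt{R_1R_2}}$: the common factor of $\frac12$ cancels between numerator and denominator, and $\sqrt{R_1R_2} = \frac12\sqrt{\alpha\beta}$, giving exactly $t^- = \frac{\beta}{2\beta+\alpha+2\sqrt{\alpha\beta}}$ and $t^+ = \frac{\beta}{2\beta+\alpha-2\sqrt{\alpha\beta}}$ as claimed. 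The transition point $(0,0,1,0,0,-1)\in\S^2\times\S^2$ is the fixed point of the toric system sitting over the corner $(0,\beta)$ of the rectangle (the maximum of $z_1$, minimum of $z_2$); under the reduction description of $\Hirzscaled{0}$ this corner corresponds to the point $[u_1,u_2,u_3,u_4]$ with $u_2 = 0 = u_4$, which after normalizing the remaining coordinates is $\left[\sqrt{2\alpha},0,\sqrt{2\beta},0\right]$.

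The main obstacle I expect is bookkeeping rather than conceptual: namely pinning down the precise affine identification between the toric momentum map of $\Hirzscaled{0}$ (coordinates $J_{std} = \frac12|u_2|^2$, $H_{std} = \frac12|u_3|^2$, image the rectangle $[0,\alpha]\times[0,\beta]$) and the product momentum map of $\S^2_{R_1}\times\S^2_{R_2}$ (coordinates $(R_1z_1, R_2z_2)$, image $[-R_1,R_1]\times[-R_2,R_2]$), so that under the correspondence the line $J_{std} = \text{const}$ matches the line $R_1z_1 + R_2z_2 = \text{const}$ and the identified transition point lands at the stated corner. This amounts to choosing the right $\mathrm{AGL}_2(\Z)$ transformation between the two polygons and checking that it sends the vertical direction for one system to the $J$-direction for the other; once this is fixed, identifying the transition point with $\left[\sqrt{2\alpha},0,\sqrt{2\beta},0\right]$ follows by evaluating the reduction coordinates at the appropriate polygon corner, and the transition-time computation is a one-line substitution.
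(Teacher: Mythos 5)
Your proposal is essentially the paper's own proof: the paper simply quotes Corollary 2.11 of \cite{LFP} (the coupled angular momenta system is a semitoric $1$-transition family with transition point $(0,0,1,0,0,-1)$ and the stated $t^{\pm}$ in terms of $R_1,R_2$) and then observes that $\Hirzscaled{0}$ is symplectomorphic to $\S^2\times\S^2$ with $\omega_{R_1,R_2}$ for $R_1=\alpha/2$, $R_2=\beta/2$, so that the substitution gives the claimed transition times. Two bookkeeping corrections to your write-up, neither of which affects the conclusion: the transported family's $\S^1$-action is generated by the pullback of $J=R_1z_1+R_2z_2$, which is \emph{not} (even up to an affine change) the action generated by $J_{std}=\frac12|u_2|^2$ — they have different weights at the fixed points — but no such intertwining is needed, since one transports the whole family $(J,H_t)$ at once; and under the equivariant Delzant identification matching $\frac12|u_2|^2$ with $R_1(z_1+1)$ and $\frac12|u_3|^2$ with $R_2(z_2+1)$, the point $(0,0,1,0,0,-1)$ corresponds to $\left[0,\sqrt{2\alpha},0,\sqrt{2\beta}\right]$ rather than $\left[\sqrt{2\alpha},0,\sqrt{2\beta},0\right]$, so to place the transition point as stated one composes with the symplectomorphism rotating each sphere by $\pi$ about the $x$-axis (sending $z_i\mapsto -z_i$), which is harmless since any symplectomorphism carries a semitoric $1$-transition family to one with the corresponding transition point and the same transition times.
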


\subsection{Constructing semitoric transition families on every Hirzebruch surface}
\label{sec:thmWkproof}

It is well-known that the standard toric system on $\Hirzscaled{n+1}$ can be obtained from the standard toric system on $W_n(\alpha',\beta)$, for some other $\alpha'>0$, by a  blowup followed by a blowdown, see for instance the proof of Theorem 5 in~\cite{PPRStoric}. The same argument can be applied to prove the existence of the semitoric system on $\Hirzscaled{n}$ described in Theorem \ref{thm:Wk}, using the properties of the toric type blowups and blowdowns established in the previous sections. For the basic
idea of the construction, see Figures~\ref{fig:obtainingWk} and~\ref{fig:Wkfamilies}.

\begin{figure}
\begin{center}
\begin{subfigure}[b]{.8\linewidth}
\begin{center}
\begin{tikzpicture}[scale=.60]

\filldraw[draw=black,fill=gray!60] (0,-2) node[anchor=south,color=black]{\footnotesize{$(0,\beta)$}}
  -- (4,-2) node[anchor=south,color=black]{\footnotesize{$(\alpha+\beta,\beta)$}}
  -- (8,-4) node[anchor=north,color=black]{\footnotesize{$(\alpha + n \beta,0)$}}
  -- (2,-4) node[anchor=north,color=black]{\footnotesize{$(\beta,0)$}}
  -- cycle;
\draw [dashed] (2,-4) -- (2,-3);
\draw (2,-3) node[] {$\times$};
\end{tikzpicture}
\end{center}
\caption{A representative of $[(\polygonnzero, (\beta, y(t)),(+1))]$.}
\label{fig:polygon_Wnhalf}
\end{subfigure}

\vspace{15pt}

 \begin{subfigure}[b]{.45\linewidth}
  \begin{center}
   \begin{tikzpicture}[scale=.6]
    \filldraw[draw=black,fill=gray!60] (0,0) node[anchor=north,color=black]{\footnotesize{$(0,0)$}}
      -- (2,2) node[anchor=south east,color=black]{\footnotesize{$(\beta,\beta)$}}
      -- (4,2) node[anchor=south west,color=black]{\footnotesize{$(\alpha+\beta,\beta)$}}
      -- (8,0) node[anchor=north,color=black]{\footnotesize{$(\alpha + n \beta,0)$}}
      -- cycle;
   \end{tikzpicture}
  \end{center}
 \caption{The polygon $\polygonnzero$.}
 \label{fig:polygonnzero}
 \end{subfigure}\,\,\,
 \begin{subfigure}[b]{.45\linewidth}
  \begin{center}
   \begin{tikzpicture}[scale=.6]
    \filldraw[draw=black,fill=gray!60] (0,2) node[anchor=south,color=black]{\footnotesize{$(0,\beta)$}}
      -- (4,2) node[anchor=south,color=black]{\footnotesize{$(\alpha+\beta,\beta)$}}
      -- (8,0) node[anchor=north,color=black]{\footnotesize{$(\alpha + n \beta,0)$}}
      -- (2,0) node[anchor=north,color=black]{\footnotesize{$(\beta,0)$}}
      -- cycle;
   \end{tikzpicture}
  \end{center}
 \caption{The polygon $\polygonnone$.}
 \label{fig:polygonnone}
 \end{subfigure}
\end{center}
\caption{The polygons associated with the semitoric family on $\Hirzscaled{n}$ for (a) $t^-<t<t^+$, (b)  	$t<t^-$, and (c) $t>t^+$.}
\label{fig:toricpolygons}
\end{figure}

We can now prove the main result of this section, which is a more detailed version of Theorem~\ref{thm:Wk}.
Let $\polygonnzero$ and $\polygonnone$ be the convex hulls of $\{(0,0), (\beta, \beta), (\alpha+\beta, \beta), (\alpha + n\beta, 0)\}$ and $\{(0,\beta), (\beta, 0), (\alpha+\beta, \beta), (\alpha + n\beta, 0)\}$ respectively, as shown in Figures~\ref{fig:polygonnzero} and~\ref{fig:polygonnone}.

\begin{thm}\label{thm:Wkdetailed}
For every $n \in \Z_{\geq 0}$ and $\alpha, \beta > 0$
there exists a semitoric 1-transition family on $W_n(\alpha,\beta)$ with transition point $\left[ \sqrt{2 \alpha}, 0, \sqrt{2\beta}, 0 \right]$ and transition times $t^-_n, t^+_n\in (0,1)$ satisfying 
\[\frac{\beta}{2 \beta + \alpha + 2 \sqrt{\alpha \beta}} \leq t^-_n < \frac{1}{2} < t^+_n \leq \frac{\beta}{2 \beta + \alpha - 2 \sqrt{\alpha \beta}}
\]
with equality on both sides if and only if $n=0$,
such that
\begin{enumerate}
 \item for $t^-_n<t< t^+_n$ the system is semitoric and has associated marked semitoric polygon $[(\polygonnzero, (\beta, y(t)),(+1))]$
  where $y(t)$ belongs to the interval $(0,\beta)$, shown in Figure~\ref{fig:polygon_Wnhalf};
 \item if $t = t^-_n$ or $t = t^+_n$ the system has exactly one degenerate point at $\left[ \sqrt{2 \alpha}, 0, \sqrt{2\beta}, 0 \right]$;
 \item if $t<t^-_n$ the system is semitoric with zero focus-focus points and associated marked semitoric polygon $[(\polygonnzero, \varnothing,\varnothing)]$, shown in Figure~\ref{fig:polygonnzero};
 \item if $t>t^+_n$ the system is semitoric with zero focus-focus points and associated marked semitoric polygon $[(\polygonnone, \varnothing,\varnothing)]$, shown in Figure~\ref{fig:polygonnone};
\end{enumerate} 
Moreover, such a system can be obtained from the scaled coupled angular momenta system on
$W_0(\alpha',\beta)$ for some choice of $\alpha'\geq \alpha$ by alternately performing blowups and blowdowns, each $n$ times.
\end{thm}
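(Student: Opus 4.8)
The plan is to start from the scaled coupled angular momenta system on $W_0(\alpha',\beta)$, which by Lemma~\ref{lem:coupledspinsW0} is a semitoric 1-transition family with transition point $p_0 = [\sqrt{2\alpha'},0,\sqrt{2\beta},0]$, and then to build the system on $W_n(\alpha,\beta)$ by performing, $n$ times, a blowup followed by a blowdown, exactly as in the toric argument used in the proof of Theorem~5 of~\cite{PPRStoric} to pass from $W_n$ to $W_{n+1}$. The key point is that all of these blowups and blowdowns must be carried out away from the $J$-fiber of the transition point, so that the transition point, transition times, and focus-focus behavior are preserved; the invariants established in Section~\ref{sec:firstprop} (in particular that a point of elliptic-elliptic type which is never degenerate stays away from the transition) guarantee that this is possible. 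First I would choose $\alpha'$ large enough (the bound $\alpha' \geq \alpha$ coming from needing enough ``room'' to perform the requisite corner chops of a fixed size), work out explicitly which completely elliptic fixed points of the $W_0(\alpha',\beta)$ system to blow up and which exceptional spheres to blow down, and check at the level of the semitoric polygon (using Lemma~\ref{lem:blowups_semitoric_chops}, which translates toric type blowups/blowdowns into semitoric corner chops/unchops) that after $n$ rounds one arrives at the marked semitoric polygon $[(\polygonnzero,(\beta,y(t)),(+1))]$ of Figure~\ref{fig:polygon_Wnhalf} for $t\in(t^-_n,t^+_n)$, and at $[(\polygonnzero,\varnothing,\varnothing)]$ and $[(\polygonnone,\varnothing,\varnothing)]$ respectively for $t<t^-_n$ and $t>t^+_n$.

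The main structural inputs are Theorem~\ref{thm:blowups} and Proposition~\ref{prop:blowdowns}, which say that if every member of a \family admits a toric type blowup (resp.\ blowdown) of the same size $\lambda$ at a continuously-varying point (resp.\ surface), then the resulting collection of systems is again a \family; together with Corollary~\ref{cor:blowup_down_family}, which says that performing such an operation on a semitoric family yields a semitoric family with the same degenerate times. So the plan is: (i) at each of the $2n$ stages, identify a completely elliptic fixed point $q$ (or an exceptional $(-1)$-sphere $\Sigma$) that is disjoint from the fiber $J^{-1}(J(p))$ containing the transition point, and that is of elliptic-elliptic type for all $t\in[0,1]$ (or, when $q$ lies in a fixed sphere of $J$, the appropriate extremal point $p_t$, so that Lemma~\ref{lem:degenerate-blowupdown-wall} applies rather than Lemma~\ref{lem:degenerate-blowupdown}); (ii) invoke Lemma~\ref{lem:degenerate-blowupdown} (or Lemma~\ref{lem:degenerate-blowupdown-wall}) to get a uniform blowup size $\tilde\lambda$ valid for \emph{all} $t\in[0,1]$, including the degenerate times $t^-,t^+$, where one must know that no new degenerate points get in the way --- this is exactly what those lemmas provide; (iii) apply Theorem~\ref{thm:blowups} or Proposition~\ref{prop:blowdowns} to obtain the new semitoric 1-transition family; (iv) repeat. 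Since none of these operations touches the fiber of the transition point, the transition point and transition times (which for $W_0$ are the explicit values in Lemma~\ref{lem:coupledspinsW0}) are unchanged --- giving the claimed transition point $[\sqrt{2\alpha},0,\sqrt{2\beta},0]$ and the stated inequalities for $t^-_n, t^+_n$, with equality iff $n=0$ since for $n\geq 1$ one must pass to $\alpha'>\alpha$.

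To pin down the polygons and the heights, I would track the effect of the whole blowup--blowdown sequence on a representative of the semitoric polygon using Lemma~\ref{lem:blowups_semitoric_chops} and the description of corner chops/unchops in Section~\ref{sec:toricchops}, checking that the cumulative effect on the standard polygon of $W_0(\alpha',\beta)$ (the trapezoid with vertices $(0,0),(0,\beta),(\alpha',\beta),(\alpha',0)$ suitably normalized) is precisely the passage to the trapezoids $\polygonnzero$ and $\polygonnone$; the mark at $(\beta,y(t))$ with $y(t)\in(0,\beta)$ comes from the fact that the transition point's fiber, untouched by the blowups, still carries the focus-focus point for $t\in(t^-_n,t^+_n)$, and the height $y(t)$ is $(2\pi)^{-1}\mathrm{vol}(J^{-1}(J(p)))$-constrained with the requisite strict inequalities because $p$ is never at an extremum of $(H_t)_{|J^{-1}(J(p))}$ on that interval. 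Finally, the statements that the system is semitoric with zero focus-focus points for $t<t^-_n$ and $t>t^+_n$, and has exactly one degenerate point (the transition point) at $t=t^-_n, t^+_n$, follow from the definition of a semitoric 1-transition family (Definition~\ref{dfn:transition_fam}) together with the preservation of degenerate times under blowups/blowdowns.

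The hard part will be (ii): verifying that at the degenerate times $t^-_n$ and $t^+_n$ the blowups and blowdowns can still be performed at a uniform size, i.e.\ that no spurious degenerate point or moving singularity collides with the simplex being chopped. This is precisely the content of Lemmas~\ref{lem:degenerate-blowupdown} and~\ref{lem:degenerate-blowupdown-wall}, so the real work is to check their hypotheses carefully at each of the $2n$ stages --- in particular that the elliptic-elliptic point (or extremal point) being blown up is non-degenerate for \emph{all} $t$, which relies on the fact (Section~\ref{sec:semitoricfam-degen}) that in a semitoric transition family the only degenerate point is the transition point itself, which by construction never enters the region of the blowup. A secondary bookkeeping difficulty is that, as noted after Theorem~\ref{thm:blowups}, the blowup point may genuinely move with $t$ when it lies in a fixed sphere of $J$ (which does occur at one step, cf.\ the remark before the proof), so one must use the ``moving point'' versions of the statements and the wall version Lemma~\ref{lem:degenerate-blowupdown-wall} rather than Lemma~\ref{lem:degenerate-blowupdown}.
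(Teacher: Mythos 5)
Your proposal follows essentially the same route as the paper's proof: starting from the coupled angular momenta family on a rescaled $W_0$, the paper inducts on $n$ by blowing up (Theorem~\ref{thm:blowups}, with the moving-point/wall case handled by Lemma~\ref{lem:degenerate-blowupdown-wall} exactly as you anticipate) at the upper-right elliptic-elliptic corner away from the transition fiber, then blowing down (Proposition~\ref{prop:blowdowns}), using Corollary~\ref{cor:blowup_down_family} and Lemmas~\ref{lem:degenerate-blowupdown}--\ref{lem:degenerate-blowupdown-wall} to keep the degenerate times, and Lemma~\ref{lem:blowups_semitoric_chops} together with $\mathrm{SL}_2(\Z)$-length checks to track the marked polygons and identify the result as $W_{n+1}(\alpha,\beta)$, with the transition-time inequalities read off from the $W_0$ formulas of Lemma~\ref{lem:coupledspinsW0}. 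The only difference is presentational (the paper phrases it as an induction through $W_n(\alpha+\lambda,\beta)$ rather than fixing $\alpha'$ once at the start), so your outline matches the paper's argument.
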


Note that the unmarked semitoric polygons obey the relationship for semitoric transition families described in Lemma~\ref{lem:polygons_transition} because the weights of the $\mathbb{S}^1$-action at the transition point are $\pm 1$,
hence items 3.~and 4.~are automatic from item 1.

\begin{proof}
We proceed by induction on $n$; if $n=0$ then the result follows from Lemma~\ref{lem:coupledspinsW0}. Let $n \geq 0$ and assume that 
such a semitoric family exists on $W_n(\alpha',\beta')$ for all $\alpha',\beta'>0$.
Given $\alpha,\beta>0$, let $0<\lambda < \beta$
and consider the system on $W_n(\alpha+\lambda, \beta)$ with transition times $t^-_n$, $t^+_n$, and which
has marked semitoric polygon (depending on $t$) as described in the theorem.
For each $t\in [0,1]$ let $p_t \in W_n(\alpha+\lambda, \beta)$ be the point whose image 
in any representative with $\epsilon = +1$ (upwards cut) of the marked semitoric polygon is the upper right corner (marked with a black dot in the first column of Figures~\ref{fig:obtainingWk} and~\ref{fig:Wkfamilies}); in fact $p_t$ can also be determined by fixing $p_0=[0,\sqrt{2(\alpha+\lambda)},\sqrt{2\beta},0]$ and requiring that $p_t$ is an elliptic-elliptic point of $F_t$ and $t\mapsto p_t$ is continuous.
Note that if $n\neq 1$ then $p_t = p_0$ for all $t$, but in the case $n=1$ the point may move since it belongs to a fixed surface of $J$.
By Theorem~\ref{thm:blowups}, to conclude that a blowup of size $\lambda$ of this family is possible we have to argue that a blowup of size $\lambda$ is possible for all $t\in [0,1]$.
If $t\notin\{t_n^-,t_n^+\}$ note that the edges adjacent to the image of $p_t$ in the semitoric polygon
have $\mathrm{SL}_2(\Z)$-length (as in Remark~\ref{rmk:SL2Z-length}) given by $\alpha+\lambda$, $\beta$, or $\alpha+\beta+\lambda$ (depending on the value of $t$
relative to $t^\pm$, and the choice of representative), all of which are greater than $\lambda$,
and the only marked point is not in the region to be removed with the 
desired corner chop, so as in Remark~\ref{rmk:SL2Z-length}
we conclude that a blowup at $p_t$ is possible.
Also, this implies that if $t\in\{t_n^-,t_n^+\}$ a blowup of size $\lambda$ is possible by Lemmas~\ref{lem:degenerate-blowupdown}
and~\ref{lem:degenerate-blowupdown-wall}
(taking a smaller $\lambda$ if necessary).
Since the semitoric family on $W_n(\alpha+\lambda, \beta)$ admits a blowup of size $\lambda$ at $p_t$
for all $t\in [0,1]$, by Theorem~\ref{thm:blowups} we can perform a blowup of size $\lambda$ at $p_t$ for the family and by 
Corollary~\ref{cor:blowup_down_family} the resulting \family is a semitoric family on $\blowup{{p_t}}{}(W_n(\alpha+\lambda,\beta))$
which has degenerate times $t_n^-,t_n^+$. Moreover, by Lemma~\ref{lem:blowups_semitoric_chops} we know the semitoric polygons of the new family,
they are the ones which are obtained by performing a semitoric corner chop of size $\lambda$ at the upper right corner of the polygons
for $W_n(\alpha+\lambda,\beta)$, as shown in the middle column of Figure~\ref{fig:obtainingWk} for $n=0,1,2$.

Next, we show the family we just constructed on $M=\blowup{{p_t}}{}(W_n(\alpha+\lambda,\beta))$ admits a blowdown.
Let $\Sigma_t$ be the sphere whose image is marked in bold in the middle column
of Figures~\ref{fig:obtainingWk} and~\ref{fig:Wkfamilies}.
Notice that if
$t\notin\{t_n^-, t_n^+\}$ then the semitoric polygon associated to the new family admits a corner unchop of size $\beta-\lambda$, since
it can be seen as the corner chop of $\polygonnzero$ or $\polygonnone$ (depending on the value of $t$ and the choice of semitoric polygon),
at the point $p_t'$ which is the preimage of the lower right corner in the semitoric polygon of $\Hirzscaled{n+1}$
(marked in blue in the right column of Figures~\ref{fig:obtainingWk} and~\ref{fig:Wkfamilies}, and as before note that $p_t'$ is independent of $t$ unless $n=0$).
Again using Lemmas~\ref{lem:degenerate-blowupdown} and~\ref{lem:degenerate-blowupdown-wall}, we see this also implies such a blowdown is possible
if $t\in\{t_n^-,t_n^+\}$ (again, taking a smaller $\lambda$ if needed).
Thus, by Proposition~\ref{prop:blowdowns} and Corollary~\ref{cor:blowup_down_family}, we have that the blowdown of the family
on $\blowup{{p_t}}{}(W_n(\alpha+\lambda,\beta))$ at $\Sigma_t$ produces a semitoric family with the same degenerate times $t_n^-$
and $t_n^+$ on a manifold $M$ which is the blowdown at $\Sigma_t$ of $\blowup{{p_t}}{}(W_n(\alpha+\lambda,\beta))$.
Furthermore, if $t=0$ in the family on $M$ we obtain a system which becomes toric when scaling the second component appropriately (like the coupled angular momenta) with Delzant polygon the one associated
to $W_{n+1}(\alpha,\beta)$, so we conclude that $M\cong W_{n+1}(\alpha,\beta)$ and thus we have produced
the required system.
Note that $t_0^-$ and $t_0^+$ satisfy the required equality by Lemma~\ref{lem:coupledspinsW0}, and by performing a blowup of size
$\lambda$ on $W_0(\alpha, \beta+\lambda)$ we have
\[ t_1^- = \frac{\beta + \lambda}{2 \beta + 2\lambda+ \alpha + 2 \sqrt{\alpha (\beta+\lambda)}}, \qquad t_1^+ = \frac{\beta + \lambda}{2 \beta + 2\lambda+ \alpha - 2 \sqrt{\alpha (\beta+\lambda)}} \]
which satisfy the claimed inequalities.
Finally, if $n>0$, note that $t^-_{n+1}$ and $t^+_{n+1}$ are the transition times for the coupled angular momenta system on $W_0(\alpha',\beta)$ where $\alpha'$ is determined by the sizes of each of the blowups used in the construction of $W_{n+1}(\alpha,\beta)$, but in any case $\alpha'>\alpha$ and thus
$t^+_{n+1}$ and $t^-_{n+1}$ satisfy the claimed inequalities.
\end{proof}

\begin{figure}[h]
\begin{center}
\begin{tikzpicture}[scale=.7]

\node[label={$W_n$}] at (3,2){};
\node[label={$\widetilde{W_n}$}] at (8.5,2){};
\node[label={$W_{n+1}$}] at (14,2){};

\node[label={$n=0$}] at (-1.25,0.5){};
\node[label={$n=1$}] at (-1.25,-2.5){};
\node[label={$n=2$}] at (-1.25,-5.5){};

\draw [->] (4.5,1) -- (5.5,1);
\draw [->] (10,1) -- (11,1);
\draw [-, rounded corners] (13.4, -0.1) |- (12.0, -0.5);
\draw [->, rounded corners] (12.0, -0.5)  -| (2.6,-0.9);
\draw [->] (4.5,-2) -- (5.5,-2);
\draw [->] (10,-2) -- (11,-2);
\draw [-, rounded corners] (13.4, -3.1) |- (12.0, -3.5);
\draw [->, rounded corners] (12.0, -3.5)  -| (2.6,-3.9);
\draw [->] (4.5,-5) -- (5.5,-5);
\draw [->] (10,-5) -- (11,-5);

\filldraw[draw=black,fill=gray!60] (0,0) node[anchor=north,color=black]{}
  -- (2,2) node[anchor=south,color=black]{}
  -- (5,2) node[anchor=south,color=black]{}
  -- (3,0) node[anchor=south,color=black]{}
  -- cycle;  
\draw [dashed] (2,2) -- (2,1);
\draw (2,1) node[] {$\times$};
\fill[black] (5,2) circle (.1);	

\filldraw[draw=black,fill=gray!60] (5.5,0) node[anchor=north,color=black]{}
  -- (7.5,2) node[anchor=south,color=black]{}
  -- (9.5,2) node[anchor=south,color=black]{}
  -- (9.5,1) node[anchor=south,color=black]{}
  -- (8.5,0) node[anchor=south,color=black]{}
  -- cycle;  
\draw [dashed] (7.5,2) -- (7.5,1);
\draw (7.5,1) node[] {$\times$};
\draw[ultra thick] (9.5,1) to (8.5,0);
\fill[black] (9.5,1) circle (1/25);	
\fill[black] (8.5,0) circle (1/25);

\filldraw[draw=black,fill=gray!60] (11,0) node[anchor=north,color=black]{}
  -- (13,2) node[anchor=south,color=black]{}
  -- (15,2) node[anchor=south,color=black]{}
  -- (15,0) node[anchor=south,color=black]{}
  -- cycle;  
\draw [dashed] (13,2) -- (13,1);
\draw (13,1) node[] {$\times$};
\fill[blue] (15,0) circle (.1);	

\filldraw[draw=black,fill=gray!60] (0,-3) node[anchor=north,color=black]{}
  -- (2,-1) node[anchor=south,color=black]{}
  -- (4,-1) node[anchor=south,color=black]{}
  -- (4,-3) node[anchor=south,color=black]{}
  -- cycle;  
\draw [dashed] (2,-1) -- (2,-2);
\draw (2,-2) node[] {$\times$};
\fill[black] (4,-1) circle (.1);	
  
\filldraw[draw=black,fill=gray!60] (5.5,-3) node[anchor=north,color=black]{}
  -- (7.5,-1) node[anchor=south,color=black]{}
  -- (8.5,-1) node[anchor=south,color=black]{}
  -- (9.5,-2) node[anchor=south,color=black]{}
  -- (9.5,-3) node[anchor=south,color=black]{}
  -- cycle;  
\draw [dashed] (7.5,-1) -- (7.5,-2);
\draw (7.5,-2) node[] {$\times$};
\draw[ultra thick] (9.5,-2) to (9.5,-3);
\fill[black] (9.5,-2) circle (1/25);	
\fill[black] (9.5,-3) circle (1/25);

\filldraw[draw=black,fill=gray!60] (11,-3) node[anchor=north,color=black]{}
  -- (13,-1) node[anchor=south,color=black]{}
  -- (14,-1) node[anchor=south,color=black]{}
  -- (16,-3) node[anchor=south,color=black]{}
  -- cycle;
\draw [dashed] (13,-1) -- (13,-2);
\draw (13,-2) node[] {$\times$};
\fill[blue] (16,-3) circle (.1);	
 
\filldraw[draw=black,fill=gray!60] (0,-6) node[anchor=north,color=black]{}
  -- (2,-4) node[anchor=south,color=black]{}
  -- (3,-4) node[anchor=south,color=black]{}
  -- (5,-6) node[anchor=south,color=black]{}
  -- cycle;  
\draw [dashed] (2,-4) -- (2,-5);
\draw (2,-5) node[] {$\times$};
\fill[black] (3,-4) circle (.1);	  
  
\filldraw[draw=black,fill=gray!60] (5.5,-6) node[anchor=north,color=black]{}
  -- (7.5,-4) node[anchor=south,color=black]{}
  -- (8,-4) node[anchor=south,color=black]{}
  -- (9.5,-5) node[anchor=south,color=black]{}
  -- (10.5,-6) node[anchor=south,color=black]{}
  -- cycle;  
\draw [dashed] (7.5,-4) -- (7.5,-5);
\draw (7.5,-5) node[] {$\times$};
\draw[ultra thick] (9.5,-5) to (10.5,-6);
\fill[black] (9.5,-5) circle (1/25);	
\fill[black] (10.5,-6) circle (1/25);

\filldraw[draw=black,fill=gray!60] (11,-6) node[anchor=north,color=black]{}
  -- (13,-4) node[anchor=south,color=black]{}
  -- (13.5,-4) node[anchor=south,color=black]{}
  -- (17.5,-6) node[anchor=south,color=black]{}
  -- cycle;
\draw [dashed] (13,-4) -- (13,-5);
\draw (13,-5) node[] {$\times$};
\fill[blue] (17.5,-6) circle (.1);	
  
\end{tikzpicture}
\end{center}
\caption{One representative of the marked semitoric polygon associated with each step in the construction of a semitoric $1$-transition family 
on $\Hirzscaled{n}$ from the scaled coupled spins system on $W_0(\alpha',\beta)$.
We perform a blowup at the black point and then a blowdown at the bold edge.
Compare with Figure~\ref{fig:Wkfamilies}.
The scalings are not indicated for clarity; performing the sequence of blowup and blowdown on $W_n(\alpha,\beta)$ yields $W_{n+1}(\alpha-\lambda,\beta)$
where $\lambda$ is the size of the blowup.}
\label{fig:obtainingWk}
\end{figure}
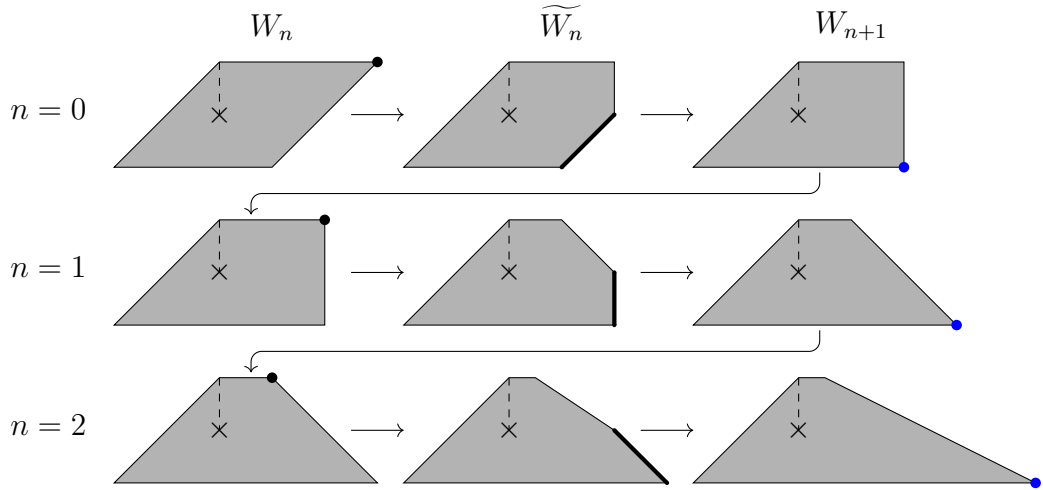

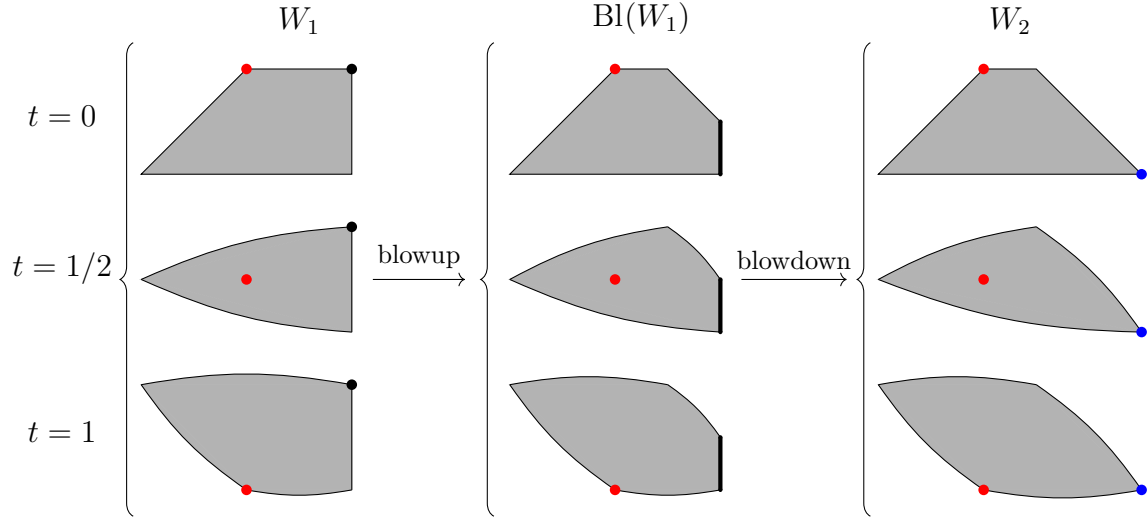
\begin{figure}
\begin{center}
\begin{tikzpicture}[scale=.7]

\node[label={$W_1$}] at (3,2.25){};
\node[label={$\blowup{}{}(W_1)$}] at (9.5,2.25){};
\node[label={$W_{2}$}] at (16.5,2.25){};

\node[label={$t=0$}] at (-1.5,0.5){};
\node[label={$t=1/2$}] at (-1.5,-2.5){};
\node[label={$t=1$}] at (-1.5,-5.5){};

\draw [->] (4.4,-2) -- node[above]{\footnotesize{blowup}} ++ (1.75,0); 
\draw [->] (11.4,-2) -- node[above]{\footnotesize{blowdown}} ++ (2.00,0);

\draw[decoration={brace,raise=5pt, amplitude = 5pt},decorate] (0.1,-6.5) -- (0.1,2.5);
\draw[decoration={brace,raise=5pt, amplitude = 5pt},decorate] ( 6.95,-6.5) -- (6.95,2.5);
\draw[decoration={brace,raise=5pt, amplitude = 5pt},decorate] ( 14.1,-6.5) -- (14.1,2.5);

\filldraw[draw=black,fill=gray!60] (0,0) node[anchor=north,color=black]{}
  -- (2,2) node[anchor=south,color=black]{}
  -- (4,2) node[anchor=south,color=black]{}
  -- (4,0) node[anchor=south,color=black]{}
  -- cycle;  
\fill[red] (2,2) circle (.1);	
\fill[black] (4,2) circle (.1);	

\filldraw[draw=black,fill=gray!60] (7,0) node[anchor=north,color=black]{}
  -- (9,2) node[anchor=south,color=black]{}
  -- (10,2) node[anchor=south,color=black]{}
  -- (11,1) node[anchor=south,color=black]{}
  -- (11,0) node[anchor=south,color=black]{}
  -- cycle;  
\fill[red] (9,2) circle (.1);	
\draw[ultra thick] (11,1) to (11,0);
\fill[black] (11,1) circle (1/25);	
\fill[black] (11,0) circle (1/25);

\filldraw[draw=black,fill=gray!60] (14,0) node[anchor=north,color=black]{}
  -- (16,2) node[anchor=south,color=black]{}
  -- (17,2) node[anchor=south,color=black]{}
  -- (19,0) node[anchor=south,color=black]{}
  -- cycle;  
\fill[red] (16,2) circle (.1);
\fill[blue] (19,0) circle (.1);	

\filldraw[draw=gray!60,fill=gray!60] (0,-2) node[anchor=north,color=black]{}
  -- (4,-1) node[anchor=south,color=black]{}
  -- (4,-3) node[anchor=south,color=black]{}
  -- cycle;  
\draw[bend left=10, fill = gray!60] (0,-2) to (4,-1);
\draw[bend left=0, fill = gray!60] (4,-1) to (4,-3);
\draw[bend left=10, fill = gray!60] (4,-3) to (0,-2);
\fill[red] (2,-2) circle (.1);	
\fill[black] (4,-1) circle (.1);

\filldraw[draw=gray!60,fill=gray!60] (7,-2) node[anchor=north,color=black]{}
  -- (10,-1) node[anchor=south,color=black]{}
  -- (11,-2) node[anchor=south,color=black]{}
  -- (11,-3) node[anchor=south,color=black]{}
  -- cycle;  
\draw[bend left=10, fill = gray!60] (7,-2) to (10,-1);
\draw[bend left=10, fill = gray!60] (10,-1) to (11,-2);
\draw[ultra thick, bend left=0, fill = gray!60] (11,-2) to (11,-3);
\draw[bend left=10, fill = gray!60] (11,-3) to (7,-2);
\fill[red] (9,-2) circle (.1);	
\fill[black] (11,-2) circle (1/25);	
\fill[black] (11,-3) circle (1/25);	

\filldraw[draw=gray!60,fill=gray!60] (14,-2) node[anchor=north,color=black]{}
  -- (17,-1) node[anchor=south,color=black]{}
  -- (19,-3) node[anchor=south,color=black]{}
  -- cycle;  
\draw[bend left=10, fill = gray!60] (14,-2)to (17,-1);
\draw[bend left=10, fill = gray!60] (17,-1) to (19,-3);
\draw[bend left=10, fill = gray!60] (19,-3) to (14,-2);
\fill[red] (16,-2) circle (.1);
\fill[blue] (19,-3) circle (.1);	

\filldraw[draw=gray!60,fill=gray!60] (0,-4) node[anchor=north,color=black]{}
  -- (4,-4) node[anchor=south,color=black]{}
  -- (4,-6) node[anchor=south,color=black]{}
  -- (2,-6) node[anchor=south,color=black]{}
  -- cycle;  
\draw[bend left=10, fill = gray!60] (0,-4) to (4,-4);
\draw[bend left=0, fill = gray!60] (4,-4) to (4,-6);
\draw[bend left=10, fill = gray!60] (4,-6) to (2,-6);
\draw[bend left=10, fill = gray!60] (2,-6) to (0,-4);
\fill[red] (2,-6) circle (.1);	
\fill[black] (4,-4) circle (.1);	

\filldraw[draw=gray!60,fill=gray!60] (7,-4) node[anchor=north,color=black]{}
  -- (10,-4) node[anchor=south,color=black]{}
  -- (11,-5) node[anchor=south,color=black]{}
  -- (11,-6) node[anchor=south,color=black]{}
  -- (9,-6) node[anchor=south,color=black]{}
  -- cycle;  
\draw[bend left=10, fill = gray!60] (7,-4) to (10,-4);
\draw[bend left=10, fill = gray!60] (10,-4) to (11,-5);
\draw[ultra thick, bend left=0, fill = gray!60] (11,-5) to (11,-6);
\draw[bend left=10, fill = gray!60] (11,-6) to (9,-6);
\draw[bend left=10, fill = gray!60] (9,-6) to (7,-4);
\fill[red] (9,-6) circle (.1);	
\fill[black] (11,-5) circle (1/25);	
\fill[black] (11,-6) circle (1/25);	

\filldraw[draw=gray!60,fill=gray!60] (14,-4) node[anchor=north,color=black]{}
  -- (17,-4) node[anchor=south,color=black]{}
  -- (19,-6) node[anchor=south,color=black]{}
  -- (16,-6) node[anchor=south,color=black]{}
  -- cycle;  
\draw[bend left=10, fill = gray!60] (14,-4) to (17,-4);
\draw[bend left=10, fill = gray!60] (17,-4) to (19,-6);
\draw[bend left=10, fill = gray!60] (19,-6) to (16,-6);
\draw[bend left=10, fill = gray!60] (16,-6) to (14,-4);
\fill[red] (16,-6) circle (.1);
\fill[blue] (19,-6) circle (.1);	
\end{tikzpicture}
\end{center}
\caption{Performing a blowup followed by a blowdown on the semitoric family on $\Hirzscaled{1}$ to produce
$\Hirzscaled{2}$, showing $t=0,1/2,1$.
We perform a blowup at the black point and then a blowdown at the bold edge.
This shows three elements of the family constructed in the proof of Theorem~\ref{thm:Wkdetailed}; the associated semitoric polygons are shown in the second row of Figure~\ref{fig:obtainingWk}.}
\label{fig:Wkfamilies}
\end{figure}

\begin{rmk}\label{rmk:taylortwist}
 Note that the systems discussed in Theorem~\ref{thm:Wkdetailed} are all obtained from the coupled angular momenta system by performing
 operations which do not affect a neighborhood of the $J$-fiber containing the focus-focus point (when $t^-<t<t^+$). Thus, for corresponding values of
 $t$ the Taylor series invariant for each of these systems is the same.
 Similarly, the integer labels of polygons obtained by the twisting index invariant are equal for corresponding polygons (those which are equal as sets in the region $\{(j_0-\varepsilon, j_0+\varepsilon\}$ where $j_0$ is the $J$-value of the focus-focus point).
\end{rmk}

\section{Explicit semitoric families on \texorpdfstring{$\Hirzscaled{1}$}{W1}}
\label{sec:W1_example}

By Theorem~\ref{thm:Wkdetailed} we know that a semitoric 1-transition family exists on $\Hirzscaled{1}$, and in this section we find completely explicit ones (but we will see that they are not the same ones abstractly constructed in Theorem~\ref{thm:Wkdetailed}); one for which the fixed points move in the preimage of the vertical wall with the parameter $t$, and one for which this is not the case and the images of two fixed points collide for $t=1/2$.

\subsection{Preliminaries on \texorpdfstring{$\Hirzscaled{1}$}{W1}}
\label{subsect:notation_W1}

As before, we see $\Hirzscaled{1}$ as the symplectic reduction at $(0,0)$ of $\C^4$ by 
\[ N(u_1,u_2,u_3,u_4) = \frac{1}{2} \left( |u_1|^2 + |u_2|^2 + |u_3|^2, |u_3|^2 + |u_4|^2 \right) - \left( \alpha + \beta, \beta \right) \]
at level zero.
Consider
\begin{equation}\label{eqn:W1_J}
 J = \frac{1}{2} |u_2|^2, \qquad R = \frac{1}{2} |u_3|^2, 
\end{equation}
so that $(J,R)$ is the standard toric system on $\Hirzscaled{1}$. The fixed set of the $\S^1$-action generated by $J$ consists of the fixed sphere $u_2 = 0$ and the fixed points $C = [0,\sqrt{2\alpha},\sqrt{2\beta},0]$ and $D = [0,\sqrt{2(\alpha + \beta)},0,\sqrt{2\beta}]$.

For $j \notin \{\alpha,\alpha + \beta \}$, the reduced space $M_j^{\text{red}} = J^{-1}(j) \slash \S^1$ is a smooth symplectic 2-sphere (cf Lemma \ref{lm:sphere}), displayed in Figure \ref{fig:W1_reduced_space}. This figure was obtained as follows: consider the $J$ and $N$ invariant functions
\begin{equation} X = \Re(\bar{u}_1 u_3 \bar{u}_4), \quad Y = \Im(\bar{u}_1 u_3 \bar{u}_4). \label{eq:X_W1}\end{equation}
Using the relations $|u_1|^2 = 2(\alpha + \beta - J - R)$, $|u_2|^2 = 2J$, $|u_3|^2 = 2 R$ and $|u_4|^2 = 2(\beta - R)$, we obtain that
\begin{equation} X^2 + Y^2 = 8 R (\beta - R) (\alpha + \beta - J - R). \label{eq:W1_red}\end{equation}
The bounds for $R$ are
$ 0 \leq R \leq \min(\beta,\alpha + \beta - J) $
because $|u_3|^2 \leq 2 \beta$ and $|u_1|^2 \geq 0$.
Equation~\eqref{eq:W1_red} with $J=j$ defines $M_j^{\text{red}}$ implicitly for all $j\in [0,\alpha+\beta]$.

Observe that for $j\notin \{0,\alpha,\alpha+\beta\}$ the set
\[ M_j^{\text{red}} \setminus \{ [u] \ | \ u_1 = 0 \ \text{or}  \  u_3 = 0 \ \text{or} \  u_4 = 0 \} \]
is $M_j^{\text{red}}$ minus two points since when $[u]$ belongs to this set $u_2 \neq 0$,  and since when $0 < j < \alpha$, necessarily $u_1 \neq 0$, and when $\alpha < j < \alpha + \beta$, necessarily $u_4 \neq 0$. We obtain cylindrical coordinates $(\rho, \theta)$ on this set as follows. Using the actions generated by $J$ and $N$, we may choose a representative $(x_1,x_2,u_3,x_4)$ of $[u_1, u_2, u_3, u_4]$ such that $x_1, x_2$ and $x_4$ are real and nonnegative. Write $u_3 = \rho \exp(i \theta)$ with $\rho > 0$, $\rho^2 < \min\left(2\beta, 2(\alpha + \beta - j) \right)$ (these bounds come from $u_1, u_3, u_4 \neq 0$) and $\theta \in [0,2\pi)$; then
\[ x_1 = \sqrt{2(\alpha + \beta - j) - \rho^2}, \quad x_2 = \sqrt{2j}, \quad x_4 = \sqrt{2\beta - \rho^2}. \]

\begin{figure}
\begin{center}
\includegraphics[scale=0.40]{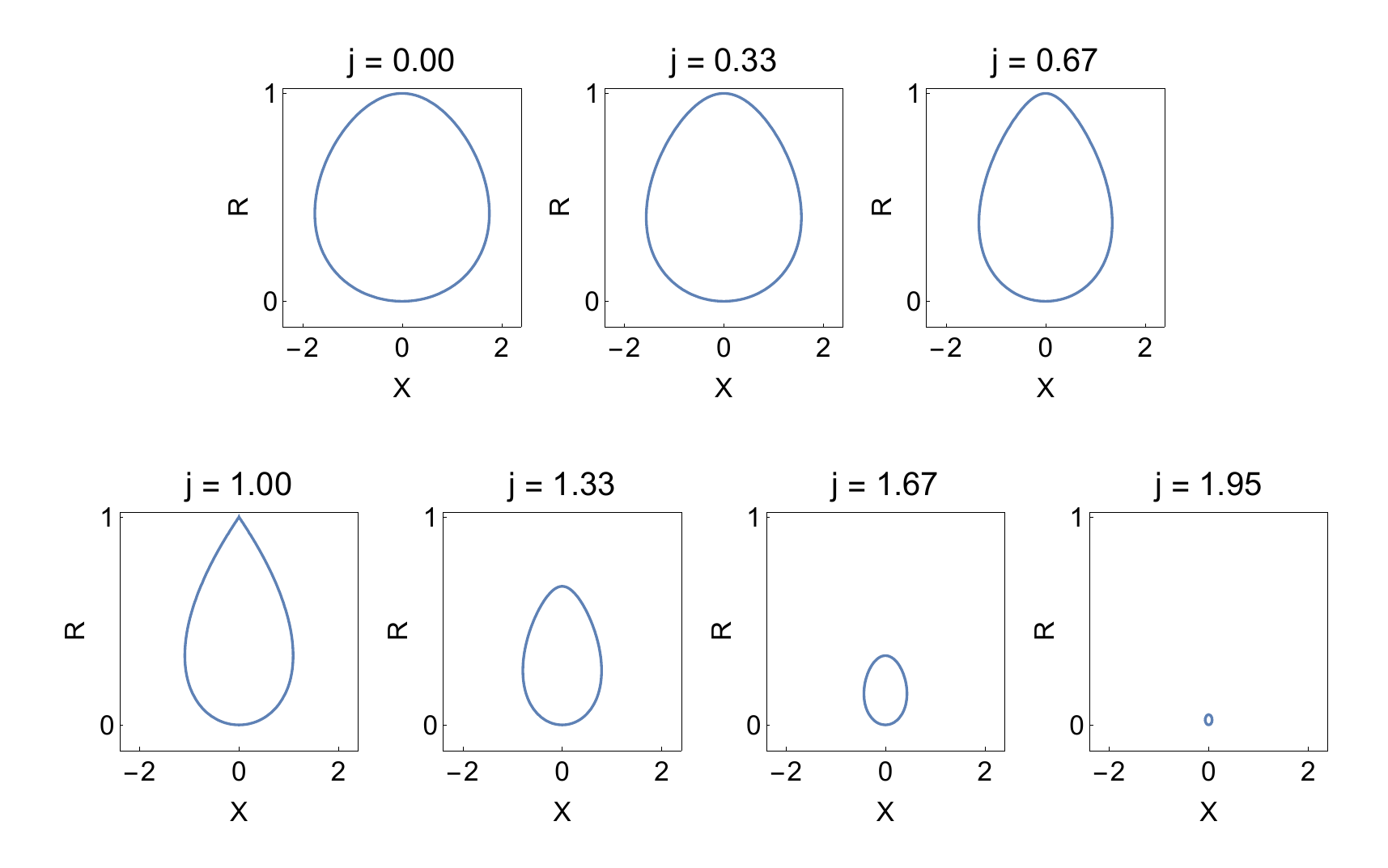}
\caption{The section $Y=0$ of the reduced space $M_j^{\text{red}}$ for $\alpha = 1 = \beta$ and several values of $j$, see Equation (\ref{eq:W1_red}). Note that $M_j^{\text{red}}$ is a surface of revolution, and that $M_j^{\text{red}}$ is a single point when $j=\alpha + \beta$. For $j=1$ it has a singular point.}
\label{fig:W1_reduced_space}
\end{center}
\end{figure}

\subsection{The system}

From Figure~\ref{fig:W1_reduced_space} and the general idea presented at the beginning of Section~\ref{sec:semitorictransfam} and Figure~\ref{fig:teardrops}, $(J,X)$ seems to be a good candidate for a semitoric system. Hence we consider, for $ \gamma \in \R$, 
\begin{equation}\label{system:W1_movingAB}
 F_t = (J,H_t),\,\,\textrm{ where }H_t = (1-t) R + t (-R + \gamma X) = (1 - 2t)R + t \gamma X,
\end{equation}
$J,R$ are as in Equation \eqref{eqn:W1_J} and $X$ is as in Equation \eqref{eq:X_W1}, so that $(J, H_0)$ is toric with image the standard Delzant polygon for $\Hirzscaled{1}$ (see Figure \ref{fig:poly_Wn}), $(J,H_1)$ is of toric type with image shown in Figure \ref{fig:moment_map_W1_noswitch} and $H_{1/2} = \frac{\gamma}{2} X$. 
\begin{rmk}
We will prove below that $(J,X)$ is indeed semitoric, and two representatives of the different classes of semitoric polygons of this system are the image of $(J,R)$ and $(J,-R)$ (note that the image of $(J,H_1)$ is similar to this semitoric polygon), but we must also include the term $\gamma X$ in the second part of the convex combination to avoid the image collapsing to a line for $t=1/2$ and produce a focus-focus singular point. Also note that the options for the function $X$ are very limited; the choice must be $J$ and $N$ invariant, low enough order to contribute to the quadratic part of $H_t$, and real-valued.
\end{rmk}

\begin{thm}\label{thm:W1_movingAB}
Under the assumption that
\begin{equation} 0 < \gamma < \frac{1}{2 \sqrt{2 \beta}}, \label{eq:hyp_gamma_W1ns}\end{equation}
$F_t$ from Equation \eqref{system:W1_movingAB} is a semitoric 1-transition family on $\Hirzscaled{1}$ with transition times $t^-, t^+$ satisfying
\[ 0 < t^- = \frac{1}{2 (1 + \gamma \sqrt{2\beta})} < t^+ = \frac{1}{2 (1 - \gamma \sqrt{2\beta})} < 1. \]
\end{thm}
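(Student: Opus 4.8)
The plan is to verify, one at a time, the conditions in Definitions~\ref{def:semitoric} and~\ref{dfn:transition_fam}. That $(\Hirzscaled{1},\omHirz{1},F_t)$ is a \family is immediate: $\Hirzscaled{1}$ is compact so $J$ is proper, $J=\tfrac12|u_2|^2$ generates the standard effective $\S^1$-action, and $(t,m)\mapsto H_t(m)=(1-2t)R+t\gamma X$ is smooth. I expect the degenerate times to be exactly $t^-$ and $t^+$ and the transition point to be $C=[0,\sqrt{2\alpha},\sqrt{2\beta},0]$, the unique fixed point of $J$ lying in the interior singular fiber $J^{-1}(\alpha)$ and projecting to the conical singularity of $M_\alpha^{\text{red}}$ visible in Figure~\ref{fig:W1_reduced_space}.

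First I would analyse the transition point. Working in the chart $U_{2,3}$ of Section~\ref{subsect:coords_W}, a short computation gives $J=\alpha+\tfrac12(x_4^2+y_4^2-x_1^2-y_1^2)$, $R=\beta-\tfrac12(x_4^2+y_4^2)$ and $X=\sqrt{2\beta}\,(x_1x_4-y_1y_4)+O(3)$, so that, writing $a=t\gamma\sqrt{2\beta}$ and $b=1-2t$, the reduced characteristic polynomial of $A_{0,1}$ at $C$ is $\chi_{0,1}(z)=z^2+(b^2-2a^2)z+a^4$, with discriminant $b^2(b^2-4a^2)$, product of roots $a^4>0$, and sum of roots $2a^2-b^2$. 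Hence when $b^2>4a^2$ the roots are real, distinct and negative, so $C$ is elliptic-elliptic; when $0<b^2<4a^2$ they are nonreal, so $C$ is focus-focus; and at $b=0$ (i.e.\ $t=\tfrac12$) one uses instead $(\nu,\mu)=(s,1)$ for small $s\neq 0$, for which the eigenvalues of $A_{s,1}$ are $\pm a\pm si$, again of focus-focus type. Now $b^2-4a^2=(1-2t)^2-8\beta\gamma^2t^2$ vanishes exactly when $1-2t=\pm2\sqrt{2\beta}\,\gamma\,t$, that is at $t=t^-=\tfrac{1}{2(1+\gamma\sqrt{2\beta})}$ and $t=t^+=\tfrac{1}{2(1-\gamma\sqrt{2\beta})}$; the hypothesis~\eqref{eq:hyp_gamma_W1ns} is precisely what makes $0<t^-<\tfrac12<t^+<1$ (indeed $t^+<1\iff\gamma<\tfrac{1}{2\sqrt{2\beta}}$), and $b^2-4a^2>0$ on $[0,t^-)\cup(t^+,1]$ while $b^2-4a^2<0$ on $(t^-,t^+)$. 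Therefore $C$ is elliptic-elliptic on $[0,t^-)$ and $(t^+,1]$, focus-focus on $(t^-,t^+)$, and by Lemma~\ref{lem:HPdegen} (applied at $t^-$, and to the time-reversed family at $t^+$) it is degenerate at $t^-$ and $t^+$.

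Next I would dispatch the remaining singular points, which is the heart of the proof. The fixed set of the $\S^1$-action is $\{u_2=0\}=J^{-1}(0)$ together with $C$ and $D=[0,\sqrt{2(\alpha+\beta)},0,\sqrt{2\beta}]$; for $D$ a computation in the chart $U_{2,4}$ parallel to the one above shows $d^2J(D)$ is negative definite and that for small $s\neq0$ the polynomial $\chi_{s,1}$ at $D$ has two distinct negative roots, so $D$ is elliptic-elliptic for every $t$. The points of $\{u_2=0\}$ and all rank one singular points are controlled by the reduced Hamiltonian: for $j\in[0,\alpha+\beta)$ (with $j=0$ recovering the fixed sphere itself) one has, in the cylindrical coordinates of Section~\ref{subsect:notation_W1},
\[
H_t^{\text{red},j}(\rho,\theta)=(1-2t)\tfrac{\rho^2}{2}+t\gamma\,\rho\cos\theta\,\sqrt{(2(\alpha+\beta-j)-\rho^2)(2\beta-\rho^2)},
\]
so $\partial_\theta H_t^{\text{red},j}=0$ forces $\sin\theta=0$ whenever $t\gamma\rho\neq0$, and near each pole of $M_j^{\text{red}}$ one checks that $H_t^{\text{red},j}$ has the form $t\gamma X+O(2)$ in suitable coordinates, so for $t>0$ neither pole is critical. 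Hence every critical point of $H_t^{\text{red},j}$ lies on the meridian $\{Y=0\}$, where $H_t^{\text{red},j}$ restricts to $h(w)=(1-2t)\tfrac{w^2}{2}+t\gamma\,w\sqrt{(P-w^2)(Q-w^2)}$ with $P=2(\alpha+\beta-j)$, $Q=2\beta$; setting $s=w^2$, the equation $h'(w)=0$ squares to
\[
(1-2t)^2\,s\,(P-s)(Q-s)=t^2\gamma^2\bigl(3s^2-2(P+Q)s+PQ\bigr)^2,
\]
a polynomial equation of degree four in $s$. What I would prove is that, for every $t\in[0,1]$ and $j\in[0,\alpha+\beta)$, this equation has exactly two roots in $(0,\min(P,Q))$ compatible with the sign of the unsquared equation, and that these are simple, except exactly when $j=\alpha$ and $t\in\{t^-,t^+\}$ (the degeneracy of $C$, where the relevant root reaches the boundary $s=2\beta$); together with the sign of $\partial_\theta^2H_t^{\text{red},j}$ on each half-meridian and Lemma~\ref{lm:nondeg_rankone_red}, this says the associated rank one points are non-degenerate elliptic-transverse and that $H_t^{\text{red},j}$ has exactly one maximum, one minimum and no saddle. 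For $j=\alpha$ the quartic factors and $h'(w)$ becomes a quadratic in $w$, so that case is clean; for general $j$ this is a finite but delicate root count — using that the quartic is positive at $s=0$ and at $s=\min(P,Q)$, negative at the smaller root of $3s^2-2(P+Q)s+PQ$, has no root in $(-\infty,0]\cup[\min(P,Q),\max(P,Q)]$, and, by analysing its derivative, no further sign change inside $(0,\min(P,Q))$. I expect this root-counting to be the main obstacle. Granting it, $F_t$ is semitoric for $t\notin\{t^-,t^+\}$, at $t^\pm$ the only degenerate point is $C$, and for $t\in(t^-,t^+)$ the unique focus-focus point is $C$, so the family is a semitoric $1$-transition family once the final bullet of Definition~\ref{dfn:transition_fam} is checked.

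Finally I would check the extremality condition, which is where the bound on $\gamma$ re-enters. The function $H_0=R$ attains its maximum over $J^{-1}(\alpha)$ only at $C$, where $R=\beta$. On $J^{-1}(\alpha)$, Equation~\eqref{eq:W1_red} gives $X^2+Y^2=8R(\beta-R)^2$, so $|X|\leq\sqrt{8R}\,(\beta-R)\leq 2\sqrt{2\beta}\,(\beta-R)$, whence for every point of $J^{-1}(\alpha)$
\[
H_1=-R+\gamma X\ \geq\ -R-2\sqrt{2\beta}\,\gamma(\beta-R)\ =\ -\beta+(\beta-R)\bigl(1-2\sqrt{2\beta}\,\gamma\bigr)\ \geq\ -\beta=H_1(C),
\]
where the last inequality uses $\beta-R\geq0$ together with $1-2\sqrt{2\beta}\,\gamma>0$ — i.e.\ precisely the upper bound in~\eqref{eq:hyp_gamma_W1ns} — with equality only at $C$. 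Thus $C$ is a maximum of $(H_0)|_{J^{-1}(\alpha)}$ and a minimum of $(H_1)|_{J^{-1}(\alpha)}$, which completes the verification of Definition~\ref{dfn:transition_fam} and shows $F_t$ is a semitoric $1$-transition family on $\Hirzscaled{1}$ with transition point $C$ and transition times $t^-,t^+$.
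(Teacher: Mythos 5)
Your treatment of the transition point $C$ is essentially the paper's (same chart $U_{2,3}$, same reduced characteristic polynomial, and your perturbation $\chi_{s,1}$ at $t=\tfrac12$ is a correct variant of the linear combination used in Lemma~\ref{lem:W1C}), and your verification of the last bullet of Definition~\ref{dfn:transition_fam} via the bound $|X|\leq 2\sqrt{2\beta}(\beta-R)$ on $J^{-1}(\alpha)$ is correct and is a nice explicit addition. But there is a genuine gap in how you handle the fixed sphere $J^{-1}(0)$. First, the rank zero points there (the points $A_t$, $B_t$ of Lemma~\ref{lm:fixed_points_W1ns}, which move with $t$): showing that $H_t^{\text{red},0}=H_t|_{J^{-1}(0)}$ has a non-degenerate maximum and minimum is a statement about a $2\times 2$ Hessian on the sphere, and it does not establish that these points are non-degenerate of elliptic-elliptic type as rank zero points of the four-dimensional system, which is what Definition~\ref{def:semitoric} requires; the cross terms of $d^2H_t$ between tangential and normal directions, and the degenerate direction of $d^2J$, must be controlled. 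This is exactly the content of the paper's Lemma~\ref{lem:W1ABD} for $A_t,B_t$, whose proof needs the full $4\times 4$ computation together with a delicate sign analysis of $b(x_3^{\pm}(t))c(x_3^{\pm}(t))$ that uses the defining equation \eqref{eq:first_eq_x3} of $x_3^{\pm}(t)$; nothing in your outline supplies this (a Krein-type perturbation argument with $\nu d^2J+\mu d^2H_t$ could also work, but you would have to make it).

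Second, and more seriously, every point of $J^{-1}(0)$ satisfies $dJ=0$, so the points of the fixed sphere where $dH_t\neq 0$ are rank one singular points of $F_t$, and your scheme never checks their non-degeneracy: they are not critical points of $H_t^{\text{red},0}$, so the reduced-Hamiltonian analysis does not see them, and Lemma~\ref{lm:nondeg_rankone_red} cannot be invoked there because it requires the point not to be a singular point of $J$. The paper treats these separately in Lemma~\ref{lem:W1_fixedsphere}, working directly with Definition~\ref{dfn:nondeg_rankone} and showing that $d^2J$ restricted to $L^{\perp}/L$ has eigenvalues $\pm i$. For the rank one points off the fixed sphere your plan (critical points have $\sin\theta=0$, then a root count of the quartic plus a Morse-theoretic exclusion of saddles) is plausible but heavier than needed: the paper's Lemma~\ref{lm:rankone_W1} never counts critical points, it proves directly that at any critical point the Hessian of $H_t^{\text{red},j}$ is definite, via the inequality $f(\rho)<0$ obtained by viewing $f$ as a quadratic in $j$ with negative discriminant; this bypasses the root-counting step you identify as the main obstacle.
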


The rest of this section is devoted to proving this theorem; we check that the rank zero points are non-degenerate of the correct type (except for the transition point which is degenerate for $t=t^{\pm}$) in Lemmas~\ref{lem:W1C} and~\ref{lem:W1ABD}, and we check that the rank one singular points are non-degenerate of elliptic-transverse type in Lemmas~\ref{lm:rankone_W1} and~\ref{lem:W1_fixedsphere}. In Figure \ref{fig:moment_map_W1_noswitch}, we display the image of the momentum map for several values of $t \in [0,1]$.

\begin{figure}%[h]
\begin{center}
\includegraphics[scale=0.45]{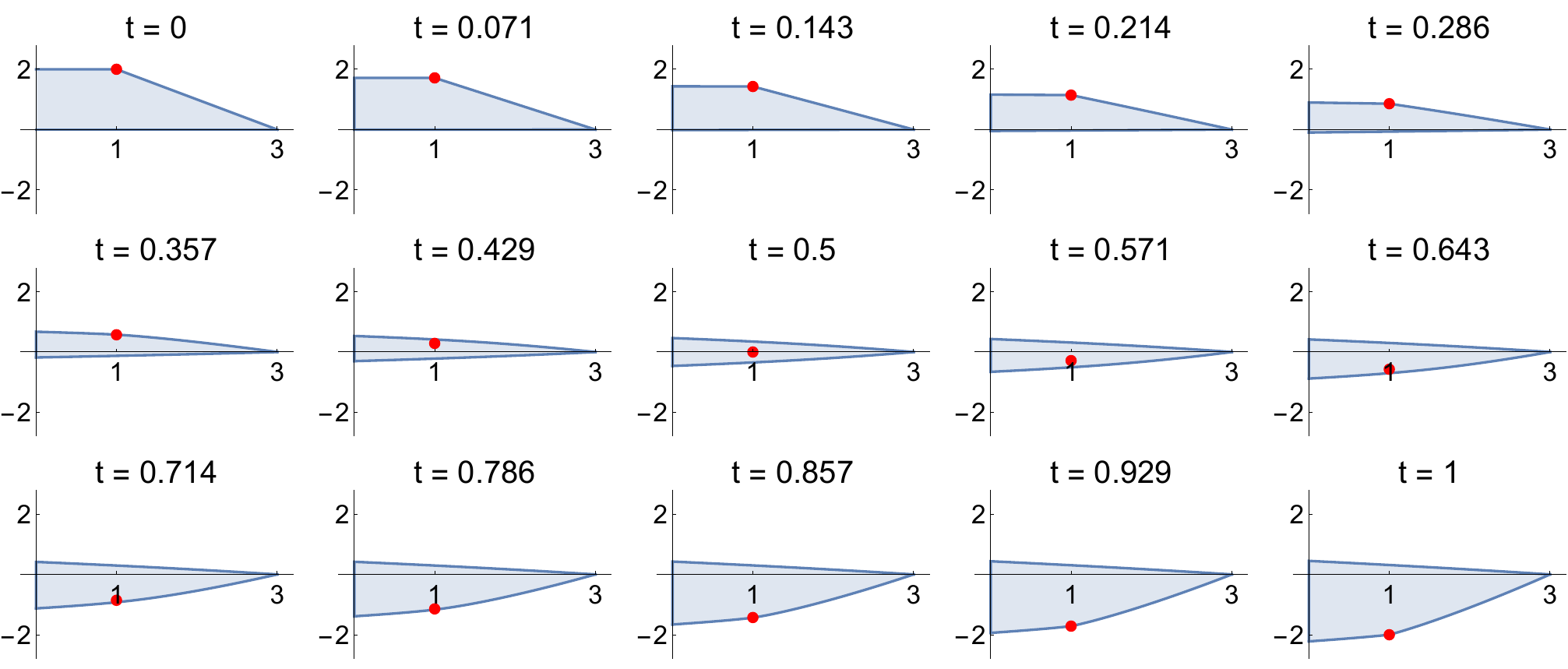}
\caption{The image $F_t(\Hirzscaled{1})$ for $F_t$ as in Equation~\eqref{system:W1_movingAB}, $\alpha = 1$, $\beta = 2$, and $\gamma = \frac{9}{20 \sqrt{2 \beta}}$. Note that in this case, $t^- = \frac{10}{29}$ and $t^+ = \frac{10}{11}$.}
\label{fig:moment_map_W1_noswitch}
\end{center}
\end{figure}

\subsection{Fixed points}

\begin{lm}
\label{lm:fixed_points_W1ns}
The fixed points of $F_t$ from Equation~\eqref{system:W1_movingAB} are the points $C = [0,\sqrt{2\alpha},\sqrt{2\beta},0]$, $D = [0,\sqrt{2(\alpha + \beta)},0,\sqrt{2\beta}]$ and
\[ \begin{cases} A_t = \left[ \sqrt{2(\alpha + \beta) - x_3^-(t)^2},0,x_3^-(t),\sqrt{2\beta - x_3^-(t)^2}\right],\\[3mm] B_t = \left[ \sqrt{2(\alpha + \beta) - x_3^+(t)^2},0,x_3^+(t),\sqrt{2\beta - x_3^+(t)^2}\right],\end{cases}\]
where $x_3^{\pm}(t)$ are the only two real solutions of the equation
\[ (1-2t) x_3 \sqrt{(2\beta - x_3^2)(2(\alpha + \beta) - x_3^2 )} + t \gamma \left( 3 x_3^4 - 4 (\alpha + 2 \beta) x_3^2 + 4 \beta (\alpha + \beta)  \right) = 0 \]
satisfying $-\sqrt{2\beta} < x_3^-(t) < x_3^+(t) < \sqrt{2\beta}$. Moreover, 
\[J(A_t) = 0, \quad J(B_t) = 0, \quad J(C) = \alpha, \quad J(D) = \alpha + \beta, \quad H_t(C) = (1-2t) \beta, \quad H_t(D) = 0.\]
\end{lm}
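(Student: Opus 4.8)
The key observation is that a rank zero point of $F_t=(J,H_t)$ must in particular be a fixed point of the $\S^1$-action generated by $J$, and by the description recalled at the start of Section~\ref{subsect:notation_W1} these form exactly the two isolated points $C,D$ together with the fixed sphere $S=\{u_2=0\}=J^{-1}(0)$. So the plan is to determine, for each of these three loci, where the extra equation $dH_t=0$ holds, using the charts and explicit expressions from Sections~\ref{subsect:coords_W} and~\ref{subsect:notation_W1}. The points $C$ and $D$ are the two fixed points of the toric system $(J,R)$ of Equation~\eqref{eqn:W1_J} not lying on $S$ (they are the vertices $(\alpha,\beta)$ and $(\alpha+\beta,0)$ of the standard Delzant polygon of $\Hirzscaled{1}$), so $dR$ vanishes at both; moreover at each of $C,D$ two of $u_1,u_3,u_4$ vanish, so $X=\Re(\bar u_1u_3\bar u_4)$ vanishes to second order there and $dX$ vanishes as well. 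Hence $dH_t=(1-2t)\,dR+t\gamma\,dX=0$ at $C$ and $D$ for every $t$, and evaluating the functions gives $J(C)=\alpha$, $H_t(C)=(1-2t)\beta$ (since $R=\beta,X=0$ there), $J(D)=\alpha+\beta$, $H_t(D)=0$ (since $R=X=0$ there).

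The substantive case is the fixed sphere $S$. Since $S$ is the extremal fibre $J^{-1}(0)$ and the $\S^1$-action is trivial on it, $dJ\equiv0$ on $S$, so a point of $S$ is a rank zero point of $F_t$ precisely when it is a critical point of $H_t$ on $\Hirzscaled{1}$. I would work in the chart $U_{1,4}$, which covers $S$ minus its two ``poles'' $[\sqrt{2(\alpha+\beta)},0,0,\sqrt{2\beta}]$ (where $u_3=0$) and $[\sqrt{2\alpha},0,\sqrt{2\beta},0]$ (where $u_4=0$). On $S$ one has $u_2=x_2+iy_2=0$ and $H_t=\tfrac12(1-2t)(x_3^2+y_3^2)+t\gamma\,x_3\sqrt{(2(\alpha+\beta)-(x_3^2+y_3^2))(2\beta-(x_3^2+y_3^2))}$ with $u_3=x_3+iy_3$; a direct computation shows $\partial H_t/\partial x_2=\partial H_t/\partial y_2=0$ identically on $S$ (only $x_1$ depends on $u_2$, and that dependence is through $x_2^2+y_2^2$), so it remains to find the critical points of $H_t|_S$. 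Writing $u_3=re^{i\psi}$ one gets $\partial_\psi(H_t|_S)=-t\gamma\,r\sin\psi\,\sqrt{(2(\alpha+\beta)-r^2)(2\beta-r^2)}$, which for $t>0$ forces $\sin\psi=0$, i.e. $u_3=x_3$ real; then $\partial H_t/\partial x_3=0$ becomes, after multiplying by $\sqrt{P(x_3)}$ with $P(x_3)=(2\beta-x_3^2)(2(\alpha+\beta)-x_3^2)$ and using $2P(x_3)+x_3P'(x_3)=2\bigl(3x_3^4-4(\alpha+2\beta)x_3^2+4\beta(\alpha+\beta)\bigr)$, exactly the equation in the statement (no extraneous factor is introduced). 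Finally, at the two poles $U_{1,4}$ must be replaced by $U_{1,3}$ resp. the original chart, and there $dX$ picks up a nonzero first-order term in $\Re u_4$ (resp. $\Re u_3$), so these poles are rank one, not rank zero, points for $t>0$; they are the two toric vertices of $S$ and account for the fixed points on $S$ in the toric cases $t\in\{0,1\}$, which are handled separately.

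It then remains to count the roots in $(-\sqrt{2\beta},\sqrt{2\beta})$ of
\[
\phi(x_3)=(1-2t)x_3\sqrt{P(x_3)}+t\gamma\bigl(3x_3^4-4(\alpha+2\beta)x_3^2+4\beta(\alpha+\beta)\bigr),\qquad t\in(0,1).
\]
Writing $g(x_3)=3x_3^4-4(\alpha+2\beta)x_3^2+4\beta(\alpha+\beta)$, one checks $\phi(\pm\sqrt{2\beta})=t\gamma\,g(\pm\sqrt{2\beta})=-4t\gamma\alpha\beta<0$ and $\phi(0)=t\gamma\,g(0)=4t\gamma\beta(\alpha+\beta)>0$, so there are at least two roots; to get exactly two I would run a sign analysis of $\phi$ and of $\phi'(x_3)=(1-2t)g(x_3)/\sqrt{P(x_3)}+t\gamma\,g'(x_3)$ on the subintervals of $(-\sqrt{2\beta},\sqrt{2\beta})$ cut out by the zeros $\pm\sqrt{w_-}$ of $g$, where $w_-\in(0,2\beta)$ is the smaller root of the quadratic $w\mapsto 3w^2-4(\alpha+2\beta)w+4\beta(\alpha+\beta)$. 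On the subintervals where the two terms of $\phi$ have the same sign there is no root; on exactly one subinterval on each side of $0$ the function $\phi$ changes sign once, and there one shows it is monotone — in the single remaining sub-case this monotonicity is precisely where the hypothesis $0<\gamma<\tfrac1{2\sqrt{2\beta}}$ is used, to keep $t\gamma\,g'$ from overcoming the other term. This bookkeeping — establishing the exact number of roots and tracking exactly where the bound on $\gamma$ enters — is the one genuinely fiddly (though completely elementary) step; everything else is a routine substitution. Carrying it out produces two distinct roots $x_3^-(t)<0<x_3^+(t)$ strictly inside $(-\sqrt{2\beta},\sqrt{2\beta})$, hence the points $A_t,B_t$ of Lemma~\ref{lm:fixed_points_W1ns} with $J(A_t)=J(B_t)=0$, completing the classification.
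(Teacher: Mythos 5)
Your overall route is the same as the paper's: fixed points must lie in the fixed set of the $\S^1$-action, i.e.\ at $C$, $D$, or on the sphere $J^{-1}(0)$; at $C$ and $D$ both $dR$ and $dX$ vanish (two of $u_1,u_3,u_4$ vanish there), giving the stated values of $J$ and $H_t$; on the sphere one works in the chart $U_{1,4}$, forces $y_3=0$, and multiplies the remaining equation by $\sqrt{p(x_3)}$, where $p(x)=(2\beta-x^2)(2(\alpha+\beta)-x^2)$, to get $(1-2t)x_3\sqrt{p(x_3)}+t\gamma\,q(x_3)=0$ with $q(x)=3x^4-4(\alpha+2\beta)x^2+4\beta(\alpha+\beta)$. (Minor slip: the point with $u_3=0$ does lie in $U_{1,4}$; only the $u_4=0$ point requires another chart. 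Your check that neither is a fixed point for $t>0$ is nevertheless correct, and the paper simply treats $t=0$ by toric theory.)

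The gap is the root count, which is the substance of the lemma and which you defer as ``fiddly bookkeeping''. Two problems. First, the proposed mechanism --- monotonicity of $\phi(x)=(1-2t)x\sqrt{p(x)}+t\gamma q(x)$ on the subinterval, cut out by the zeros $\pm\sqrt{X^-}$ of $q$ (where $X^-$ is the root of $3X^2-4(\alpha+2\beta)X+4\beta(\alpha+\beta)$ lying in $(0,2\beta)$), on which $\phi$ changes sign --- is not established and is not obviously true: there $\phi'(x)=(1-2t)q(x)/\sqrt{p(x)}+t\gamma q'(x)$ has competing terms ($q<0$ while $q'$ changes sign on that subinterval when $\beta>\alpha$), and near $t=1/2$ the first term is arbitrarily small, so no fixed bound on $\gamma$ can make $\phi'$ single-signed uniformly in $t$; moreover, for $t>1/2$ the sign pattern flips and the positive root actually lies in $(0,\sqrt{X^-})$, where your ``same sign $\Rightarrow$ no root'' dichotomy does not apply, so the case analysis as described is incomplete. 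Second, attributing the count to the hypothesis \eqref{eq:hyp_gamma_W1ns} is a misreading: the paper's proof of this lemma uses no restriction on $\gamma>0$ at all; the bound on $\gamma$ enters only later (Lemma~\ref{lem:W1C}) to place the transition times $t^{\pm}$ in $(0,1)$. The paper instead sets $X=x_3^2$, writes $\tilde q(X)=3X^2-4(\alpha+2\beta)X+4\beta(\alpha+\beta)$, and studies the quartic $t^2\gamma^2\tilde q(X)^2-(1-2t)^2X(2\beta-X)(2(\alpha+\beta)-X)$: its derivative factors as $\tilde q(X)$ times an affine function of $X$, a sign analysis gives exactly one root in $(0,X^-)$ and one in $(X^-,2\beta)$, and since $\phi$ cannot vanish at both $x$ and $-x$ (its $q$-term is even, the other odd), this together with $\phi(0)>0>\phi(\pm\sqrt{2\beta})$ yields exactly two roots with the stated signs, for every $\gamma>0$. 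Either import this squaring argument, or, if you want to stay with $\phi$ directly, prove a genuine uniqueness statement, e.g.\ that $\phi'(x_0)$ has a fixed sign at every zero $x_0$ on the relevant side (using $(1-2t)=-t\gamma q(x_0)/(x_0\sqrt{p(x_0)})$ this reduces to a polynomial inequality on $(0,2\beta)$); as written, ``exactly two solutions'' is asserted, not proved.
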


Note that the points $A_t$ and $B_t$ move on the fixed sphere of $J$ as $t$ varies. Note also that for $t = 1/2$, we can compute explicitly
\[ x_3^{\pm}(t) = \pm \sqrt{\frac{2}{3} \left( \alpha + 2 \beta - \sqrt{\alpha^2 + \alpha \beta + \beta^2} \right) }.  \]

\begin{proof}
The case $t = 0$ comes from the theory of toric systems, so we assume that $t \neq 0$. The fixed points for $F_t$ must lie in the fixed set of $J$, which means they can only be $C, D$ or satisfy $u_2 = 0$. We already know that the tangent maps of $R$ at $C$ and $D$ vanish because these are fixed points of the standard toric system. But since $C$ corresponds to $u_1 = 0 = u_4$ and $D$ corresponds to $u_1 = 0 = u_3$, a simple computation shows that the tangent maps of $X$ at these points also vanish. Thus the tangent maps of $H$ (a linear combination of $X$ and $R$) at $C$ and $D$ vanish. 

Since $t \neq 0$, the fixed points on $J^{-1}(0)$ belong to the open set $U_{1,4}$ defined in Section \ref{subsect:coords_W}. Using the local coordinates $x_2, y_2, x_3, y_3$ introduced in this section, and the relation
\[ x_1 = \sqrt{2(\alpha + \beta) - (x_3^2 + y_3^2) - (x_2^2 + y_2^2)}, \quad x_4 = \sqrt{2\beta - (x_3^2 + y_3^2)}, \]
we find that $J = \frac{x_2^2 + y_2^2}{2}$ and
\[ H_t = \frac{(1-2t)(x_3^2 + y_3^2)}{2} + \gamma t x_3 \sqrt{(2\beta - (x_3^2 + y_3^2))(2(\alpha + \beta) - (x_3^2 + y_3^2) - (x_2^2 + y_2^2))}. \]
Thus $J = 0$ implies $(x_2, y_2) = (0,0)$, and we look for $(x_3, y_3)$ such that $dH_t(0,0,x_3,y_3) = 0$. This amounts to
\[ \begin{cases} (1-2t) x_3 + t \gamma \sqrt{(2\beta - (x_3^2 + y_3^2))(2(\alpha + \beta) - (x_3^2 + y_3^2))} +  \frac{2 t \gamma x_3^2 (x_3^2 + y_3^2 - \alpha - 2 \beta)}{\sqrt{(2\beta - (x_3^2 + y_3^2))(2(\alpha + \beta) - (x_3^2 + y_3^2) )}}  = 0,  \\ (1-2t) y_3 + \frac{2 t \gamma x_3 y_3 (x_3^2 + y_3^2 - \alpha - 2 \beta)}{\sqrt{(2\beta - (x_3^2 + y_3^2))(2(\alpha + \beta) - (x_3^2 + y_3^2) )}}  = 0. \end{cases} \]
If $y_3 \neq 0$, the second equation yields
\[ \frac{2 t \gamma x_3 (x_3^2 + y_3^2 - \alpha - 2 \beta)}{\sqrt{(2\beta - (x_3^2 + y_3^2))(2(\alpha + \beta) - (x_3^2 + y_3^2) )}}  = 2t-1, \]
and substituting this in the first equation, we obtain that 
\[ t \gamma \sqrt{(2\beta - (x_3^2 + y_3^2))(2(\alpha + \beta) - (x_3^2 + y_3^2))} = 0, \]
which is impossible since $x_1 \neq 0$ and $x_4 \neq 0$. Therefore, $y_3 = 0$ and the first equation becomes
\begin{equation} (1-2t) x_3 + t \gamma \sqrt{(2\beta - x_3^2)(2(\alpha + \beta) - x_3^2)} +  \frac{2 t \gamma x_3^2 (x_3^2 - \alpha - 2 \beta)}{\sqrt{(2\beta - x_3^2)(2(\alpha + \beta) - x_3^2 )}}  = 0, \label{eq:first_eq_x3} \end{equation}
or equivalently
\[ (1-2t) x_3 \sqrt{(2\beta - x_3^2)(2(\alpha + \beta) - x_3^2 )} + t \gamma \left( 3 x_3^4 - 4 (\alpha + 2 \beta) x_3^2 + 4 \beta (\alpha + \beta)  \right) = 0. \]
We claim that this equation has exactly two real solutions $x_3^+(t), x_3^-(t)$, satisfying $-\sqrt{2\beta} < x_3^-(t) < 0 < x_3^+(t) < \sqrt{2\beta}$; these solutions are displayed in Figure \ref{fig:W1_x3plusminus}.
\begin{figure}
\begin{center}
\includegraphics[scale=0.4]{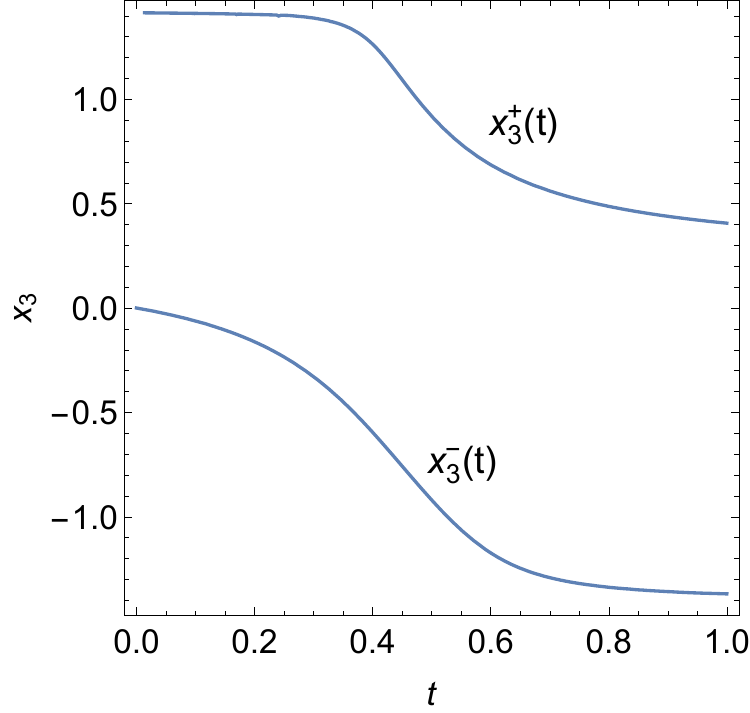}
\caption{The graphs of $x_3^-$ and $x_3^+$ as functions of $t$ for $\alpha = \beta = 1$ and $\gamma = \frac{1}{4 \sqrt{2\beta}}$.}
\label{fig:W1_x3plusminus}
\end{center}
\end{figure}
To prove the claim, let 
\[ f(x) = (1-2t) x \sqrt{(2\beta - x^2)(2(\alpha + \beta) - x^2 )} + t \gamma \left( 3 x^4 - 4 (\alpha + 2 \beta) x^2 + 4 \beta (\alpha + \beta)  \right); \]
we are looking for solutions of $f = 0$ in $(-\sqrt{2\beta},\sqrt{2\beta})$, whose squares are in particular roots of the polynomial
\[ P(X) = t^2 \gamma^2 \left( 3 X^2 - 4 (\alpha + 2 \beta) X + 4 \beta (\alpha + \beta)  \right)^2 - (1-2t)^2 X (2\beta - X)(2(\alpha + \beta) - X ) \]
in $(0,2\beta)$. We have
\[  P'(X)= (3 X^2 - 4 (\alpha +2\beta) X + 4 \beta(\alpha + \beta)) (12 \gamma^2 t^2 X - (1-2t)^2 - 8 \gamma^2 t^2(\alpha +2 \beta)) \]
which has three roots, only one of which depends on $t$. They are
\[ X^\pm = \frac{2}{3}\left(\alpha + 2\beta \pm \sqrt{\alpha^2+\alpha \beta +\beta^2}\right),\,\,X_t = \frac{(1-2t)^2+8(\alpha+2\beta)\gamma^2 t^2}{12\gamma^2 t^2}. \]
Note that $X^+ > 2 \beta$ since $\sqrt{\alpha^2+\alpha \beta +\beta^2} > \beta$, and that $0 < X^- < 2 \beta$ since we have $\alpha < \sqrt{\alpha^2+\alpha \beta +\beta^2} < \alpha + 2 \beta$. The third root $X_t$ may or may not lie in $(0,2\beta)$ depending on the parameters, but in any case $P'$ is negative on the intervals $(0,X^-)$ and $(X_t,2\beta)$ and positive on $(X^-,X_t)$; furthermore, one readily checks that $P(0) > 0, P(X^-) < 0$ and $P(2\beta) > 0$, so $P$ has exactly one zero $X_1$ in $(0,X^-)$ and one zero $X_2$ in $(X^-,2\beta)$. This gives four possible solutions $\pm \sqrt{X_1}, \pm \sqrt{X_2}$ to the equation $f(x) = 0$, but we can discard two of them since the form of $f$ implies that it can never have two zeros of the form $\pm x$, since the second term in $f$ takes the same value regardless of the choice of $x$ or $-x$ while its first term changes sign. So we find exactly two solutions $x_3^-(t)$ and $x_3^+(t)$ of $f = 0$, with the correct sign because $f(\pm \sqrt{2\beta}) = -4t\gamma \alpha \beta < 0$ and $f(0) = 4 t \gamma \beta (\alpha + \beta) > 0$.
\end{proof}

We see that in this system the fixed points on the vertical wall depend on $t$. In Section~\ref{subsect:W1_switch}, we will exhibit another system for which this is not the case.

\begin{lm}
\label{lem:W1C}
The point $C$ is elliptic-elliptic when $0 \leq t < t^-$ and $t^+ < t \leq 1$, and focus-focus when $t^- < t < t^+$, where  
\[ t^- = \frac{1}{2 (1 + \gamma \sqrt{2\beta})}, \qquad t^+ = \frac{1}{2 (1 - \gamma \sqrt{2\beta})}. \]
\end{lm}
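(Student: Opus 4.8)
The plan is to apply the classification of rank-zero singular points recalled in Section~\ref{subsect:sing_dim4}. Since $C=[0,\sqrt{2\alpha},\sqrt{2\beta},0]$ satisfies $u_1=u_4=0$, it lies in the chart $U_{2,3}$ of Section~\ref{subsect:coords_W}; there the coordinates $(x_1,y_1,x_4,y_4)$ place $C$ at the origin and make the symplectic form equal to $dx_1\wedge dy_1+dx_4\wedge dy_4$. First I would expand $J=\tfrac12|u_2|^2$, $R=\tfrac12|u_3|^2$ and $X=\Re(\bar u_1 u_3\bar u_4)$ in these coordinates: one gets $J=\alpha+\tfrac12(x_4^2+y_4^2)-\tfrac12(x_1^2+y_1^2)$ and $R=\beta-\tfrac12(x_4^2+y_4^2)$ exactly, while $X=\sqrt{2\beta}\,(x_1x_4-y_1y_4)+O(4)$. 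Hence, writing $a=t\gamma\sqrt{2\beta}$ and $b=1-2t$, the Hessians at $C$ are
\[
 d^2J(C)=\mathrm{diag}(-1,-1,1,1),\qquad
 d^2H_t(C)=\begin{pmatrix}0&0&a&0\\0&0&0&-a\\a&0&-b&0\\0&-a&0&-b\end{pmatrix}
\]
in the basis $(\partial_{x_1},\partial_{y_1},\partial_{x_4},\partial_{y_4})$.

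Next I would compute, with $\nu=0$ and $\mu=1$, the matrix $A_{0,1}=\Omega_C^{-1}d^2H_t(C)$ and its characteristic polynomial; a short determinant computation gives $X^4-(2a^2-b^2)X^2+a^4$, so the reduced characteristic polynomial is $\chi_t(Y)=Y^2-(2a^2-b^2)Y+a^4$, with discriminant $b^2(b^2-4a^2)$. The algebraic heart of the argument is the factorization
\[
 b^2-4a^2=(1-2t)^2-8\beta\gamma^2t^2=(4-c^2)(t^--t)(t^+-t),\qquad c=2\gamma\sqrt{2\beta},
\]
in which $t^\pm=\tfrac12(1\mp c)^{-1}$ are exactly the values in the statement; hypothesis~\eqref{eq:hyp_gamma_W1ns} is precisely what guarantees $c<1$, hence $4-c^2>0$, $t^-<\tfrac12<t^+$, and $t^+<1$. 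Then the criterion of Section~\ref{subsect:sing_dim4} classifies $C$: for $t\in[0,t^-)\cup(t^+,1]$ with $t\neq0$ one has $b\neq0$ and $b^2-4a^2>0$, so $\chi_t$ has two distinct real roots, of product $a^4>0$ and of sum $2a^2-b^2<0$ (as $b^2>4a^2>2a^2$), hence both negative — so $C$ is elliptic-elliptic; for $t\in(t^-,t^+)$ with $t\neq\tfrac12$ one has $b^2(b^2-4a^2)<0$, so $\chi_t$ has a pair of non-real conjugate roots with nonzero imaginary part — so $C$ is focus-focus.

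It remains to handle the two parameter values $t=0$ and $t=\tfrac12$, at which the choice $\nu=0$ produces a zero, respectively a double, root and so fails to detect the type; this is the only genuinely delicate point. At $t=0$ the system is $(J,R)$, the standard toric system on $W_1(\alpha,\beta)$, and $C$ maps to the vertex $(\alpha,\beta)$ of the Delzant polygon of Figure~\ref{fig:poly_Wn} (with $n=1$), so $C$ is elliptic-elliptic (equivalently, recomputing with $(\nu,\mu)=(2,1)$ gives reduced characteristic polynomial $(Y+1)(Y+4)$). At $t=\tfrac12$ (so $b=0$) I would recompute with $\mu=1$ and some $\nu\neq0$: reordering the coordinates as $(x_1,x_4,y_1,y_4)$ brings $\Omega_C^{-1}(\nu\,d^2J+d^2H_{1/2})$ into the block form $\left(\begin{smallmatrix}0&P\\Q&0\end{smallmatrix}\right)$ with $PQ$ having eigenvalues $(a\pm i\nu)^2$, which are non-real (and distinct and nonzero) for $a\nu\neq0$, so $C$ is focus-focus there too. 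Once these special cases are in place, the three ranges in the statement are covered and the claimed formulas for $t^\pm$ follow from the factorization above.
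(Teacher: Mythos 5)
Your proof is correct and takes essentially the same route as the paper: the same chart $U_{2,3}$ and Taylor expansion, the same Hessian and reduced characteristic polynomial (your $Y^2-(2a^2-b^2)Y+a^4$ is exactly the paper's $P$), the same discriminant sign analysis with the factorization locating $t^\pm$, the toric argument at $t=0$, and a linear combination with $d^2J$ to settle $t=\tfrac12$ (the paper just picks the specific combination $\nu=1$, $\mu=2/(\gamma\sqrt{2\beta})$, getting eigenvalues $\pm 1\pm i$, while you run the block computation for general $\nu\neq 0$). One cosmetic slip: with your convention $c=2\gamma\sqrt{2\beta}$ the transition times are $t^\pm=(2\mp c)^{-1}$, not $\tfrac12(1\mp c)^{-1}$ (that formula would go with $c=\gamma\sqrt{2\beta}$, the paper's convention); since your factorization and the inequalities $c<1$, $t^-<\tfrac12<t^+<1$ use the correct values, nothing in the argument breaks.
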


Note that by Lemma \ref{lem:HPdegen}, it is degenerate for $t = t^-$ and $t = t^+$. Note also that because of assumption (\ref{eq:hyp_gamma_W1ns}), we have 
\[ 0 < t^- < \frac{1}{2} < t^+ < 1. \]

\begin{proof}
We do not treat the case $t=0$ since in this case the system is toric so we already know that $C$ is elliptic-elliptic. The point $C$ corresponds to $u_1 = 0 = u_4$, so we work with the local coordinates $x_1, y_1, x_4, y_4$ in the trivialization open set $U_{2,3}$, see Section \ref{subsect:coords_W}. In these coordinates, $C$ corresponds to $(0,0,0,0)$, and 
\[ H_t = \frac{(1-2t)(2\beta - (x_4^2 + y_4^2))}{2} + t \gamma (x_1 x_4 - y_1 y_4) \sqrt{2\beta - (x_4^2 + y_4^2)}. \]
The Taylor expansion of $H_t$ at $(0,0,0,0)$ reads
\[ H_t = (1-2t)\beta + \frac{2t-1}{2} (x_4^2 + y_4^2) +  t \gamma \sqrt{2 \beta} (x_1 x_4 - y_1 y_4) + O(3), \]
where $O(3)$ stands for $O(\|(x_1, y_1, x_4, y_4)\|^3)$.
Hence the Hessian of $H_t$ at $C$ in the basis corresponding to $x_1,y_1,x_4,y_4$ satisfies
\[ d^2 H_t(C) = \begin{pmatrix} 0 & 0 & t \gamma \sqrt{2 \beta} & 0 \\ 0 & 0 & 0 & -t \gamma \sqrt{2 \beta} \\ t \gamma \sqrt{2 \beta} & 0 & 2t-1 & 0 \\ 0 & -t \gamma \sqrt{2 \beta} & 0 & 2t-1  \end{pmatrix}. \]
In these local coordinates, the symplectic form at $(0,0,0,0)$ reads $\omega = dx_1 \wedge dy_1 + dx_4 \wedge dy_4$, hence the matrix $\Omega_C$ of this symplectic form in the aforementioned basis satisfies
\[ \Omega_C = \begin{pmatrix} 0 & 1 & 0 & 0 \\ -1 & 0 & 0 & 0 \\ 0 & 0 & 0 & 1 \\ 0 & 0 & -1 & 0 \end{pmatrix}, \quad \Omega_C^{-1} = \begin{pmatrix} 0 & -1 & 0 & 0 \\ 1 & 0 & 0 & 0 \\ 0 & 0 & 0 & -1 \\ 0 & 0 & 1 & 0 \end{pmatrix}, \]
and thus
\[ \Omega_C^{-1} d^2 H_t(C) = \begin{pmatrix} 0 & 0 & 0 & t\gamma\sqrt{2\beta} \\ 0 & 0 & t\gamma\sqrt{2\beta} & 0 \\ 0 & t\gamma\sqrt{2\beta} & 0 & 1-2t \\ t\gamma\sqrt{2\beta} & 0 & 2t-1 & 0 \end{pmatrix}. \]
First, we assume that $t \neq 1/2$. The reduced characteristic polynomial of $\Omega^{-1} d^2 H_t(C)$ (see Section \ref{subsect:sing_dim4} for its definition) is
\[ P = X^2 + \left( 4( 1 - \beta \gamma^2) t^2 - 4 t + 1 \right) X + 4 t^4 \gamma^4 \beta^2, \]
with discriminant $\Delta = -(2t-1)^2 \left( 4 (2 \beta \gamma^2 - 1) t^2 + 4 t - 1  \right)$. Let $c = \gamma \sqrt{2 \beta}$, so that 
\[ \Delta = -(2t-1)^2 \left( 4 (c^2 - 1) t^2 + 4 t - 1 \right) = - 4 (c^2 - 1) (2t-1)^2 \left(t - \frac{1}{2(1-c)} \right) \left(t - \frac{1}{2(1+c)} \right) \]
(note that by Equation (\ref{eq:hyp_gamma_W1ns}), $0 < c < 1/2$ so in particular $c^2 - 1 < 0$); this reads
\[ \Delta =4 (1 - c^2) (2t-1)^2 \left(t - t^+ \right) \left(t - t^- \right). \]
If $t^- < t < t^+$, then $\Delta < 0$ and $P$ has two complex roots with nonzero imaginary part, so $C$ is non-degenerate of focus-focus type. If $0 < t < t^-$ or $t^+ < t \leq 1$, $\Delta > 0$ and $P$ has two real roots
\[ \lambda^{\pm} = \frac{-\left( 4( 1 - \beta \gamma^2) t^2 - 4 t + 1 \right) \pm \sqrt{\Delta}}{2} = \frac{-\left( 4( 1 - \frac{c^2}{2}) t^2 - 4 t + 1 \right) \pm \sqrt{\Delta}}{2}. \]
Now, let
\[ b = 4\left( 1 - \frac{c^2}{2}\right) t^2 - 4 t + 1 = 4\left( 1 - \frac{c^2}{2}\right) \left( t - \frac{1}{2-c\sqrt{2}} \right) \left( t - \frac{1}{2+c\sqrt{2}} \right), \]
and note that 
\[ t^-  < \frac{1}{2+c\sqrt{2}} <  \frac{1}{2-c\sqrt{2}} < t^+, \] 
which means that $b > 0$ since $0 < t < t^-$ or $t^+ < t \leq 1$. Hence $\lambda^- < 0$; moreover, one readily checks that $b^2 - \Delta = 16 \beta^2 \gamma^4 t^4 > 0$. Consequently, $b > \sqrt{\Delta}$ and $\lambda^+ < 0$ as well. Hence $C$ is non-degenerate of elliptic-elliptic type.

If $t = 1/2$, then 
\[ \Omega^{-1} d^2 H_t(C) =  \frac{1}{2} \begin{pmatrix} 0 & 0 & 0 & \gamma\sqrt{2\beta} \\ 0 & 0 & \gamma\sqrt{2\beta} & 0 \\ 0 & \gamma\sqrt{2\beta} & 0 & 0 \\ \gamma\sqrt{2\beta} & 0 & 0 & 0 \end{pmatrix} \]
and the eigenvalues of this matrix are not distinct, so we cannot assert that $C$ is non-degenerate and infer its type right away. Since $J = \alpha - \frac{1}{2}(x_1^2 + y_1^2) + \frac{1}{2}(x_4^2 + y_4^2)$, we find 
\[ \Omega^{-1} d^2 J(C) = \begin{pmatrix} 0 & 1 & 0 & 0 \\ -1 & 0 & 0 & 0 \\ 0 & 0 & 0 & -1 \\ 0 & 0 & 1 & 0 \end{pmatrix}. \]
The linear combination
\[ \Omega^{-1} d^2 J(C) + \frac{2}{\gamma \sqrt{2\beta}} \Omega^{-1} d^2 H_t(C) = \begin{pmatrix} 0 & 1 & 0 & 1 \\ -1 & 0 & 1 & 0 \\ 0 & 1 & 0 & -1 \\ 1 & 0 & 1 & 0 \end{pmatrix} \]
has eigenvalues $\pm 1 \pm i$, so $C$ is non-degenerate of focus-focus type.
\end{proof}

The point $D$ can be treated in a similar fashion; we only give a few details of the proof. However, the treatment of the points $A_t$ and $B_t$ is different so we are more precise with them.

\begin{lm}
\label{lem:W1ABD}
The points $A_t, B_t$ and $D$ are always elliptic-elliptic.
\end{lm}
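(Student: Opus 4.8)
The plan is to verify directly, using the local normal form in appropriate trivialization charts, that each of the points $A_t$, $B_t$, and $D$ is a non-degenerate rank zero point of elliptic-elliptic type for every $t\in[0,1]$, following the same recipe as in the proof of Lemma~\ref{lem:W1C}: compute the Hessian of $H_t$ and the symplectic matrix $\Omega$ in a convenient basis, form $\Omega^{-1}d^2 H_t$, compute the reduced characteristic polynomial $\chi$ (using that the characteristic polynomial has the form $X\mapsto\chi(X^2)$ by \cite[Proposition 1.2]{BolFom}), and check that $\chi$ has two distinct negative roots, possibly after replacing $H_t$ by a suitable linear combination $\nu J+\mu H_t$ when the eigenvalues happen to be non-simple (as occurs for $C$ at $t=1/2$).

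Concretely, for $D=[0,\sqrt{2(\alpha+\beta)},0,\sqrt{2\beta}]$, which corresponds to $u_1=0=u_3$, I would work in the chart $U_{2,4}$ with local coordinates $x_1,y_1,x_3,y_3$ and the relations from Section~\ref{subsect:coords_W}; here $J=(\alpha+\beta)-\tfrac12(x_1^2+y_1^2)$ modulo higher order (so $J$ has no $x_3,y_3$ part to quadratic order near $D$) and one computes that the $X$-term contributes a cross term of the form $x_1 x_3$ (up to sign and a constant $\sqrt{2\beta}$) while $R=\tfrac12(x_3^2+y_3^2)$. The Hessian of $H_t=(1-2t)R+t\gamma X$ at $D$ will then be block-structured just as for $C$, and the same discriminant computation applies; the key point is that $\pi_1(D)=\alpha+\beta$ is the \emph{maximum} value of $J$, so $D$ lies on the boundary of the momentum image and must remain elliptic-elliptic — one should find that the analogue of the discriminant is strictly negative only for $t$ outside $[0,1]$, or more simply that the relevant $\chi$ always has two distinct negative roots. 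For $A_t$ and $B_t$, which lie on the fixed sphere $u_2=0$ and hence have $J$-value $0$ (the minimum of $J$), the situation is different because these points move with $t$; I would use the chart $U_{1,4}$ with coordinates $x_2,y_2,x_3,y_3$ as in the proof of Lemma~\ref{lm:fixed_points_W1ns}, expand $H_t$ around the solution $(0,0,x_3^\pm(t),0)$ using the explicit defining equation $f(x_3^\pm(t))=0$ from that lemma, and compute the $2\times2$ Hessian blocks in the $(x_3,y_3)$ and $(x_2,y_2)$ directions. Since $J=\tfrac12(x_2^2+y_2^2)$ exactly, the $(x_2,y_2)$ block of $d^2 J$ is definite and the relevant reduced characteristic polynomial factors, so the type is governed by the sign of $\partial^2 H_t/\partial x_3^2$ at the critical point together with the sign of the $y_3$-second derivative; one checks that, under the hypothesis \eqref{eq:hyp_gamma_W1ns}, both are of the same sign and nonzero, giving elliptic-elliptic type for all $t$, and in particular $A_t,B_t$ never become degenerate. (Lemma~\ref{lem:HPdegen} and the fact, from Section~\ref{sec:firstprop}, that an interior point cannot change type without becoming degenerate are not needed here because we handle all $t$ at once.)

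The main obstacle I expect is the bookkeeping for $A_t$ and $B_t$: because these critical points are only defined implicitly through the quartic equation for $x_3^\pm(t)$, one cannot plug in a closed-form value, and the second-derivative test requires differentiating the somewhat cumbersome expression for $H_t$ in the $U_{1,4}$ coordinates twice and then simplifying using the critical-point relation $f(x_3^\pm(t))=0$ to eliminate the square-root terms. A useful shortcut is to pass to the reduced space $M_0^{\mathrm{red}}$: since $0$ is an extremal value of $J$, Lemma~\ref{lm:sphere} tells us $M_0^{\mathrm{red}}$ is a sphere, $H_t$ descends to a function on it, and by Lemma~\ref{lm:nondeg_rankone_red} (applied with the convention that works even at a singular value when the point is not $J$-singular) the type of $A_t,B_t$ as fixed points of $F_t$ is controlled by whether $[A_t],[B_t]$ are non-degenerate critical points of $H_t^{\mathrm{red},0}$; since $H_t^{\mathrm{red},0}$ is (generically) Morse on the sphere and $A_t,B_t$ are the only candidates besides the images of $C,D$, one concludes they are elliptic — but to be careful about the definiteness one still needs the explicit computation, so I would present the direct Hessian calculation and merely use the reduced-space picture as a consistency check.

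For the point $D$, I expect no real difficulty: the computation is formally identical to the $C$ case but with the roles of $R$ and $2\beta-|u_3|^2$ interchanged, and the upshot is that the relevant discriminant is $4(1-c^2)(2t-1)^2(t-\tilde t^+)(t-\tilde t^-)$ for some $\tilde t^\pm$ that lie \emph{outside} $[0,1]$ (or the sign works out so that $\chi$ has two distinct negative roots throughout), reflecting that $D$ is a boundary vertex of the momentum image for all $t$ and hence cannot undergo a Hamiltonian-Hopf bifurcation. I would write this up briefly, referring back to the matrix manipulations in the proof of Lemma~\ref{lem:W1C} and only recording the final reduced characteristic polynomial and the verification that its roots are distinct and negative for every $t\in[0,1]$.
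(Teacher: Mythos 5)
Your strategy coincides with the paper's: for $D$ you work in $U_{2,4}$, form $\Omega_D^{-1}d^2H_t(D)$, and check that its reduced characteristic polynomial has two distinct negative roots (handling $t=1/2$, where the roots fail to be simple, with a linear combination with $J$); for $A_t,B_t$ you work in $U_{1,4}$ at $(0,0,x_3^{\pm}(t),0)$, use the block structure of the Hessian there, and reduce the type question to the signs of $\partial^2_{x_3}H_t$ and $\partial^2_{y_3}H_t$ at the critical point after simplifying with the critical-point relation~\eqref{eq:first_eq_x3}. This is exactly the paper's proof in outline.

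However, three points. First, for $A_t,B_t$ the step you dismiss as ``one checks'' is the entire substance of the argument: after substituting~\eqref{eq:first_eq_x3}, the paper must still prove that the quartic $P(X)=X^2\left(3X^2-8(\alpha+2\beta)X+24\beta(\alpha+\beta)\right)-16\beta^2(\alpha+\beta)^2$ is negative on $(0,2\beta)$ in order to conclude that the two second derivatives have the same sign; without this (or an equivalent estimate) your proof is incomplete. Note also that this sign condition holds for all $\gamma>0$; the hypothesis~\eqref{eq:hyp_gamma_W1ns} is only needed for the point $C$, so invoking it here is misleading. Second, your expansion of $J$ near $D$ is wrong: in the $U_{2,4}$ coordinates one has $J=\alpha+\beta-\tfrac12(x_1^2+y_1^2)-\tfrac12(x_3^2+y_3^2)$, so $J$ does have a quadratic $(x_3,y_3)$ part. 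This is harmless for $t\neq 1/2$, but at $t=1/2$ the discriminant vanishes and one must use a combination $\nu\,d^2J(D)+\mu\,d^2H_t(D)$, where an incorrect $d^2J(D)$ would spoil the verification; likewise, ``$D$ lies on the boundary so it must remain elliptic-elliptic'' is not an argument (it is the statement to be proved), though you do not actually lean on it. Third, the shortcut via Lemma~\ref{lm:nondeg_rankone_red} does not apply to $A_t,B_t$: they are rank-zero points lying on the fixed sphere of $J$, hence singular points of $J$ itself, and Morse non-degeneracy of $H_t^{\mathrm{red},0}$ on that sphere does not by itself determine the four-dimensional Williamson type; you treat it only as a consistency check, which is fine, but it cannot substitute for the direct Hessian computation.
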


\begin{proof}
{\bf The point $D$.} We work on $U_{2,4} = \{ [u_1,u_2,u_3,u_4] \ | \ u_2 \neq 0, u_4 \neq 0 \} \subset  \Hirzscaled{1}$. Using the action of $N$, we may assume that $u_2 = x_2 \in \R^+$ and $u_4 = x_4 \in \R^+$, and we can write $u_1 = x_1 + i y_1, u_3 = x_3 + i y_3$ and use $x_1, y_1, x_3, y_3$ as local coordinates, since then
\[ x_2 = \sqrt{2(\alpha + \beta) - (x_3^2 + y_3^2) - (x_1^2 + y_1^2)}, \quad x_4 = \sqrt{2\beta - (x_3^2 + y_3^2)}. \]
In these coordinates, $D$ corresponds to $(0,0,0,0)$,
\[ H_t = \frac{(1-2t)(x_3^2 + y_3^2)}{2} + t \gamma (x_1 x_3 + y_1 y_3) \sqrt{2\beta - (x_3^2 + y_3^2)}, \]
and thus
\[ \Omega_D^{-1} d^2 H_t(D) = \begin{pmatrix} 0 & 0 & 0 & -t\gamma\sqrt{2\beta} \\ 0 & 0 & t\gamma\sqrt{2\beta} & 0 \\ 0 & -t\gamma\sqrt{2\beta} & 0 & 2t-1 \\ t\gamma\sqrt{2\beta} & 0 & 1-2t & 0 \end{pmatrix}. \]
First, we assume that $t \neq 1/2$. The reduced characteristic polynomial of $\Omega^{-1} d^2 H_t(D)$ is
\[ P = X^2 + \left( 4( 1 + \beta \gamma^2) t^2 - 4 t + 1 \right) X + 4 t^4 \gamma^4 \beta^2. \]
Its discriminant is given by 
\[ \Delta = (2t-1)^2 \left( 4( 1 + 2\beta \gamma^2) t^2 - 4 t + 1  \right) = (2t-1)^2 \left( 8\beta \gamma^2 t^2 + (2t-1)^2  \right) > 0. \]
Hence $P$ has two real roots
\[ \lambda^{\pm} = \frac{-\left( 4( 1 + \beta \gamma^2) t^2 - 4 t + 1 \right) \pm \sqrt{\Delta}}{2} < 0. \]
Consequently, $D$ is non-degenerate of elliptic-elliptic type.

If $t = 1/2$ one readily checks that 
\[ 2\Omega_D^{-1} d^2 J(D) + \frac{2}{\gamma \sqrt{2\beta}} \Omega^{-1} d^2 H_t(D) = \begin{pmatrix} 0 & 2 & 0 & -1 \\ -2 & 0 & 1 & 0 \\ 0 & -1 & 0 & 2 \\ 1 & 0 & -2 & 0 \end{pmatrix} \]
has eigenvalues $\pm i, \pm 3 i$, so $D$ is non-degenerate of elliptic-elliptic type.

\ \\

{\bf The points $A_t$ and $B_t$.} We use the same chart and notation as in the proof of Lemma \ref{lm:fixed_points_W1ns}. One readily checks that 
\[ \Omega_{(0,0,x_3,0)}^{-1} d^2 H_t(0,0,x_3,0) = \begin{pmatrix} 0 & -a(x_3) & 0 & 0 \\ a(x_3) & 0 & 0 & 0 \\ 0 & 0 & 0 & -c(x_3) \\ 0 & 0 & b(x_3) & 0 \end{pmatrix} \]
with
\[ a(x_3) =  \frac{ -t \gamma x_3 (2\beta - x_3^2)}{\sqrt{(2\beta - x_3^2)(2(\alpha + \beta) - x_3^2)}}, \quad c(x_3) = 1-2t + \frac{2t\gamma x_3 (x_3^2 - \alpha - 2\beta)}{\sqrt{(2\beta - x_3^2)(2(\alpha + \beta) - x_3^2)}}, \]
and
\[ b(x_3) = 1-2t + \frac{2t\gamma x_3 \left( 3 x_3^6 - 9(\alpha + 2\beta) x_3^4 + 4 (\alpha^2 + 9 \alpha \beta + 9 \beta^2) x_3^2 + 12 \beta (\alpha^2 + 3 \alpha \beta + 2 \beta^2) \right)}{\left( (2\beta - x_3^2)(2(\alpha + \beta) - x_3^2) \right)^{3/2}}.\]
Because of Equation (\ref{eq:first_eq_x3}), we have that 
\[ c(x_3^{\pm}(t)) = \frac{-t\gamma}{x_3^{\pm}(t)} \sqrt{(2\beta - x_3^{\pm}(t)^2)(2(\alpha + \beta) - x_3^{\pm}(t)^2)}, \]
so $c(x_3^-(t)) > 0$ and $c(x_3^+(t)) < 0$. Furthermore, using Equation (\ref{eq:first_eq_x3}) again, a straightforward computation yields
\[ b(x_3^{\pm}(t)) = \frac{t\gamma P(x_3^{\pm}(t)^2) }{x_3^{\pm}(t) \left( (2\beta - x_3^{\pm}(t)^2)(2(\alpha + \beta) - x_3^{\pm}(t)^2) \right)^{3/2}}  \]
where $P(X) = X^2 \left( 3 X^2 - 8 (\alpha + 2 \beta) X + 24 \beta (\alpha + \beta) \right) - 16 \beta^2 (\alpha + \beta)^2$. Since
\[ P'(X) = 12 X (2\beta - X) (2(\alpha + \beta) - X) > 0 \]
for $X \in (0,2\beta)$, we have that $P(X) \leq P(2\beta) = -16\alpha^2 \beta^2 < 0$. Thus, the sign of $b(x_3^{\pm}(t))$ is the opposite of the sign of $x_3^{\pm}(t)$. Therefore, $b(x_3^{\pm}(t)) c(x_3^{\pm}(t)) > 0$ for every $t$; thus, the eigenvalues of $\Omega_{(0,0,x_3^{\pm}(t),0)}^{-1} d^2 H_t(0,0,x_3^{\pm}(t),0)$ are $\pm i a(x_3^{\pm}(t))$, $\pm i \sqrt{b(x_3^{\pm}(t)) c(x_3^{\pm}(t))}$, so $A_t$ and $B_t$ are non-degenerate of elliptic-elliptic type.
\end{proof}

\subsection{Rank one points}
\label{sec:W1rankone}

We now prove that the rank one points are always non-degenerate of elliptic-transverse type; in order to do so, we will use some convenient local coordinates, but we first need to show that certain points cannot be singular of rank one.

\begin{lm}
\label{lm:discard_poles_W1}
$F_t$ as in Equation~\eqref{system:W1_movingAB} has no rank one points with $u_1 = 0$ or $u_3 = 0$ or $u_4 = 0$.
\end{lm}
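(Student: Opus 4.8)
The plan is to show directly that at any point with $u_1=0$, $u_3=0$, or $u_4=0$ the differentials $dJ$ and $dH_t$ are either both zero (a rank zero point, hence not rank one) or linearly independent (a regular point). First I would dispose of the easy constraint: on $N^{-1}(0)$ we have $|u_3|^2+|u_4|^2=2\beta$, so $u_3$ and $u_4$ cannot vanish simultaneously, and $|u_1|^2=2(\alpha+\beta-J-R)$ together with $0\le R\le\beta$ forces $u_1=0$ only at $J=\alpha+\beta-R$; in particular the three cases are essentially disjoint and each sits in one of the coordinate charts $U_{\ell,m}$ from Section~\ref{subsect:coords_W}.

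Next I would treat each of the three strata in the appropriate chart, using the explicit formula for $H_t$ in terms of the local coordinates, exactly the formulas already computed in the proofs of Lemmas~\ref{lm:fixed_points_W1ns}, \ref{lem:W1C}, and~\ref{lem:W1ABD}. The key observation is that $X=\Re(\bar u_1 u_3\bar u_4)$ vanishes to first order along $\{u_1=0\}$, $\{u_3=0\}$, and $\{u_4=0\}$, because $X$ is (at least) quadratic in the coordinates transverse to each of these loci (it is a product of three of the $u_i$). For instance, on $\{u_3=0\}$ (which lies in $U_{1,4}$ or $U_{2,4}$) one has $R=\tfrac12|u_3|^2$ so $dR=0$ along $u_3=0$ as well, hence $dH_t=(1-2t)dR+t\gamma\,dX=0$ along this locus; thus any such point is either rank zero (if also $dJ=0$, i.e.\ $u_2=0$, which gives the points $C$ or $D$ or the fixed sphere) or else $dJ\ne0$ and the point is regular. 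On $\{u_4=0\}$ the analysis is symmetric, with $u_4$ appearing only inside $X$; here $dR$ need not vanish, but I would check that $dR$ and $dJ$ are linearly dependent only on the fixed locus $u_2=0$ (where again the point is rank zero), and off that locus $dJ$ already shows the point is regular. On $\{u_1=0\}$, since $u_1$ enters only through $X$ and through the constraint relating $x_1$ to the other variables, a direct computation of the $1$-jet of $H_t$ in the chart $U_{2,3}$ or $U_{2,4}$ shows $dH_t$ is a multiple of $dR$ modulo the vanishing of $u_1$, and one finishes as before.

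In each case the bookkeeping amounts to: write $H_t$ in local coordinates, compute $dH_t$, observe that on the stratum it is proportional to $dR$ (or vanishes), then note that the rank of $(dJ,dH_t)$ drops below two only where $dJ$ and $dR$ are parallel, which by the toric picture happens exactly at the fixed points of the standard toric system $(J,R)$ — i.e.\ at corners of the Delzant polygon, which are rank zero points of $F_t$, not rank one. The main obstacle is purely organizational: making sure every one of the loci $\{u_1=0\}$, $\{u_3=0\}$, $\{u_4=0\}$ is covered by a chart in which the formula for $H_t$ is available and the vanishing-order claim for $X$ is transparent, and correctly handling the boundary overlaps (e.g.\ points with two of the $u_i$ equal to zero, which are precisely $C$ and $D$ and the fixed sphere, already known to be rank zero). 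No genuinely hard estimate is involved; the content is the first-order vanishing of $X$ along each coordinate hyperplane, which follows immediately from its monomial form $\bar u_1 u_3\bar u_4$.
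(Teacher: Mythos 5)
Your central claims are backwards, and the errors are not merely organizational. First, $X=\Re(\bar u_1 u_3\bar u_4)$ is \emph{linear}, not quadratic, in the coordinate transverse to each of the loci $\{u_1=0\}$, $\{u_3=0\}$, $\{u_4=0\}$; its differential does \emph{not} vanish there. Concretely, in the chart $U_{1,4}$ with coordinates $(x_2,y_2,x_3,y_3)$ one has $X=x_3\sqrt{(2\beta-(x_3^2+y_3^2))(2(\alpha+\beta)-(x_3^2+y_3^2)-(x_2^2+y_2^2))}$, so at a point of $\{u_3=0\}$ with $u_1,u_4\neq 0$ the derivative $\partial X/\partial x_3=\sqrt{2\beta\,(2(\alpha+\beta)-(x_2^2+y_2^2))}\neq 0$, hence $dH_t=(1-2t)\,dR+t\gamma\,dX\neq 0$ for $t\neq 0$, while $dJ=(x_2,y_2,0,0)$: the two differentials are linearly \emph{independent}, which is what makes these points regular. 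Your assertion that $dH_t=0$ along $\{u_3=0\}$ is false, and the analogous claim on $\{u_1=0\}$ (that $dH_t$ is a multiple of $dR$) fails for the same reason: there $dH_t$ acquires nonzero components in the $x_1,y_1$ directions, $t\gamma x_3\sqrt{2\beta-\rho^2}$ and $t\gamma y_3\sqrt{2\beta-\rho^2}$, which are transverse to $dJ$. Second, even granting your claims, your rank bookkeeping is wrong: a point with $dH_t=0$ and $dJ\neq 0$ is a \emph{rank one} singular point, not a regular point, so your argument as written would prove the opposite of the lemma. (This is exactly what happens at $t=0$, where $H_0=R$ and the loci in question really do consist of rank one points of the toric system; any correct proof must use $t\neq 0$, as the paper's does at the outset.) Relatedly, $dJ$ and $dR$ are dependent along the preimage of the entire boundary of the Delzant polygon, not only at its corners, so your final "toric picture" step also does not hold as stated.

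By contrast, the paper avoids charts entirely: it characterizes rank one points upstairs in $\C^4$ via Lagrange multipliers, $\nabla H_t=\lambda\nabla J+\mu_1\nabla N_1+\mu_2\nabla N_2$, and then exploits the monomial form of $X$ algebraically: if $u_1=0$ the equation $t\gamma\bar u_3u_4=\mu_1\bar u_1$ forces $u_3=0$ or $u_4=0$ (giving the fixed points $C$ or $D$), and similarly for $u_3=0$ and $u_4=0$, with the remaining cases excluded by $|u_3|^2+|u_4|^2=2\beta$. Your chart-by-chart strategy could in principle be repaired --- the computation above shows the correct statement is that the $X$-term supplies a nonvanishing component of $dH_t$ transverse to $dJ$ on each locus, precisely \emph{because} $X$ vanishes only to first order --- but as proposed, the key vanishing claim and the regular/rank-one dichotomy both need to be reversed before the argument goes through.
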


\begin{proof}
Again, we may assume that $t\neq 0$.
Let $J, N, R, X$ be as in Section \ref{subsect:notation_W1}, and write $N = (N_1, N_2)$.
Since these are real-valued functions, we may use Lagrange multipliers, treating the variables
$u_j,\bar{u}_j$ as independent variables (see for instance~\cite{Kre}).
A rank one singular point is a solution of $\nabla H_t = \lambda \nabla J + \mu_1 \nabla N_1 + \mu_2 \nabla N_2$, which reads
\[ t \gamma \bar{u}_3 u_4 = \mu_1 \bar{u}_1, \quad 0 = (\lambda + \mu_1) \bar{u}_2, \quad (1 - 2t) \bar{u}_3 + t \gamma \bar{u}_1 \bar{u}_4 = (\mu_1 + \mu_2) \bar{u}_3, \quad t \gamma u_1 \bar{u}_3 = \mu_2 \bar{u}_4,  \]
plus the same equations but with every $u_j$ conjugate. If $u_1 = 0$, necessarily $u_3 = 0$ or $u_4 = 0$; we get the fixed points $C$ and $D$. If $u_4 = 0$, necessarily $u_1 = 0$ or $u_3 = 0$; the first case yields $C$, the second case is impossible since $|u_3|^2 + |u_4|^2 = 2 \beta$. Finally, if $u_3 = 0$, necessarily $u_1 = 0$ or $u_4 = 0$; the first case yields $D$, the second one is impossible.
\end{proof}

Thanks to this lemma, we can use the local coordinates $(\rho, \theta)$ on $M_j^{\text{red}} \setminus \{ [u] \ | \ u_1 = 0 \ \text{or}  \  u_3 = 0 \ \text{or} \  u_4 = 0 \}$ defined in Section \ref{subsect:notation_W1}. In these coordinates,
\[ H_t^{\text{red},j} = (1 - 2t) \frac{\rho^2}{2} + \gamma t \rho \cos \theta \sqrt{(2\beta - \rho^2)(2(\alpha + \beta - j) - \rho^2)}. \]

\begin{lm}
\label{lm:rankone_W1}
All rank one points of $F_t$ as in Equation~\eqref{system:W1_movingAB} on $\Hirzscaled{1} \setminus J^{-1}(0)$ are non-degenerate of elliptic-transverse type for all $t$. 
\end{lm}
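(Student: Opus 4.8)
The plan is to use Lemma~\ref{lm:nondeg_rankone_red}, which reduces the question to showing that $H_t^{\mathrm{red},j}$ is a Morse function on $M_j^{\mathrm{red}}$ with no hyperbolic critical points, for every $j \in (0,\alpha+\beta]$ with $j \notin \{\alpha\}$ being handled by the same formula since the reduced space is smooth near any rank-one point by Lemma~\ref{lm:discard_poles_W1} (the singular point of $M_\alpha^{\mathrm{red}}$ comes from $u_1=0$, which cannot be a rank-one point). By Lemma~\ref{lm:discard_poles_W1} every rank-one point lies in the chart where the coordinates $(\rho,\theta)$ of Section~\ref{subsect:notation_W1} are valid, so I would simply compute the critical points of
\[
H_t^{\mathrm{red},j}(\rho,\theta) = (1-2t)\frac{\rho^2}{2} + \gamma t \rho\cos\theta\sqrt{(2\beta-\rho^2)(2(\alpha+\beta-j)-\rho^2)}
\]
directly. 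Note these coordinates are cylindrical, not symplectic, so after finding critical points I must use a genuinely symplectic chart (or argue via the index, since nondegeneracy and Morse index are coordinate-independent for a function on a surface, while ellipticity versus hyperbolicity of a rank-one point is detected by the Morse index of the reduced Hamiltonian by Lemma~\ref{lm:nondeg_rankone_red}).

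First I would take the $\theta$-derivative: $\partial_\theta H_t^{\mathrm{red},j} = -\gamma t \rho \sin\theta \sqrt{(2\beta-\rho^2)(2(\alpha+\beta-j)-\rho^2)}$, which vanishes (given $\gamma t\neq 0$, $\rho\neq 0$, and the square root positive in the valid range) exactly when $\sin\theta = 0$, i.e. $\theta \in \{0,\pi\}$. On each of these two meridian circles the function becomes a one-variable function of $\rho$, and $\partial_\rho H_t^{\mathrm{red},j}=0$ gives (up to sign coming from $\cos\theta=\pm1$) an equation in $\rho$ of the same shape as Equation~\eqref{eq:first_eq_x3} with $\alpha+\beta$ replaced by $\alpha+\beta-j$; I would show, by the same polynomial-discriminant argument used in the proof of Lemma~\ref{lm:fixed_points_W1ns}, that there is exactly one solution $\rho_0\in(0,\sqrt{2\min(\beta,\alpha+\beta-j)})$ on each meridian, so there are exactly two rank-one points per regular fiber $J^{-1}(j)$, $0<j<\alpha+\beta$. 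Then, at such a critical point, since $\partial_\theta H$ has a simple zero transverse to the meridian (its $\rho$-coefficient times $\partial_\theta(\sin\theta)=\pm1$ is nonzero) the Hessian in $(\rho,\theta)$ is block-diagonal with the $\theta\theta$-entry equal to $\mp\gamma t\rho_0\sqrt{\cdots}\cdot(\text{sign})$, which is nonzero; so it suffices to check that the $\rho\rho$-entry, i.e. the second derivative of the restriction to the meridian, is nonzero at $\rho_0$. This last check is exactly analogous to the computation of $b(x_3^\pm(t))$ and $c(x_3^\pm(t))$ in the proof of Lemma~\ref{lem:W1ABD}: using the critical-point equation to simplify, the second $\rho$-derivative reduces to a rational expression whose numerator is a polynomial in $\rho_0^2$ that one shows has a definite sign on $(0,2\beta)$ by differentiating it and evaluating at the endpoints.

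Finally, nondegeneracy of the reduced critical point (both Hessian eigenvalues nonzero) combined with Lemma~\ref{lm:nondeg_rankone_red} shows the rank-one point is nondegenerate, and since the only possibilities for a rank-one point are elliptic-transverse and hyperbolic-transverse (Definition~\ref{dfn:nondeg_rankone}), and the latter would mean $[u]$ is a hyperbolic (index~$1$) critical point of $H_t^{\mathrm{red},j}$, I rule it out by a global count: $M_j^{\mathrm{red}}$ is a $2$-sphere (Lemma~\ref{lm:sphere}) and $H_t^{\mathrm{red},j}$ is a Morse function with exactly two critical points (computed above), hence by the Euler characteristic they must be one maximum and one minimum, both elliptic. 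The main obstacle is the $\rho\rho$-Hessian sign computation: it is the one place where an honest, somewhat delicate polynomial-positivity argument (in the spirit of the $P(X)$ estimates in the proof of Lemma~\ref{lem:W1ABD}) has to be carried out, and care is needed because the valid $\rho$-range depends on whether $j<\alpha$ or $j>\alpha$ (i.e. whether the binding constraint is $|u_3|^2\le 2\beta$ or $|u_1|^2\ge 0$); I would treat both subranges uniformly by keeping $\alpha+\beta-j$ as a parameter throughout, exactly as $\alpha+\beta$ appears in Lemma~\ref{lm:fixed_points_W1ns}.
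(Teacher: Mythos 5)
Your setup coincides with the paper's: reduce via Lemma~\ref{lm:nondeg_rankone_red}, use Lemma~\ref{lm:discard_poles_W1} to stay in the $(\rho,\theta)$ chart (and away from the singular point of $M_\alpha^{\mathrm{red}}$), and locate critical points of $H_t^{\mathrm{red},j}$ at $\sin\theta=0$. Where you diverge is the final step. The paper never counts critical points: at a critical point $(\rho_c,\theta_c)$ it computes the sign of $\partial^2_\rho H_t^{\mathrm{red},j}$ directly, using the critical-point equation to reduce it to the negativity of $f(\rho)=2\rho^2 g g''+2\rho g g'-\rho^2 (g')^2-4g^2$, which it proves by viewing $f/4$ as a quadratic polynomial in $j$ with discriminant $-64\rho^6(2\beta-\rho^2)^3<0$; since $\partial^2_\theta H_t^{\mathrm{red},j}$ and $\partial^2_\rho H_t^{\mathrm{red},j}$ then both carry the sign of $-\cos\theta_c$ and the mixed partial vanishes, the reduced Hessian is definite and ellipticity follows at once, uniformly in $j$ (no $j<\alpha$ versus $j>\alpha$ split). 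Your route instead counts the critical points (one per meridian, by adapting the quartic root analysis of Lemma~\ref{lm:fixed_points_W1ns} with $\alpha+\beta$ replaced by $\alpha+\beta-j$), checks nondegeneracy, and then invokes $\chi(\mathbb{S}^2)=2$ via Lemma~\ref{lm:sphere} to force both points to be extrema. This is logically sound (with two nondegenerate critical points the indices must sum to $2$), but it shifts the work: you must redo the $P(X)$ analysis with the parameter $j$, handle the endpoint checks separately in the two ranges of $j$, and treat $t=0$ and $t=1/2$ (where the critical-point equations degenerate) — while the quantity you still have to control, the $\rho\rho$-entry, is essentially the paper's polynomial $f$; once you have carried out that estimate you get its sign, not merely its nonvanishing, and the counting and Euler-characteristic steps become superfluous. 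So your proof works, but the paper's direct sign computation is both shorter and more uniform; your global-topological argument buys a conceptual check (exactly one maximum and one minimum on each reduced sphere) at the cost of extra case analysis.
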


\begin{proof}
We use Lemma \ref{lm:nondeg_rankone_red} (which, we recall, works away from the singular part of the reduced space for $j \in \{\alpha, \alpha + \beta\}$) and work with the reduced Hamiltonian $H_t^{\text{red},j}$ on $M_j^{\text{red}}$. We write
\[ H_t^{\text{red},j} = (1 - 2t) \frac{\rho^2}{2} + \gamma t \rho \cos \theta \sqrt{g(\rho)}. \]
where $g(\rho) = (2\beta - \rho^2)(2(\alpha + \beta - j) - \rho^2)$. We may assume that $\rho^2 \notin \{0, 2\beta, 2(\alpha + \beta - j)\}$ by Lemma~\ref{lm:discard_poles_W1}, since these values respectively correspond to $u_3 = 0, u_4 = 0$ and $u_1 = 0$. Singular points of $H_t^{\text{red},j}$ satisfy 
\begin{equation} 0 = \dpar{H_t^{\text{red},j}}{\theta} = - \gamma t \rho \sin \theta \sqrt{g(\rho)}, \qquad 0 = \dpar{H_t^{\text{red},j}}{\rho} = (1-2t) \rho + \gamma t h(\rho) \cos \theta. \label{eq:dHdrho=0_W1ns} \end{equation}
where 
\begin{equation} h(\rho) = \sqrt{g(\rho)} + \frac{\rho g'(\rho)}{2 \sqrt{g(\rho)}} = \frac{2 g(\rho) + \rho g'(\rho)}{2 \sqrt{g(\rho)}}. \label{eq:h_general}\end{equation}
The first equation boils down to $\sin \theta = 0$ because $\rho^2 \notin \{0, 2\beta, 2(\alpha + \beta - j)\}$. 
The derivative 
\[ \frac{\partial^2 H_t^{\text{red},j}}{\partial \theta \partial \rho} = -\gamma t h(\rho) \sin \theta  \]
therefore vanishes at a singular point. Moreover,
\[ \dpar{^2 H_t^{\text{red},j}}{\theta^2} = -\gamma t \rho \cos \theta \sqrt{g(\rho)}, \qquad \dpar{^2 H_t^{\text{red},j}}{\rho^2} = 1 - 2t + \gamma t h'(\rho) \cos \theta.  \]
We already see that $\dpar{^2 H_t^{\text{red},j}}{\theta^2} < 0$ if $\theta = 0$, and $\dpar{^2 H_t^{\text{red},j}}{\theta^2} > 0$ if $\theta = \pi$. Furthermore,
\[ h'(\rho) = \frac{2 g(\rho) ( \rho g''(\rho) + 2 g'(\rho) ) - \rho g'(\rho)^2}{4 g(\rho)^{3/2}}. \]
If $(\rho_c, \theta_c)$ is a singular point, the second equality in Equation (\ref{eq:dHdrho=0_W1ns}) yields
\[ 1 - 2 t = \frac{- \gamma t \cos \theta_c \left( 2 g(\rho_c) + \rho_c g'(\rho_c) \right)}{2 \rho_c \sqrt{g(\rho_c)}}. \]
Therefore, 
\[ \dpar{^2 H_t^{\text{red},j}}{\rho^2}(\rho_c,\theta_c) = \frac{\gamma t \cos \theta_c \left( 2 \rho_c^2 g(\rho_c) g''(\rho_c) + 2 \rho_c g(\rho_c) g'(\rho_c)  - \rho_c^2 g'(\rho_c)^2 - 4 g(\rho_c)^2 \right)}{4 \rho_c g(\rho_c)^{3/2}}. \]
We claim that $f(\rho) = 2 \rho^2 g(\rho) g''(\rho) + 2 \rho g(\rho) g'(\rho)  - \rho^2 g'(\rho)^2 - 4 g(\rho)^2$
is always negative. In order to prove this claim, we see $f$ as a second order polynomial in $j$; namely, $f = 4 P$ with
\[ P = -16\beta^2 j^2 + 8 (\rho^6 - 3\beta \rho^4 + 4 \beta^2 (\alpha + \beta)) j + 3 \rho^8 - 8 (\alpha + 2 \beta) \rho^6 + 24 \beta (\alpha + \beta) \rho^4 - 16 \beta^2 (\alpha + \beta)^2.  \] 
The discriminant $\Delta$ of $P$ reads $\Delta = -64 \rho^6 (2 \beta - \rho^2)^3 < 0$,
hence $P$ is of the sign of $ -16\beta^2 < 0$, and the claim is proved. In particular, the quantity $\dpar{^2 H_t^{\text{red},j}}{\rho^2}(\rho_c,\theta_c)$ has the sign of $-\cos \theta_c$, and the determinant of $d^2 H_t^{\text{red},j}(\rho_c, \theta_c)$ is positive. Consequently, $(\rho_c,\theta_c)$ corresponds to non-degenerate rank one points of elliptic-transverse type.
\end{proof}

\begin{lm}
\label{lem:W1_fixedsphere}
All rank one points of $F_t$ as in Equation~\eqref{system:W1_movingAB} on $J^{-1}(0)$ are non-degenerate of elliptic-transverse type for every $t \in [0,1]$.
\end{lm}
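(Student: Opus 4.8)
The plan is to analyze the rank one points of $F_t$ lying in the extremal fiber $J^{-1}(0)$ directly, since Lemma~\ref{lm:nondeg_rankone_red} does not apply there ($0$ is not a regular value and the fiber contains fixed points of $J$). First I would recall from Section~\ref{subsect:notation_W1} that on $J^{-1}(0)$ we have $u_2 = 0$, so this fiber is contained in the chart $U_{1,4}$ (using $t\neq 0$; the case $t=0$ is toric and already understood), with local coordinates $x_2,y_2,x_3,y_3$ and $x_1 = \sqrt{2(\alpha+\beta) - (x_3^2+y_3^2) - (x_2^2+y_2^2)}$, $x_4 = \sqrt{2\beta - (x_3^2+y_3^2)}$, in which $J = \tfrac12(x_2^2+y_2^2)$ and
\[ H_t = \tfrac{(1-2t)(x_3^2+y_3^2)}{2} + \gamma t x_3 \sqrt{(2\beta - (x_3^2+y_3^2))(2(\alpha+\beta) - (x_3^2+y_3^2) - (x_2^2+y_2^2))}, \]
exactly as in the proof of Lemma~\ref{lm:fixed_points_W1ns}. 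A rank one point on $J^{-1}(0)$ is a point where $dJ = 0$ (automatic on this fiber) but $dH_t \neq 0$; since $dJ$ vanishes identically on $\{x_2 = y_2 = 0\}$, the rank one points in $J^{-1}(0)$ are exactly the points $(0,0,x_3,y_3)$ with $(x_3,y_3)\neq(x_3^\pm(t),0)$ and not equal to $C$ or $D$ (which are the fixed points, already handled in Lemmas~\ref{lem:W1C} and~\ref{lem:W1ABD}), and moreover with $u_1,u_3,u_4 \neq 0$ by Lemma~\ref{lm:discard_poles_W1}.

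The next step is to set up the non-degeneracy test for rank one points from Definition~\ref{dfn:nondeg_rankone}. At such a point $m = (0,0,x_3,y_3)$ the line $L = \ker dJ(m)^{\perp}$ (the tangent to the $\S^1$-orbit direction) is spanned by a vector in the $(x_2,y_2)$-plane, and $L^{\perp}/L$ can be identified with the $(x_3,y_3)$-plane. Restricting to the submanifold $\{x_2 = y_2 = 0\}$ (which is the fixed sphere $S$ of $J$, a symplectic surface), $H_t$ restricts to
\[ h_t(x_3,y_3) = \tfrac{(1-2t)(x_3^2+y_3^2)}{2} + \gamma t x_3 \sqrt{(2\beta - (x_3^2+y_3^2))(2(\alpha+\beta) - (x_3^2+y_3^2))}, \]
and $A_{\nu,\mu}$ descends to $\Omega^{-1} d^2 h_t(x_3,y_3)$ on this plane; the point $m$ is non-degenerate of elliptic-transverse type precisely when $d^2 h_t(x_3,y_3)$ is definite (equivalently $\det d^2 h_t > 0$ at a critical point of $h_t$), and of hyperbolic-transverse type when $\det d^2 h_t < 0$. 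This is essentially Lemma~\ref{lm:nondeg_rankone_red} applied with the fixed sphere $S$ playing the role of a reduced space: $S$ is smooth, $h_t = H_t|_S$ is its ``reduced'' Hamiltonian, and the same argument (or a direct computation) shows the type is governed by the Morse data of $h_t$. A clean way to organize the computation is to use polar coordinates $x_3 = r\cos\phi$, $y_3 = r\sin\phi$ on the sphere-chart, in which $h_t = (1-2t)\tfrac{r^2}{2} + \gamma t\, r\cos\phi\,\sqrt{(2\beta - r^2)(2(\alpha+\beta) - r^2)}$; this has the same structure as the $H_t^{\text{red},j}$ appearing in Lemma~\ref{lm:rankone_W1} with $j = 0$ (so $\rho\leftrightarrow r$ and $\alpha+\beta-j = \alpha+\beta$).

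Given this reduction, the main work — and the step I expect to be slightly delicate — is to show $\det d^2 h_t > 0$ at every critical point, i.e. to rerun the computation of Lemma~\ref{lm:rankone_W1} at $j = 0$. Concretely: critical points of $h_t$ have $\sin\phi = 0$; along $\sin\phi = 0$ the mixed second derivative vanishes, $\partial_\phi^2 h_t = -\gamma t\, r\cos\phi\,\sqrt{g(r)}$ has the sign of $-\cos\phi$ (where $g(r) = (2\beta - r^2)(2(\alpha+\beta) - r^2)$), and using the critical equation $(1-2t)r + \gamma t h(r)\cos\phi = 0$ with $h$ as in Equation~\eqref{eq:h_general} one rewrites $\partial_r^2 h_t$ at the critical point as a multiple of $\cos\phi$ times $f(r) = 2r^2 g g'' + 2r g g' - r^2 (g')^2 - 4g^2$; then $\det d^2 h_t$ has the sign of $-f(r)$, independent of $\cos\phi$. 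It therefore suffices to prove $f(r) < 0$ for $r^2 \in (0, \min(2\beta, 2(\alpha+\beta)))\setminus$(excluded values), which is exactly the polynomial-positivity argument already used in Lemma~\ref{lm:rankone_W1} specialized to $j = 0$: writing $f = 4P$ with $P$ the quadratic in $j$ from that proof, at $j = 0$ we get $P(0) = 3r^8 - 8(\alpha+2\beta)r^6 + 24\beta(\alpha+\beta)r^4 - 16\beta^2(\alpha+\beta)^2$, whose derivative is $12 r^3(2\beta - r^2)(2(\alpha+\beta) - r^2) > 0$ on the relevant range, so $P(0) \leq P(2\beta) = -16\alpha^2\beta^2 < 0$; alternatively one simply invokes that $\Delta = -64 r^6(2\beta - r^2)^3 < 0$ forces $P$, hence $f$, to have the constant sign $-16\beta^2 < 0$. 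This gives $\det d^2 h_t(r,\phi) > 0$ at each critical point, so all rank one points of $F_t$ on $J^{-1}(0)$ are non-degenerate of elliptic-transverse type, for every $t\in[0,1]$.
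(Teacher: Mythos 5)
Your strategy has a genuine gap: you have identified the wrong line $L$, and hence you test the wrong operator at the wrong points. At a rank one point $p=(0,0,x_3,y_3)\in J^{-1}(0)$ one has $dJ(p)=0$ but $dH_t(p)\neq 0$; in particular $X_J(p)=0$ (the point lies on the fixed sphere, so there is no $\S^1$-orbit direction there), and the line $L$ of Definition~\ref{dfn:nondeg_rankone} is $\mathrm{Span}\{X_{H_t}(p)\}$, which lies in the $(x_3,y_3)$-plane, not in the $(x_2,y_2)$-plane as you claim. Consequently $L^{\perp}/L$ is identified with the $(x_2,y_2)$-plane, and since the linear combination of $dJ$ and $dH_t$ vanishing at $p$ is $dJ$ itself, the operator to analyze is $\Omega^{-1}d^2J(p)$ restricted to that plane --- not $\Omega^{-1}d^2 h_t$ on the $(x_3,y_3)$-plane. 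This is what the paper's proof does: in the chart $U_{1,4}$ one has $J=\frac{1}{2}(x_2^2+y_2^2)$, the vector $X_{H_t}(p)$ has components only in the $(x_3,y_3)$-directions (as recorded in the proof of Lemma~\ref{lm:fixed_points_W1ns}), so $d^2J(p)$ restricts to the identity on $L^{\perp}/L$ and $\Omega^{-1}d^2J(p)_{|L^{\perp}/L}$ has eigenvalues $\pm i$; non-degeneracy and the elliptic-transverse type follow immediately, for every $t$.

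The reduction step you propose cannot be repaired by letting the fixed sphere $S=J^{-1}(0)$ play the role of a reduced space. Lemma~\ref{lm:nondeg_rankone_red} rests on the $\S^1$-action being nontrivial near the point (its content is precisely that rank one points of $F$ correspond to critical points of the reduced Hamiltonian), and on $S$ the action is trivial, so the correspondence breaks down: every point of $S$ is singular for $F_t$, the critical points of $h_t=H_t{}_{|S}$ are exactly the rank zero points $A_t$ and $B_t$ (already handled in Lemma~\ref{lem:W1ABD}), and the rank one points the lemma is about are the points of $S$ where $dh_t\neq 0$. Your computation --- locate the critical points of $h_t$ with $\sin\phi=0$ and check $\det d^2h_t>0$, i.e.\ rerun Lemma~\ref{lm:rankone_W1} at $j=0$ --- therefore only re-derives information about $A_t$ and $B_t$ and says nothing about the rank one points in $J^{-1}(0)$; moreover, at a non-critical point of $h_t$ the Hessian of $h_t$ is not invariantly defined, so no conclusion about the type of those points could be extracted this way.
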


\begin{proof}
Using Definition \ref{dfn:nondeg_rankone}, a rank one point $p$ on $J^{-1}(0)$ (hence, such that $dJ(p) = 0$) is non-degenerate if and only if the restriction of $d^2 J(p)$ to $L^{\perp} \slash L$ is invertible, where $L = \mathrm{Span}\{ X_{H_t}(p) \}$. Furthermore, if this is the case, the singular point is of elliptic-transverse type if and only if the eigenvalues of $\Omega^{-1} d^2 J(p)_{|L^{\perp} \slash L}$ are of the form $\pm i \alpha$ with $\alpha \neq 0$ real, where $\Omega$ is the matrix of the symplectic form on $L^{\perp} \slash L$. As in the second part of the proof of Lemma \ref{lm:fixed_points_W1ns}, we work on the trivialization open set $U_{1,4}$ defined in Section \ref{subsect:coords_W}, with local coordinates $x_2, y_2, x_3, y_3$; in these coordinates $J = \frac{1}{2}(x_2^2 + y_2^2)$. We know from the aforementioned proof that at a singular point $p = (0,0,x_3,y_3)$ of rank one in $J^{-1}(0)$, the Hamiltonian vector field of $H_t$ reads
\[ X_{H_t}(p) = \begin{pmatrix} 0 \\ 0 \\ f_1(t,x_3,y_3) \\ f_2(t,x_3,y_3) \end{pmatrix} \]
for some functions $f_1, f_2$. Therefore, 
\[ L^{\perp} = \mathrm{Span}\left\{\begin{pmatrix} 1 \\ 0 \\ 0 \\ 0 \end{pmatrix},\begin{pmatrix} 0 \\ 1 \\ 0 \\ 0 \end{pmatrix}, X_{H_t}(p) \right\}, \quad L^{\perp} \slash L \simeq \mathrm{Span}\left\{\begin{pmatrix} 1 \\ 0 \\ 0 \\ 0 \end{pmatrix},\begin{pmatrix} 0 \\ 1 \\ 0 \\ 0 \end{pmatrix}\right\}; \]
so $d^2 J(p)$ restricts to the identity on $L^{\perp} \slash L$, and $\Omega^{-1} d^2 J(p)_{|L^{\perp} \slash L}$ has eigenvalues $\pm i$.
\end{proof}

\subsection{An explicit example with the same fixed points for all \texorpdfstring{$t$}{t}}
\label{subsect:W1_switch}

In this section, we describe another system on $\Hirzscaled{1}$, given in Equation \eqref{system:W1_switch}, which differs from the system 
from Equation \eqref{system:W1_movingAB} described above in several
important ways, but still has the same semitoric polygons when it is semitoric with one focus-focus point. Firstly, in the system described above the fixed points of $(J,H_t)$ vary in the manifold with $t$, while in the
system described below this is not the case. On the other hand, the \emph{images} of the fixed points of the system 
described in this section which are in the
set $J^{-1}(0)$ move vertically as $t$ changes and pass through each other, such that for $t<1/2$ we have
$H_t(A)<H_t(B)$, for $t>1/2$ we have $H_t(A)>H_t(B)$, and for $t=1/2$ we have $F_t(A)=F_t(B)$. Thus, at
$t=1/2$ the system is degenerate, with the entire sphere $J^{-1}(0)$ being sent to the same point by $F_{1/2}$;
this type of degeneracy, called the collapse of the fixed sphere, is described in item~\ref{item:fixedcollapse} in Section~\ref{sec:semitoricfam-degen}.
Figure \ref{fig:moment_map_W1_switch} shows the image of the momentum map of the system from Equation \eqref{system:W1_switch}
for varying values of $t$.
Because of the degeneracy at $t=1/2$, this is not a semitoric 1-transition family and thus 
this is not the system guaranteed by Theorem~\ref{thm:Wkdetailed}.

The idea is to replace $X$ by $J X$ in the system from Equation~\eqref{system:W1_movingAB} (and we also add a constant to shift the momentum map image), with $X$ as in Equation \eqref{eq:X_W1} and $J$ as in Equation \eqref{eqn:W1_J}, so that this term is zero on the fixed sphere. 
Thus, we consider the system
\begin{equation}
 F_t = (J,H_t),\,\,\textrm{ where }\,\,H_t = (1-t) R + t \left(-R + \gamma J X +\beta\right) = (1-2t) R + t(\gamma J X + \beta). 
\label{system:W1_switch}\end{equation}

\begin{thm}\label{thm:W1_switch}
For $0 < \gamma < \frac{1}{2 \alpha \sqrt{2\beta}}$ the system $F_t = (J,H_t)$ given in Equation \eqref{system:W1_switch} is a semitoric family on $\Hirzscaled{1}$ with degenerate times $t^-$, $\frac{1}{2}$, and $t^+$, where
\[ t^- = \frac{1}{2(1 + \alpha \gamma \sqrt{2 \beta})}, \qquad t^+ =  \frac{1}{2(1 - \alpha \gamma \sqrt{2 \beta})}. \]
\end{thm}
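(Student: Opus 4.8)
The plan is to follow the same outline as the proof of Theorem~\ref{thm:W1_movingAB}: verify that $J$ generates an effective $\S^1$-action (which is immediate, since $J$ is unchanged from the previous system), that $J$ is proper (automatic since $\Hirzscaled{1}$ is compact), and then analyze all singular points, checking that away from the three degenerate times $t^-, \tfrac12, t^+$ every singular point is non-degenerate with no hyperbolic block. The new feature compared to Equation~\eqref{system:W1_movingAB} is the extra factor of $J$ in the coupling term; since $J$ vanishes on the fixed sphere $\{u_2 = 0\}$, the term $\gamma J X$ and all of its first and second derivatives vanish there, so on $J^{-1}(0)$ the Hamiltonian is simply $H_t = (1-2t)R + t\beta$ up to higher order, which is exactly $(1-2t)$ times the toric Hamiltonian $R$ plus a constant. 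This is why the fixed points $A = [0,\sqrt{2\alpha},\sqrt{2\beta},0]$ and $B = [0,\sqrt{2(\alpha+\beta)},0,\sqrt{2\beta}]$ on the wall do not move with $t$, and it is also the source of the degeneracy at $t = \tfrac12$: when $t=\tfrac12$ the coefficient $1-2t$ vanishes and $H_{1/2}$ restricted to $J^{-1}(0)$ is constant, so the whole sphere $J^{-1}(0)$ collapses to one point (this is case~\ref{item:fixedcollapse} of Section~\ref{sec:semitoricfam-degen}).

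\textbf{Key steps in order.} First I would list the fixed points: $A$, $B$, $D = [0,\sqrt{2(\alpha+\beta)},0,\sqrt{2\beta}]$-type points, and the interior fixed point $C = [0,\sqrt{2\alpha},\sqrt{2\beta},0]$, using the same charts $U_{\ell,m}$ as in Section~\ref{subsect:coords_W}; I expect this to be a finite set with no sphere except at $t=\tfrac12$, where $J^{-1}(0)$ becomes a sphere of degenerate rank zero points. Second, I would compute the Hessian of $H_t$ at $C$ in the chart $U_{2,3}$ exactly as in Lemma~\ref{lem:W1C}; because $J(C) = \alpha \neq 0$, the factor $J$ contributes the constant $\alpha$ to the quadratic part of $\gamma J X$, so the Hessian at $C$ is the same as in Lemma~\ref{lem:W1C} but with $\gamma$ replaced by $\alpha\gamma$. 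This immediately gives that $C$ is elliptic-elliptic for $t \in [0,t^-)\cup(t^+,1]$, focus-focus for $t \in (t^-,t^+)$, and degenerate at $t^\pm$, with $t^\pm = \tfrac{1}{2(1\mp\alpha\gamma\sqrt{2\beta})}$ — note the condition $0 < \gamma < \tfrac{1}{2\alpha\sqrt{2\beta}}$ is exactly what forces $0 < t^- < \tfrac12 < t^+ < 1$. Third, I would check the fixed points on the wall $J^{-1}(0)$: for $t \neq \tfrac12$ the reduced Hamiltonian on $J^{-1}(0)$ is $(1-2t)R + \text{const}$ up to higher order, so $A$ and $B$ are non-degenerate elliptic-elliptic (the sign $1-2t$ flips but they stay elliptic), and at $t=\tfrac12$ one checks via the Hessian that they are degenerate (Hessian of $H_{1/2}$ restricted to the sphere directions vanishes). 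Fourth, the point $D$-type interior fixed point is handled exactly as in Lemma~\ref{lem:W1ABD}, again with $\gamma \mapsto \alpha\gamma$ in the relevant formulas, giving elliptic-elliptic for all $t$. Fifth, for the rank one points I would use Lemma~\ref{lm:nondeg_rankone_red} and work on the reduced spaces $M_j^{\text{red}}$ as in Lemmas~\ref{lm:discard_poles_W1}, \ref{lm:rankone_W1}, \ref{lem:W1_fixedsphere}: away from $j=0$, the reduced Hamiltonian is $(1-2t)\tfrac{\rho^2}{2} + \gamma t j \rho\cos\theta\sqrt{g(\rho)}$, which is identical in form to the previous system with $\gamma \mapsto \gamma j$, so the same discriminant computation (the claim that a certain polynomial $P$ in $j$ has negative discriminant) shows all rank one points off the wall are elliptic-transverse for all $t$; on $J^{-1}(0)$ the argument of Lemma~\ref{lem:W1_fixedsphere} applies verbatim since $d^2J$ restricted to $L^\perp/L$ is the identity regardless of $H_t$. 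Finally, I would collect these facts: the system is semitoric exactly when all singular points are non-degenerate with no hyperbolic component, which holds precisely for $t \notin \{t^-,\tfrac12,t^+\}$, establishing that it is a semitoric family with those three degenerate times.

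\textbf{Main obstacle.} The routine-but-delicate part is the collapse analysis at $t=\tfrac12$: one must show that $F_{1/2}(J^{-1}(0))$ is a single point and that every point of $J^{-1}(0)$ is a genuinely degenerate rank zero point of $F_{1/2}$, not merely rank one. This requires checking that $dH_{1/2}$ vanishes identically on $J^{-1}(0)$ — which follows because $H_{1/2} = 0\cdot R + \tfrac12(\gamma JX + \beta)$ and both $J$ and $JX$ (hence their differentials, using $dJ=0$ and $J=0$ on the wall) vanish there — and then that the Hessians $d^2J$ and $d^2 H_{1/2}$ do not span a Cartan subalgebra at such points, which is clear since $d^2H_{1/2}$ restricted to the fixed-sphere directions is zero. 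A secondary subtlety is confirming that at $t=\tfrac12$ there are no \emph{other} degenerate points away from the wall (so that $\tfrac12$ is a legitimate isolated degenerate time with the degeneracy confined to $J^{-1}(0)$), which follows from the rank one analysis above being valid for all $t$ including $t=\tfrac12$ on $M\setminus J^{-1}(0)$, and from $C$ and $D$ being non-degenerate at $t=\tfrac12$ (for $C$ this uses the linear combination trick with $d^2J$ as in the end of Lemma~\ref{lem:W1C}, again with $\gamma\mapsto\alpha\gamma$). Everything else is a direct transcription of the $\Hirzscaled{1}$ computations already carried out, with the uniform substitution $\gamma \rightsquigarrow \alpha\gamma$ at the interior fixed point $C$ and $\gamma \rightsquigarrow \gamma j$ on the reduced spaces.
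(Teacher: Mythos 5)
Your proposal is correct and follows exactly the route the paper intends: the paper omits the proof of Theorem~\ref{thm:W1_switch}, saying it is very similar to that of Theorem~\ref{thm:W1_movingAB}, and your plan --- redo the Hessian computations of Lemmas~\ref{lem:W1C} and~\ref{lem:W1ABD} with the effective coupling $\gamma J(p)$ at each isolated fixed point, redo the reduced-space rank-one analysis of Lemmas~\ref{lm:discard_poles_W1}--\ref{lem:W1_fixedsphere} with $\gamma \mapsto \gamma j$, and add the collapse analysis of $J^{-1}(0)$ at $t=\tfrac12$ --- is precisely that adaptation, and it produces the stated $t^{\pm}$ because $J(C)=\alpha$. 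Only minor slips, none of which affect the argument: the wall fixed points are $A=[\sqrt{2(\alpha+\beta)},0,0,\sqrt{2\beta}]$ and $B=[\sqrt{2\alpha},0,\sqrt{2\beta},0]$ (you wrote the coordinates of $C$ and $D$); at $D$ the substitution is $\gamma\mapsto(\alpha+\beta)\gamma$ rather than $\alpha\gamma$ (harmless, since $D$ is elliptic-elliptic for any positive coupling); and on the fixed sphere $d^{2}(JX)=X\,d^{2}J$, so the second derivatives of $\gamma JX$ vanish only where $X=0$ --- which is all you need, as this holds at $A$ and $B$, and only the first derivatives (which do vanish on the whole sphere) enter the collapse argument at $t=\tfrac12$.
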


Note that since $0 < \gamma < 1$, we indeed have $0 < t^- < t^+ < 1$. Since the proof of this theorem is very similar to the proof of Theorem~\ref{thm:W1_movingAB}, we leave it to the reader. Let us simply mention that for all $t$ the fixed points include $A$, $B$, $C$, and $D$ with 
\[
\left\{\begin{aligned}
A &= [\sqrt{2(\alpha + \beta)},0,0,\sqrt{2\beta}],  & J(A) &= 0,             & H_t(A) &= t\beta, \\ 
B &= [\sqrt{2\alpha},0,\sqrt{2\beta},0],            & J(B) &=0,              & H_t(B) &= (1-t)\beta,\\ 
C &= [0,\sqrt{2\alpha},\sqrt{2\beta},0],            & J(C) &= \alpha,        & H_t(C) &= (1-t)\beta,\\ 
D &= [0,\sqrt{2(\alpha + \beta)},0,\sqrt{2\beta}],  & J(D) &= \alpha +\beta, & H_t(D) &= t\beta,
\end{aligned}\right.
\]
and that the point which transitions is again $C$. 
If $t\neq 1/2$ the only fixed points are $A,B,C,D$ and if $t=1/2$ the fixed points are $C,D$ and the entire set $J^{-1}(0)$.
Note that by Lemma \ref{lem:HPdegen}, $C$ is degenerate for $t = t^-$ and $t = t^+$. The image of $F_t$ is displayed in Figure \ref{fig:moment_map_W1_switch}.

\begin{figure}[h]
\begin{center}
\includegraphics[scale=0.45]{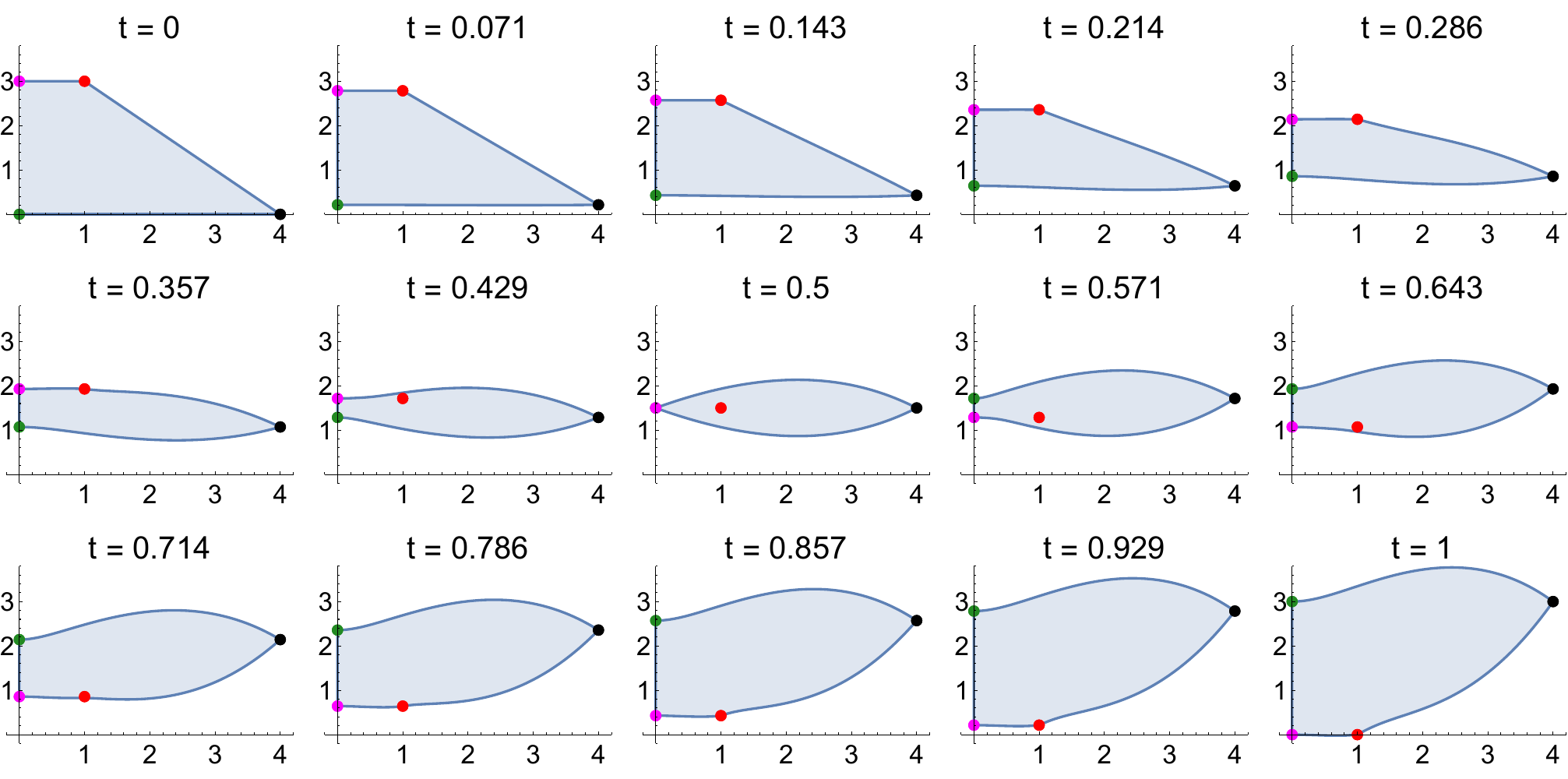}
\caption{The image $F_t(\Hirzscaled{1})$ of the system from Equation~\eqref{system:W1_switch} for $\alpha = 1$, $\beta = 3$ and $\gamma = \frac{3}{8 \alpha \sqrt{2 \beta}}$. Note that in this case, $t^- = \frac{4}{11}$ and $t^+ = \frac{4}{5}$. The images of $A$, $B$, $C$, and $D$ are colored blue, purple, red, and black respectively; note that at $t=1/2$
the images of $A$ and $B$ pass through each other. }
\label{fig:moment_map_W1_switch}
\end{center}
\end{figure}

\subsection{A \texorpdfstring{\family}{fixed-S1 family} on \texorpdfstring{$W_1(1,1)$}{W1} with hyperbolic singularities}
\label{subsect:triangle}

To conclude this section, we explain how to modify the system in Equation \eqref{system:W1_movingAB} in order to obtain a \family displaying hyperbolic singularities; more precisely, some elements in this family are so-called hyperbolic semitoric systems, as in \cite[Definition 3.2]{dullin-pelayo}. Following the techniques used in that paper, the idea is to add a higher order term to the non-periodic Hamiltonian; depending on this term, we may obtain that for some values of the parameter, the transition point is elliptic-elliptic and has image in the interior of the image of the momentum map, with curves of images of rank one points emanating from it. We achieve this by considering, on $W_1(1,1)$, $J$ as in Equation \eqref{eqn:W1_J} and 
\[ H_t = (1-2t) R + t X + 2 t |u_1|^2 |u_3|^2 \]
for $t \in [0,1]$, where $R$ is defined in Equation~\eqref{eqn:W1_J} and $X$ is as in Equation~\eqref{eq:X_W1}. Here, for simplicity, we have assumed that $\alpha = \beta = 1$ and we have taken $\gamma = 1$ in the original system \eqref{system:W1_movingAB}, but in principle the same recipe could be applied for other values of the parameters. The corresponding image of the momentum map is shown in Figure \ref{fig:triangle_W1}.
The interior lines in the image of the momentum map denote the image of non-degenerate rank one points (the ones in the segments adjacent to the red dot are elliptic-transverse and the rest are hyperbolic-transverse), except for the red point which is the image of an elliptic-elliptic point and the other two corners of the triangle, which are the images of degenerate rank one points; see Figure 4 in~\cite{dullin-pelayo}. Systems of this type and their relationship to Hamiltonian $\S^1$-spaces are discussed in \cite{HohPal21}. One of the reasons that the family of systems in Figure~\ref{fig:triangle_W1} is interesting is because it displays different
behavior for various values of $t$; it is toric for $t=0$, displays a so-called \emph{flap} for small values of $t$ (such as $t=0.33$),
and displays a so-called \emph{swallowtail} or \emph{pleat} for large values of $t$ (such as $t=1$).
For a discussion of topological types of the base of the Lagrangian fibration in integrable
systems, such as flaps and swallowtails, including physical examples of systems with these features, 
see~\cite{Efs-book,EfsSug10,EfsGia}.

\begin{figure}[h]
\begin{center}
\includegraphics[scale=0.45]{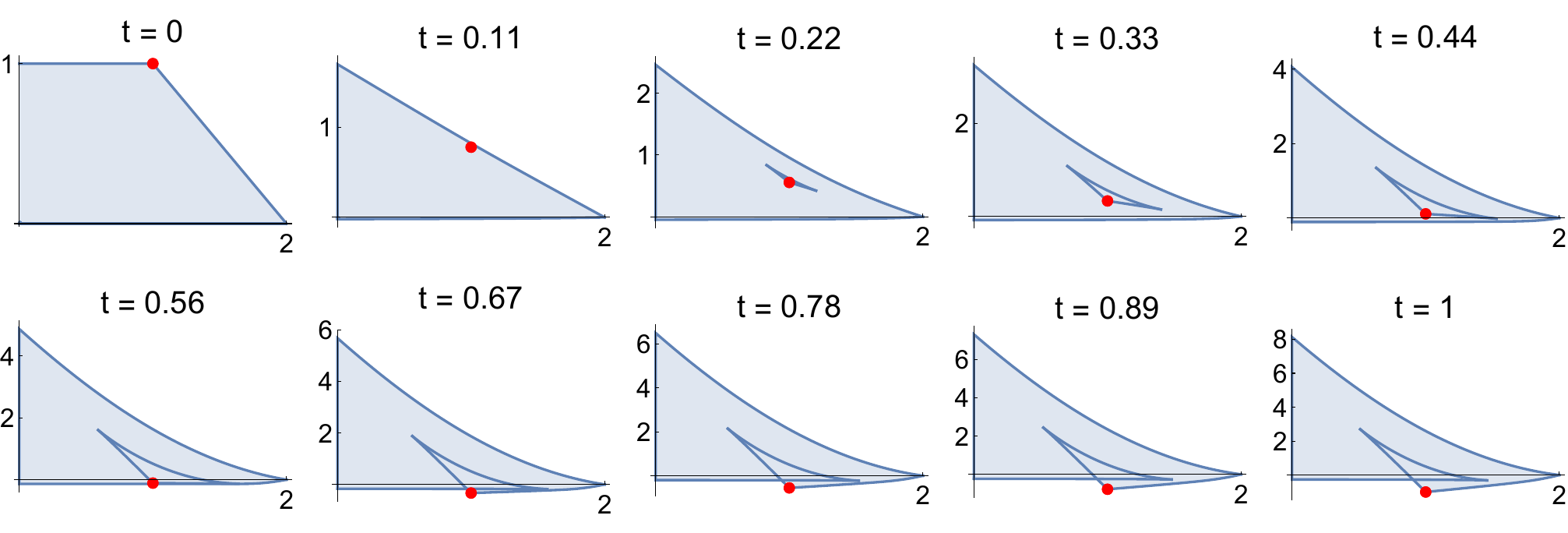}
\caption{Image of the momentum map for the system described in Section \ref{subsect:triangle}.}
\label{fig:triangle_W1}
\end{center}
\end{figure}

\section{Explicit semitoric families on \texorpdfstring{$\Hirzscaled{2}$}{W2}}
\label{sec:W2examples}

In this section we present two explicit examples of semitoric 1-transition families on $\Hirzscaled{2}$ (in
Sections~\ref{sec:W2_firstexample} and~\ref{sec:W2_secondexample}),
and in Section \ref{sec:W2_s1s2} we present a two-parameter family of semitoric systems on $\Hirzscaled{2}$
which transitions between having zero, one, and two focus-focus points depending on the values of
the parameters.

\subsection{Preliminaries on \texorpdfstring{$\Hirzscaled{2}$}{W2}}
\label{subsect:notation_W2}

As before, we see $\Hirzscaled{2}$ as the symplectic reduction of $\C^4$ by 
\[ N(u_1,u_2,u_3,u_4) = \frac{1}{2} \left( |u_1|^2 + |u_2|^2 + 2 |u_3|^2, |u_3|^2 + |u_4|^2 \right) - \left( \alpha + 2 \beta, \beta \right) \]
at level zero.
We consider the periodic Hamiltonians
\begin{equation}
\label{eqn:W2JR}
 J = \frac{1}{2}\left( |u_2|^2 + |u_3|^2 \right), \qquad R = \frac{1}{2} \left( |u_3|^2 - |u_4|^2 \right);
\end{equation}
note that $(J,R)$ is not toric (since the action generated by $R$ is not effective) but $(J,\frac{R}{2})$ is. Nevertheless, using $R$ is more convenient for our purpose. The image of the momentum map $(J,R)$ is displayed in Figure \ref{fig:W2_JR}; note that one feature of this system is that there is no vertical wall, so we can hope to write a semitoric 1-transition family for which the point with image $(\beta,\beta)$ transitions into the interior of the momentum map image and becomes a focus-focus point.

\begin{figure}
\begin{center}
\begin{tikzpicture}
\filldraw[draw=black,fill=gray!60] (0,-1) node[anchor=north,color=black]{$(0,-\beta)$}
  -- (1,1) node[anchor=south,color=black]{$(\beta,\beta)$}
  -- (4,1) node[anchor=south,color=black]{$(\alpha+\beta,\beta)$}
  -- (5,-1) node[anchor=north,color=black]{$(\alpha + 2 \beta,-\beta)$}
  -- cycle;
\end{tikzpicture}
\caption{The image of the system $(J,R)$ from Equation~\eqref{eqn:W2JR} on $\Hirzscaled{2}$.}
\label{fig:W2_JR}
\end{center}
\end{figure}

We will need the following simple lemma, which we state without proof.
\begin{lm}
The fixed points of $J$ on $\Hirzscaled{2}$ are
\[\begin{cases}
A = \left[\sqrt{2(\alpha + 2 \beta)},0,0,\sqrt{2\beta}\right], &B = \left[\sqrt{2\alpha},0,\sqrt{2\beta},0\right]\\[2mm	]
C = \left[0,\sqrt{2\alpha},\sqrt{2\beta},0\right], &D = \left[0,\sqrt{2(\alpha + 2 \beta)},0,\sqrt{2\beta}\right]
\end{cases}\]
with values $J(A) = 0$, $J(B) = \beta$, $J(C) = \alpha + \beta$, and $J(D) = \alpha + 2 \beta$.
\end{lm}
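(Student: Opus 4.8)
The statement to prove is the lemma identifying the four fixed points of $J$ on $\Hirzscaled{2}$, where $J = \frac{1}{2}(|u_2|^2 + |u_3|^2)$ and the listed points $A, B, C, D$ are given in the quotient coordinates $[u_1,u_2,u_3,u_4]$. The plan is to proceed exactly as in the analogous computations for $\Hirzscaled{1}$ in Section~\ref{subsect:notation_W1}: the fixed set of the $\S^1$-action generated by $J$ on $\Hirzscaled{2}$ is the zero set of $dJ$, so I would first characterize this set directly in the symplectic quotient and then check that it consists of exactly these four points together with possibly a fixed surface.

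First I would work with the presentation of $\Hirzscaled{2}$ as the reduction at level zero of $\C^4$ with the $\T^2$-action generated by $N$, so that the constraints $|u_1|^2 + |u_2|^2 + 2|u_3|^2 = 2(\alpha + 2\beta)$ and $|u_3|^2 + |u_4|^2 = 2\beta$ hold on $N^{-1}(0)$. Since $J$ descends from the function $\frac{1}{2}(|u_2|^2 + |u_3|^2)$ on $\C^4$, a point $[u]$ is fixed by the induced $\S^1$-action if and only if, modulo the $\T^2$-action tangent directions, the Hamiltonian vector field $X_J$ vanishes; equivalently, the gradient of $\frac{1}{2}(|u_2|^2 + |u_3|^2)$ lies in the span of the gradients of $N_1, N_2$, and the direction generating the $\S^1$. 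Concretely, using the coordinate charts $U_{\ell,m}$ analogous to those in Section~\ref{subsect:coords_W}, I would compute $J$ in each chart's local coordinates and solve $dJ = 0$ chart by chart. In the chart where $u_2 \neq 0$ and $u_3 \neq 0$, $J$ is a nonconstant function of the local coordinates with no critical point, so the fixed points must satisfy $u_2 = 0$ or $u_3 = 0$.

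The heart of the computation is the case analysis: imposing $u_2 = 0$ forces (via the constraints and the requirement $dJ = 0$, i.e.\ that $|u_3|^2$ also be stationary subject to $|u_3|^2 + |u_4|^2 = 2\beta$) either $u_3 = 0$ or $u_4 = 0$. When $u_2 = u_4 = 0$ we get $|u_3|^2 = 2\beta$, $|u_1|^2 = 2\alpha$, which (up to the $\T^2$-action, choosing real positive representatives) is the point $C = [0,\sqrt{2\alpha},\sqrt{2\beta},0]$ with $J(C) = \alpha + \beta$; when $u_2 = u_3 = 0$ we get $|u_4|^2 = 2\beta$, $|u_1|^2 = 2(\alpha + 2\beta)$, giving $D = [0,\sqrt{2(\alpha+2\beta)},0,\sqrt{2\beta}]$ with $J(D) = \alpha + 2\beta$. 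Symmetrically, imposing $u_3 = 0$ (while $u_2 \neq 0$ was already handled) forces $u_1 = 0$ or $u_2 = 0$; but $u_3 = 0, u_4 = 0$ is impossible since $|u_3|^2 + |u_4|^2 = 2\beta$, so the remaining subcases give $A = [\sqrt{2(\alpha+2\beta)},0,0,\sqrt{2\beta}]$ with $J(A) = 0$ and $B = [\sqrt{2\alpha},0,\sqrt{2\beta},0]$ with $J(B) = \beta$. Finally I would note that the apparent possibility of a whole fixed sphere is ruled out here: unlike on $\Hirzscaled{1}$, the fixed locus $\{dJ = 0\}$ consists only of these four isolated points, consistent with $\chi(\Hirzscaled{2}) = 4$ and Lemma~\ref{lem:dJzero}. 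The main obstacle is purely bookkeeping — being careful with the $\T^2$-quotient so that one does not overcount points differing by the torus action, and handling the chart transitions so every critical point is captured exactly once; there is no conceptual difficulty, which is presumably why the authors state it without proof.
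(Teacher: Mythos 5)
The paper states this lemma without proof, so there is no argument of the authors to compare against; judged on its own terms, your strategy (identify the fixed set of $J$ chart by chart, as in the $\Hirzscaled{1}$ computations) is the right one, but the execution contains a genuine error. Your key dichotomy ``the fixed points must satisfy $u_2=0$ or $u_3=0$'' is false: in the chart $U_{2,3}$, using $|u_3|^2 = 2\beta - (x_4^2+y_4^2)$ and $|u_2|^2 = 2(\alpha+2\beta) - (x_1^2+y_1^2) - 2|u_3|^2$, one finds
\[ J = (\alpha+\beta) - \tfrac{1}{2}(x_1^2+y_1^2) + \tfrac{1}{2}(x_4^2+y_4^2), \]
which does have a critical point, at the chart origin $u_1=0=u_4$, i.e.\ at $C$ --- and $C$ has $u_2=\sqrt{2\alpha}\neq 0$ and $u_3=\sqrt{2\beta}\neq 0$, so your dichotomy would exclude it. The correct dichotomies are $u_1u_2=0$ and $u_3u_4=0$: writing the condition that $X_J(u)$ lie in $\mathrm{span}\{X_{N_1}(u),X_{N_2}(u)\}$, the rotation weights of $J$, $N_1$, $N_2$ on $(u_1,u_2,u_3,u_4)$ are $(0,1,1,0)$, $(1,1,2,0)$, $(0,0,1,1)$, and $X_J=aX_{N_1}+bX_{N_2}$ is impossible if $u_1,u_2$ are both nonzero (it forces $a=0$ and $a=1$) or if $u_3,u_4$ are both nonzero (it forces $b=0$ and $2a+b=1$ with $a\in\{0,1\}$).

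Your subsequent case analysis is then internally inconsistent: $u_2=u_4=0$ gives $|u_1|^2=2\alpha$, $|u_3|^2=2\beta$, i.e.\ the point $B=[\sqrt{2\alpha},0,\sqrt{2\beta},0]$ with $J=\beta$, not $C$ with $J=\alpha+\beta$; $u_2=u_3=0$ gives $A$ with $J=0$, not $D$; and the case $u_3=0$ yields $A$ and $D$, never $B$ (which has $u_3\neq 0$). In each instance the point you name fails the case hypothesis you imposed, and the claimed $J$-value contradicts $J=\tfrac12(|u_2|^2+|u_3|^2)$ under that hypothesis. The fix is straightforward: either run the case analysis on the correct dichotomies $u_1u_2=0$, $u_3u_4=0$ (noting $u_3=u_4=0$ is excluded by $|u_3|^2+|u_4|^2=2\beta$ and $u_1=u_2=0$ by $|u_1|^2+|u_2|^2\geq 2\alpha>0$), which yields exactly $A,B,C,D$ with the stated $J$-values, or observe that in each of the four charts $U_{1,3},U_{1,4},U_{2,3},U_{2,4}$ the function $J$ is a constant plus $\pm\tfrac12(x_p^2+y_p^2)\pm\tfrac12(x_q^2+y_q^2)$, whose unique critical point is the chart origin, giving $B$, $A$, $C$, $D$ respectively; since these charts cover $\Hirzscaled{2}$, the fixed set is exactly these four points and, consistently with Lemma~\ref{lem:dJzero}, there is no fixed sphere.
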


For $j \notin \{0,\alpha,\alpha + \beta, \alpha + 2\beta \}$, the reduced space $M_j^{\text{red}} = J^{-1}(j) \slash \S^1$ is a smooth symplectic 2-sphere, displayed in Figure \ref{fig:reduced_space_W2}. We obtain this figure by considering the functions $X = \Re\left( \bar{u}_1 \bar{u}_2 u_3 \bar{u}_4 \right)$ and $Y = \Im\left( \bar{u}_1 \bar{u}_2 u_3 \bar{u}_4 \right)$, which are invariant under the actions generated by $J$ and $N$.
Since $|u_1|^2 = 2\alpha + 3\beta - 2J - R$, $|u_2|^2 = 2J - \beta - R$, $|u_3|^2 = \beta + R$ and $|u_4|^2 = \beta - R$, we have that $M_j^{\text{red}}$ is described by the equation
\[ X^2 + Y^2 = \left( 2\alpha + 3\beta - 2j - R \right) \left( 2j - \beta - R \right) \left( \beta^2 - R^2 \right). \] 
As before, the set
\[ M_j^{\text{red}} \setminus \{ [u] \ | \ u_1 = 0  \ \text{or}   \  u_2 = 0 \ \text{or}   \  u_3 = 0 \ \text{or} \  u_4 = 0 \} \]
is $M_j^{\text{red}}$ minus two points. We obtain cylindrical coordinates $(\rho, \theta)$ on this set as follows. Using the actions of $J$ and $N$, we may choose a representative $(x_1,x_2,u_3,x_4)$ of $[u_1, u_2, u_3, u_4]$ such that $x_1, x_2$ and $x_4$ are real and nonnegative. We obtain by writing $u_3 = \rho \exp(i \theta)$ with $\rho > 0$, $\rho^2 < \min\left( 2 \beta, 2j, 2(\alpha + 2 \beta - j) \right)$, and $\theta \in [0,2\pi)$ that
\[ x_1 = \sqrt{2(\alpha + 2\beta - j) - \rho^2}, \quad x_2 = \sqrt{2j - \rho^2}, \quad x_4 = \sqrt{2\beta - \rho^2}. \]
In these coordinates, $R = \rho^2 - \beta$ and $X = \rho \cos \theta \sqrt{(2(\alpha + 2\beta - j) - \rho^2)(2j - \rho^2)(2\beta - \rho^2)}$.

\begin{figure}
\begin{center}
\includegraphics[scale=0.4]{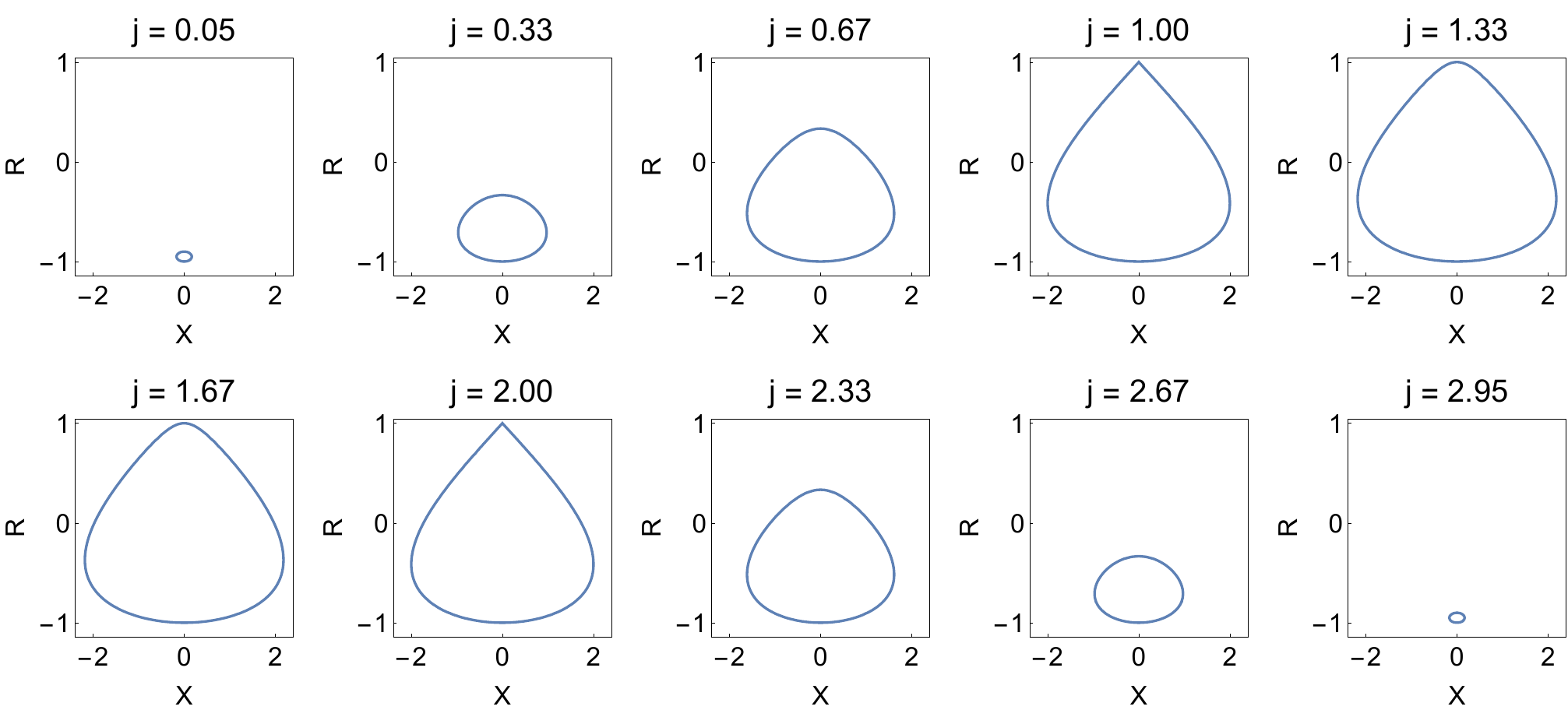}
\caption{The section $Y=0$ of the reduced space $M_j^{\text{red}}$ (which is a surface of revolution around the $R$ axis) for $\alpha = \beta = 1$ and different values of $j$. As expected, the reduced space is singular for $j = \beta$ and $j = \alpha + \beta$,
since the rank zero points $B$ and $C$ are in these levels of $J$.}
\label{fig:reduced_space_W2}
\end{center}
\end{figure}

\subsection{A semitoric 1-transition family on \texorpdfstring{$\Hirzscaled{2}$}{W2}}
\label{sec:W2_firstexample}

We consider
\[ H_t = (1-t) R + \frac{\beta t}{\alpha (\alpha + 2 \beta)} \left( \gamma X + (2J - \alpha - 2\beta)(R + \alpha + \beta) \right), \qquad \text{with} \ 0 < \gamma < \frac{1}{2} \sqrt{\frac{\alpha}{\beta}};  \]
the upper bound for $\gamma$ is required to get transition times in $(0,1)$ in the semitoric transition family that we will build. The constants are chosen so that 
$H_t(A) = -\beta$, $H_t(B) = (1-2t)\beta$, $H_t(C) = \beta$ and $H_t(D) = (2t - 1) \beta$, and thus the images of the fixed points by $H_0$ and $H_1$ are always $\pm\beta$.
We claim that $(J,H_t)$ is a transition family with transition point $B$. The momentum map image of this family for varying values of $t$ is shown in the top row of Figure~\ref{fig:moment_map_W2_array}.

\begin{thm}
\label{thm:W2_trans_B}
The system given by
\begin{equation}\label{system:W2_transB}
  J = \frac{1}{2}\left( |u_2|^2 + |u_3|^2 \right), \quad 
  H_t = (1-t) R + \frac{\beta t \left( \gamma X + (2J - \alpha - 2\beta)(R + \alpha + \beta) \right) }{\alpha (\alpha + 2 \beta) } 
\end{equation} 
on $\Hirzscaled{2}$ is a semitoric 1-transition family with transition point $B = [\sqrt{2\alpha}, 0, \sqrt{2\beta},0]$ and transition times
\[ t^- = \frac{1 + 2 \nu}{1 + (3 + c) \nu}\,\,\textrm{ and }\,\, t^+ = \frac{1 + 2 \nu}{1 + (3 - c) \nu}, \]
where $\nu = \frac{\beta}{\alpha}$ and $c = 2\gamma \sqrt{\nu}$, so that $0 < c < 1$.
\end{thm}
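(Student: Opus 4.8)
The plan is to follow the strategy of the proof of Theorem~\ref{thm:W1_movingAB}: identify the fixed points of $F_t=(J,H_t)$, show they are all non-degenerate of the expected type for $t\notin\{t^-,t^+\}$ (with $B$ becoming focus-focus exactly on $(t^-,t^+)$), show that the rank one points are always non-degenerate of elliptic-transverse type, and then verify the four conditions of Definition~\ref{dfn:transition_fam} with transition point $B$. Since the Delzant polygon of $\Hirzscaled{2}$ (Figure~\ref{fig:W2_JR}) has no vertical edge, the $\S^1$-action generated by $J$ has no fixed surface, so by Lemma~\ref{lem:dJzero} its fixed set is the set of isolated points $\{A,B,C,D\}$; by Lemma~\ref{lem:fixedpointsdontmove} these are precisely the fixed points of $F_t$ for every $t\in[0,1]$, and one checks directly that $J^{-1}(0)=\{A\}$ and $J^{-1}(\alpha+2\beta)=\{D\}$ are single points, while $J^{-1}(\beta)$ and $J^{-1}(\alpha+\beta)$ are (singular) fibers containing the interior fixed points $B$ and $C$.

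To analyze the fixed points, I would work in the charts $U_{\ell,m}$ of Section~\ref{subsect:coords_W}, write the Taylor expansion of $H_t$ at each of $A,B,C,D$, and compute the matrix $\Omega_p^{-1}\,d^2H_t(p)$ together with its reduced characteristic polynomial $\chi_{\nu,\mu}$ in the sense of Section~\ref{subsect:sing_dim4}; at the finitely many values of $t$ for which the eigenvalues fail to be simple, I would instead diagonalize a suitable linear combination $\Omega_p^{-1}(\nu\,d^2J(p)+\mu\,d^2H_t(p))$, exactly as in Lemma~\ref{lem:W1C}. For $A$, $C$, and $D$ I expect the discriminant of $\chi$ to be positive with both roots negative for all $t$, so these points stay elliptic-elliptic. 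For $B$, introducing $\nu=\beta/\alpha$ and $c=2\gamma\sqrt{\nu}$ (so $0<c<1$ by the hypothesis on $\gamma$), the discriminant should factor as a positive constant times $(1-2t)^2$ times a quadratic in $t$ whose two roots are exactly $t^-=\tfrac{1+2\nu}{1+(3+c)\nu}$ and $t^+=\tfrac{1+2\nu}{1+(3-c)\nu}$, both lying in $(0,1)$ with $t^-<t^+$; this makes $B$ elliptic-elliptic for $t<t^-$ and $t>t^+$ and focus-focus for $t^-<t<t^+$, and Lemma~\ref{lem:HPdegen} then shows $B$ is degenerate precisely at $t=t^{\pm}$. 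Along the way I would record $H_t(B)=(1-2t)\beta$, together with the values $H_t(A)=-\beta$, $H_t(C)=\beta$, $H_t(D)=(2t-1)\beta$, to verify the fourth bullet of Definition~\ref{dfn:transition_fam} (that $B$ is a maximum of $(H_0)_{|J^{-1}(\beta)}$ and a minimum of $(H_1)_{|J^{-1}(\beta)}$).

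For the rank one points I would use Lemma~\ref{lm:nondeg_rankone_red} (valid near images of rank one points even in the singular levels $j\in\{\beta,\alpha+\beta\}$, since $J$ has no fixed surface so every rank one point has $dJ\neq 0$), passing to the reduced Hamiltonian $H_t^{\text{red},j}$ on $M_j^{\text{red}}$ for $j\in(0,\alpha+2\beta)$. First, as in Lemma~\ref{lm:discard_poles_W1}, a Lagrange-multiplier computation — treating $u_k,\bar u_k$ as independent variables — rules out rank one points with $u_1=0$, $u_2=0$, $u_3=0$, or $u_4=0$, so the cylindrical coordinates $(\rho,\theta)$ of Section~\ref{subsect:notation_W2} are available; in these, $H_t^{\text{red},j}$ has, up to a constant, the form $a(t,j)\rho^2+b(t)\,\rho\cos\theta\,\sqrt{g(\rho)}$ with $g(\rho)=(2(\alpha+2\beta-j)-\rho^2)(2j-\rho^2)(2\beta-\rho^2)$. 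The critical point equations force $\sin\theta=0$; at a critical point the mixed second partial derivative vanishes, $\partial_\theta^2 H_t^{\text{red},j}$ has the sign of $-\cos\theta$, and substituting the $\partial_\rho$-equation into $\partial_\rho^2 H_t^{\text{red},j}$ rewrites it as a positive multiple of $\cos\theta$ times an explicit polynomial in $j$ and $\rho^2$, which I would show is negative by viewing it as a quadratic in $j$ with negative leading coefficient and negative discriminant, exactly as in Lemma~\ref{lm:rankone_W1}. Hence $\det d^2 H_t^{\text{red},j}>0$ at every critical point, so all rank one points of $F_t$ are non-degenerate of elliptic-transverse type for all $t\in[0,1]$.

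Putting everything together: for $t\notin\{t^-,t^+\}$, $F_t$ has only non-degenerate singular points and no hyperbolic block, $J$ is proper (the manifold is compact) and generates an effective $\S^1$-action (cf.\ Section~\ref{subsect:notation_W2}), so $F_t$ is semitoric; at $t=t^-$ and $t=t^+$ the unique degenerate point is $B$; and $B$ is elliptic-elliptic outside $[t^-,t^+]$, focus-focus inside, with no degenerate points in $\Hirzscaled{2}\setminus\{B\}$ at $t=t^{\pm}$, and switches from a maximum to a minimum of $(H_t)_{|J^{-1}(\beta)}$ between $t=0$ and $t=1$. Hence $(\Hirzscaled{2},\omHirz{2},F_t)$ is a semitoric $1$-transition family with transition point $B$ and transition times $t^{\pm}$. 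I expect the main obstacle to be the reduced-space computation — in particular verifying the definite sign of the polynomial controlling $\partial_\rho^2 H_t^{\text{red},j}$ at critical points — together with the algebra identifying the discriminant of the reduced characteristic polynomial at $B$ with $(1-2t)^2$ times a quadratic whose roots are $t^-$ and $t^+$; the bound $0<\gamma<\tfrac12\sqrt{\alpha/\beta}$ is exactly what keeps $t^{\pm}\in(0,1)$ (equivalently $0<c<1$).
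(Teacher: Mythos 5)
Your plan is structurally the same as the paper's proof: Hessian analysis of the four fixed points $A,B,C,D$ in the charts of Section~\ref{subsect:coords_W} via the reduced characteristic polynomial (Lemmas~\ref{lem:W2B} and~\ref{lem:W2ACD}), plus a reduced-space argument showing all rank one points are elliptic-transverse (the paper packages this in Lemma~\ref{lm:rankone_W2}, stated for the whole two-parameter family, of which this system is the slice $H_{t,1}$), all modeled on the proof of Theorem~\ref{thm:W1_movingAB}. Your preliminary observations (no fixed surface of $J$, hence fixed points independent of $t$; Lagrange multipliers to exclude rank one points with some $u_\ell=0$; special parameter values handled by a linear combination with $d^2J$) all match the paper.

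There is, however, one concrete step where your proposed method would fail as written. In the rank-one computation you plan to show the sign-controlling quantity $f(\rho)=2\rho^2 g g''+2\rho g g'-\rho^2 (g')^2-4g^2$ is negative ``by viewing it as a quadratic in $j$ with negative leading coefficient and negative discriminant, exactly as in Lemma~\ref{lm:rankone_W1}.'' That trick works for $\Hirzscaled{1}$ because there $g$ is linear in $j$; here $g(\rho)=(2(\alpha+2\beta-j)-\rho^2)(2j-\rho^2)(2\beta-\rho^2)$ is quadratic in $j$, so $f$ is a degree four polynomial in $j$ and the quadratic-discriminant argument does not apply. The paper itself flags exactly this point as ``a bit more technical'' and only sketches it (Lemma~\ref{lm:rankone_W2} and Figure~\ref{fig:rankone_W2}); a complete argument needs a genuinely different estimate for the quartic. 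A minor further discrepancy: at $B$ the discriminant of the reduced characteristic polynomial factors as $((1+\nu-4\nu^2)t-1-2\nu)^2$ times a quadratic whose roots are $t^{\pm}$, not $(1-2t)^2$ times a quadratic as you predicted; since that squared factor never vanishes on $[0,1]$, no special case is needed at $B$ (the linear-combination fallback you mention is instead used at isolated parameter values for $A$, $C$, $D$). Neither issue affects the overall architecture, but the quartic-in-$j$ negativity is the real work your outline leaves unresolved.
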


The rest of this section is devoted to the proof of this theorem;
indeed, in Lemmas~\ref{lem:W2B} and~\ref{lem:W2ACD} we show that the rank zero points are non-degenerate (except for the transition point at the transition times) and of the desired type and the fact that the rank one points are non-degenerate of elliptic-transverse type follows from Lemma~\ref{lm:rankone_W2}.

\begin{lm}
\label{lem:W2B}
The point $B$ is non-degenerate of focus-focus type for $t^- < t < t^+$ and of elliptic-elliptic type for $0 \leq t < t^-$ and $t^+ < t \leq 1$.
\end{lm}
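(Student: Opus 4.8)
The plan is to imitate the proof of Lemma~\ref{lem:W1C}: work in an explicit affine chart in which $B$ becomes the origin, compute the Hessians of $J$ and $H_t$ there, form the operator $A^t_{\nu,\mu} = \Omega_B^{-1}(\nu\,d^2 J(B) + \mu\,d^2 H_t(B))$, and analyze its reduced characteristic polynomial using the criterion in Section~\ref{subsect:sing_dim4}. First I would choose the chart $U_{2,4}=\{[u]\mid u_2\neq 0,\ u_4\neq 0\}$ on $\Hirzscaled{2}$, since $B=[\sqrt{2\alpha},0,\sqrt{2\beta},0]$ has $u_2\neq 0$ and $u_4=0$, so $B$ is a corner of that chart; actually the cleaner choice is a chart in which both nonvanishing coordinates of $B$ are the ``positive real'' ones, i.e.\ $U_{1,3}$ (since $u_1,u_3\neq 0$ at $B$), with local coordinates $x_2,y_2,x_4,y_4$ and $x_1,x_3$ solved via $N=0$. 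In these coordinates $J=\tfrac12(x_2^2+y_2^2)+\tfrac12|u_3|^2$ becomes, after substitution, a function whose Hessian at $B$ I compute directly, and likewise for $H_t$, using the explicit expression $X = \rho\cos\theta\sqrt{\cdots}$ together with $R=\tfrac12(|u_3|^2-|u_4|^2)$ and $2J-\alpha-2\beta$ from Equation~\eqref{eqn:W2JR}. The symplectic form is $dx_2\wedge dy_2 + dx_4\wedge dy_4$ there, so $\Omega_B$ is the standard block matrix.

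The key steps, in order: (1) write the Taylor expansion of $H_t$ at $B$ to second order in the chart coordinates, isolating the quadratic form; (2) write down $d^2 J(B)$ and $d^2 H_t(B)$ as $4\times 4$ matrices and $\Omega_B^{-1}$; (3) for $t\neq \tfrac12$, compute the reduced characteristic polynomial $\chi_{\nu,\mu}$ of $A^t_{0,1}=\Omega_B^{-1}d^2 H_t(B)$ and its discriminant $\Delta(t)$; (4) factor $\Delta(t)$ — I expect, by analogy with Lemma~\ref{lem:W1C}, that $\Delta(t) = \kappa\,(2t-1)^2\,(t-t^-)(t-t^+)$ for an explicitly positive constant $\kappa$, with $t^\pm$ the values stated in terms of $\nu=\beta/\alpha$ and $c=2\gamma\sqrt{\nu}$; (5) deduce the type: when $t^-<t<t^+$ we get $\Delta<0$ hence two nonreal eigenvalues, i.e.\ focus-focus, and when $t<t^-$ or $t>t^+$ we get $\Delta>0$ and must check both real roots $\lambda^\pm$ of $\chi$ are negative — this last sign check, as in Lemma~\ref{lem:W1C}, is done by introducing $b(t)=$ (the coefficient whose sign controls $\lambda^-$), showing $b(t)>0$ on the relevant intervals by factoring it, and then checking $b^2-\Delta>0$ (it should be a manifest perfect square or a manifestly positive quantity) so that $\lambda^+<0$ as well; (6) handle the exceptional value $t=\tfrac12$ separately: there $A^t_{0,1}$ has a repeated eigenvalue and the focus-focus criterion fails, so instead exhibit a specific linear combination $A^{1/2}_{\nu_0,\mu_0}=\Omega_B^{-1}(\nu_0 d^2 J(B)+\mu_0 d^2 H_{1/2}(B))$ whose eigenvalues are of the form $\pm\gamma_0\pm i\delta_0$ with $\gamma_0,\delta_0\neq 0$, which certifies $B$ is focus-focus at $t=\tfrac12$ (this mirrors the last paragraph of the proof of Lemma~\ref{lem:W1C}, where $\pm 1\pm i$ appeared).

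The main obstacle I anticipate is Step (4)–(5): the bookkeeping in factoring the discriminant and then verifying that \emph{both} real roots of $\chi_{\nu,\mu}$ are negative on the outer intervals. In the $W_1$ case this worked because $b^2-\Delta$ was a clean monomial ($16\beta^2\gamma^4 t^4$); here, with the extra affine term $(2J-\alpha-2\beta)(R+\alpha+\beta)$ in $H_t$, the quadratic part of $H_t$ at $B$ is slightly more complicated, so the algebra is heavier and one must be careful that the normalization constant $\tfrac{\beta}{\alpha(\alpha+2\beta)}$ is exactly what makes $t^\pm$ come out in the stated form. A secondary subtlety is confirming that $B$ genuinely is a fixed point of $F_t$ for all $t$ — i.e.\ that $dX$, $dR$, and $d(2J-\alpha-2\beta)(R+\alpha+\beta)$ all vanish at $B$ — which follows because $B$ corresponds to $u_2=u_4=0$ and each term is a product involving at least one of $u_2,u_4$ or is critical there; this should be recorded at the start, paralleling the corresponding observation in Lemma~\ref{lm:fixed_points_W1ns}. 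Finally, that $B$ is degenerate exactly at $t=t^\pm$ follows from Lemma~\ref{lem:HPdegen} once the type change is established, so no separate computation is needed for that.
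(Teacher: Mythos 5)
Your overall strategy coincides with the paper's proof of Lemma~\ref{lem:W2B}: work in the chart $U_{1,3}$ with coordinates $x_2,y_2,x_4,y_4$ (so that $B$ is the origin), Taylor-expand $H_t$ to second order, form $\Omega_B^{-1}d^2H_t(B)$, and read off the type from the sign of the discriminant $\Delta$ of the reduced characteristic polynomial, handling the elliptic-elliptic range by showing that the coefficient $b$ is positive and that $b^2-\Delta>0$, so both real roots are negative. Your steps (1)--(5) are exactly what the paper does, including both sign checks and the observation that degeneracy at $t^{\pm}$ comes for free from Lemma~\ref{lem:HPdegen}.

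However, step (6) is a genuine error, not merely an unnecessary precaution. By analogy with Lemma~\ref{lem:W1C} you expect $\Delta$ to contain the factor $(2t-1)^2$, hence a repeated-eigenvalue problem at $t=\tfrac12$, which you propose to resolve by exhibiting a linear combination with eigenvalues $\pm\gamma_0\pm i\delta_0$, thereby ``certifying $B$ is focus-focus at $t=\tfrac12$''. For this system that conclusion is false: since $c=2\gamma\sqrt{\nu}<1$, one has $t^-=\frac{1+2\nu}{1+(3+c)\nu}>\tfrac12$ (the inequality $2(1+2\nu)>1+(3+c)\nu$ reduces to $1>(c-1)\nu$, which always holds), so $t=\tfrac12$ lies outside $(t^-,t^+)$ and $B$ is elliptic-elliptic there --- as the statement of the lemma itself requires. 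Correspondingly, the squared factor appearing in the actual discriminant is $\bigl((1+\nu-4\nu^2)t-(1+2\nu)\bigr)^2$, which never vanishes for $t\in[0,1]$, so no exceptional value of $t$ arises and the paper needs no separate case (in contrast with $W_1$, where $t^-<\tfrac12<t^+$). If you carry out your own steps (1)--(5) honestly the computation self-corrects, but as written step (6) would either fail (no linear combination at $t=\tfrac12$ has four genuinely complex eigenvalues, since $\Delta>0$ there and the roots are distinct real and negative) or, worse, would purport to establish the wrong type at $t=\tfrac12$, contradicting the rest of your argument. Everything else (the fixed-point check at $B$, the choice of chart, the structure of the sign analysis) matches the paper.
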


\begin{proof}
The point $B$ corresponds to $u_2 = 0 = u_4$; we use the local coordinates $x_2, y_2, x_4, y_4$ on  $U_{1,3}$ defined in Section \ref{subsect:coords_W}, and write 
\[ x_1 = \sqrt{2\alpha + 2(x_4^2 + y_4^2) - (x_2^2 + y_2^2)}, \quad x_3 = \sqrt{2\beta - (x_4^2 + y_4^2)}. \]
In these coordinates, $B$ corresponds to $(0,0,0,0)$, $J = \beta + \frac{1}{2}(x_2^2 + y_2^2) - \frac{1}{2}(x_4^2 + y_4^2)$, $R = \beta - (x_4^2 + y_4^2)$, and
\[ X = (x_2 x_4 - y_2 y_4) \sqrt{\left( 2 \alpha + 2 (x_4^2 + y_4^2) - (x_2^2 + y_2^2) \right)\left( 2\beta - (x_4^2 + y_4^2) \right)}. \]
The Taylor expansion of $H_t$ at $(0,0,0,0)$ reads
\[ H_t = (1-2t)\beta + \frac{\beta t (x_2^2 + y_2^2)}{\alpha} + \left(t - 1 - \frac{2 \beta^2 t}{\alpha (\alpha + 2 \beta)} \right) (x_4^2 + y_4^2) + \frac{2 \beta \gamma \sqrt{\alpha \beta} t}{\alpha (\alpha + 2 \beta)} (x_2 x_4 - y_2 y_4) + O(3). \]
Hence we obtain that 
\[ \Omega_B^{-1} d^2 H_t(B) = 2 \begin{pmatrix} 0 & -\frac{\beta t}{\alpha} & 0 & \frac{\beta \gamma \sqrt{\alpha \beta} t}{\alpha (\alpha + 2 \beta)} \\ \frac{\beta t}{\alpha} & 0 & \frac{\beta \gamma\sqrt{\alpha \beta} t}{\alpha (\alpha + 2 \beta)} & 0 \\ 0 & \frac{\beta \gamma \sqrt{\alpha \beta} t}{\alpha (\alpha + 2 \beta)} & 0 & -\left(t - 1 - \frac{2 \beta^2 t}{\alpha (\alpha + 2 \beta)} \right) \\ \frac{\beta \gamma \sqrt{\alpha \beta} t}{\alpha (\alpha + 2 \beta)} & 0 & t - 1 - \frac{2 \beta^2 t}{\alpha (\alpha + 2 \beta)} & 0 \end{pmatrix}. \] 
The discriminant of the reduced characteristic polynomial $P$ of $\frac{1}{2}\Omega_B^{-1} d^2 H_t(B)$ reads
\[ \Delta = \frac{((1 + \nu - 4 \nu^2)t - 1 - 2 \nu)^2 (1 + 2 \nu - (1 + 3 \nu)(1-\lambda)t) (1 + 2 \nu - (1 + 3 \nu)(1+\lambda)t)}{(1 + 2 \nu)^4}, \]
where 
\[ \lambda = \frac{2 \gamma \nu}{1 + 3 \nu} \sqrt{\nu}. \]
One readily checks that the quantity $(1 + \nu - 4 \nu^2)t - 1 - 2 \nu$ never vanishes for $t \in [0,1]$ whenever $\nu > 0$; hence $\Delta > 0$ except when $t^- \leq t \leq t^+$ where 
\[ t^- = \frac{1 + 2 \nu}{(1 + 3 \nu)(1+\lambda)} = \frac{1 + 2 \nu}{1 + (3 + c) \nu}, \quad  t^+ = \frac{1 + 2 \nu}{(1 + 3 \nu)(1-\lambda)} = \frac{1 + 2 \nu}{1 + (3 - c) \nu}.  \] 
Thus when $t^- < t < t^+$, $\Delta < 0$, hence $P$ has two complex roots with nonzero imaginary part, therefore $B$ is non-degenerate of focus-focus type. When $0 \leq t < t^-$ or $t^- \leq t \leq t^+$, $\Delta > 0$ and $P$ has two distinct real roots, which are both negative. Indeed, these roots are $\frac{-b \pm \sqrt{\Delta}}{2}$
where $P = X^2 + b X + c$. A straightforward computation shows that $b$ is a second order polynomial in $t$ whose discriminant equals
\[ \frac{2\nu^2 (c^2 - 2 - 8 \nu (1 + \nu))}{(1+2\nu)^2} < 0  \]
since $0 < c < 1$ and $\nu > 0$, so $b$ never vanishes. Moreover, $b = 1$ for $t=0$, so $b > 0$. Furthermore, one readily checks that 
\[ b^2 - \Delta = \frac{\left( ( (1 + 3 \nu)^2 \lambda^2 + 4 \nu (1 + 2 \nu) (2 \nu^2 - 2 \nu - 1) )t + 4 \nu (1 + 2 \nu)^2 \right)^2 t^2}{4 (1 + 2 \nu)^4} > 0. \]
This analysis proves that $-b + \sqrt{\Delta} < 0$, hence both roots of $P$ are negative. Consequently, $B$ is non-degenerate of elliptic-elliptic type. 
\end{proof}

\begin{lm}
\label{lem:W2ACD}
The points $A, C, D$ are non-degenerate of elliptic-elliptic type for every $t \in [0,1]$.
\end{lm}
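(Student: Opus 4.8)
\textbf{Proof strategy for Lemma~\ref{lem:W2ACD}.} The plan is to treat each of the three points $A$, $C$, $D$ by the same computational recipe used for $B$ in Lemma~\ref{lem:W2B}, but without the complication of a sign change in the discriminant. For each point I would first choose the appropriate trivialization chart $U_{\ell,m}$ from Section~\ref{subsect:coords_W} in which that point lies (it corresponds to the two vanishing coordinates among $u_1,\dots,u_4$), so that the point itself becomes the origin $(0,0,0,0)$. The fixed points $A$, $C$, $D$ correspond respectively to $u_2=0=u_3$, $u_1=0=u_4$, and $u_1=0=u_3$; in each chart I would use the constraints $N=0$ to eliminate the two positive real coordinates and express $J$, $R$, $X$ in terms of the four local coordinates.

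The key step is then to compute the Taylor expansion of $H_t$ to second order at the origin and read off the Hessian matrix $d^2 H_t$ in the basis corresponding to the chart coordinates, together with the matrix $\Omega$ of the symplectic form (which, in these Darboux-like coordinates, is the standard one). From this I form $\Omega^{-1} d^2 H_t$ and its reduced characteristic polynomial $\chi_{\nu,\mu}$ as in Section~\ref{subsect:sing_dim4}, taking $(\nu,\mu)=(0,1)$; I expect that at each of $A$, $C$, $D$ the quadratic parts of $H_t$ split into two decoupled oscillator-type blocks (since $X$ contributes a term $x_p x_q - y_p y_q$ coupling one variable to another, and $R$ contributes pure squares), so that the eigenvalues come in pairs $\pm i a$, $\pm i b$. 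The task reduces to verifying that $a$ and $b$ are real, nonzero, and that the relevant products are positive for all $t\in[0,1]$ — i.e. checking the sign of a discriminant and of the constant term of the reduced characteristic polynomial. For $A$ and $D$ this is the analogue of the computation in Lemma~\ref{lem:W2B} with $\Delta>0$ everywhere (no transition occurs), and for $C$ it is similar; in each case the relevant polynomial in $t$ (or its discriminant) should have fixed sign because the constants in $H_t$ were chosen precisely so that the fixed-point values behave well, and because $0<\gamma<\tfrac12\sqrt{\alpha/\beta}$, equivalently $0<c<1$ with $c=2\gamma\sqrt{\nu}$.

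Concretely, for the point $A$ (chart $U_{1,4}$, coordinates $x_2,y_2,x_3,y_3$) one finds $J=\tfrac12(x_2^2+y_2^2)+\tfrac12(x_3^2+y_3^2)$ near $0$ after using $|u_1|^2, |u_4|^2$ from the constraints, and $X$ vanishes to higher order there since it contains the factor $u_2$ and $u_4$-type terms; so $H_t$ is, to second order, a nondegenerate combination of $(x_2^2+y_2^2)$ and $(x_3^2+y_3^2)$ with coefficients that are easily checked to be nonzero and of the correct sign for all $t\in[0,1]$, giving elliptic-elliptic type. The points $C$ and $D$ are handled the same way: at $C$ the term $X$ again contributes only higher-order corrections (it contains the factors $u_1$ and $u_2$, one of which vanishes at $C$ while the product of the non-vanishing moduli is a constant, but the coupling monomial $x_p x_q - y_p y_q$ still vanishes at the origin of the chart, leaving the Hessian diagonal), and similarly at $D$. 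In each case I would extract the two coefficients appearing in the diagonalized Hessian, factor them as polynomials in $t$, and observe that neither vanishes on $[0,1]$ and that their product is positive there — using $0<c<1$ and $\nu>0$ when these enter — concluding non-degeneracy of elliptic-elliptic type.

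\textbf{Main obstacle.} The only real difficulty is bookkeeping: correctly carrying out the change of coordinates in each of the three charts and the second-order Taylor expansions without algebra errors, and then verifying the sign conditions on the resulting polynomials in $t$. There is no conceptual obstruction — unlike the point $B$, none of $A$, $C$, $D$ undergoes a bifurcation, so the discriminant of the reduced characteristic polynomial should stay strictly positive throughout $[0,1]$; but because the coefficients of $H_t$ involve the parameters $\alpha,\beta,\gamma$ (packaged as $\nu$ and $c$), one must be careful that the sign arguments genuinely use only $\nu>0$ and $0<c<1$ and hold uniformly in $t$. Since this is entirely routine (the same kind of computation already displayed in the proof of Lemma~\ref{lem:W2B}), in the paper one could reasonably present $A$ in full detail and indicate that $C$ and $D$ are handled identically, or simply state that all three are straightforward verifications along the lines of Lemma~\ref{lem:W2B}.
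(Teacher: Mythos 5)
There is a genuine gap in your ``Concretely'' paragraph, and it is exactly the point on which the lemma is non-trivial. You claim that at $A$, $C$, $D$ the function $X=\Re(\bar u_1\bar u_2 u_3\bar u_4)$ contributes only higher-order corrections, so that the Hessian of $H_t$ is diagonal and the lemma reduces to checking the signs of two decoupled coefficients. This is false: at each of the four fixed points exactly two of the factors $u_1,\dots,u_4$ vanish (at $A$ it is $u_2=0=u_3$, at $C$ it is $u_1=0=u_4$, at $D$ it is $u_1=0=u_3$), so the lowest-order term of $X$ in the corresponding chart is \emph{quadratic}, namely a cross term such as $(x_2x_3+y_2y_3)$ at $A$, $(x_1x_4-y_1y_4)$ at $C$, $(x_1x_3+y_1y_3)$ at $D$, multiplied by a nonzero constant (the square-root factor evaluated at the point). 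A quadratic monomial of course vanishes at the origin, but its second derivatives do not, so it enters the Hessian with a coefficient proportional to $\gamma t$ — precisely the same mechanism that drives the transition at $B$. Consequently $\Omega^{-1}d^2H_t$ at $A$, $C$, $D$ is not block-diagonal, and whether these points stay elliptic-elliptic is a genuine quantitative question about $\gamma$, $\nu=\beta/\alpha$ and $t$, not a triviality.

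The correct route is the one you sketch in your first paragraph (and the one the paper follows): compute the full quadratic part including the $\gamma$-cross term, form the reduced characteristic polynomial $P=X^2+bX+c$ of $\Omega^{-1}d^2H_t$, and show for all $t\in[0,1]$ that its discriminant $\Delta$ is positive and both roots are negative. In the paper this requires showing that an auxiliary quadratic in $t$ appearing in $\Delta$ has negative discriminant (using $0<c<1$, $\nu>0$), that $b>0$ because its own discriminant in $t$ is negative and $b(0)=1$, and that $b^2-\Delta>0$ because it is a perfect square times $\nu^2t^2$; none of these steps is visible in your proposal once the cross term has been (incorrectly) discarded. A secondary omission: fixing $(\nu,\mu)=(0,1)$ is not always enough, since at isolated parameter values the relevant factor in $\Delta$ vanishes and $P$ has a repeated root; non-degeneracy is then verified by passing to a different linear combination $\nu\, d^2J+\mu\, d^2H_t$, as the paper does in those edge cases.
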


\begin{proof}
As usual, we may assume that $t \neq 0$ since the statement is clear in the case $t=0$ for which the system $(J,\frac{R}{2})$ is toric.

{\bf The point $A$.} The point $A$ corresponds to $u_2 = 0 = u_3$, so we use the local coordinates $x_2,y_2,x_3,y_3$ on $U_{1,4}$ as in Section \ref{subsect:coords_W}, in which $A$ corresponds to $(0,0,0,0)$, $R = (x_3^2 + y_3^2) - \beta$, $J = \frac{1}{2}(x_2^2 + y_2^2) + \frac{1}{2}(x_3^2 + y_3^2)$ and 
\[ X = (x_2 x_3 + y_2 y_3) \sqrt{\left( 2 (\alpha + 2 \beta) - 2 (x_3^2 + y_3^2) - (x_2^2 + y_2^2) \right)\left( 2\beta - (x_3^2 + y_3^2) \right)}, \]
The Taylor expansion of $H_t$ at $(0,0,0,0)$ reads 
\[ H_t = -\beta + \frac{\beta t (x_2^2 + y_2^2)}{\alpha + 2 \beta} + \left(1 - t - \frac{2 \beta^2 t}{\alpha (\alpha + 2 \beta)} \right) (x_3^2 + y_3^2) + \frac{2 \beta \gamma t}{\alpha} \sqrt{\frac{\beta}{\alpha + 2 \beta}} (x_2 x_3 + y_2 y_3) + O(3). \]
Hence the reduced characteristic polynomial $P$ of
\[ \frac{1}{2} \Omega_A^{-1} d^2 H_t(A) = \begin{pmatrix} 0 & -\frac{\beta t}{\alpha + 2 \beta}  & 0 & -\frac{\beta \gamma t}{\alpha} \sqrt{\frac{\beta}{\alpha + 2 \beta}} \\  \frac{\beta t}{\alpha + 2 \beta}  & 0 & \frac{\beta \gamma t}{\alpha} \sqrt{\frac{\beta}{\alpha + 2 \beta}} & 0 \\ 0 &-\frac{\beta \gamma t}{\alpha} \sqrt{\frac{\beta}{\alpha + 2 \beta}} & 0 & -\left(1 - t - \frac{2 \beta^2 t}{\alpha (\alpha + 2 \beta)} \right) \\ \frac{\beta \gamma t}{\alpha} \sqrt{\frac{\beta}{\alpha + 2 \beta}} & 0 & 1 - t - \frac{2 \beta^2 t}{\alpha (\alpha + 2 \beta)} & 0 \end{pmatrix}. \] 
has discriminant 
\[ \Delta = \frac{((1 + \nu + 2 \nu^2)t - 1 - 2 \nu)^2 Q(t)}{(1 + 2 \nu)^3}, \]
where 
\[ Q(t) = (1 + 4 \nu + (c^2 + 5 ) \nu^2 + 2 \nu^3) t^2 - 2 (1 + 3 \nu + 2 \nu^2) t + 1 + 2 \nu \]
is itself a quadratic polynomial in $t$ whose discriminant equals $-4(1+2\nu)c^2 \nu^2 < 0$. Hence $Q > 0$, so $\Delta > 0$ as well (except when $(1 + \nu + 2 \nu^2)t - 1 - 2 \nu = 0$, but in this case one readily checks that the eigenvalues of the linear combination $\Omega_A^{-1} d^2 H(A) + \frac{2 \nu}{1 + \nu + 2 \nu^2} \Omega_A^{-1} d^2 J(A) $ have the desired type). Now, let $P = X^2 + b X + c$, so that the two real roots of $P$ are $ \frac{-b \pm \sqrt{\Delta}}{2}$. Again, $b$ is a second order polynomial in $t$ whose discriminant equals 
\[ - \frac{2 \nu^2 (c^2 + 2 + 2 c^2 \nu) }{(1 + 2 \nu)^2} < 0 \]
and such that $b(0) = 1$, so $b > 0$. Furthermore
\[ b^2 - \Delta = \frac{ \nu^2 t^2  \left( ( 2(c^2 + 4)\nu^2 + c^2 \nu + 4 (1 + 2 \nu) )t - 4 \nu (1 + 2 \nu) \right)^2}{4 (1 + 2 \nu)^4} > 0 \]
(except in the case $( 2(c^2 + 4)\nu^2 + c^2 \nu + 4 (1 + 2 \nu) )t - 4 \nu (1 + 2 \nu) = 0$, left to the reader), hence both roots of $P$ are negative, and $A$ is non-degenerate of elliptic-elliptic type.\\ 

{\bf The point $C$.} The point $C$ satisfies $u_1 = 0 = u_4$, so we use the local coordinates $x_1,y_1,x_4,y_4$ on $U_{2,3}$ as in Section \ref{subsect:coords_W}, in which $C$ corresponds to $(0,0,0,0)$, $J = \alpha + \beta - \frac{1}{2}(x_1^2 + y_1^2) + \frac{1}{2}(x_4^2 + y_4^2)$, $R = \beta - (x_4^2 + y_4^2)$, and 
\[ X = (x_1 x_4 - y_1 y_4) \sqrt{\left( 2 \alpha + 2 (x_4^2 + y_4^2) - (x_1^2 + y_1^2) \right)\left( 2\beta - (x_4^2 + y_4^2) \right)}. \]
The discriminant of the reduced characteristic polynomial $P$ of $\frac{1}{2}\Omega_C^{-1} d^2 H_t(C)$ is
\[ \Delta = \frac{((1 + 3 \nu + 4 \nu^2)t - 1 - 2 \nu)^2 (1 + 2 \nu - (1 + (1+c)\nu)t) (1 + 2 \nu - (1 + (1-c)\nu)t)}{(1 + 2 \nu)^4}. \]
Since $0 < c < 1$, we have that $1 + 2 \nu > 1 + (1+c)\nu > 1 + (1-c) \nu $,
so $\Delta > 0$ (except in the case $(1 + 3 \nu + 4 \nu^2)t - 1 - 2 \nu = 0$, left to the reader). Writing $P = X^2 + b X + c$, $b$ is a second order polynomial in $t$ whose discriminant equals 
\[ \frac{2 \nu^2 (c^2 - 2 - 8\nu (1+\nu)) }{(1 + 2 \nu)^2} < 0 \]
and such that $b(0) = 1$, so $b > 0$. Furthermore
\[ b^2 - \Delta = \frac{ \nu^2 t^2  \left( ( 4 + (c^2 + 16) \nu  + 24 \nu^2 + 16 \nu^3 )t - 4 \nu (1 + 2 \nu)^2 \right)^2}{4 (1 + 2 \nu)^4} > 0\]
(except when $( 4 + (c^2 + 16) \nu  + 24 \nu^2 + 16 \nu^3 )t - 4 \nu (1 + 2 \nu)^2 = 0$, but then one can again find a suitable linear combination of $\Omega_C^{-1} d^2 J(C)$ and $\Omega_C^{-1} d^2 H_t(C)$), hence both roots of $P$ are negative, and $C$ is non-degenerate of elliptic-elliptic type.\\

{\bf The point $D$.} The point $D$ corresponds to $u_1 = 0 = u_3$, thus we work with the local coordinates $x_1,y_1,x_3,y_3$ on $U_{2,4}$ as in Section \ref{subsect:coords_W}. In these coordinates, $D$ corresponds to $(0,0,0,0)$, $R = (x_3^2 + y_3^2) - \beta$, $J = \alpha + 2 \beta - \frac{1}{2}(x_1^2 + y_1^2) - \frac{1}{2}(x_3^2 + y_3^2)$, and 
\[ X = (x_1 x_3 + y_1 y_3) \sqrt{\left( 2 (\alpha + 2 \beta) - 2 (x_3^2 + y_3^2) - (x_1^2 + y_1^2) \right)\left( 2\beta - (x_3^2 + y_3^2) \right)}. \]
The discriminant of the reduced characteristic polynomial $P$ of $\frac{1}{2} \Omega_D^{-1} d^2 H_t(D)$ is
\[ \Delta = \frac{(1 + 2 \nu - (1 + 3\nu - 2 \nu^2)t )^2 Q(t)}{(1 + 2 \nu)^3}, \]
where 
$ Q(t) = (1 + (c^2-3) \nu^2 + 2 \nu^3) t^2 - 2 (1 + \nu + 2 \nu^2) t + 1 + 2 \nu $
is a quadratic polynomial in $t$ with discriminant $-4(1+2\nu)c^2 \nu^2 < 0$. Hence $Q > 0$ (again, unless $(1 + 2 \nu - (1 + 3\nu - 2 \nu^2)t  = 0$, left to the reader), so $\Delta > 0$ as well. Now, let $P = X^2 + b X + c$, so that the two real roots of $P$ are $ \frac{-b \pm \sqrt{\Delta}}{2}$. Again, $b$ is a second order polynomial in $t$ whose discriminant equals 
\[ - \frac{2 \nu^2 (c^2 + 2 + 2 c^2 \nu) }{(1 + 2 \nu)^2} < 0 \]
and such that $b(0) = 1$, so $b > 0$. Furthermore
\[ b^2 - \Delta = \frac{ \nu^2 t^2  \left( ( 2(c^2 + 4)\nu^2 + (c^2 - 8) \nu - 4 )t + 4 \nu (1 + 2 \nu) \right)^2}{4 (1 + 2 \nu)^4} > 0 \]
(unless $( 2(c^2 + 4)\nu^2 + (c^2 - 8) \nu - 4 )t + 4 \nu (1 + 2 \nu)$, left to the reader), hence both roots of $P$ are negative, and $D$ is non-degenerate of elliptic-elliptic type.
\end{proof}

\subsection{Another semitoric 1-transition family on \texorpdfstring{$\Hirzscaled{2}$}{W2}}
\label{sec:W2_secondexample}

By starting with the same system $(J,R)$ at $t=0$, but defining a new function $H_t$, we can produce a
semitoric family in which the point $C$ is the transition point instead of $B$.
The proof of the following is similar to the proof of Theorem~\ref{thm:W2_trans_B}, so we omit it.

\begin{thm}
\label{thm:W2_trans_C}
The system given by
\begin{equation}\label{system:W2_trans_C} 
  J = \frac{1}{2}\left( |u_2|^2 + |u_3|^2 \right), \quad H_t = (1-t) R + \frac{(\alpha + \beta) t}{\alpha (\alpha + 2 \beta)} \left(\gamma X - (2J-\alpha - 2 \beta)\left(R + \frac{\beta^2}{\alpha + \beta} \right) \right) 
\end{equation}
on $\Hirzscaled{2}$ is a semitoric 1-transition family with transition point $C$ and transition times
\[ t^- = \frac{1 + 2\nu}{2 + c + (3+c) \nu}, \qquad t^+ = \frac{1 + 2\nu}{2 - c + (3 - c) \nu}.\]
\end{thm}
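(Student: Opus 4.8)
The plan is to follow the proof of Theorem~\ref{thm:W2_trans_B} essentially line by line, with the fixed point $C$ now in the role that $B$ played there. First I would dispose of the structural facts requiring no computation: by the lemma at the start of Section~\ref{subsect:notation_W2}, the fixed points of $J$ on $\Hirzscaled{2}$ are exactly $A,B,C,D$, with distinct $J$-values $0,\beta,\alpha+\beta,\alpha+2\beta$; since $J^{-1}(0)=\{A\}$ and $J^{-1}(\alpha+2\beta)=\{D\}$ are single points, $J$ has no fixed sphere; and $J=J_{std}+H_{std}$ is, up to a $\mathrm{GL}_2(\Z)$-change, a component of the standard toric momentum map on $\Hirzscaled{2}$ (see Section~\ref{sec:hirzdef}), so $J$ generates an effective $\S^1$-action. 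Consequently $A,B,C,D$ are the only candidates for rank zero points of $F_t$; the Taylor expansions below confirm that each is indeed a fixed point of $F_t$ for every $t$, and then Lemmas~\ref{lm:fixed_semitoric}, \ref{lem:fixedpointsdontmove} and~\ref{lm:euler} show these are the only rank zero points for all $t$. It then remains to check that (i) $A,B,D$ are non-degenerate of elliptic-elliptic type for all $t$; (ii) $C$ is non-degenerate of elliptic-elliptic type for $t\in[0,t^-)\cup(t^+,1]$ and of focus-focus type for $t\in(t^-,t^+)$, and degenerate at $t^{-}$ and $t^{+}$; (iii) every rank one singular point is non-degenerate of elliptic-transverse type for all $t$; and (iv) the maximum/minimum flip in Definition~\ref{dfn:transition_fam} holds at $C$.

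The crux is (ii). The transition point $C$ corresponds to $u_1=0=u_4$, so I would work in the chart $U_{2,3}$ with local coordinates $x_1,y_1,x_4,y_4$ of Section~\ref{subsect:coords_W}, in which $J=\alpha+\beta-\tfrac12(x_1^2+y_1^2)+\tfrac12(x_4^2+y_4^2)$, $R=\beta-(x_4^2+y_4^2)$, and $X=(x_1x_4-y_1y_4)\sqrt{(2\alpha+2(x_4^2+y_4^2)-(x_1^2+y_1^2))(2\beta-(x_4^2+y_4^2))}$, exactly as in the treatment of $C$ in the proof of Lemma~\ref{lem:W2ACD}. Substituting into $H_t$ from Equation~\eqref{system:W2_trans_C} and expanding to second order, the quadratic part of $H_t$ at $C$ is $a_1(t)(x_1^2+y_1^2)+a_4(t)(x_4^2+y_4^2)+m(t)(x_1x_4-y_1y_4)$, where, with $\nu=\beta/\alpha$ and $c=2\gamma\sqrt{\nu}\in(0,1)$,
\[
 a_1(t)=\nu t,\qquad a_4(t)=\frac{2(1+\nu-\nu^2)t-(1+2\nu)}{1+2\nu},\qquad m(t)=\frac{c(1+\nu)t}{1+2\nu}.
\]
Forming $\tfrac12\Omega_C^{-1}d^2H_t(C)$ and computing the discriminant $\Delta(t)$ of its reduced characteristic polynomial, one obtains the factorization
\[
 \Delta(t)=\big(a_1(t)-a_4(t)\big)^2\big(a_1(t)+a_4(t)-m(t)\big)\big(a_1(t)+a_4(t)+m(t)\big),
\]
and $a_1+a_4\pm m=\dfrac{\big((2\pm c)+(3\pm c)\nu\big)t-(1+2\nu)}{1+2\nu}$ is increasing in $t$, equals $-1$ at $t=0$, and vanishes at $t=\dfrac{1+2\nu}{(2\pm c)+(3\pm c)\nu}$; these two roots are precisely $t^-$ (for the $+$ sign) and $t^+$ (for the $-$ sign), and $0<t^-<t^+<1$ because $0<c<1$ and $\nu>0$. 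Hence $\Delta(t)<0$ on $(t^-,t^+)$, where the reduced characteristic polynomial has a pair of nonreal roots and $C$ is focus-focus; for $t\notin[t^-,t^+]$ one has $\Delta(t)>0$, and, as at the end of the proof of Lemma~\ref{lem:W2B}, the coefficient $b$ of $X$ in the reduced characteristic polynomial is positive (its discriminant in $t$ is negative and $b(0)=1$) while $b^2-\Delta(t)$ is a perfect square, so both roots are negative and $C$ is elliptic-elliptic; finally $C$ is degenerate at $t=t^-$ and $t=t^+$ by Lemma~\ref{lem:HPdegen}. The finitely many $t$ at which $\tfrac12\Omega_C^{-1}d^2H_t(C)$ fails to have two distinct nonzero eigenvalues — the endpoints $t^{\pm}$ and the at most one root of $a_1-a_4$, or of the constant term of the reduced characteristic polynomial, lying in $(0,1)$ — are handled exactly as the analogous exceptional values in Lemma~\ref{lem:W2ACD}: at $t^{\pm}$ the point is already known to be degenerate, while at the remaining values a generic combination $\widetilde{\nu}\,\Omega_C^{-1}d^2J(C)+\widetilde{\mu}\,\Omega_C^{-1}d^2H_t(C)$ has two distinct nonzero eigenvalues, so $C$ is non-degenerate and, by local constancy of the Williamson type on the non-degenerate locus, has the type predicted by the sign of $\Delta$ at nearby parameters.

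The remaining fixed points $A,B,D$ (in the charts $U_{1,4},U_{1,3},U_{2,4}$ respectively) are handled by precisely the same method: for $t=0$ the system $(J,R/2)$ is toric, so there is nothing to do, and for $t\neq 0$ one computes the reduced characteristic polynomial of $\tfrac12\Omega^{-1}d^2H_t$ at each point, checks that its discriminant stays positive, and concludes that both roots are negative via the argument that its linear coefficient is positive and its square exceeds the discriminant, dispatching the exceptional parameter values by an explicit linear combination as above. For the rank one points I would first show, by a Lagrange multiplier computation in the variables $u_j,\overline{u}_j$ as in Lemma~\ref{lm:discard_poles_W1}, that $F_t$ has no rank one point with any $u_j=0$; away from such points Lemma~\ref{lm:nondeg_rankone_red} reduces the question to the surfaces $M_j^{\text{red}}$, on which, in the cylindrical coordinates $(\rho,\theta)$ of Section~\ref{subsect:notation_W2}, $R=\rho^2-\beta$ and $X=\rho\cos\theta\sqrt{g(\rho)}$ with $g(\rho)=(2(\alpha+2\beta-j)-\rho^2)(2j-\rho^2)(2\beta-\rho^2)$. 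Since the reduced space, hence $g$, does not depend on $H_t$, the analysis is literally that of Lemma~\ref{lm:rankone_W2}: at a critical point $\sin\theta=0$, the mixed second partial derivative of $H_t^{\text{red},j}$ vanishes, $\partial_\theta^2 H_t^{\text{red},j}$ has a definite sign, and after eliminating the factor $1-t$ using the critical-point relation the Hessian determinant is a positive multiple of $-\big(2\rho_c^2 g(\rho_c)g''(\rho_c)+2\rho_c g(\rho_c)g'(\rho_c)-\rho_c^2 g'(\rho_c)^2-4g(\rho_c)^2\big)$, which is positive because (as in Lemmas~\ref{lm:rankone_W1} and~\ref{lm:rankone_W2}) the bracketed expression, viewed as a quadratic polynomial in $j$, has negative discriminant. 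Hence all rank one singular points are non-degenerate of elliptic-transverse type.

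For (iv), a direct evaluation gives $H_t(C)=(1-2t)\beta$, so $H_0(C)=\beta=R(C)$ is the maximum of $H_0$ on $J^{-1}(\alpha+\beta)$, while $H_1(C)=-\beta$ is the minimum of $H_1$ on that same fiber (since $C$ is the only rank zero point there and is elliptic-elliptic, hence an extremum of $H_t$ on that fiber, for every $t\notin(t^-,t^+)$); this is the flip required by Definition~\ref{dfn:transition_fam}, and together with (i)--(iii) it shows that $F_t$ from Equation~\eqref{system:W2_trans_C} is a semitoric $1$-transition family on $\Hirzscaled{2}$ with transition point $C$ and transition times $t^-$ and $t^+$. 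The main obstacle is entirely computational: carrying out the second-order expansion of $H_t$ at $C$ and obtaining the factorization of $\Delta(t)$ so that its roots come out as the stated $t^{\pm}$, together with the corresponding positivity checks at $A,B,D$ and the bookkeeping for the exceptional parameter values; no conceptual difficulty arises beyond what already appears in the proofs of Theorems~\ref{thm:W1_movingAB} and~\ref{thm:W2_trans_B}, which is why the authors omit the details.
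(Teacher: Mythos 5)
Your proposal is correct and follows exactly the route the paper intends: the authors omit the proof of Theorem~\ref{thm:W2_trans_C} precisely because it repeats the argument for Theorem~\ref{thm:W2_trans_B} (and Lemmas~\ref{lem:W2B}, \ref{lem:W2ACD}, \ref{lm:rankone_W2}) with $C$ in place of $B$, which is what you carry out. Your quadratic expansion at $C$ and the factorization $\Delta=(a_1-a_4)^2\,(a_1+a_4-m)(a_1+a_4+m)$ are consistent with the discriminants computed in the paper's treatment of $B$, and they do yield the stated transition times $t^{\pm}$.
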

The image of the momentum map of this system for varying values of $t$ is shown in the leftmost
column of Figure~\ref{fig:moment_map_W2_array} ($t=0$ is the upper left image and $t$ increases
to $1$ moving down the column; in other words, $s_1 = 0$ and $t$ corresponds to $1-s_2$).

\subsection{A two-parameter family on \texorpdfstring{$\Hirzscaled{2}$}{W2}}
\label{sec:W2_s1s2}

In fact, both systems described above (in Sections~\ref{sec:W2_firstexample} and~\ref{sec:W2_secondexample}) are part of a
two-parameter family of integrable systems, $(J,H_{s_1,s_2})$, $s_1, s_2 \in [0,1]$, which are semitoric for almost all values of $(s_1,s_2)$. The idea is very similar to the one given in Equation~\eqref{eqn:HPsystem}, originally introduced in \cite{HohPal}, where the authors exhibit a two-parameter family of integrable systems on $\S^2 \times \S^2$ semitoric for almost all values of the parameters. Taking the parameters $(s_1,s_2) = (1/2,1/2)$ for both systems, the system introduced in the present paper has the same unmarked semitoric polygon as the system from~\eqref{eqn:HPsystem} whenever $\alpha = 2(R_2 - R_1)$ and $\beta = 2 R_1$.
However, there is no reason to believe these systems are isomorphic as semitoric systems, since their Taylor series, height, or twisting index invariants may differ. In Appendix \ref{sect:appendix}, we investigate this by computing the height invariants of both systems.

The system that we consider is $F_{s_1,s_2} = (J,H_{s_1,s_2})$ with
\begin{equation}\label{eqn:W2system2param}
 J = \frac{1}{2}\left( |u_2|^2 + |u_3|^2 \right),\,\,
 H_{s_1,s_2} = (1-s_1)(1-s_2) H_{00} + s_2 (1-s_1) H_{01} + s_1 (1-s_2) H_{10} + s_1 s_2 H_{11}
\end{equation}
where $H_{01} = R$, $H_{10} = -R$ and 
\[ H_{00} =  \frac{(\alpha + \beta) \left(\gamma X - (2J-\alpha - 2 \beta)(R + \frac{\beta^2}{\alpha + \beta} ) \right)}{\alpha (\alpha + 2 \beta)} , \ H_{11} = \frac{\beta \left( \gamma X + (2J - \alpha - 2\beta)(R + \alpha + \beta) \right) }{\alpha (\alpha + 2 \beta)}. \]
Here, we need to choose $\gamma$ so that
\begin{equation} \frac{1}{2 (1 + 2\nu) \sqrt{\nu}} < \gamma < \frac{1}{2 \sqrt{\nu}}, \qquad \textrm{ with }\nu =\frac{\beta}{\alpha} \label{eq:range_gamma}\end{equation}
as can be seen from the proof below.
Note that the system in Theorem \ref{thm:W2_trans_B} corresponds to $H_{t,1}$ (first row in Figure \ref{fig:moment_map_W2_array}), while the system in Theorem \ref{thm:W2_trans_C} corresponds to $H_{0,1-t}$ (first column in Figure~\ref{fig:moment_map_W2_array}). In this section, we will not fully prove that the system is a semitoric system for almost all $(s_1,s_2)$; indeed, we will only prove that the rank one points are non-degenerate of elliptic-transverse type for every choice of $(s_1,s_2)$ (see Lemma~\ref{lm:rankone_W2}), and that the fixed points are non-degenerate of the desired type when $(s_1,s_2) = (1/2,1/2)$, see Lemma~\ref{lm:W2_halfhalf} (which implies that this is also the case in an open neighborhood of $(1/2,1/2)$ in the parameter space). However, we will give numerical evidence for this fact (see Figure \ref{fig:moment_map_W2_array}). Since, as explained above, the case $H_{t,1}$ has been rigorously treated (and the case $H_{0,1-t}$ is extremely similar), this yields strong evidence that the system is indeed semitoric with the desired number of focus-focus points for every choice of $(s_1,s_2)$. We also include a picture showing the different regions of $[0,1] \times [0,1]$ in which $B$ and $C$ are either both elliptic-elliptic, or elliptic-elliptic and focus-focus, or focus-focus and elliptic-elliptic, or both focus-focus, see Figure \ref{fig:regions_W2}. In order to obtain this picture, we numerically computed the signs of the discriminants of $\Omega_B^{-1} d^2 H_{s_1,s_2}(B)$ and $\Omega_C^{-1} d^2 H_{s_1,s_2}(C)$.

\begin{figure}
\begin{center}
\includegraphics[scale=.5]{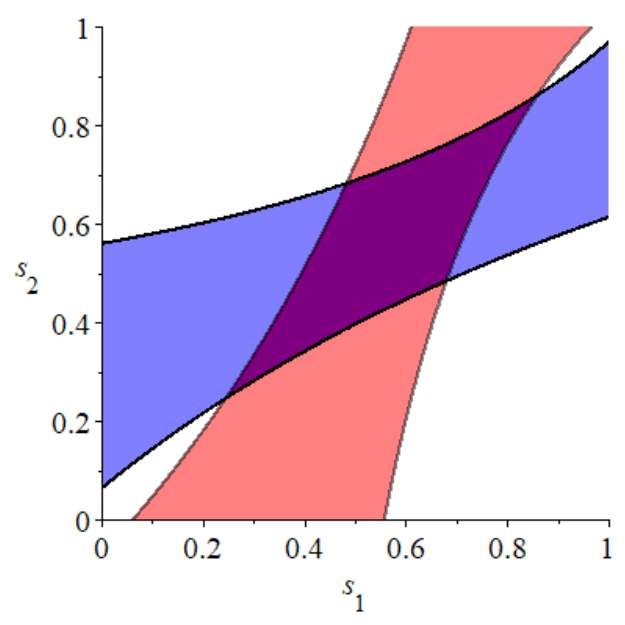}
\caption{A plot showing the types of the points $B$ and $C$ depending on the parameters $s_1$ and $s_2$ for the system given in Equation~\eqref{eqn:W2system2param},
where $\alpha = 1$, $\beta = 1$ and $\gamma = \frac{9}{20 \sqrt{\nu}}$.
In the white region both are elliptic-elliptic, in the red region $B$ is focus-focus while $C$ is elliptic-elliptic,
in the blue region $B$ is elliptic-elliptic while $C$ is focus-focus, and in the central purple region both are focus-focus. The solid lines separating the different regions correspond to parameters for which at least one of these two points is degenerate. This can be compared with Figure \ref{fig:moment_map_W2_array}.}
\label{fig:regions_W2}
\end{center}
\end{figure}

\begin{figure}
\begin{center}
\includegraphics[scale=0.5]{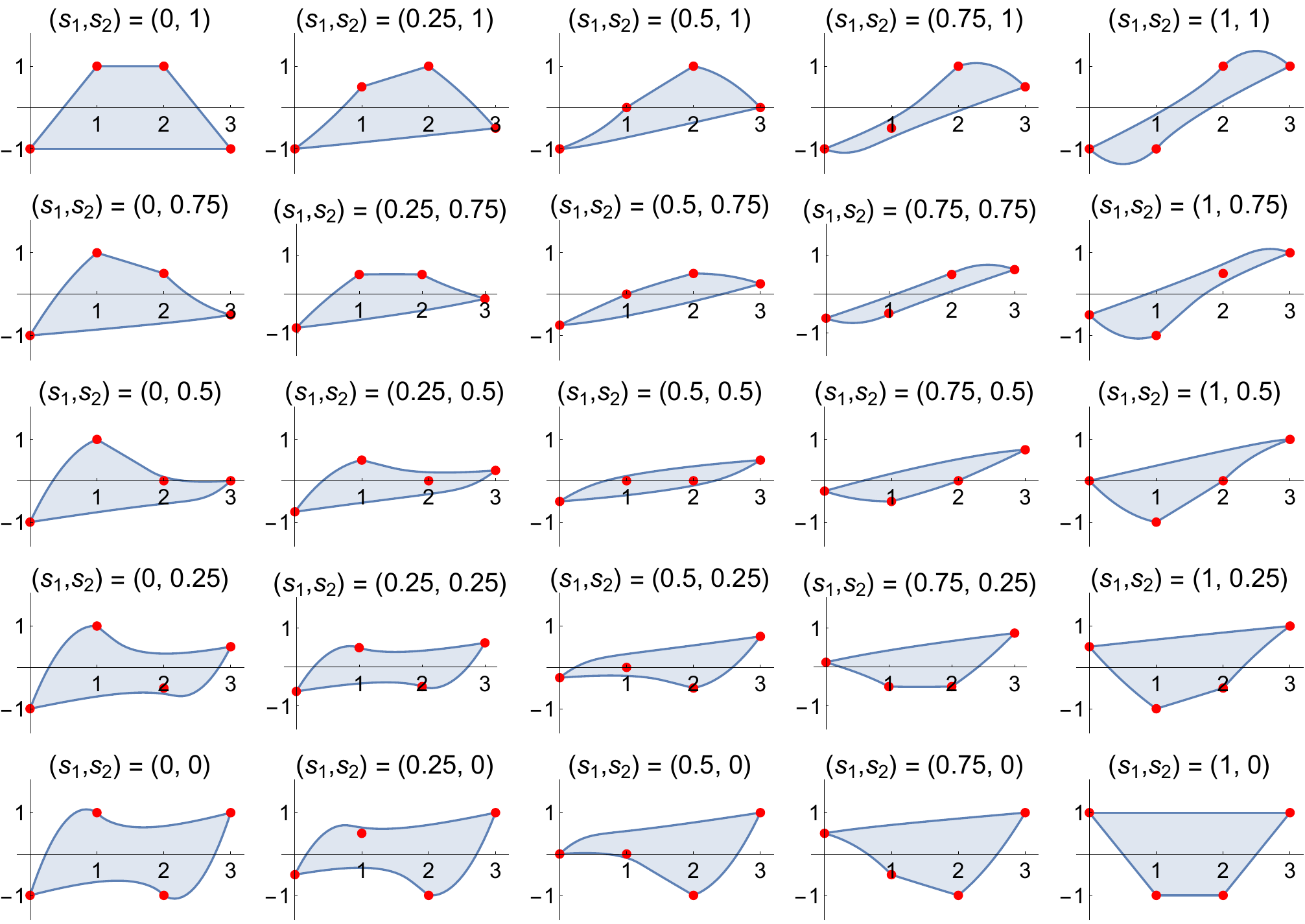}
\caption{The image $F_{s_1,s_2}(\Hirzscaled{2})$ with $F_{s_1,s_2}$ as in Equation~\eqref{eqn:W2system2param}, $\alpha = 1$, $\beta = 1$, and $\gamma = \frac{9}{20 \sqrt{\nu}}$. This should be compared to Figure \ref{fig:HPsystem}, displaying a similar system on $\S^2 \times \S^2$ constructed in \cite{HohPal} and described in Section \ref{sec:knownex}.}
\label{fig:moment_map_W2_array}
\end{center}
\end{figure}

\begin{lm}
\label{lm:rankone_W2}
For every choice of $(s_1,s_2) \in [0,1] \times [0,1]$, the rank one singular points of $(J,H_{s_1,s_2})$ are non-degenerate of elliptic-transverse type.
\end{lm}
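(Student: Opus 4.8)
The strategy is the same as in Lemmas~\ref{lm:rankone_W1} and~\ref{lem:W1_fixedsphere}: we reduce to the reduced space using Lemma~\ref{lm:nondeg_rankone_red} wherever possible, and handle the (few) fibers where the reduced space is singular separately. First I would establish an analogue of Lemma~\ref{lm:discard_poles_W1}, namely that $(J,H_{s_1,s_2})$ has no rank one points with $u_1 = 0$, $u_2 = 0$, $u_3 = 0$, or $u_4 = 0$; since each $H_{ij}$ is a combination of $J$, $R$, and $X$, the Lagrange multiplier computation (treating $u_j, \bar u_j$ as independent variables, as in the proof of Lemma~\ref{lm:discard_poles_W1}) is essentially identical and only produces the fixed points $A,B,C,D$ when one coordinate vanishes. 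This justifies working in the cylindrical coordinates $(\rho,\theta)$ on $M_j^{\mathrm{red}}$ from Section~\ref{subsect:notation_W2}, in which $R = \rho^2 - \beta$ and $X = \rho\cos\theta\,\sqrt{g(\rho)}$ with $g(\rho) = (2(\alpha+2\beta-j)-\rho^2)(2j-\rho^2)(2\beta-\rho^2)$.

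Next I would write out the reduced Hamiltonian in these coordinates. Since $H_{s_1,s_2}$ is an affine combination of $H_{00},H_{01},H_{10},H_{11}$, and each of those is of the form (constant in $\theta$) $\times \rho^2$ plus (coefficient) $\times \rho\cos\theta\sqrt{g(\rho)}$ plus terms affine in $J$ (which are constant on $J^{-1}(j)$), the reduced Hamiltonian has the shape
\[
 H_{s_1,s_2}^{\mathrm{red},j} = A(\rho) + \gamma\, C\, \rho\cos\theta\,\sqrt{g(\rho)}
\]
where $A(\rho)$ is a polynomial in $\rho^2$ depending affinely on $(s_1,s_2)$ and on $j$, and $C = C(s_1,s_2)$ is the total coefficient of the $X$-term (which is a positive multiple of $\gamma$ times an affine function of $s_1,s_2$ — one checks $C$ does not vanish on $[0,1]^2$). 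This is formally the same structure as in Lemma~\ref{lm:rankone_W1}: the $\theta$-equation forces $\sin\theta = 0$, the mixed second partial $\partial^2 H^{\mathrm{red}}/\partial\theta\partial\rho$ vanishes at a critical point, $\partial^2 H^{\mathrm{red}}/\partial\theta^2$ has a definite sign (opposite to $\cos\theta$), and one uses the $\rho$-critical-point relation to rewrite $\partial^2 H^{\mathrm{red}}/\partial\rho^2$ at the critical point as (sign of $\cos\theta$) times an expression whose sign is controlled by a polynomial in $j$. The determinant of the Hessian is then a product of two same-sign quantities, hence positive, giving an elliptic (non-degenerate) critical point, which by Lemma~\ref{lm:nondeg_rankone_red} means the rank one point is non-degenerate elliptic-transverse. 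As in Lemma~\ref{lm:rankone_W1}, this argument also covers $j \in \{\beta,\alpha+\beta\}$ away from the singular point of the reduced space, since Lemma~\ref{lm:nondeg_rankone_red} applies there too.

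Finally, one must handle the rank one points lying in $J^{-1}(0)$ and $J^{-1}(\alpha+2\beta)$, i.e.\ in the fibers containing the fixed points $A$ and $D$ where the reduced space degenerates to a point (recall from the polygon in Figure~\ref{fig:W2_JR} that $J=0$ and $J=\alpha+2\beta$ are the extremal values of $J$). For these I would argue as in Lemma~\ref{lem:W1_fixedsphere}: at a rank one point $p$ with $dJ(p) = 0$ in a chart like $U_{1,4}$ (resp.\ $U_{2,3}$) where $J = \tfrac12(x_2^2+y_2^2)$ (resp.\ a similar expression), the quotient $L^\perp/L$ is spanned by the two $J$-coordinate directions, $d^2 J(p)$ restricts to (plus or minus) the identity there, and $\Omega^{-1} d^2 J(p)|_{L^\perp/L}$ has eigenvalues $\pm i$, so by Definition~\ref{dfn:nondeg_rankone} the point is non-degenerate elliptic-transverse. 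The main obstacle is purely computational: producing the polynomial in $j$ that controls the sign of $\partial^2 H^{\mathrm{red}}/\partial\rho^2$ at a critical point and verifying it has constant sign for all $j$ in the relevant range and all $(s_1,s_2)\in[0,1]^2$ — this is the step where the affine dependence on $(s_1,s_2)$ and the discriminant-of-a-quadratic-in-$j$ trick (as used repeatedly in Lemma~\ref{lm:rankone_W1}, e.g.\ the polynomial $P$ with negative discriminant there) must be carried out, but no new ideas beyond those lemmas are needed.
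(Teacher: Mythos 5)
Your overall route is the same as the paper's: discard rank-one points with some $u_\ell=0$ by the Lagrange-multiplier argument of Lemma~\ref{lm:discard_poles_W1}, then work in the cylindrical coordinates $(\rho,\theta)$ on $M_j^{\mathrm{red}}$, where $H_{s_1,s_2}^{\mathrm{red},j}=a\rho^2+b\,\rho\cos\theta\sqrt{g(\rho)}+c$, force $\sin\theta=0$, and control the sign of $\partial^2 H^{\mathrm{red}}/\partial\rho^2$ at a critical point via the same quantity $f(\rho)=2\rho^2 g g''+2\rho g g'-\rho^2 (g')^2-4g^2$; the paper likewise only asserts (with a plot, since $f$ is now degree $4$ in $j$) that $f<0$ on the relevant range.

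Two points need correcting. First, your claim that the coefficient of the $X$-term never vanishes on $[0,1]^2$ is false: in the paper's notation $b=\frac{\left((1-s_1)(1-s_2)(\alpha+\beta)+s_1 s_2\beta\right)\gamma}{\alpha(\alpha+2\beta)}$, which vanishes exactly at $(s_1,s_2)=(0,1)$ and $(1,0)$. At those corners both of your steps that rely on $c_1\neq 0$ break: the Lagrange computation no longer forces products like $u_2\bar u_3 u_4$ to vanish (and indeed $(J,\pm R)$ does have rank-one points with some $u_\ell=0$, namely the preimages of the edges of the polygon in Figure~\ref{fig:W2_JR}), and the reduced Hamiltonian is then independent of $\theta$, so its critical points are not isolated and your Hessian test fails. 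The paper handles this by excluding $(0,1)$ and $(1,0)$ at the outset, since there the system is toric up to vertical scaling and the conclusion is automatic; your proof needs the same (one-line) exclusion. Second, your final step about rank-one points in $J^{-1}(0)$ and $J^{-1}(\alpha+2\beta)$ is vacuous: on $\Hirzscaled{2}$ the $\S^1$-action generated by $J$ has no fixed sphere (there is no vertical wall, as the paper points out), and these extremal fibers are just the single rank-zero points $A$ and $D$, so there are no rank-one points with $dJ=0$ and no analogue of Lemma~\ref{lem:W1_fixedsphere} is required; this step is harmless but reflects a misreading of the geometry, which here differs from that of $\Hirzscaled{1}$.
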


\begin{proof}
We may assume that $(s_1,s_2) \notin \{(0,1),(1,0)\}$ because we already know that the corresponding systems are toric up to vertical scaling. As in Section~\ref{sec:W1rankone}, we start by proving that there is no rank one point with $u_{\ell} = 0$ for some $\ell \in \llbracket 1, 4 \rrbracket$. Since the idea is the same as in Lemma \ref{lm:discard_poles_W1}, we only give partial details. We write $H_{s_1,s_2} = c_1 X + c_2 R + c_3 J + c_4 J R + c_5$ for some constants $c_1,c_2,c_3,c_4,c_5$ that we will not write explicitly, such that $c_1 \neq 0$.
A rank one singular point is a solution of $\nabla H_{s_1,s_2} = \lambda \nabla J + \mu_1 \nabla N_1 + \mu_2 \nabla N_2$, which reads
\[ \begin{cases} c_1 u_2 \bar{u}_3 u_4 = \mu_1 \bar{u}_1, \\
c_1 u_1 \bar{u}_3 u_4 + c_3 \bar{u}_2 + \frac{c_4}{2} (|u_3|^2 - |u_4|^2) \bar{u}_2 = (\lambda + \mu_1) \bar{u}_2 \\
c_1 \bar{u}_1 \bar{u}_2 \bar{u}_4 + (c_2 + c_3) \bar{u}_3 + \frac{c_4}{2}(|u_2|^2 + 2 |u_3|^2 - |u_4|^2) \bar{u}_3 = (\lambda + 2 \mu_1 + \mu_2) \bar{u}_3, \\
c_1  u_1 u_2 \bar{u}_3 - c_2 \bar{u}_4 - \frac{c_4}{2} (|u_2|^2 + |u_3|^2) \bar{u}_4 = \mu_2 \bar{u}_4, \end{cases} \]
plus the same equations with conjugate $u_j$. If $u_1 = 0$, the first equation implies that $u_2 = 0$ or $u_3 = 0$ or $u_4 = 0$ since $c_1 \neq 0$; the first case is impossible since $|u_1|^2 + |u_2|^2 + 2 |u_3|^2 = 2(\alpha + 2 \beta)$ and $|u_3|^2 \leq 2 \beta$, and the other ones give the fixed points $D$ and $C$. If $u_2 = 0$, the second line yields $u_1 = 0$ (impossible for the same reason), $u_3 = 0$ (fixed point $A$), or $u_4 = 0$ (fixed point $B$). If $u_3 = 0$, the third equation implies that $u_1 = 0$ (fixed point $D$) or $u_2 = 0$ (fixed point $A$) or $u_4 = 0$ (impossible since $|u_3| + |u_4|^2 = 2 \beta$). Finally, if $u_4 = 0$, the last line gives $u_1 = 0$ (fixed point $C$) or $u_2 = 0$ (fixed point $B$) or $u_3 = 0$ (impossible for the same reason).

So we use the coordinates $(\rho,\theta)$ on $M_j^{\text{red}}$ introduced in Section \ref{subsect:notation_W2}, and thus we obtain that $H_{s_1,s_2}^{\text{red},j} = a \rho^2 + b \rho  \cos \theta \sqrt{g(\rho)} + c $ where 
\begin{equation}\label{eqn:foureqns}
\begin{cases}
a = s_2 - s_1 + \frac{(2j-\alpha-2\beta) \left( s_1 s_2 \beta - (1-s_1)(1-s_2)(\alpha + \beta) \right) }{\alpha(\alpha + 2 \beta)},\\[.7em] 
b = \frac{(1-s_1)(1-s_2)(\alpha + \beta) + s_1 s_2 \beta}{\alpha(\alpha + 2 \beta)} \gamma,\\[.7em] 
c = (1 - s_1 - s_2 + 2 s_1 s_2) \frac{(2j - \alpha - 2 \beta) \beta}{\alpha + 2 \beta} + (s_1 - s_2) \beta,\\[.7em]
g(\rho) = (2(\alpha + 2\beta - j) - \rho^2)(2j - \rho^2)(2\beta - \rho^2).
\end{cases}
\end{equation}
Singular points of $ H_{s_1,s_2}^{\text{red},j}$ satisfy 
\begin{equation} 0 = \dpar{H_{s_1,s_2}^{\text{red},j}}{\theta} =  -b \rho  \sin \theta \sqrt{g(\rho)}, \qquad 0 = \dpar{H_{s_1,s_2}^{\text{red},j}}{\rho} = 2 a \rho + b  h(\rho) \cos \theta \label{eq:dHdrho=0_W2} \end{equation}
with $h$ as in Equation \eqref{eq:h_general} (with the function $g$ now being as in Equation~\eqref{eqn:foureqns}). The first equality implies that $\theta \in \{0,\pi\}$, and as before, the partial derivative 
\[ \frac{\partial^2 H_{s_1,s_2}^{\text{red},j}}{\partial \theta \partial \rho} = -b h(\rho) \sin \theta\]
vanishes at a singular point. Moreover,
\[ \dpar{^2 H_{s_1,s_2}^{\text{red},j}}{\theta^2} = -b \rho  \cos \theta \sqrt{g(\rho)}, \qquad \dpar{^2 H_{s_1,s_2}^{\text{red},j}}{\rho^2} = 2 a + b h'(\rho) \cos \theta. \]
The first of these derivatives has the sign of $-\cos \theta$. Using Equation (\ref{eq:dHdrho=0_W2}), and proceeding as in the proof of Lemma \ref{lm:rankone_W1}, we obtain that at a singular point $(\rho_c,\theta_c)$,
\[ \dpar{^2 H_{s_1,s_2}^{\text{red},j}}{\rho^2}(\rho_c,\theta_c) =  \left(  \frac{2 \rho_c^2 g(\rho_c) g''(\rho_c) + 2 \rho_c g(\rho_c) g'(\rho_c)  - \rho_c^2 g'(\rho_c)^2 - 4 g(\rho_c)^2 }{4 \rho_c g(\rho_c)^{3/2}} \right) b \cos \theta_c. \]
Again, one can check that the numerator $f(\rho) = 2 \rho^2 g(\rho) g''(\rho) + 2 \rho g(\rho) g'(\rho)  - \rho^2 g'(\rho)^2 - 4 g(\rho)^2$ is always negative for $j,\rho$ in the range described earlier, which concludes the proof. The proof of this last claim follows the same lines as the end of the proof of Lemma \ref{lm:rankone_W1}, except that this time $f$ is a degree 4 polynomial in $j$, which makes it a bit more technical; see Figure \ref{fig:rankone_W2} for a plot of $f$ as a function of $\rho$ for several values of $j$.
\begin{figure}
\begin{center}
\includegraphics[scale=.45]{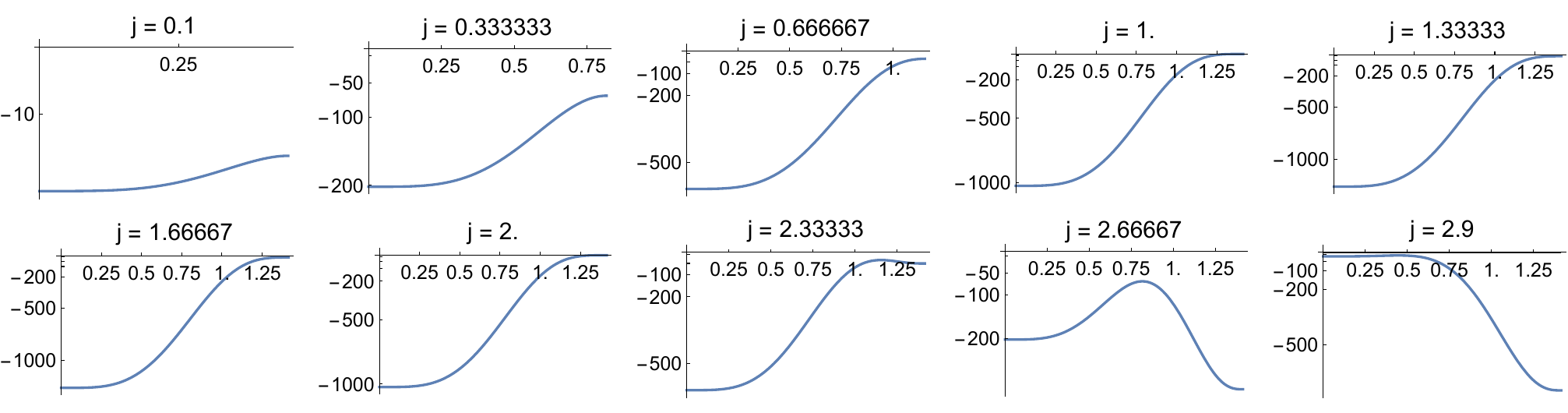}
\caption{Graph of $f: \rho \mapsto 2 \rho^2 g(\rho) g''(\rho) + 2 \rho g(\rho) g'(\rho)  - \rho^2 g'(\rho)^2 - 4 g(\rho)^2$ for different values of $j$ (recall $g$, and hence $f$, depends on the parameter $j$); here $\alpha = \beta =1$. Note that for $j=1$ and $j=2$, $f$ vanishes when $\rho = \sqrt{2}$; this corresponds to $u_4 = 0$, which yields fixed points and not rank one points.}
\label{fig:rankone_W2}
\end{center}
\end{figure}
\end{proof}

\begin{lm}\label{lm:W2_halfhalf}
The points $B$ and $C$ are both non-degenerate of focus-focus type for the system $(J,H_{1/2,1/2})$ in Equation~\eqref{eqn:W2system2param}, and the points $A$ and $B$ are non-degenerate of elliptic-elliptic type. 
\end{lm}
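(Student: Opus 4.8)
The plan is to mimic the computations already carried out in Lemmas~\ref{lem:W2B} and~\ref{lem:W2ACD} for the one-parameter family, specialized to the parameter values $(s_1,s_2)=(1/2,1/2)$. First I would substitute $s_1=s_2=1/2$ into Equation~\eqref{eqn:W2system2param}; since the coefficients in that convex combination become $1/4$ each, we get $H_{1/2,1/2} = \tfrac14(H_{00}+H_{01}+H_{10}+H_{11})$, and because $H_{01}=R$ and $H_{10}=-R$ cancel, this simplifies to $H_{1/2,1/2} = \tfrac14(H_{00}+H_{11})$. Writing this out using the explicit formulas for $H_{00}$ and $H_{11}$, the $X$-terms add (both have coefficient proportional to $\gamma$) and the $(2J-\alpha-2\beta)$-terms partially combine, so one obtains a clean closed form $H_{1/2,1/2} = c_1 X + c_2 R + c_3 J + c_4 JR + c_5$ for explicit constants depending only on $\alpha,\beta,\gamma$.

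Next, for each of the four fixed points $A,B,C,D$ I would use exactly the local coordinate charts and Taylor expansions already established in the proofs of Lemmas~\ref{lem:W2B} and~\ref{lem:W2ACD} (the chart $U_{1,3}$ for $B$, $U_{1,4}$ for $A$, $U_{2,3}$ for $C$, $U_{2,4}$ for $D$). Since $H_{1/2,1/2}$ is, up to the sign conventions, the half-sum of the Hamiltonians appearing in Theorems~\ref{thm:W2_trans_B} and~\ref{thm:W2_trans_C} evaluated at $t=1$ (respectively $t=0$), I can read off $\Omega_p^{-1}d^2 H_{1/2,1/2}(p)$ as a linear combination of the Hessian matrices already computed there — or simply recompute it directly, which is a short calculation. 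Then I form the reduced characteristic polynomial $\chi$ (as in Section~\ref{subsect:sing_dim4}) and examine its discriminant. For $B$ and $C$, the point $(s_1,s_2)=(1/2,1/2)$ lies in the central purple region of Figure~\ref{fig:regions_W2}, so I expect the discriminant to be negative, yielding two complex conjugate roots with nonzero imaginary part, hence focus-focus type; for $A$ (and $B$ is already handled, but the statement redundantly asserts $A,B$ elliptic-elliptic — wait, this is inconsistent, so I must be careful here). Actually on reflection the statement says $A$ and $B$ are elliptic-elliptic, but $B$ is also asserted focus-focus; I would resolve this by interpreting the intended statement as ``$A$ and $D$ are elliptic-elliptic'' (the two points whose $J$-values are the extremes $0$ and $\alpha+2\beta$), and prove that the discriminant of the reduced characteristic polynomial at $A$ and at $D$ is strictly positive with both roots negative, exactly as in Lemma~\ref{lem:W2ACD}.

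Concretely the key steps, in order, are: (1) specialize $H_{s_1,s_2}$ to $s_1=s_2=1/2$ and simplify; (2) at $B$, in the chart $U_{1,3}$, compute the Taylor expansion to second order, extract $\Omega_B^{-1}d^2 H_{1/2,1/2}(B)$, compute the reduced characteristic polynomial, and verify its discriminant is negative — i.e. plug $t=1$ into the formula for $\Delta$ from Lemma~\ref{lem:W2B} and check $1 < t^+$ and $t^- < 1$, which holds under hypothesis~\eqref{eq:range_gamma}; (3) do the same at $C$ using the chart $U_{2,3}$, plugging $t=0$ into the $\Delta$-formula of Theorem~\ref{thm:W2_trans_C}'s proof, or directly, and checking the discriminant is negative, which translates into an inequality on $\gamma$ equivalent to the lower bound in~\eqref{eq:range_gamma}; (4) at $A$ (chart $U_{1,4}$) and $D$ (chart $U_{2,4}$), verify the discriminant is positive and $b>0$, $b^2-\Delta>0$ so both roots are negative, giving elliptic-elliptic type. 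The main obstacle I anticipate is bookkeeping: making sure the convex-combination coefficients and the sign conventions ($H_{10}=-R$ versus $H_{01}=R$, and the $-$ signs in $H_{00}$) are tracked correctly so that $H_{1/2,1/2}$ really does reduce to the ``midpoint'' of the two transition Hamiltonians, and then confirming that the single point $t=1$ (for $B$) and $t=0$ (for $C$) indeed fall strictly between the respective transition times $t^-$ and $t^+$ precisely when $\gamma$ satisfies~\eqref{eq:range_gamma} — this is where the stated range of $\gamma$ gets used, and verifying it requires carefully comparing $\frac{1+2\nu}{1+(3-c)\nu}$ with $1$ and $\frac{1+2\nu}{2-c+(3-c)\nu}$ with $0$, i.e. elementary but sign-sensitive inequalities in $\nu$ and $c=2\gamma\sqrt\nu$.
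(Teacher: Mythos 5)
Your overall skeleton---simplifying $H_{1/2,1/2}=\tfrac14(H_{00}+H_{11})$, working in the charts of Section~\ref{subsect:coords_W}, computing $\Omega_p^{-1}d^2H_{1/2,1/2}(p)$ at each fixed point, and analyzing the reduced characteristic polynomial (negative discriminant for focus-focus; $b>0$ and $b^2-\Delta>0$ for elliptic-elliptic)---is exactly the paper's approach, and your resolution of the typo in the statement ($A$ and $D$, not $A$ and $B$, are elliptic-elliptic) is correct. However, the concrete mechanism you commit to for $B$ and $C$ is wrong. The parameter point $(s_1,s_2)=(1/2,1/2)$ lies on neither of the one-parameter families of Theorems~\ref{thm:W2_trans_B} and~\ref{thm:W2_trans_C} (those are the edges $s_2=1$ and $s_1=0$ of the parameter square, and your ``respectively $t=0$'' identification is also off: the $t=1$ endpoint of Theorem~\ref{thm:W2_trans_C} is $H_{00}$, its $t=0$ endpoint is just $R$). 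Consequently $d^2H_{1/2,1/2}(B)$ is not $(1-t)\,d^2R(B)+t\,d^2H_{11}(B)$ for any $t$: its quadratic part at $B$ has no $(x_2^2+y_2^2)$-term at all (the $\pm R$ terms cancel and $(2J-\alpha-2\beta)(\beta-R)$ contributes only $-\alpha(x_4^2+y_4^2)$ there), and one can check it is not even in the span of $d^2J(B)$ and the Hessian appearing in Lemma~\ref{lem:W2B}. So ``plugging $t=1$ into the $\Delta$-formula of Lemma~\ref{lem:W2B}'' determines the type of $B$ for the different system $(J,H_{1,1})$, not for $(J,H_{1/2,1/2})$.

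Worse, the inequalities you invoke are false: under~\eqref{eq:range_gamma} one has $c=2\gamma\sqrt{\nu}<1$, hence $t^+=\frac{1+2\nu}{1+(3-c)\nu}<1$, so $t=1$ is \emph{not} between the transition times of Theorem~\ref{thm:W2_trans_B} (at $(s_1,s_2)=(1,1)$ the point $B$ is elliptic-elliptic, the white corner of Figure~\ref{fig:regions_W2}); similarly $t=0$ in Theorem~\ref{thm:W2_trans_C} gives the toric-type system $(J,R)$, where $C$ is elliptic-elliptic since $t^->0$. Carrying out your steps (2)--(3) literally would therefore ``prove'' the wrong types. The repair is the route you mention only in passing: compute $\Omega_B^{-1}d^2H_{1/2,1/2}(B)$ from scratch (a short calculation, which is what the paper does); up to normalization its reduced characteristic polynomial is $X^2+\frac{1-2\gamma^2\nu(1+2\nu)^2}{(1+2\nu)^2}X+\gamma^4\nu^2$, whose discriminant $\frac{1-4\gamma^2\nu(1+2\nu)^2}{(1+2\nu)^4}$ is negative precisely because of the lower bound in~\eqref{eq:range_gamma} (you correctly guessed where that bound enters, but attached it to the wrong computation); the corresponding matrix at $C$ has the same eigenvalues, and the direct computations at $A$ and $D$ give positive discriminant with $b>0$ and $b^2-\Delta>0$, with the isolated parameter values where $b^2-\Delta=0$ or $\Delta$'s prefactor vanishes handled by a suitable linear combination with $d^2J$, as in Lemma~\ref{lem:W2ACD}.
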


\begin{proof}
Note that 
\[ H_{1/2,1/2} = \frac{1}{4} \left( \frac{\gamma X }{\alpha} + \frac{(2J - \alpha - 2 \beta)( \beta - R)}{\alpha + 2 \beta}   \right). \]
A straightforward computation yields
\[ \Omega_A^{-1} d^2 H_{1/2,1/2}(A) = \frac{1}{2} \begin{pmatrix} 0 & -\frac{2\beta}{\alpha + 2 \beta}  & 0 & -\frac{\gamma \sqrt{\beta(\alpha + 2 \beta)}}{\alpha} \\  \frac{2\beta}{\alpha + 2 \beta}  & 0 &\frac{\gamma \sqrt{\beta(\alpha + 2 \beta)}}{\alpha}  & 0 \\ 0 &-\frac{\gamma \sqrt{\beta(\alpha + 2 \beta)}}{\alpha} & 0 & -\frac{\alpha}{\alpha + 2 \beta} \\ \frac{\gamma \sqrt{\beta(\alpha + 2 \beta)}}{\alpha} & 0 & \frac{\alpha}{\alpha + 2 \beta} & 0 \end{pmatrix}. \]
The reduced characteristic polynomial $P = X^2 + bX + c$ of the latter has discriminant 
\[ \Delta = \frac{ 4 \nu (1 + 2\nu)^3  \gamma^2 + (1 - 2 \nu)^2}{(1 + 2 \nu)^2} > 0 \]
and has two negative roots since 
\[ b = \frac{1 + 4 \nu^2 + 2 \nu (1 + 2 \nu)^3}{(1 + 2 \nu)^2} > 0, \quad b^2 - \Delta = \frac{ \nu^2 \left( (1+2\nu)^3  \gamma^2 - 2 \right)^2 }{(1 + 2 \nu)^4} > 0 \]
except when $\gamma = \sqrt{\frac{2}{(1+2\nu)^3}}$ (but this case can be checked by finding a suitable linear combination of $\Omega_A^{-1} d^2 H_{1/2,1/2}(A)$ and $\Omega_A^{-1} d^2 J(A)$). Hence $A$ is non-degenerate of elliptic-elliptic type. Similarly, the reduced characteristic polynomial of 
\[ 2 \Omega_B^{-1} d^2 H_{1/2,1/2}(B) = \begin{pmatrix} 0 & 0 & 0 & \gamma \sqrt{\nu} \\  0 & 0 & \gamma \sqrt{\nu} & 0 \\ 0 & \gamma \sqrt{\nu} & 0 & \frac{1}{1 + 2 \nu} \\ \gamma \sqrt{\nu} & 0 & -\frac{1}{1 + 2 \nu} & 0 \end{pmatrix} \]
is $Q = X^2 + \frac{ 1 - 2\gamma^2 \nu(1 + 2\nu)^2}{(1 + 2 \nu)^2} X + \gamma^4 \nu^2$
and its discriminant equals 
\[ \frac{1 - 4 \gamma^2 \nu (1 + 2\nu)^2}{(1 + 2 \nu)^4}, \]
and is negative since $\gamma > \frac{1}{2(1+2\nu)\sqrt{\nu}}$. Hence $Q$ has two complex roots with nonzero imaginary part, so $B$ is non-degenerate of focus-focus type. Moreover,
\[ 2 \Omega_C^{-1} d^2 H_{1/2,1/2}(C) = \begin{pmatrix} 0 & 0 & 0 & \gamma \sqrt{\nu} \\  0 & 0 & \gamma \sqrt{\nu} & 0 \\ 0 & \gamma \sqrt{\nu} & 0 & -\frac{1}{1 + 2 \nu} \\ \gamma \sqrt{\nu} & 0 & \frac{1}{1 + 2 \nu} & 0 \end{pmatrix} \]
has the same eigenvalues as $2 \Omega_B^{-1} d^2 H_{1/2,1/2}(B)$, so $C$ is also non-degenerate of focus-focus type. Finally, the discriminant of the reduced characteristic polynomial $R = X^2 + bX + c$ of
\[ \Omega_D^{-1} d^2 H_{1/2,1/2}(D) = \frac{1}{2} \begin{pmatrix} 0 & \frac{2\beta}{\alpha + 2 \beta}  & 0 & -\frac{\gamma \sqrt{\beta(\alpha + 2 \beta)}}{\alpha} \\  -\frac{2\beta}{\alpha + 2 \beta}  & 0 &\frac{\gamma \sqrt{\beta(\alpha + 2 \beta)}}{\alpha}  & 0 \\ 0 &-\frac{\gamma \sqrt{\beta(\alpha + 2 \beta)}}{\alpha} & 0 & \frac{\alpha+4\beta}{\alpha + 2 \beta} \\ \frac{\gamma \sqrt{\beta(\alpha + 2 \beta)}}{\alpha} & 0 & -\frac{\alpha+4\beta}{\alpha + 2 \beta} & 0 \end{pmatrix} \]
reads
\[ \Delta =  \frac{(1 + 6 \nu)^2 (1 + 4 \gamma^2 \nu + 8 \gamma^2 \nu^2 )}{(1 + 2\nu)^2} > 0. \]
Both roots of $R$ are negative since
\[ b = \frac{1 + 8\nu + 20\nu^2 + 2 \nu (1 + 2\nu)^3 \gamma^2 }{(1 + 2\nu)^2} > 0, \quad b^2 - \Delta = \frac{4 \nu^2 \left( (1 + 2\nu)^3 \gamma^2 - 2 (1 + 4 \nu) \right)^2 }{(1 + 2\nu)^4} > 0. \]
(except when $ (1 + 2\nu)^3 \gamma^2 = 2 (1 + 4 \nu)$, left to the reader). Hence $D$ is non-degenerate of elliptic-elliptic type.
\end{proof}

\begin{rmk}
One could try to find a similar two-parameter family of systems on $\Hirzscaled{n}$ with $n \geq 3$ starting from the analogous $(J,R)$, but it seems unlikely that such a family would exist. This is because it would seem the system with $s_1=s_2=1/2$ would have two focus-focus points and a semitoric polygon with no vertical wall. By~\cite[Theorem 1.4]{KPP_min}, the only such semitoric polygon, up to rescaling the lengths of the edges while preserving the slopes, is the one associated to the system in Section~\ref{sec:W2_s1s2} (shown in Figure~
\ref{fig:stpolygon-2ff}), but this cannot be the semitoric polygon for a system with two focus-focus points on $\Hirzscaled{n}$ because, for instance, it does not give the correct weights of the $\mathbb{S}^1$-action associated to $J$ at the maximum and minimum values of $J$. Of course, by Remark~\ref{rmk:weightsnot1}, we know that none of the representatives of the semitoric polygons of a semitoric 2-transition family on $\Hirzscaled{n}$ can be the usual toric polygon for $\Hirzscaled{n}$ because the weights of the $J$-action for the latter are not $\pm 1$ at both transition points.

Still, we could try to write down such a system in the same fashion. The most reasonable $J$ and $N$-invariant choice for $X$ would be $X = \Re\left( \bar{u}_1^{n-1} \bar{u}_2 u_3 \bar{u}_4 \right)$; and indeed, this does not work because rank one points that are not of elliptic-transverse type show up. 
In particular, the points with $u_1 = 0$  cannot be discarded as in Lemma \ref{lm:rankone_W2}, because 
the first equation in the system obtained by writing $\nabla H_{s_1,s_2} = \lambda \nabla J + \mu_1 \nabla N_1 + \mu_2 \nabla N_2$ reads
$ c_1 (n-1) u_1^{n-2} u_2 \bar{u}_3 u_4 = \mu_1 \bar{u}_1, $
and for $n \geq 3$, this equation does not imply that $u_2 \bar{u}_3 u_4 = 0$ if $u_1 = 0$. We omit
the details, but it turns out that these points are indeed rank one points which are not
of elliptic-transverse type.
\end{rmk}

\appendix

\section{Comparison \texorpdfstring{of the systems in~\eqref{eqn:W2system2param} and~\eqref{eqn:HPsystem}}{} using the height invariant}
\label{sect:appendix}

In this appendix we compare the systems
$(\Hirzscaled{2}, \omega_{\Hirzscaled{2}},(J,H_{1/2,1/2}))$ from Equation~\eqref{eqn:W2system2param} in Section \ref{sec:W2_s1s2} and $(\S^2 \times \S^2, R_1 \omega_{\S^2} \oplus R_2 \omega_{\S^2}, (J,H_{1/2, 1/2}))$ in Equation~\eqref{eqn:HPsystem}, originally introduced in \cite{HohPal}.
When $(\alpha, \beta)$ and $(R_1, R_2)$  are such that both systems have the same unmarked semitoric polygon, the ambient 
manifolds $(\Hirzscaled{2},\omega_{\Hirzscaled{2}})$ 
and $(\S^2 \times \S^2, R_1 \omega_{\S^2} \oplus R_2 \omega_{\S^2})$ are symplectomorphic;
this is well-known and proved for instance in \cite[Lemma 3]{Kar02}. In this case the systems could
in principle be isomorphic as
semitoric systems. We will prove that this is not the case, except possibly for one special choice of 
the extra parameter $\gamma$ in the first system, by computing their height invariants and seeing that they differ.
Note that we only give integral formulas for these invariants, since it is sufficient for our purpose; however, one can obtain closed forms for the integrals involved.

\subsection{The height invariant}

The additional information contained in the marked semitoric polygon compared to the unmarked semitoric polygon (see Section \ref{sec:semitoric}) is the so-called \emph{height invariant}. Keeping the notation from the aforementioned section, let $\left( \Delta_{\vec{\epsilon}}, \vec{c} = ( (u_1,v_1), \ldots, (u_{m_f},v_{m_f})), \vec{\epsilon} \right)$ be any representative of the marked semitoric polygon of a semitoric system $(M, \omega, (J,H))$, and let  
\[ \vec{h} = (h_1, \ldots, h_{m_f}) = \left( v_1 - \min_{\pi_1^{-1}(u_1) \cap \Delta_{\vec{\epsilon}}} \pi_2, \ldots, v_{m_f} - \min_{\pi_1^{-1}(u_{m_f}) \cap \Delta_{\vec{\epsilon}}} \pi_2 \right). \]
This quantity does not depend on the choice of representative of the marked semitoric polygon, and is called the height invariant of the system. In practice, this invariant can be computed as follows. Let $\ell \in \{1, \ldots, m_f \}$ and let $(x_{\ell}, y_{\ell})$ be the corresponding focus-focus value. Let $(M_j^{\text{red}}, \omega^{\text{red}}_j)$ be the symplectic reduction of $M$ with respect to the $\S^1$-action generated by $J$ at level $j$, and let $H^{\text{red},j}$ be the Hamiltonian induced by $H$ on $M_j^{\text{red}}$. Then $2 \pi h_{\ell}$ is the symplectic area (with respect to $\omega^{\text{red}}_j$) of $\{ m \in M_{x_{\ell}}^{\text{red}} \ | \ H^{\text{red},x_{\ell}}(m) < y_{\ell} \}$. 

\subsection{The system from Equation~(\ref{eqn:W2system2param})}

We start by computing the height invariant $\vec{h} = (h_1, h_2)$ of the semitoric system\\ 
$(\Hirzscaled{2}, \omega_{\Hirzscaled{2}},(J,H_{1/2,1/2}))$ defined in Equation~\eqref{eqn:W2system2param}. The two focus-focus values of this system are $(\beta,0)$ and $(\alpha + \beta,0)$.

First, we compute $h_1$. Using the local cylindrical coordinates from Section~\ref{subsect:notation_W2}, the symplectic form on $M_{\beta}^{\text{red}}$ is $\omega^{\text{red}}_\beta = \rho \ d\rho \wedge d\theta$, and a straightforward computation shows that the reduced Hamiltonian reads
\[ H^{\text{red}, \beta}_{1/2,1/2} = \frac{(2 \beta - \rho^2)}{4} \left( \frac{\gamma \rho \cos \theta \sqrt{2(\alpha + \beta) - \rho^2}}{\alpha} - \frac{\alpha}{\alpha + 2 \beta} \right).  \]
Hence $H^{\text{red}, \beta}_{1/2,1/2} < 0$ if and only if $\cos(\theta) < f(\rho)$ where
\[  f(\rho) = \frac{\alpha^2}{(\alpha + 2 \beta) \gamma \rho \sqrt{2(\alpha + \beta) - \rho^2}}. \]
One readily checks that $f(\rho)$ belongs to $[0,1]$ if and only if $\rho \geq \rho^-$ where 
\[ \rho^- = \sqrt{ \alpha + \beta -\frac{ \sqrt{(\alpha + 2 \beta)^2 (\alpha + \beta)^2 \gamma^2 - \alpha^4}}{(\alpha + 2\beta)\gamma}}. \]
Therefore, $H^{\text{red}, \beta}_{1/2,1/2} < 0$ if and only if
\begin{itemize}
\item either $\rho^- \leq \rho \leq \sqrt{2\beta}$ and $\arccos(f(\rho)) < \theta < 2\pi - \arccos(f(\rho))$, 
\item or $\rho < \rho^-$ (and there is no constraint on $\theta$).
\end{itemize}
Consequently, 
\[ h_1 = \frac{1}{2\pi} \int_{\rho^-}^{\sqrt{2\beta}} \left( 2 \pi - 2 \arccos(f(\rho))  \right) \rho \ d\rho + \int_0^{\rho^-} \rho \ d\rho. \]
Since $\int_0^{\sqrt{2\beta}} \rho \ d\rho = \beta$, we finally obtain that
\begin{equation} h_1 = \beta - \frac{I_{W_2}}{\pi}, \qquad I_{W_2} = \int_{\rho^-}^{\sqrt{2\beta}} \rho \arccos(f(\rho)) \ d\rho . \label{eq:h1_W2} \end{equation}
We obtain in a similar fashion that
$ h_2 = \frac{I_{W_2}}{\pi} = \beta - h_1. $

\subsection{The system from Equation~(\ref{eqn:HPsystem})}

Now, we compute the height invariant of the semitoric system $(\S^2 \times \S^2, R_1 \omega_{\S^2} \oplus R_2 \omega_{\S^2}, (J,H_{1/2, 1/2}))$ defined in \cite{HohPal} (see Equation \eqref{eqn:HPsystem}) as
\[ J = R_1 z_1 + R_2 z_2, \quad H_{1/2,1/2} = \frac{1}{4} \left( z_1 + z_2 + 2 (x_1 x_2 + y_1 y_2) \right) \]
where $(x_i, y_i, z_i)$ are the usual Cartesian coordinates on the $i$-th copy of $\S^2$. This system also has two focus-focus values, namely $(R_1 - R_2, 0)$ and $(R_2 - R_1, 0)$.

In order to compute $h_1$, we use the coordinates $(\rho, \alpha)$ on $M_{R_1 - R_2}^{\text{red}}$ with
\[
  \rho = \sqrt{\frac{1-z_1}{1+z_1}}, \qquad \alpha = \theta_2-\theta_1
\]
defined in \cite[Section 3.5]{LFP} (here $(z_i, \theta_i)$ are the usual cylindrical coordinates on each copy of $\mathbb{S}^2$). In these coordinates, the reduced symplectic form and reduced Hamiltonian read
\[ \omega^{\text{red}}_{R_1-R_2} = \frac{4 R_1 \rho}{(1 + \rho^2)^2} d\rho \wedge d\alpha, \qquad H^{\text{red},R_1-R_2}_{\frac{1}{2},\frac{1}{2}} = \frac{4 \rho^2 \cos \alpha \sqrt{( \Theta + (\Theta - 1) \rho^2 )} + (1 - \Theta)(1 + \rho^2) \rho^2}{2 \Theta (1 + \rho^2)^2} \]
where $\Theta = \frac{R_2}{R_1}$. Therefore, $H^{\text{red},R_1-R_2}_{\frac{1}{2},\frac{1}{2}}(\rho,\alpha) < 0$  if and only if 
\[ \cos \alpha < \frac{(\Theta-1)(1+\rho^2)}{4 \sqrt{\Theta + (\Theta - 1) \rho^2}}, \]
which amounts to 
\begin{itemize}
\item either $0 < \rho < \sqrt{\frac{9-\Theta + 4 \sqrt{5}}{\Theta - 1}}$ and $\arccos\left( \frac{(\Theta - 1)(1 + \rho^2)}{4 \sqrt{\Theta + (\Theta - 1) \rho^2}} \right) < \alpha < 2 \pi - \arccos\left( \frac{(\Theta - 1)(1 + \rho^2)}{4 \sqrt{\Theta + (\Theta - 1) \rho^2}} \right)$,
\item or $\rho \geq \sqrt{\frac{9-\Theta + 4 \sqrt{5}}{\Theta - 1}}$ (in which case there is no constraint on $\alpha$).
\end{itemize}

Using this, one readily checks that
\begin{equation} h_1 = 2R_1 \left( 1 - \frac{2I_{\S^2 \times \S^2}}{\pi} \right), \quad I_{\S^2 \times \S^2} = \int_{0}^{\sqrt{\frac{9-\Theta + 4 \sqrt{5}}{\Theta - 1}}} \frac{\rho}{(1+\rho^2)^2} \arccos\left( \frac{(\Theta - 1)(1 + \rho^2)}{4 \sqrt{\Theta + (\Theta - 1) \rho^2}} \right) \ d\rho.  \label{eq:h1_S2}\end{equation}
A similar computation yields $h_2 = 2 R_1 - h_1$.

\subsection{Comparison between the two systems}

A representative of the marked semitoric polygon of each of these two systems is shown in Figure \ref{fig:poly_comparison}; we see that the corresponding representatives of the unmarked semitoric polygons coincide if and only if 
\begin{equation} \alpha = 2(R_2 - R_1) \qquad \textrm{and} \qquad \beta = 2R_1, \label{eq:R_alpha_beta} \end{equation} in which case all representatives coincide.
%If one of these equalities does not hold, the unmarked semitoric polygons are distinct and the systems are not isomorphic. 
Hence we now assume that this equation is satisfied.

\begin{figure}[h]
\begin{center}

 \begin{subfigure}[b]{.45\linewidth}
  \begin{center}
   \begin{tikzpicture}[scale=.9]
    \filldraw[draw=black,fill=gray!60] (0,0) node[anchor=north,color=black]{\footnotesize{$(0,0)$}}
      -- (2,2) node[anchor=south,color=black]{\footnotesize{$(\beta,\beta)$}}
      -- (4,2) node[anchor=south,color=black]{\footnotesize{$(\alpha+\beta,\beta)$}}
      -- (6,0) node[anchor=north,color=black]{\footnotesize{$(\alpha + 2 \beta,0)$}}
      -- cycle;
     \draw [dashed] (2,2) -- (2,1.2);
     \draw (2,1.2) node {$\times$};	
		 \draw [dashed] (4,2) -- (4,0.8);
 
     \draw (4,0.8) node {$\times$};
   \end{tikzpicture}
  \end{center}
 \caption{The system from Equation~\eqref{eqn:W2system2param}.}
 \end{subfigure}\,\,\,
 \begin{subfigure}[b]{.45\linewidth}
  \begin{center}
   \begin{tikzpicture}[scale=.9]
    \filldraw[draw=black,fill=gray!60] (0,0) node[anchor=north,color=black]{\footnotesize{$(-(R_1 + R_2), 0)$}}
      -- (2,2) node[anchor=south east,color=black]{\footnotesize{$(R_1 - R_2, 2 R_1)$}}
      -- (4,2) node[anchor=south west,color=black]{\footnotesize{$(R_2 - R_1, 2 R_1)$}}
      -- (6,0) node[anchor=north,color=black]{\footnotesize{$(R_1 + R_2, 0)$}}
      -- cycle;
     \draw [dashed] (2,2) -- (2,0.6);
     
     \draw (2,0.6) node {$\times$};
	\draw [dashed] (4,2) -- (4,1.4);

	\draw (4,1.4) node {$\times$};
   \end{tikzpicture}
  \end{center}
 \caption{The system from Equation~\eqref{eqn:HPsystem}.}
 \end{subfigure}
\end{center}
\caption{A representative of the marked semitoric polygon for each system at $s_1=s_2=1/2$.}
\label{fig:poly_comparison}
\end{figure}
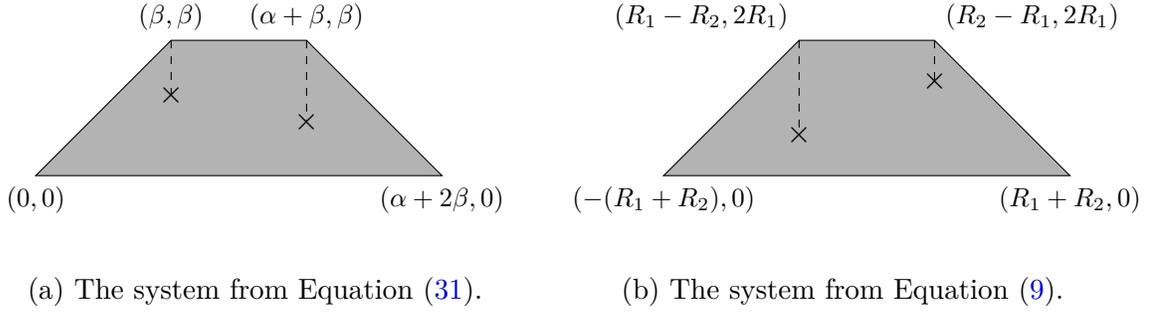

One can check from Equation \eqref{eq:h1_W2} and the expression of $\rho^-$ that the first component $h_1$ of the height invariant of the system on $\Hirzscaled{2}$ is a strictly decreasing function of $\gamma$. Thus, for all but possibly one value of $\gamma$ in the range given in Equation \eqref{eq:range_gamma}, the height invariants are distinct, and the systems $(\Hirzscaled{2}, \omega_{\Hirzscaled{2}},(J,H_{\frac{1}{2},\frac{1}{2}}))$ and $(\S^2 \times \S^2, R_1 \omega_{\S^2} \oplus R_2 \omega_{\S^2}, (J,H_{\frac{1}{2},\frac{1}{2}}))$ are not isomorphic. We display the values of $h_1$ for both systems in Figure \ref{fig:comparison_heights}.

\begin{figure}[H]
\begin{center}
\begin{subfigure}[b]{0.45\textwidth}
\begin{center}
\includegraphics[scale=0.22]{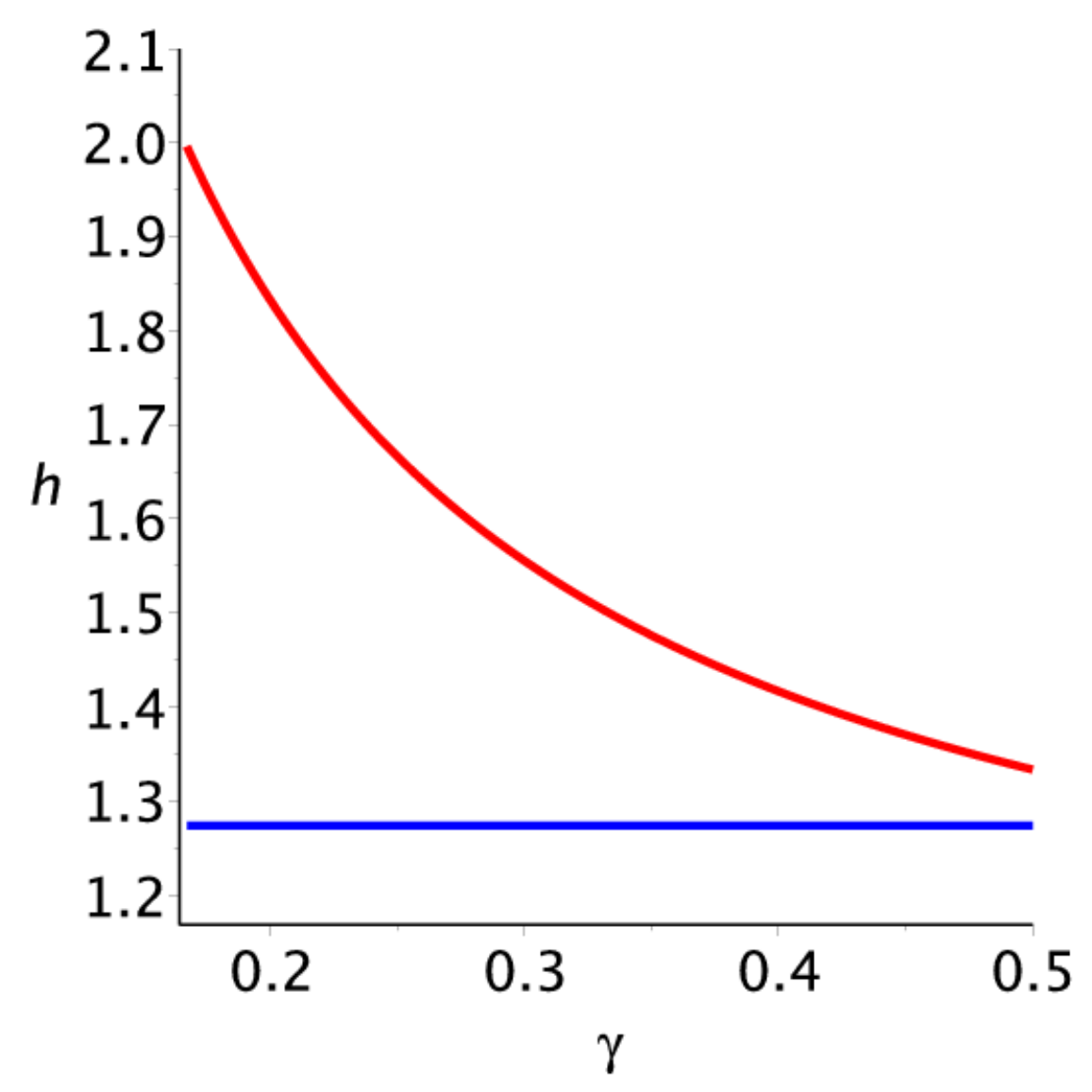}
\end{center}
\caption{$R_1 = 1$, $R_2 = 2$}
\end{subfigure} 
\begin{subfigure}[b]{0.45\textwidth}
\begin{center}
\includegraphics[scale=0.22]{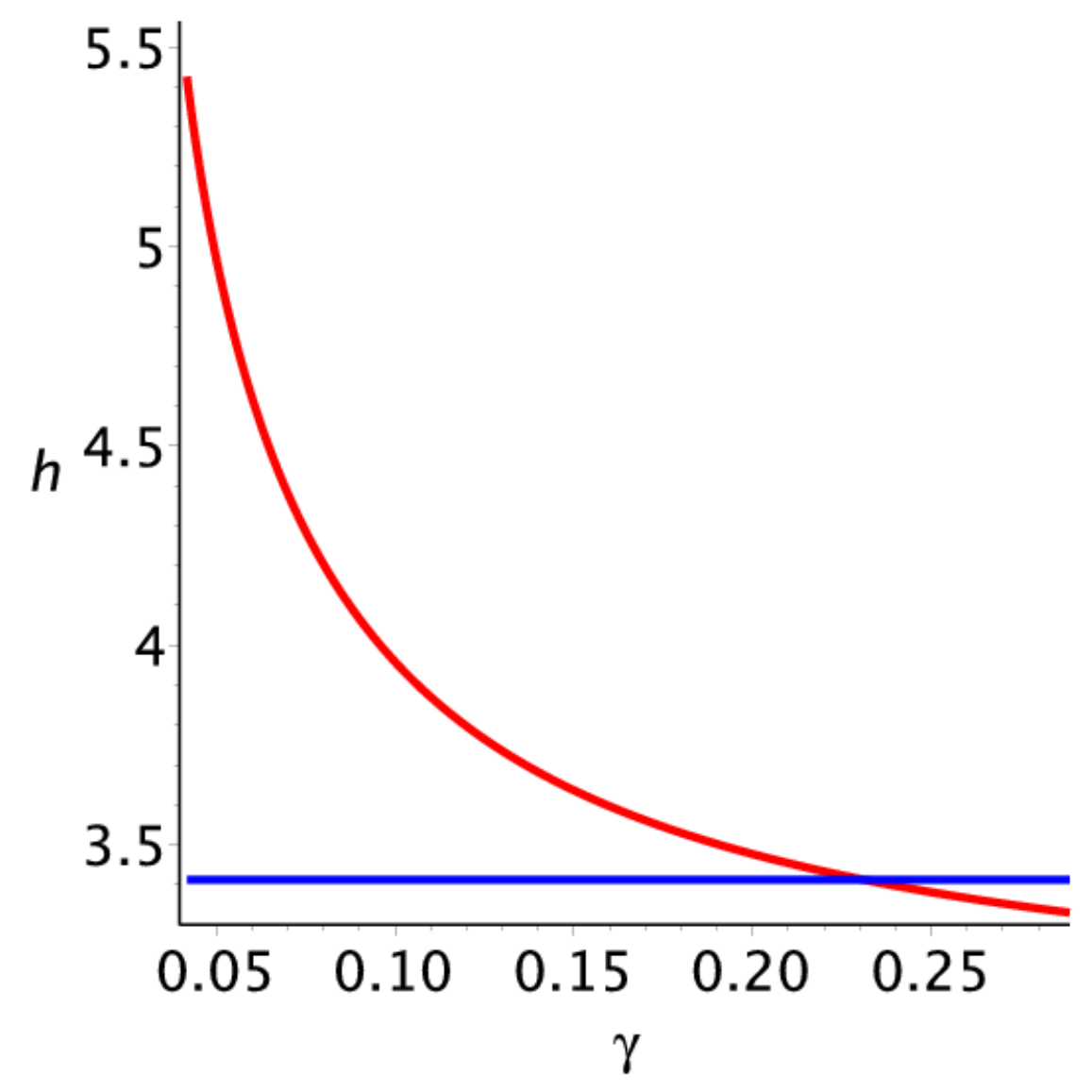}
\end{center}
\caption{$R_1 = 3$, $R_2 = 4$}
\end{subfigure}
\caption{The first component $h_1$ of the height invariant for the systems $(\S^2 \times \S^2, R_1 \omega_{\S^2} \oplus R_2 \omega_{\S^2}, (J,H_{\frac{1}{2},\frac{1}{2}}))$ (blue) and $(\Hirzscaled{2}, \omega_{\Hirzscaled{2}},(J,H_{\frac{1}{2},\frac{1}{2}}))$ for $\gamma$ varying in the range given in Equation \eqref{eq:range_gamma} (red). Here we have fixed $R_1, R_2$ and chosen $\alpha, \beta$ according to Equation \eqref{eq:R_alpha_beta}.}
\label{fig:comparison_heights}
\end{center}
\end{figure}

We see that for some values of $(R_1, R_2)$, there exists one value of $\gamma$ for which the heights invariants of the systems coincide (since $h_2 = 2 R_1 - h_1$ in both cases). It would be interesting to check whether or not the systems are isomorphic for this special value of $\gamma$; this would require to compute the Taylor series or twisting index invariant of these systems, which calls for more work.

%\newpage

\bibliographystyle{abbrv}
\bibliography{st_families}

\end{document}